\documentclass[11pt]{amsart}   	
\usepackage[margin=0.9in]{geometry}             
\usepackage[dvipsnames]{xcolor}

\usepackage{graphicx}				
\usepackage{amssymb,mathrsfs,latexsym}
\usepackage{amsthm,amsmath,stmaryrd}
\usepackage{tikz}
\usepackage{tikz-cd}
\usepackage{tikz-3dplot}
\tdplotsetmaincoords{70}{120}
\usetikzlibrary{decorations.pathmorphing}

\tikzset{snake it/.style={decorate, decoration=snake}}

\usepackage{accents,upgreek,enumerate}
\usepackage{bm}
\usepackage{mathtools}
\usepackage[all]{xy}
\usepackage{caption}
\usepackage[euler-digits]{eulervm}
\usepackage[normalem]{ulem}

\usepackage{ wasysym }
\definecolor{lavender}{RGB}{238,230,255}

\usetikzlibrary{shadings,fadings}

\usepackage{setspace}
\tikzset{
  commutative diagrams/.cd, 
  arrow style=tikz, 
  diagrams={>=stealth}
}

\newtheorem{innercustomthm}{Theorem}
\newenvironment{customthm}[1]
  {\renewcommand\theinnercustomthm{#1}\innercustomthm}
  {\endinnercustomthm}

  \makeatletter
\def\@tocline#1#2#3#4#5#6#7{\relax
  \ifnum #1>\c@tocdepth 
  \else
    \par \addpenalty\@secpenalty\addvspace{#2}%
    \begingroup \hyphenpenalty\@M
    \@ifempty{#4}{%
      \@tempdima\csname r@tocindent\number#1\endcsname\relax
    }{%
      \@tempdima#4\relax
    }%
    \parindent\z@ \leftskip#3\relax \advance\leftskip\@tempdima\relax
    \rightskip\@pnumwidth plus4em \parfillskip-\@pnumwidth
    #5\leavevmode\hskip-\@tempdima
      \ifcase #1
       \or\or \hskip 1em \or \hskip 2em \else \hskip 3em \fi%
      #6\nobreak\relax
    \dotfill\hbox to\@pnumwidth{\@tocpagenum{#7}}\par
    \nobreak
    \endgroup
  \fi}
\makeatother

\usetikzlibrary{calc}
\usetikzlibrary{fadings}
\usetikzlibrary{decorations.pathmorphing}
\usetikzlibrary{decorations.pathreplacing}

\newcounter{marginnote}
\DeclareMathAlphabet{\mathpzc}{OT1}{pzc}{m}{it}

\usepackage[backref=page]{hyperref}
\hypersetup{
  colorlinks   = true,          
  urlcolor     = blue,          
  linkcolor    = violet,          
  citecolor   = VioletRed             
}

\newtheorem{theorem}{Theorem}[subsection]
\newtheorem{corollary}[theorem]{Corollary}
\newtheorem{lemma}[theorem]{Lemma}
\newtheorem{proposition}[theorem]{Proposition}
\newtheorem{conjecture}[theorem]{Conjecture}
\newtheorem{quasi-theorem}[theorem]{Quasi-Theorem}

\theoremstyle{definition}
\newtheorem{definition}[theorem]{Definition}

\newtheorem{remark}[theorem]{Remark}

\newtheorem{construction}[theorem]{Construction}
\newtheorem{example}[theorem]{Example}
\newtheorem{blank remark}[theorem]{}

\newtheorem{not1}[theorem]{Notation}

\newcommand{\A}{{\mathbb{A}}}

\newcommand{\PP}{\mathbb{P}}         
\newcommand{\QQ} {{\mathbb Q}}		
\newcommand{\RR} {{\mathbb R}}		
\newcommand{\ZZ} {{\mathbb Z}}

\def\setminus{\smallsetminus}

\newcommand{\cal}{\mathcal}

\def\cD{{\cal D}}

\def\cO{{\cal O}}

\def\cX{{\cal X}}
\def\cY{{\cal Y}}

\begin{document}

\title{The GW/PT conjectures for toric pairs}

\author{Davesh Maulik {\it \&} Dhruv Ranganathan}

\address{Davesh Maulik \\ Massachusetts Institute of Technology, Cambridge, USA}
\email{\href{mailto:maulik@mit.edu}{maulik@mit.edu}}

\address{Dhruv Ranganathan \\ University of Cambridge, Cambridge, UK}
\email{\href{mailto:dr508@cam.ac.uk}{dr508@cam.ac.uk}}

\begin{abstract}
We prove the conjectural correspondence between logarithmic Gromov--Witten theory and logarithmic Donaldson/Pandharipande--Thomas theory for pairs $(Y|\partial Y)$ consisting of a toric threefold $Y$ and any torus invariant divisor $\partial Y$, with primary insertions. The results are the first verifications of this conjecture when $\partial Y$  is singular, i.e. the ``fully logarithmic'' setting, and the first proof of the equivariant toric correspondence for pairs when $\partial Y$ is nonempty. When $\partial Y$ is empty, we get a new proof of the known toric correspondence, but our methods also lead to stronger conclusions. In particular, we show the PT series is a Laurent polynomial in the presence of sufficient positivity and prove a 2008 conjecture of Oblomkov, Okounkov, Pandharipande, and the first author stating the capped vertex is a Laurent polynomial. The methods also verify the logarithmic DT/PT conjecture for toric threefold pairs. Using the constraints of the logarithmic theory, the complete evaluation of toric pairs is determined by a single calculation --- the degree $1$ series of $\mathbb{P}^3$.

\end{abstract}

\maketitle

\setcounter{tocdepth}{1}
\tableofcontents

\section*{Introduction}

Let $Y$ be a smooth threefold. Gromov--Witten (GW) theory concerns intersection theory on moduli of stable maps to $Y$ and Donaldson/Pandharipande--Thomas (DT/PT) theory is about intersection theory on moduli spaces of sheaves on $Y$. The theories are related by a series of conjectures, the {\it GW/DT/PT correspondence}, presented in~\cite{MNOP06a,MNOP06b,PT09}. 

The conjectures admit generalizations to pairs $(Y|\partial Y)$, where $\partial Y$ is a simple normal crossings boundary divisor~\cite{MNOP06b,MR23} and these generalized conjectures have played a key role in the subject for two decades~\cite{BP08,MPT10,OP10,PP12} through degeneration techniques~\cite{Li02,LiWu15}. However, progress has been restricted to when $\partial Y$ is smooth. Even the formulation of the general correspondence requires significant input from logarithmic geometry~\cite{AC11,Che10,GS13,MR20,MR23}. In this paper, we study the case where $\partial Y$ is singular. In future work, we use this to study the absolute GW/PT \textit{descendent} correspondence. 

Logarithmic GW/PT theory constructs an invariant for each choice of curve class $\beta$, tangency along $\partial Y$, genus/Euler characteristic, and incidence conditions\footnote{We do not consider descendent conditions in this paper.}. We have associated generating functions $Z_{\sf GW}(u)$ and $Z_{\sf PT}(q)$ by summing over the genus/Euler characteristic.  The {\it logarithmic GW/PT correspondence} comprises two statements: 
\begin{enumerate}[(i)]
    \item the series $Z_{\sf PT}(q)$ is the Laurent expansion of a rational function in $q$,
    \item the series $Z_{\sf GW}(u)$ and $Z_{\sf PT}(q)$ are equal after setting $-q = e^{iu}$, up to explicit constants.
\end{enumerate}

\subsection{Main results} We study pairs $(Y|\partial Y)$, where $Y$ is a projective toric threefold and $\partial Y$ is any torus invariant divisor, possibly empty. We call these {\it toric threefold pairs}.

\begin{customthm}{A}\label{thm: toric-pair-correspondence}
The logarithmic GW/PT correspondence holds equivariantly for toric threefold pairs. 
\end{customthm}

This gives the first verification of the correspondence for any pair when $\partial Y$ is singular, and the first proof of the equivariant toric correspondence for smooth pairs. Note that the results of~\cite{MOOP,PP12,PP14} do not treat the smooth pair geometry. Even when $\partial Y$ is smooth, our proof passes through pairs with singular boundary. The proofs use several developments in logarithmic enumerative geometry from the last five years~\cite{MR20,MR23,MR25,MR21,R19}.

The methods produce new statements about the absolute PT theory of toric threefolds, when $\partial Y$ is empty. In this case Theorem~\ref{thm: toric-pair-correspondence} specializes to a result of Oblomkov, Okounkov, Pandharipande, and the first author~\cite{MOOP}, but our proof is quite different and leads to the following stronger statement.

\begin{customthm}{B}\label{thm: polynomiality}
Let $(Y|\partial Y)$ be a toric threefold pair. Assume every toric divisor not in $\partial Y$ is nef. Then the equivariant PT series of $(Y|\partial Y)$ is a Laurent polynomial in $q$. 
\end{customthm}

The extreme cases when the hypotheses apply are when $Y$ is a product of projective spaces with $\partial Y = \emptyset$ and when $Y$ is toric and $\partial Y$ is the toric boundary. In the former case, polynomiality is a consequence of a conjecture of Okounkov, Oblomkov, Pandharipande, and the first author~\cite{MOOP}. We prove this conjecture in Section~\ref{sec: polynomiality}.


Allowing an snc boundary introduces a more complicated and larger set of unknown series than just working with smooth pairs.  However, to see the utility of this, we note the following feature of the proof:

\noindent
{\it The set of all primary equivariant GW/PT series of toric threefold pairs is uniquely determined by the series associated to a line passing through two points in $\mathbb P^3$ and compatibility with the logarithmic constraints }\medskip

Said differently, introducing logarithmic targets and the degeneration formula introduce even more relations than it does unknowns. The study of the logarithmic cobordism ring provides a simple illustration in a related setting~\cite{Guz25}.





\subsection{DT/PT}\label{sec: DT-PT} Logarithmic DT theory of ideal sheaves is related to the PT side by a wall-crossing conjecture generalizing~\cite{MNOP06b,PT09}. In~\cite[Conjecture~5.4.1]{MR20}, we proposed a closed form for the degree $0$ logarithmic DT series involving the MacMahon function, recently proved by Guzman~\cite{Guz25}, and conjectured that the PT series equals the DT series divided by the degree $0$ evaluation.

\begin{customthm}{C}\label{thm: dt-pt}
The logarithmic DT/PT correspondence holds equivariantly for toric threefold pairs.
\end{customthm}

We focus on GW/PT to streamline the exposition; Theorem~\ref{thm: dt-pt} follows by repeating the proof of Theorem~\ref{thm: toric-pair-correspondence} step by step. The argument proceeds by inductions compatible with the correspondence and reduces to known base cases.

\subsection{Overview}

Our strategy is based on the recently proved logarithmic degeneration formula~\cite{MR23}. For each invariant, we specialize the insertions to force degenerations and inductively split the problem into ``smaller'' ones. While similar inductive schemes exist in GW theory~\cite{MR25}, they use the target dimension and the domain genus as induction parameters, which are not available on the PT side.

Instead, the key input is the system of relations between GW and PT theory arising from the degeneration formula. We isolate several families of “unbreakable” base cases, and degeneration compatibility then completely constrains the toric pair correspondence. 


Here is a more detailed roadmap:

\begin{enumerate}[I.]
    \item {\it Elementary geometries.} Compatibility of GW/PT with logarithmic degenerations~\cite{MR23} reduces the conjecture to certain elementary targets -- these are iterated $\mathbb P^1$-bundles over toric bases $(B|\partial B)$, where $\partial B$ is the full toric boundary and each $\mathbb P^1$-fiber has log structure along the $0$-section. 
    \item {\it Strata conjectures.} The GW/PT moduli admit stratifications by the same partially ordered set. We formulate a generalized GW/PT correspondence for the strata, with additional insertions from the cohomology of the stack of expansions. By ``rubber calculus'' techniques, a study of exotic insertions, and the degeneration formula~\cite{MR23}, we show the strata correspondences follow from the ordinary correspondence on the components of the universal degeneration on the stratum.
    \item {\it Inductive structure.} The cohomology of an elementary geometry is generated by its boundary strata. This allows incidence conditions to be traded for strata on the GW/PT space. We prove that, in an appropriate ordering, the moduli problems that comprise the strata moduli are smaller. This sets up an induction and reduces the conjecture to several base cases. 
    \item {\it Semi-positive cases.} The first base cases occur when the non-boundary toric strata in $(Y|\partial Y)$ have nef normal bundles. If $\partial Y$ is the full toric boundary, there is an infinite family of cases indexed by the positive integers. We solve these by combining the degeneration formula with tropical arguments of Parker~\cite{Par17}, also used by Bousseau~\cite{Bou17}, which induct on the complexity. The key input is compatibility of the correspondence with the moves above. The arguments can be made to work in the general nef normal bundle setting.
    \item {\it Negative cases.} If the non-boundary toric strata have non-positive normal bundles, there are invariants with no or fewer insertions, and pure degeneration techniques are insufficient. We instead pass to equivariant invariants and take inspiration from the study of the cap and tube geometries in~\cite{OP06vira}. This reduces to certain local curve calculations, which we handle by relating these local curve theories to associated compactified logarithmic theories. This relies on certain basic properties of the logarithmic linear system of curves on a toric surface, developed in Appendix~\ref{app: log-linear-system}. 
\end{enumerate}

To run GW and PT arguments compatibly, we use results of Neguț~\cite{Negut} to rework the evaluation structure in logarithmic PT theory. This makes the GW and PT evaluations coincide --- one moduli space for each choice of contact order, parameterizing {\it ordered} points on strata of $Y$. The ideas may be of interest, for example, in $K$-theoretic PT theory.

\subsection{Broader context} The results here connect to various directions in enumerative geometry. 

\subsubsection{GW/PT} Our results form the basis for work on the GW/PT \textit{descendent} conjectures, aiming to lift the sequence of results by Pandharipande--Pixton~\cite{PP13,PP14,PP12} to the logarithmic setting. The primary GW/PT correspondence for absolute targets has seen significant progress due to Pardon~\cite{Par23}, but the descendent conjectures remain largely open. Our basic aim is to resolve the descendent conjectures for all Fano threefolds, among others.

We highlight two further directions. Our results give the first verification of GW/PT in the fully logarithmic setting. Pardon has recently introduced new methods, involving generic transversality results for holomorphic curves, that prove the primary GW/PT conjecture in the presence of sufficient positivity~\cite{Par23}. In light of this, we raise the following:

\noindent
{\bf Problem.} {\it Adapt the methods of~\cite{Par23} to prove the GW/PT correspondence for positive snc pairs.}

Another question concerns the pair $(\mathbb P^3|5H)$, where the divisor comprises five generic hyperplanes. This is an elementary ``log general type'' target. With $4$ hyperplanes, the pair is toric and the correspondence by the work in this paper. Using descendents, we hope to prove the correspondence for $(\mathbb P^3|5H)$ in forthcoming work, but we state the following:

\noindent
{\bf Problem.} {\it Prove the primary GW/PT correspondence for $(\mathbb P^3|5H)$ without passing to descendents.}

 A solution would imply the correspondence for a number of targets, including the quintic threefold. The first proof of GW/PT for the quintic in~\cite{PP12} used a remarkable argument via the completely inexplicit descendent correspondence. The problem above would give a ``primary to primary'' proof.

\subsubsection{Hodge integrals and triple DR cycles}

The absolute GW theory of toric threefolds can be reduced to the evaluation of cubic Hodge integrals via virtual localization~\cite{GP99}. The logarithmic GW theory of toric threefold pairs shares a similar relationship with higher double ramification (DR) cycles~\cite{HMPPS,HS21,MR21,RUK22}. One way of interpreting our method on the GW side is that we trade cubic Hodge integrals for triple DR cycles, which behave better with respect to the correspondence.

Exact formulas in both cases can be obtained in special situations. Calabi--Yau Hodge integrals can be computed in closed form using the topological vertex~\cite{AKMV}, which is equivalent to the GW/PT correspondence~\cite{MNOP06b}. On the other hand, a large family of rank-$3$ DR cycle integrals can be computed via their relation to the non-commutative KdV hierarchy~\cite{BR21,KHSUK}. It would be interesting to understand how these formulas relate to each other; more generally, further study of the algebraic structure of higher DR integrals and associated closed formulas seems worthwhile.



\subsubsection{Refined tropical curve counting} A rich line of inquiry in tropical geometry is {\it refined} curve counting, which studies polynomial refinements of the classical Severi degrees~\cite{GS14,Mik17}. Severi degrees are topological Euler characteristics of certain moduli spaces associated with linear systems, and the refinements correspond to the $\chi_y$-genera. There is a natural definition of a tropical refined curve count and a partially conjectural interpretation via $\chi_y$-genera; see~\cite{BG16,NPS18}. Bousseau has shown that the tropical refined counts exactly encode the all-genus GW series of the toric threefold pair $S\times \mathbb{P}^1$, with full boundary and zero tangency in the $\mathbb{P}^1$-direction~\cite{Bou17}.  

Via GW/PT, we obtain another sheaf-theoretic interpretation of the refined tropical counts, now as an unrefined PT series. The refinement in~\cite{GS14} is based on $K$-theoretic invariants such as the $\chi_y$-genus, so we observe a refined/unrefined equality. This appears to be related to phenomena in the CY5 theory of Nekrasov--Okounkov~\cite{NO16}; see~\cite[Section~6]{AB25}.

\subsection*{Notation and terminology} We use snc to abbreviate simple normal crossings. We use $Y$ to denote a smooth toric threefold with dense torus $A$ and $\partial Y$ a snc divisor. The $A$-equivariant cohomology will be denoted $H^\star_A(-;\QQ)$. We use $\mathcal X\to B$ for a projective snc degeneration over a curve $B$ with critical value $0$. A map of cone complexes is combinatorially flat if every cone maps surjectively onto a cone. 

\subsection*{Acknowledgments} We thank P. Bousseau, T. Blomme, S. Koyama, P. Kennedy-Hunt, A. Neguț, A. Okounkov, R. Pandharipande, and R. Thomas for discussions, interest, and encouragement. 

\subsection*{Funding} D.M. was supported by a Simons Investigator grant. D.R. was supported by EPSRC Horizon Europe Guarantee EP/Y037162/1 and EPSRC New Investigator Award EP/V051830/1. 

\section{Logarithmic GW and PT theory}

We review the GW and PT theory of pairs and degenerations~\cite{MR20,MR23}. For an introduction, see~\cite{R26-SRI}.

\subsection{Expansions}\label{sec: degenerations} Let $Y$ be a smooth projective toric threefold and $\partial Y$ a torus invariant snc divisor. The pair $(Y|\partial Y)$ determines a cone complex $\Sigma_{Y|\partial Y}$. We use $\Sigma_Y$ or $\Sigma$ when the rest is clear from context. 

\subsubsection{Working up to subdivision} There is a bijection between proper birational log \'etale maps to $(Y|\partial Y)$ and subdivisions of $\Sigma_{Y|\partial Y}$. We use {\it log birational model} and {\it log birational class} for spaces that differ by such maps. Logarithmic GW/PT invariants are unchanged by subdivisions, so we often specify $(Y|\partial Y)$ and $\Sigma_{Y|\partial Y}$ up to subdivision. We adopt two notations -- we write $\underline \Sigma$ for a fan that differs from $\Sigma$ by subdivision, or write the support of the fan: e.g. if $\partial Y$ is the full toric boundary, we use $\mathbb R^3$. 

\subsubsection{Expansion from a polyhedral complex} Let $\Gamma$ be a $1$-dimensional polyhedral complex embedded in $\Sigma$ with integral vertices. View it as lying at height $1$ in $\Sigma\times \RR_{\geq 0}$. Taking its cone, we have:
\[
\mathsf{Cone}(\Gamma)\to \Sigma\times\RR_{\geq 0}.
\]
By toric geometry, we obtain a degeneration 
\[
\mathcal Y_\Gamma\to \A^1
\]
of $\mathcal Y$. Note $\mathsf{Cone}(\Gamma)$ is $2$-dimensional, so the total space $\mathcal Y_\Gamma$ has no codimension $3$ strata. The ``trivial'' expansion is $Y$ minus all closed codimension $2$ strata. See~\cite[Section~1]{MR23}.

\begin{definition}
    The {\it expansion} $Y_\Gamma$ associated to a $1$-dimensional polyhedral complex $\Gamma\subset\Sigma$ is the special fiber of $\mathcal Y_\Gamma$. 
\end{definition}

The irreducible components of $Y_\Gamma$ are in bijection with the vertices of $\Gamma$, and the components of the double locus are in bijection with the edges of $\Gamma$.

\subsubsection{The target at a tropical point} Given an integral point $w$ on $\Sigma$, there is an associated degeneration of $Y$. Take the fan $\Sigma_w\to \Sigma\times\mathbb R_{\geq 0}$ by adding a ray $\rho_w$ through $(w,1)$. This gives a degeneration, the {\it $w$-degeneration to the normal cone}:
\[
\mathcal Y_w\to \A^1.
\]
The degeneration has a logarithmic structure. The exceptional component $Y_w$ inherits a logarithmic structure: one divisor from the intersection of $Y_w$ with the singular locus of the special fiber, and the remaining divisor comes from the intersection of $Y_w$ with the closure of $\partial Y\times (\A^1\setminus \{0\})$. 

If $w$ is in a cone $\sigma$, with locally closed stratum $S^\circ$ with closure $S$, the exceptional component above is an equivariant compactification of the normal torus torsor of $S\subset Y$. The torsors are isomorphic for different $w$ in the interior of $\sigma$. The compactifications are in the same log birational class.

\begin{definition}
For an integral point $w$ in $\Sigma$, a {\it target determined by $w$} is any element in in the logarithmic birational class of the exceptional component in the $w$-degeneration to the normal cone. 
\end{definition}

\subsection{Stable maps and stable pairs} We recall the logarithmic GW/PT correspondence~\cite{AC11,GS13,MR23,R19}. The key construction in~\cite{MR20} is a {\it stack of expansions} $\mathsf{Exp}(Y|\partial Y)$ fitting into a diagram
\[
\begin{tikzcd}
    \mathcal Y\arrow{d} \arrow{r} & Y\\
    \mathsf{Exp}(Y|\partial Y).
\end{tikzcd}
\]
The map $\mathcal Y \to \mathsf{Exp}(Y|\partial Y)$ is a family of expansions of $Y$ along $\partial Y$. Each component $D$ in $\partial Y$ can be specialized to the fibers of $\mathcal Y$. If $\mathcal Y_S$ is the pullback of $\mathcal Y$ to a scheme $S$, denote the union of the specialized boundary divisors by $\partial \mathcal Y_S$. We use $\partial \cdot$ to denote the boundary, in this sense, of any expansion. 

The fibers of $\mathcal Y \to \mathsf{Exp}(Y|\partial Y)$ come with an additional decoration: certain components of each fiber, always $\mathbb{P}^1$-bundles, are designated {\it tube} components. For each expansion over a geometric point, we have a contraction of the tube components $\mathcal Y \to \overline{\mathcal Y}$. 

\begin{remark}[Non-uniqueness]\label{rem: non-uniqueness}
The stack $\mathsf{Exp}(Y|\partial Y)$ is the Artin fan of the cone space $T(Y|\partial Y)$, whose points correspond to $1$-dimensional rational polyhedral complexes in $\Sigma$. While this topological space is canonical, its cone decomposition is not; several distinguished smooth conical structures exist, each determining a stack of expansions, and the collection is closed under subdivision. The precise choice is irrelevant, so we just pick one, and refine it when necessary.
\end{remark}

A family of expansions of $(Y|\partial Y)$ over a scheme $S$ is the pullback of the universal expansion under a map to the stack $\mathsf{Exp}(Y|\partial Y)$. A family of \emph{stable maps to expansions} of $(Y|\partial Y)$ over $S$ consists of a family of expansions $\mathcal Y/S$ and a map from a, possibly disconnected, prestable nodal curve
\[
(C,p_1,\ldots,p_n) \to \mathcal Y \to Y
\]
to an expansion of $(Y|\partial Y)$, subject to the following conditions, over every geometric point:

\begin{enumerate}[(i)]
    \item The preimage of $\partial \mathcal Y$ on $C$ is contained in the set of marked points $\{p_i\}$.
    \item The preimage of the singular locus of $\mathcal Y$ is contained in the singular locus of $C$. 
    \item The tangency order of a node of $C$ with the singular locus, measured after normalization, is equal on the two preimages of the node, i.e. the map is {\it pre-deformable}.
    \item The restriction of the map to a connected component of the preimage of a tube component is a fully ramified Galois cover of a fiber of the $\mathbb P^1$-bundle by a rational curve, ramified only over the $0$ and $\infty$ sections.
    \item After contracting all tube components and their preimages, the induced map $\overline C \to \overline{\mathcal Y}$ has finite automorphism group over $Y$.
\end{enumerate}

In comparison with Li's relative stable maps~\cite{Li01}, the unusual condition is the one about tube components, which can be avoided when $\partial Y$ is smooth but not in general. 

A family of \emph{stable pairs on expansions} of $(Y|\partial Y)$ over $S$ consists of a family of expansions $\mathcal Y/S$ and a pair $(\mathcal E,s)$, where $\mathcal E$ is a pure sheaf of dimension $1$ and $s$ is a section with finite cokernel, subject to the following fiberwise conditions. We will denote the support of $\mathcal E$ by $Z$.

\begin{enumerate}[(i)]
    \item The cokernel of $s$ is supported away from $\partial \mathcal Y$ and from the singular locus of $\mathcal Y$.
    \item The pullback of each component of $\partial Y$ is a non-zero divisor on $Z$.  
    \item For any components $Y_1, Y_2 \subset \mathcal Y$ meeting along $D_{12}$, let $Z_i := Z|_{Y_i}$. Then $D_{12}$ restricts to a non-zero divisor on each $Z_i$, and the scheme-theoretic intersections $Z_1 \cap D_{12}$ and $Z_2 \cap D_{12}$ are equal, i.e., the stable pair is \emph{pre-deformable}.
    \item The restriction of the stable pair to the preimage of a tube component is invariant under the fiberwise $\mathbb G_m$-scaling action on the $\mathbb P^1$-bundle; that is, the pair is pulled back along $\mathcal Y\to\overline{\mathcal Y}$.
    \item The automorphism group of the induced stable pair on $\overline{\mathcal Y}$ from (iv), over $Y$, is finite.
\end{enumerate}

Again, when compared with the Li--Wu theory of relative stable pairs~\cite{LiWu15}, the tube condition is the only unfamiliar part. It is parallel to the GW side.

The discrete data of a stable map are the genus $g$, and the class $\beta$ of the pushforward of $[C]$ to $H_2(Y;\ZZ)$, and the number of marked points $k$ that do not map to $\partial Y$. The discrete data of a stable pair are the Euler characteristic $\chi$ and $\beta$, the pushforward of the support of $\mathcal E$ in $H_2(Y;\ZZ)$.  We have the following~\cite{MR20,R19}:

\begin{theorem}
    The moduli problem of stable maps to expansions of resp. stable pairs on expansions of $(Y|\partial Y)$ is representable by Deligne--Mumford stacks. Fixing discrete data $\beta$ and the genus $g$ resp. Euler characteristic $\chi$, and $k$, the moduli stacks are proper, and denoted $\mathsf{GW}_{\beta}(Y|\partial Y)_g$ resp. $\mathsf{PT}_{\beta}(Y|\partial Y)_\chi$. There are maps:
    \[
    \begin{tikzcd}
        \mathsf{GW}_{\beta,k}(Y|\partial Y)_g\arrow{dr} & &\mathsf{PT}_{\beta}(Y|\partial Y)_\chi\arrow{dl}\\
        &\mathsf{Exp}(Y|\partial Y),&
    \end{tikzcd}
    \]
    equipped with relative perfect obstruction theories. 
\end{theorem}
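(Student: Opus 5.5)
This theorem is recalled from~\cite{MR20,R19}. The proof runs in parallel on the two sides, and the common mechanism is the stack of expansions. I first fix a smooth cone structure on the cone space $T(Y|\partial Y)$ (Remark~\ref{rem: non-uniqueness}). I then form the universal expansion $\mathcal Y\to\mathsf{Exp}(Y|\partial Y)$, which is a logarithmically smooth family. Both moduli problems are then relative moduli problems over the Artin stack $\mathsf{Exp}(Y|\partial Y)$. Representability is checked fiberwise over this stack.

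\textbf{GW side.} Stable maps to $\mathcal Y$, with conditions (i)--(iii), form an open substack of logarithmic stable maps to the log smooth family $\mathcal Y/\mathsf{Exp}$. By~\cite{AC11,GS13}, this is algebraic and locally of finite type over $\mathsf{Exp}$. The tube condition (iv) is closed. Stability (v) bounds automorphisms after contracting tubes, which gives the Deligne--Mumford property.

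\textbf{PT side.} The relative Hilbert/Quot-type stack of stable pairs on fibers of $\mathcal Y$ gives the starting point. Predeformability and the transversality conditions (i)--(iii) are open conditions, in the manner of Li--Wu~\cite{LiWu15}. $\mathbb G_m$-invariance on tubes is closed, and finiteness of automorphisms gives the DM property.

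\textbf{Obstruction theories.} On the GW side, $R\pi_*f^*T^{\log}_{\mathcal Y/\mathsf{Exp}}$ gives the standard relative obstruction theory. On the PT side, I use $R\mathcal Hom_\pi(I,I)_0$ for the universal complex $I=[\mathcal O\to\mathcal E]$. This complex is perfect in amplitude $[-1,0]$ relative to $\mathsf{Exp}$ because $\mathcal Y\to\mathsf{Exp}$ is log smooth and the pairs are predeformable, i.e.\ transverse to the log strata. This transversality is what makes the Li--Wu argument work in the snc setting.

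\textbf{Properness.} This is the main obstacle. Boundedness follows because the fixed class $\beta$ bounds the tropical data: only finitely many cones of $T(Y|\partial Y)$ can support a stable object, so the image in $\mathsf{Exp}$ lies in a quasi-compact open substack. Finiteness of $\mathsf{Exp}\to$ (point) is not available, so I would argue directly with the valuative criterion.

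Given a family over a punctured trait, I would take the limit in $Y$ itself, via the Kontsevich space or the Chow/Hilbert limit. I would then read off the tropicalization of the generic fiber as a $1$-dimensional polyhedral complex in $\Sigma$. After a base change, I would subdivide to choose the expansion $Y_\Gamma$ dictated by this tropical curve, so that the limit becomes transverse to all strata. This is tropical transversality from~\cite{MR20}.

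Two points need care. The first is that the cone decomposition of $T(Y|\partial Y)$ may force extra components, and these are exactly the tubes. Condition (iv) and the stability (v) make the extra data unique, which gives separatedness. The second is that, on the sheaf side, the limit must be made pure and transverse at once. To handle this, I would take the flat limit of the ideal on the expanded degeneration, saturate it to kill zero-dimensional embedded components at the strata, and check that the transversality is preserved.

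Uniqueness of the limit then reduces to uniqueness of the tropical limit point in $T(Y|\partial Y)$, together with the standard separatedness of stable maps and pairs on a fixed expansion.
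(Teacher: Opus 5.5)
The paper gives no argument here: the statement is quoted from \cite{MR20} (pairs) and \cite{R19} (maps). Your outline of algebraicity and of the obstruction theories is consistent with those sources. However, the properness argument, which you rightly call the heart of the matter, has genuine gaps.

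First, your existence step starts from ``the limit in $Y$'' and ``the tropicalization of the generic fiber as a $1$-complex in $\Sigma$''. This presupposes that the generic point of the trait lies over the open stratum of $\mathsf{Exp}(Y|\partial Y)$. These moduli spaces are typically reducible, and the locus of objects on unexpanded targets need not be dense, so you cannot restrict the valuative criterion to it. You must also extend $K$-points that already lie over a positive-dimensional cone $\tau$. That requires tropicalizing the object on each component of $\mathcal Y_K$, in the star of its vertex, and showing that these local pieces assemble into a point of a cone having $\tau$ as a face. Your sketch does not do this.

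Second, the expansion is not determined by the tropicalization of the support curve, and the ``saturation'' fix cannot work. Limits of stable pairs on a fixed projective family over the trait are unique. So if the cokernel of $s$ (or, for ideal sheaves, an embedded point) specializes into $\partial\mathcal Y$ or the double locus, nothing can be saturated away without breaking flatness or changing $\chi$. The only freedom is the choice of expansion: you must add non-tube components with $\beta_V=0$ at the tropical positions of those points.

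On the GW side the same holds for internal markings and contracted components. The expansion must therefore be read off the parametrized tropical curve, not its image. You must also insert trivial cylinders into the domain over tube components to achieve (iv). Your claim that (iv) is a closed condition needs proof, since a tube of the special fiber need not specialize from a tube of the generic fiber.

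For comparison, \cite{R19} avoids a direct valuative argument on the GW side. It realizes the space of maps to expansions as a logarithmic modification of the Abramovich--Chen--Gross--Siebert space \cite{AC11,GS13}. This modification is obtained by subdividing the tropical moduli compatibly with the chosen cone structure on $T(Y|\partial Y)$, so properness and the Deligne--Mumford property are inherited. Your PT strategy (valuative criterion via tropical transversality, with tubes giving uniqueness) is closer to \cite{MR20}, but it needs both inputs above.
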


We will equip both spaces with the pullback logarithmic structure from $\mathsf{Exp}(Y|\partial Y)$\footnote{This is a smaller logarithmic structure on the GW side than is usually used. It is better suited for the correspondence.}.

\subsubsection{GW evaluations} The space $\mathsf{GW}_{\beta,k}(Y|\partial Y)_g$ decomposes by contact orders. A component $D_i$ of $\partial Y$ can be specialized to the universal expansion $\mathcal Y$. Denote it $\mathcal D_i$. The {\it contact order}, i.e. the order of vanishing along $\mathcal D_i$ of the stable map at $p_j$ , is denoted $c_{ij}$. Contact orders are locally constant.

\begin{remark}
    An expansion has no strata of codimension $2$, so we implicitly impose: {\it for each $j$, the contact order $c_{ij}$ is nonzero for at most one $i$.}  The condition always holds after subdivision and this does not affect the virtual class~\cite{AW}. 
\end{remark}

The numbers $c_{ij}$ determine an ordered partition of $\beta\cdot D_i$:
\[
\mu_i:  \sum_j c_{ij} = \beta\cdot D_i,
\]
ignoring the $c_{ij}$ that are $0$. If $\partial Y$ has components $D_1,\ldots, D_r$, we have a vector of partitions
\[
\bm \mu  = (\mu_1,\ldots,\mu_r).
\]
Let $k$ be the number of ``internal'', i.e. tangency $0$ markings. There is a decomposition
\[
\mathsf{GW}_{\beta,k}(Y|\partial Y)_g = \bigcup_{\bm \mu} \mathsf{GW}^{\bm \mu}_{\beta,k}(Y|\partial Y)_g.
\]
Evaluations are given by composing the marked sections with the universal map. A zero contact order marking evaluates to $Y$. If $c_{ij}$ is nonzero, the evaluation factors through $D_i$. Together, we have:
\[
\mathsf{ev}\colon\mathsf{GW}^{\bm \mu}_{\beta,k}(Y|\partial Y)_g\to Y^k\times \mathsf{Ev}^{\bm \mu},
\]
where $k$ is the number of internal marked points. The {\it logarithmic cohomology} of $Y^k\times\mathsf{Ev}^{\bm \mu}$ is the colimit
\[
H^\star_{\sf log}(Y^k\times\mathsf{Ev}^{\bm \mu})= \varinjlim H^\star(\widetilde{Y^k\times\mathsf{Ev}^{\bm \mu}})
\]
taken over subdivisions, with transitions given by pullback. The ordinary cohomology of $Y^k\times\mathsf{Ev}^{\bm \mu}$ is a subalgebra of this direct limit -- its elements are {\it strongly non-exotic} classes. A {\it non-exotic} class comes from the cohomology of a product of blowups of the evaluation factors. The {\it logarithmic Borel-Moore homology} is
\[
H_\star^{\sf log, \mathsf{BM}}(\mathsf{GW}^{\bm \mu}_{\beta,k}(Y|\partial Y)_g;\mathbb Q)= \varprojlim H_\star^{\mathsf{BM}}(\widetilde{\mathsf{GW}^{\bm \mu}_{\beta,k}(Y|\partial Y)_g};\QQ),
\]
with the limit taken over subdivisions and proper pushforward. The virtual class $[\mathsf{GW}^{\bm \mu}_{\beta,k}(Y|\partial Y)_g]^{\sf vir}$ lifts to the logarithmic Borel-Moore homology. For each class $\upalpha$ in $H^\star_{\sf log}(Y^k\times\mathsf{Ev}^{\bm \mu})$ we pullback and integrate to get:
\[
\langle \upalpha |\bm\mu\rangle  = \int_{[\mathsf{GW}^{\bm \mu}_{\beta,k}(Y|\partial Y)_g]^{\sf vir}} \mathsf{ev}^\star(\upalpha) \in \QQ.
\]
We often work with the subspace $H^\star_{\sf log}(Y^k) \otimes H^\star_{\sf log}(\mathsf{Ev}^{\bm \mu})\subset H^\star_{\sf log}(Y^k\times\mathsf{Ev}^{\bm \mu})$ and use notation
\[
\langle \zeta\otimes\delta |\bm\mu\rangle  = \int_{[\mathsf{GW}^{\bm \mu}_{\beta,k}(Y|\partial Y)_g]^{\sf vir}} \mathsf{ev}^\star(\zeta\otimes \delta) \in \QQ.
\]
The evaluation space is insensitive to the genus parameter, so we obtain:
\[
Z_{\mathsf{GW}}\left(Y|\partial Y;u|\zeta\otimes\delta|\bm\mu  \right)_\beta = \sum_g \langle \zeta\otimes \delta\ |\bm\mu\rangle^{\sf GW}_{\beta,g} \cdot u^{2g-2}.
\]

\subsubsection{PT evaluations} For a stable pair $(\mathcal E,s)$ on $\mathcal Y$ and $D_i\subset \partial Y$, restriction to $\mathcal D_i$ gives a map
\[
\mathsf{PT}_{\beta}(Y|\partial Y)_\chi\to  \prod_{i = 1}^r\mathsf{Hilb}^{n_i}(D_i|\partial D_i),
\]
where $n_i$ is equal to $\beta\cdot D_i$. The virtual class of the PT space lifts to logarithmic homology, so given any cohomology class in the target space above we obtain an invariant by integration. 

We say a few words about the cohomology of $\prod_{i = 1}^r\mathsf{Hilb}^{n_i}(D_i|\partial D_i)$. In~\cite[Section~4]{MR23} we construct, for each vector of partitions $\bm \mu$, an injection
\[
H^\star_{\sf log}(\mathsf{Ev}^{\bm\mu})^{\mathsf{Aut}(\bm\mu)}\hookrightarrow H^\star_{\sf log}\left( \prod_{i = 1}^r\mathsf{Hilb}^{n_i}(D_i|\partial D_i)\right)
\]
generalizing the Nakajima correspondence~\cite{Groj96,Nak97}. For each $\zeta\otimes\delta$ in $H^\star_{\sf log}(Y^k) \otimes H^\star_{\sf log}(\mathsf{Ev}^{\bm \mu})^{\mathsf{Aut}(\bm\mu)}$ we have PT invariants $\langle \zeta\otimes \delta|\bm\mu\rangle^{\sf PT}_{\beta,\chi}$.\footnote{The insertions from the first factor $H^\star_{\sf log}(Y^k)$ are usually applied by slant product~\cite[Section~5.4]{MR20}. We rework this definition in the next section and so do need the details here.} Summing over the Euler characteristic, we obtain:
\[
Z_{\sf PT}(Y|\partial Y;q|\zeta\otimes\delta|\bm\mu(\delta))_\beta = \sum_\chi \langle \zeta\otimes \delta|\bm\mu\rangle^{\sf PT}_{\beta,\chi} \cdot q^\chi.
\]
In Section~\ref{sec: Negut-spaces}, we modify the definitions so one can more naturally pull back logarithmic cohomology classes on $Y^k\times\mathsf{Ev}^{\bm\mu}$. 

\subsection{Equivariant invariants} Since $Y$ will be toric and $\partial Y$ torus invariant, the GW/PT invariants lift to equivariant cohomology. The dense torus $A$ acts on this pair, so we have $A$-actions
\[
A\curvearrowright \mathsf{GW}^{\bm\mu}_{\beta,k}(Y|\partial Y)_g, \ \ \mathsf{PT}_\beta(Y|\partial Y)_\chi.
\]
The associated maps
\[
\mathsf{GW}^{\bm\mu}_{\beta,k}(Y|\partial Y)_g, \ \ \mathsf{PT}_\beta(Y|\partial Y)_\chi\to \mathsf{Exp}(Y|\partial Y)
\]
are equivariant with respect to the trivial $A$-action on the stack of expansions. The virtual fundamental classes lift to equivariant Borel--Moore homology. 

The divisors in $\partial Y$ are preserved by $A$, there is an $A$-action on the evaluation spaces, and the evaluation diagrams are all $A$-equivariant. Note also that all subdivisions and the blowups they induce happen at $A$-equivariant centers. By working in equivariant cohomology, we get GW/PT series by the same definitions as above:
\[
Z_{\sf GW} \in H_A^\star(\mathsf{pt};\QQ)(\!(u)\!), \ \ \  Z_{\sf PT}\in H_A^\star(\mathsf{pt};\QQ)(\!(q)\!).
\]

\begin{remark}
Defining equivariant invariants poses no difficulty, but we will not use a virtual localization theorem as in~\cite{GP99}. Such a formula is currently unavailable, but is also not of use for us: the terms of the localization formula are typically not compatible with the correspondence.
\end{remark}

\subsection{Logarithmic GW/PT for primary insertions} Let $d_\beta$ be the anti-canonical degree of $\beta$ on $Y$, let  $\mu_j$ be the entries in the vector of partitions $\bm\mu$. The length of a partition is $\ell(\mu_j)$ and the sum of the entries is $|\mu_j|$. The following is~\cite[Conjecture~A]{MR23}.

\begin{conjecture}[GW/PT correspondence]\label{conj: gw/pt}
Fix a curve class $\beta$, partitions $\bm\mu$, and an insertion $\upalpha$.

    \begin{enumerate}[(i)]
\item The PT series $Z_{\sf PT}(Y|\partial Y;q|\upalpha|\bm\mu)$ is the Laurent expansion of a rational function.
\item Under the change of variables $q = -e^{iu}$, there is an equality of generating functions:
\[
(-q)^{-d_\beta/2}\cdot Z_{\sf PT}(Y|\partial Y;q|\upalpha|\bm\mu)_\beta = (-iu)^{d_\beta+\sum \ell(\mu_j)-|\mu_j|} \cdot Z_{\sf GW}(Y|\partial Y;u|\upalpha|\bm\mu)_\beta.
\]
    \end{enumerate}
\end{conjecture}

The starting point for our proof in the toric case is~\cite[Theorem~C]{MR23}, which we recall.

Let $\mathcal X\to B$ be an snc degeneration, possibly with horizontal divisors. We say the GW/PT correspondence holds for a stratum $S$ if it holds for the component obtained by blowing up $S$ and base changing to make it reduced. 

We have boundary evaluations, from the product over the divisors $D_1,\ldots,D_r$, of the logarithmic Hilbert schemes of these $D_i$. We refer to elements of the usual cohomology of $\prod_{i = 1}^r\mathsf{Hilb}^{n_i}(D_i|\partial D_i)$, viewed inside the logarithmic cohomology, as {\it non-exotic} insertions. 

\begin{theorem}\label{thm: deg-compatibility}
    Let $\mathcal X \to B$ be a projective snc degeneration, possibly with horizontal divisors, over a smooth curve with special point $0$. If the logarithmic GW/PT correspondence holds for all strata of the special fiber, then it holds on the general fiber for non-exotic insertions. 
\end{theorem}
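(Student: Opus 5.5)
This statement is \cite[Theorem~C]{MR23}, and the plan is to rerun that argument, which rests on the logarithmic degeneration formula on both sides. First, deformation invariance of the virtual classes, which are pulled back from $\mathsf{Exp}$ over the base $B$, identifies the invariants of the general fiber $\mathcal X_t$ with the invariants of the special fiber $\mathcal X_0$, computed on the moduli space of maps or pairs to expansions of the degeneration. After a log modification of the base (a subdivision of $\Sigma_{\mathcal X}$), the special fiber moduli space decomposes by \emph{rigid tropical types}, i.e.\ $1$-dimensional polyhedral complexes $\Gamma$ mapping to the tropicalization of $\mathcal X$ over the point $1\in\RR_{\geq 0}$.

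The degeneration formula of~\cite{MR23} writes the virtual class of $\mathcal X_0$ as a sum over such $\Gamma$, weighted by multiplicities (lattice indices on the GW side). Each term is obtained by gluing virtual classes of the vertex moduli spaces, which are GW resp.\ PT spaces of the targets determined by the vertices, i.e.\ blowups of strata $S$ of $\mathcal X_0$ made reduced. The gluing uses the diagonal of the evaluation spaces along the edges.

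The central step is matching the two formulas term by term. I will show the following:
\begin{enumerate}[(i)]
\item The indexing sets of tropical types agree on both sides. On the PT side, the discrete data are the Euler characteristics and the boundary Hilbert scheme data, and these yield the same graphs.
\item The gluing along edges is expressed through a Künneth decomposition of the diagonal class in $H^\star_{\sf log}(\mathsf{Ev})$, transported to the Hilbert scheme side via the Nakajima-type injection of~\cite[Section~4]{MR23}.
\item The normalization factors $(-q)^{-d_\beta/2}$ and $(-iu)^{d_\beta+\sum\ell-|\mu|}$, together with the gluing multiplicities and the $\mathsf{Aut}(\bm\mu)$ factors, combine multiplicatively over vertices and edges.
\end{enumerate}
For (iii) I use that $d_\beta$ is additive over vertices once boundary contributions are counted, and that each glued edge contributes a factor matching the $\ell-|\mu|$ correction and the edge weights $\prod c_e$ on both sides. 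Summing over genus and Euler characteristic and imposing $-q=e^{iu}$ then turns the correspondence for each vertex, which holds by hypothesis, into the correspondence for each $\Gamma$-term. Summing over $\Gamma$ gives the result for $\mathcal X_t$. Rationality in (i) of Conjecture~\ref{conj: gw/pt} follows because the sum over $\Gamma$ is finite.

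The main obstacle is the insertions. A non-exotic insertion on the general fiber specializes to a class on the special fiber. After splitting the diagonal and restricting to vertex evaluation spaces, however, the resulting classes generally lie in the logarithmic, possibly exotic, cohomology of the vertex boundary evaluations; this is why the conclusion is restricted to non-exotic insertions. To control this, I will choose the Künneth splitting of the diagonal in $H^\star_{\sf log}$ of the edge evaluation spaces. I then need the vertex correspondence to hold for every class occurring in that splitting. The first approach is to check that the hypothesis ``GW/PT holds for the stratum,'' interpreted as in~\cite{MR23}, covers all logarithmic insertions on the stratum targets. Alternatively, after a further subdivision the diagonal classes become non-exotic. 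Compatibility of the PT gluing with this decomposition requires the log Nakajima basis to be orthogonal with respect to the diagonal, and I expect this to be the delicate point.
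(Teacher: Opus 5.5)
Your plan is essentially the paper's route. The statement is deduced from the parallel GW and PT degeneration formulas of Theorem~\ref{thm: deg-formula}, which share the same sums over rigid $\Gamma$, over $\bm\mu$, over distributions of insertions, and over the K\"unneth components $\delta_V^{(j)}$ of the strict diagonal, so no separate Nakajima orthogonality check is needed once that formula is cited; their prefactors $(-1)^{\bm\mu}q^{-|\bm\mu|}$ and $u^{2\ell(\bm\mu)}$ then match under the normalizations exactly as in your step (iii), using additivity of the \emph{log} anticanonical degree.

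One correction: the exotic classes $\delta_V^{(j)}$ are handled by your first option, since the vertex hypothesis is the conjecture for all logarithmic insertions, whereas further subdivision does not make them non-exotic; likewise, the restriction to non-exotic insertions is needed on the general fiber so that those insertions split across the degeneration, not because of the vertex classes.
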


\subsection{Combinatorial structures}\label{combinatorial} Recall that a $1$-complex in $\Sigma$ is a $1$-dimensional rational polyhedral subcomplex embedded in $\Sigma$~\cite{MR20}. It is {\it asymptotic to $\Sigma$} if every unbounded ray in $\Gamma$ is parallel to a ray of $\Sigma$. 

\begin{definition}
    A {\it Chow $1$-complex} in $\Sigma$ is a $1$-complex $\Gamma$ that is asymptotic to $\Sigma$, together with (i) a positive integer $n_E$ assigned to each edge $E$ of $\Gamma$, and (ii) an effective curve class $\beta_V$ in $H_2(S_V;\mathbb{Z})$, where $S_V$ is the stratum of $(Y|\partial Y)$ dual to the cone containing $V$. The {\it curve class} of a Chow $1$-complex is the sum of the pushforwards to $Y$ of its vertex decorations.

A vertex $V$ of a Chow $1$-complex is {\it unstable} if $\beta_V$ is equal to $0$, $V$ is $2$-valent, and $V$ together with the tangent directions at $V$ lie in the relative interior of the same cone. A Chow $1$-complex is {\it stable} if it contains no unstable vertices.
\end{definition}

Chow and stable Chow $1$-complexes with curve class $\beta$ are parameterized by topological spaces underlying cone complexes. These are denoted $\mathcal T^\bullet_\beta(Y|\partial Y)$ for the Chow $1$-complex and $T^\bullet_\beta(Y|\partial Y)$ for the stable ones. The cone complex structure is not unique, in line with Remark~\ref{rem: non-uniqueness}. 

An unstable vertex can be ``erased'', by definition, so we have a natural map $\mathcal T^\bullet_\beta(Y|\partial Y)\to T^\bullet_\beta(Y|\partial Y)$. We can choose cone complex structures such that there is a retraction:
\[
T^\bullet_\beta(Y|\partial Y)\hookrightarrow \mathcal T^\bullet_\beta(Y|\partial Y)\to T^\bullet_\beta(Y|\partial Y),
\]
and furthermore, the second arrow is ``combinatorially flat'' -- it maps every cone of the domain surjectively onto a cone of the target. These results are proved in~\cite{MR23}. 

If $D \subset \partial Y$ is a divisor in the boundary, there is an analogous notion of a Chow $0$-complex: a finite collection of points in the fan $\Sigma_{D|\partial D}$, labeled by multiplicities. If the divisors are $D_1,\ldots, D_r$, and $\bm n$ is the vector $(\beta \cdot D_i)_i$, we can consider the moduli space of Chow $0$-complexes simultaneously on the cone complexes of all $D_i$. These are denoted $P^\bullet_{\bm n}(Y|\partial Y)$, or $P^\bullet_{\bm n}$ when the rest is clear from context. 

We define a map
\[
T^\bullet_\beta(Y|\partial Y) \to P^\bullet_{\bm n}(Y|\partial Y)
\]
as follows: for a $1$-complex in $\Sigma$, the unbounded rays in the direction of a ray $\rho$ of $\Sigma$ determine a $0$-complex in the star fan of $\rho$. The collection of $0$-complexes gives the map. See Figure~\ref{fig: evaluation-trop} below.

\begin{figure}[h!]
    \begin{tikzpicture}[scale=3]

\definecolor{softlavender}{RGB}{223,210,255}

\fill[softlavender, opacity=0.4] (3.8,16.0) rectangle (5.6,17.7);

\draw[violet]   
(4.4991,16.5976) -- (4.8617,16.5976) -- (4.8617,16.9189) --
(4.6887,17.092) -- (4.3261,17.092) -- (4.3261,16.7706) -- cycle;

\draw[violet] (4.8617,16.5976) -- (5.2298,16.2859);
\draw[violet] (4.8617,16.9189) -- (5.1969,16.9272);
\draw[violet] (4.4971,16.3217) -- (4.4948,16.5976);
\draw[violet] (4.4971,16.3217) -- (4.8652,16.01);
\draw[violet] (4.4971,16.3217) -- (3.991,16.3289);
\draw[violet] (5.1969,16.9272) -- (5.1969,17.59);
\draw[violet] (5.1969,16.9272) -- (5.565,16.6155);
\draw[violet] (4.6897,17.5685) -- (4.6887,17.092);
\draw[violet] (4.3261,16.7706) -- (3.82,16.7778);
\draw[violet] (4.0589,17.3213) -- (4.3261,17.092);
\draw[violet] (4.0589,17.3213) -- (3.8299,17.3249);
\draw[violet] (4.0599,17.5757) -- (4.0589,17.3213);

\begin{scope}[xshift=2.4cm]

\draw[violet] (3.9,16.35) -- (3.9,17.55);
\draw[violet] (3.95,17.72) -- (5.05,17.72);
\draw[violet] (4.05,16.35) -- (5.15,17.45);

\shade[ball color=violet] (3.9,16.50) circle (0.6pt);
\shade[ball color=violet] (3.9,16.95) circle (0.6pt);
\shade[ball color=violet] (3.9,17.35) circle (0.6pt);

\shade[ball color=violet] (4.25,17.72) circle (0.6pt);
\shade[ball color=violet] (4.60,17.72) circle (0.6pt);
\shade[ball color=violet] (4.90,17.72) circle (0.6pt);

\shade[ball color=violet] (4.30,16.60) circle (0.6pt);
\shade[ball color=violet] (4.60,16.90) circle (0.6pt);
\shade[ball color=violet] (4.90,17.20) circle (0.6pt);

\end{scope}

\end{tikzpicture}
\caption{The map sending a $1$-complex to the corresponding configuration of points on the ends by taking asymptotics}\label{fig: evaluation-trop}.
\end{figure}
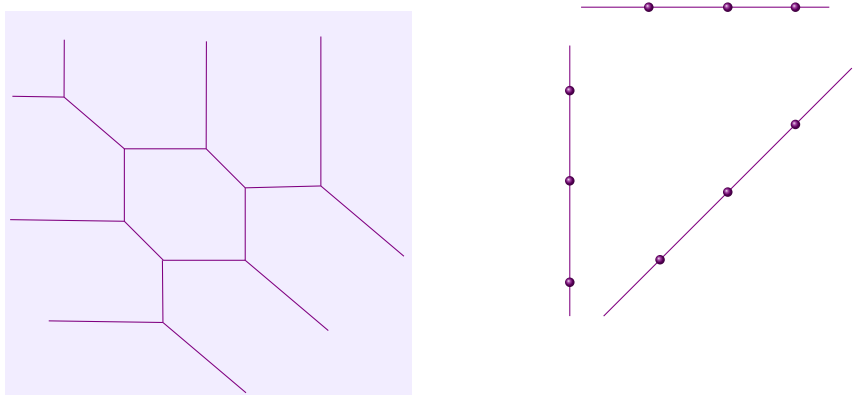

\subsubsection{Variations} We consider some variations on these tropical spaces, constructed similarly.

\begin{enumerate}[(i)]
\item {\it Balancing.} One can impose a balancing condition. This is a local condition at every vertex. We will later use a version of this: we equip $\Sigma$ with a map $\Sigma \to \mathbb{R}^s$ and require that at every vertex, the sum of outgoing directions weighted by the edge multiplicities is $0$. This picks out a subcomplex of the space of (stable) Chow $1$-complexes. Since it is a passage to a subcomplex, we will not change notation when we use the balancing condition, but simply alert the reader.

\item {\it Internal markings.} Fix a positive integer $k$ and partition the set $[k]$ among the vertices of a Chow $1$-complex. Under edge contractions, we take the union of these subsets. An unstable vertex is required to have no elements of $[k]$. Denote these spaces $\mathcal T^\bullet_{\beta,k}(Y|\partial Y)$, $T^\bullet_{\beta,k}(Y|\partial Y)$, etc.

\item {\it Labeled ends.} Given a Chow $1$-complex, the number of rays parallel to each ray in $\Sigma$, counted with the multiplicity labeling, is constant and gives an unordered partition in each direction. We pass to the moduli space that orders this partition. We denote it $\mathcal T_\beta(Y|\partial Y)$, $\mathcal T_{\beta,k}(Y|\partial Y)$, etc.

\item {\it Labeled points.} We can have ordered or unordered $0$-complexes, denoted respectively $P_{\bm n}$ and $P^\bullet_{\bm n}$. 

\item {\it Fixed partition.} Given a Chow $1$-complex and a ray of $\Sigma$, the multiplicities on the ends give a partition $\mu_i$. These partitions have specialization relations: a partition is specialized by combining some of the parts. If we fix a vector of partitions $\bm \mu$, we can consider the subcomplex of $\mathcal T_\beta(Y|\partial Y)$ whose partition in the $i^{\text{th}}$ direction is a specialization of $\mu_i$. We denote these by $\mathcal T^{\bm \mu}_\beta(Y|\partial Y)$ and the stable version by $T^{\bm \mu}_\beta(Y|\partial Y)$. We can also do this without labeling and for $0$-complexes. 
\end{enumerate}

A cone complex $\Sigma$ determines an {\it Artin fan} denoted $\mathsf a\Sigma$. This is the colimit of Artin stacks $[U_\sigma/T_\sigma]$, where $U_\sigma$ is the $T_\sigma$-toric variety of $\sigma$, over faces $\sigma$ in $\Sigma$, see~\cite{CCUW}. There are maps:
\begin{equation}
\begin{tikzcd}\label{eqn: basic-diagram}
    \coprod_{\bm \mu} \mathsf{GW}^{\bm \mu}_{\beta,k}(Y|\partial Y)_g \arrow{dr} & &\mathsf{PT}_{\beta}(Y|\partial Y)_\chi\arrow{dl}\\
        &\mathsf a T^\bullet_\beta(Y|\partial Y).&
\end{tikzcd}
\end{equation}
Similarly, we have:
\[
\begin{tikzcd}
\coprod_{\bm \mu} \mathsf{Ev}^{\bm \mu} \arrow{dr} & &\prod_{i = 1}^r\mathsf{Hilb}^{n_i}(D_i|\partial D_i) \arrow{dl}\\
        &\mathsf aP^\bullet_{\bm n}.&
\end{tikzcd}
\]
These fit together with the evaluation structures, to give a commutative diagram:
\[
\begin{tikzcd}
    \mathsf{GW}^{\bm \mu}_{\beta,g}(Y|\partial Y)\arrow{d}\arrow{r} & \mathsf{Ev}^{\bm \mu}\arrow{d}\\
    \mathsf a T^\bullet_\beta(Y|\partial Y)\arrow{r}& \mathsf a P^\bullet_{\bm n}.
\end{tikzcd}
\]

\subsection{Degeneration formulas} Theorem~\ref{thm: deg-compatibility} is a consequence of parallel degeneration formulas for GW and PT theory, which we recall~\cite{MR23}.

\subsubsection{Terminology and setup} Let $\mathcal X \to B$ be a projective snc degeneration.

\begin{enumerate}[(i)]
    \item The dual polyhedral complex of $\mathcal X_0$ is denoted $\Delta(\mathcal X_0)$, or simply $\Delta$. The space of Chow $1$-complexes in $\Delta$ with curve class $\beta$, denoted $T^\bullet_\beta(\Delta)$, can be given the structure of a polyhedral complex. This structure is not unique, but we simply pick one. 

    \item A Chow $1$-complex $\Gamma$ in $\Delta$ is {\it rigid} if it corresponds to a vertex of $\mathcal T^\bullet_\beta(\Delta)$. Note that this depends on the choice of polyhedral structure. 

    \item For each rigid $1$-complex $\Gamma$, and each vertex $V$ of $\Gamma$, there is a target $(Y_V|\partial Y_V)$ associated to $V$. 

    \item For each vertex and partition of the edge multiplicities, there is an evaluation
    \[
    T^{\bm \mu}_\beta(Y_V|\partial Y_V) \to P^{\bm \mu}_{\bm n}(Y_V|\partial Y_V).
    \]
    We assume cone structures have been chosen so that these maps are combinatorially flat for all vertices of $\Gamma$. We get distinguished birational models of the evaluation space $\mathsf{Ev}^{\bm\mu(V)}$.

    \item Fix, for every edge in $\Gamma$, a partition of its multiplicity. We have a product space
    \[
    \prod_V \mathsf{Ev}^{\bm\mu(V)}(Y_V|\partial Y_V).
    \]
    Away from the boundary divisors, for each part in a partition of each edge, the corresponding divisor appears twice. There is a natural diagonal locus, and its closure is denoted $\mathbb D_{\bm \mu}$. This defines a cohomology class in the product, the \textit{strict diagonal}, and we write its K\"unneth decomposition as
    \[
    [\mathbb D_{\bm \mu}] = \sum_j \bigotimes_V \delta_V^{(j)} \in \bigotimes H^\star(\mathsf{Ev}^{\bm\mu(V)}(Y_V|\partial Y_V)).
    \]

    \item Let $S$ be a set of $k$ cohomology classes in $H^\star_{\sf log}(\mathcal X)$. We will be interested in partitions of $S$.
\end{enumerate}

We state the degeneration formulas in GW/PT theory in terms of generating functions~\cite[Section~8.5]{MR23}. Fix insertions for GW/PT invariants on the general fiber $\cX_\eta$ with non-exotic, non-vanishing insertions, i.e., classes pulled back from $\cX$. Denote the generating series by $Z_{\sf PT}(q)$ and $Z_{\sf GW}(u)$. We define multiplicity $m_{\bm \mu}$ to be the product over all partitions of the products of the parts of the partition.  Similarly, we introduce the sign $(-1)^{\bm \mu}$
to be the product over all partitions of $(-1)^{\ell(\mu_j) - |\mu_j|}$.

\begin{theorem}[GW and PT degeneration formulas]\label{thm: deg-formula}
    For the PT series, we have
    \[
\mathsf{Z}_{\mathsf{PT}}(q)
= 
\sum_{\Gamma} \sum_{\bm{\mu}} \sum_{S = \coprod S_V} \sum_{(j)} \frac{(-1)^{\bm{\mu}}m_{\bm{\mu}}}{\mathrm{Aut}(\bm{\mu})}\frac{1}{q^{|\bm{\mu}|}}
\prod_{V}  
\mathsf{Z}_{\mathsf{PT}}\left(Y_V|\partial Y_V;q |
\prod_{i \in S_V} \zeta_i\otimes \delta^{(j)}_{V}| \bm{\mu}_{V}
\right)_{\beta(V)}.
\]

For the GW series, we have
\[
\mathsf{Z}_{\mathsf{GW}}(u)
= 
\sum_{\Gamma} \sum_{\bm{\mu}} \sum_{S = \coprod S_V} \sum_{(j)} \frac{m_{\bm{\mu}}}{\mathrm{Aut}(\bm{\mu})}\cdot u^{2\ell(\bm{\mu})}
\prod_{V}  
\mathsf{Z}_{\mathsf{GW}}\left(Y_V|\partial Y_V;u |
\prod_{i \in S_V} \zeta_i\otimes \delta^{(j)}_{V}| \bm{\mu}_{V}
\right)_{\beta(V)}.
\] 
In both formulas, the sum over $\Gamma$ is over rigid $1$-complexes, the second is over partitions of the internal edge labels of $\Gamma$, the third sum is the distribution of the internal insertions among the vertices, and the final sum accounts for the K\"unneth factors of the strict diagonal. 
\end{theorem}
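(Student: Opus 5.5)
The plan is the standard three-step argument: deformation invariance, decomposition of the special fiber virtual class over rigid tropical types, and gluing at each vertex. After that I convert the result into generating series by tracking how the discrete invariants add.

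\emph{Step 1 (deformation invariance).} I would form the moduli spaces of stable maps, resp.\ stable pairs, to expansions of the whole family $\mathcal X\to B$. These carry perfect obstruction theories relative to the stack of expansions of $\mathcal X/B$. Since the insertions $\zeta_i$ are non-exotic and pulled back from $\mathcal X$, the invariants of the general fiber $\mathcal X_\eta$ are then computed by refined Gysin pullback along $0\to B$ of the family virtual class. So they equal the corresponding integrals over the special fiber moduli.

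\emph{Step 2 (decomposition over rigid $1$-complexes).} The family moduli maps to the Artin fan of $T^\bullet_\beta(\Delta)$, which lies over the tropicalization $\RR_{\ge 0}$ of $B$. After choosing a cone structure on $T^\bullet_\beta(\Delta)$ such that the map to $\RR_{\geq 0}$ and all vertex evaluation maps are combinatorially flat, the pullback of the point $0$ (equivalently, the height-one slice) is a sum over the vertices $\Gamma$ of this slice, i.e.\ rigid $1$-complexes. Each $\Gamma$ comes with a lattice-index multiplicity. This gives
\[
[\mathsf M_0]^{\sf vir}=\sum_\Gamma m_\Gamma\,[\mathsf M_\Gamma]^{\sf vir},
\]
where $\mathsf M_\Gamma$ is the closed substack of maps or pairs whose tropicalization specializes to $\Gamma$.

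\emph{Step 3 (gluing).} For each $\Gamma$ I would compare $\mathsf M_\Gamma$ with the fiber product of the vertex moduli spaces $\mathsf M(Y_V|\partial Y_V)$ over $\prod_V\mathsf{Ev}^{\bm\mu(V)}$ along the strict diagonal $\mathbb D_{\bm\mu}$.
\begin{enumerate}[(i)]
\item On the GW side, cut $C$ at the nodes over the double loci, recording the contact orders $\bm\mu$ of the edges. The obstruction theories are compatible via the standard node exact sequence. The factor $m_{\bm\mu}$ comes from the degree of the gluing map together with $m_\Gamma$, and $\mathrm{Aut}(\bm\mu)$ from ordering the parts.
\item On the PT side, pre-deformability identifies the restrictions to each double divisor as a point of $\mathsf{Hilb}$. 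The gluing therefore takes place over the Hilbert scheme diagonal. I would then express that diagonal in the Nakajima/Neguț basis indexed by $\bm\mu$; the sign $(-1)^{\bm\mu}$ and the factor $m_{\bm\mu}/\mathrm{Aut}(\bm\mu)$ come from the norm of that basis.
\end{enumerate}
Applying the Künneth decomposition $[\mathbb D_{\bm\mu}]=\sum_j\bigotimes_V\delta_V^{(j)}$ and distributing the $\zeta_i$ over the components ($S=\coprod S_V$) gives the products over $V$.

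\emph{Step 4 (generating series).} Gluing along $\ell(\bm\mu)$ nodes gives $2g-2=\sum_V(2g_V-2)+2\ell(\bm\mu)$, which produces the factor $u^{2\ell(\bm\mu)}$. Gluing sheaves along divisors gives $\chi=\sum_V\chi_V-|\bm\mu|$, which produces the factor $q^{-|\bm\mu|}$.

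The main obstacle is Step 3 on logarithmic birational models. The diagonal must be transverse, i.e.\ the strict diagonal must compute the Gysin pullback without excess. This is exactly why the cone structures are chosen so that $T^{\bm\mu}_\beta(Y_V|\partial Y_V)\to P^{\bm\mu}_{\bm n}(Y_V|\partial Y_V)$ is combinatorially flat. I would first establish this flatness, then deduce transversality by pulling back along the resulting flat morphism of Artin fans.
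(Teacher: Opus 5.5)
Your outline is correct and is the argument the paper relies on. The paper does not reprove the formula but quotes it from \cite[Section~8.5]{MR23}, where it is obtained as you propose: decompose the special-fiber virtual class over rigid $1$-complexes, glue along the strict diagonal after flattening the vertex evaluations, and pass to the logarithmic Nakajima basis on the PT side, with $u^{2\ell(\bm\mu)}$ and $q^{-|\bm\mu|}$ coming from additivity of genus and Euler characteristic.

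One point to sharpen on the PT constant. With unnormalized Nakajima correspondences the diagonal produces $(-1)^{\bm\mu}/(\mathrm{Aut}(\bm\mu)\, m_{\bm\mu})$. It is the $1/m_{\bm\mu}$ normalization of the PT boundary invariants, which enters at both endpoints of each edge, that turns this into the stated $(-1)^{\bm\mu}m_{\bm\mu}/\mathrm{Aut}(\bm\mu)$ and matches the GW gluing factor.
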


We have stated formulas for invariants with internal insertions in the general fiber $\cX_\eta$. If $\cX_\eta$ has boundary divisors, we can include boundary insertions in the formula, provided they insertions are non-exotic. This follows immediately from the cycle theoretic formula in~\cite[Section~8]{MR23}. 

\begin{remark}[Comparison with the traditional formula]
For readers familiar with the double point degeneration formula~\cite{Li02,LiWu15}, we highlight the new features. The first is (iii): the targets $Y_V$ may be projective bundles over strata of $\mathcal X_0$, not just irreducible components. 

The second difference occurs in (iv). For a smooth pair, no blowup is required, and the gluing condition is imposed by the diagonal in $\prod_E D_E^2$. In Step (iv), however, one performs a subdivision that flattens the tropical evaluation, with gluing imposed by the strict transform of the diagonal. After flattening, the evaluation space need not remain a product of divisors over edges. Instead, it can be described as a product of evaluation spaces taken over the \textit{vertices} of $\Gamma$. Each factor is the blowup of the product, over the edges incident to a given vertex, of the corresponding divisors.

The third difference is that the K\"unneth components of $\mathbb D_{\bm\mu}$ induce {\it exotic} insertions. These occur again because for each vertex, the evaluation space is the blowup of a product of divisors. This complicates iterative uses of the formula: if the general fiber carries exotic insertions, the special fiber may not admit a splitting. Iteration requires an exotic/non-exotic conversion rule, which we discuss later.
\end{remark}

\section{Fixed partition PT moduli spaces}\label{sec: Negut-spaces}

We run parallel arguments for stable maps and stable pairs. However, the evaluation spaces differ: on the GW side they are products of target strata and on the PT side we have Hilbert schemes of points for boundary insertions and the slant product for internal insertions. We modify the PT spaces so the evaluation maps match the GW side. For boundary evaluations, we use Neguț's work~\cite{Negut,Negut2} on flag Hilbert schemes; for internal evaluations, we use Oberdieck's {\it marked PT moduli}~\cite{Ob21}.

\subsection{Boundary evaluations and flag Hilbert schemes} Let $S$ be a smooth surface and let 
$$\mu = (\mu^{(1)}, \dots, \mu^{(\ell)})$$ 
be a partition of $d$, with $\mu^{(1)} \geq \mu^{(2)} \dots \geq \mu^{(\ell)}$. Let $Z \hookrightarrow S$ be a subscheme of length $d$. A {\it $\mu$-flag structure} on $Z$ is an increasing flag of subschemes
$$\emptyset = Z_0 \subset Z_1 \subset \cdots \subset Z_{d-1} \subset Z_d = Z$$
such that 
\begin{enumerate}[(i)]
\item for each $0\leq k\leq d$ the length of $Z_k$ is $k$ and
\item there exist points $p_1, \cdots, p_\ell \in S$ such that $\mathrm{supp}(\cO_{Z_{k}}/\cO_{Z_{k-1}}) = p_i$ when 
$\mu^{(1)} + \cdots +\mu^{(i-1)} + 1 \leq k \leq \mu^{(1)} + \cdots +\mu^{(i)}$.
\end{enumerate}

\begin{remark}
    Let $\mu$ be the partition $(1,1,\ldots,1)$. If $Z$ is a union of $d$ distinct points, then a $\mu$-flag structure is just an ordering of the points. In general, we obtain an ordering of the support, but also at fat points, we have a moduli space of one-step flags from the support to the full fat point. 
\end{remark}

We recall Neguț's result from in~\cite[Proposition~2.19]{Negut}. See also~\cite[Section~4]{Negut2}.

\begin{proposition}
The moduli problem of length $d$ subschemes of a smooth surface $S$ with $\mu$-flag structure is representable by a scheme $\mathsf{Hilb}_\mu(S)$. There is a map
\[
\rho\colon \mathsf{Hilb}_\mu(S) \rightarrow \mathsf{Hilb}_d(S) \times S^{\ell}
\]
which sends $(Z_i)_i$ to $(Z, p_1, \dots, p_\ell)$. The map $\rho$ carries a perfect obstruction theory of relative dimension $-d$. The moduli space therefore carries a virtual class $[\mathsf{Hilb}_\mu(S)]^{\sf vir}$ in degree $d+\ell$. 
\end{proposition}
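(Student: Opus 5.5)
Representability comes from realizing $\mathsf{Hilb}_\mu(S)$ as a closed subscheme of a product of Hilbert schemes. The perfect obstruction theory on $\rho$ will be built by iterating a single-step construction, one step for each box added to the flag.

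\textbf{Representability.} I first embed the flag data into
\[
\prod_{k=0}^{d}\mathsf{Hilb}_k(S)\times S^\ell .
\]
The nesting $Z_{k-1}\subset Z_k$ is a closed condition, since it says the ideal of $Z_k$ lies in the ideal of $Z_{k-1}$. It cuts out the full flag Hilbert scheme. The support condition requires $\mathcal O_{Z_k}/\mathcal O_{Z_{k-1}}$ to be the skyscraper at $p_i$ for $k$ in the $i$-th block. Over the one-step nested scheme $\mathsf{Hilb}_{k-1,k}(S)$ there is a canonical support map to $S$, and the condition is the equality of this map with the chosen projection to $p_i$. This is again closed. So $\mathsf{Hilb}_\mu(S)$ is a closed subscheme of a projective scheme. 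The map $\rho$ is the composite of the inclusion with the projection onto $\mathsf{Hilb}_d(S)\times S^\ell$.

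\textbf{Obstruction theory.} Following Neguț, I factor $\rho$ through the intermediate spaces $\mathsf{Hilb}_{\mu,\le k}$, consisting of partial flags $Z_0\subset\cdots\subset Z_k$ with compatible points. Each step $Z_{k-1}\rightsquigarrow Z_k$ adds one box at a fixed point $p$. Neguț presents the one-step nested Hilbert scheme $\mathsf{Hilb}_{k-1,k}(S)$ as the zero locus of a section of a vector bundle on a smooth ambient space. That ambient space is the projectivization of the universal sheaf, $\mathbb P(\mathcal I_{Z_{k-1}}|_p)$, fibered over $\mathsf{Hilb}_{k-1}\times S$ or its relative version. The section is built from $\mathrm{Hom}(\mathcal I_{Z_{k-1}},\mathcal O_p)$ and $\mathrm{Ext}$-terms. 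This presentation gives a two-term complex $[T_{\rm amb}\to E]$ that serves as a relative perfect obstruction theory for each step.

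I then compose the step obstruction theories using the standard compatibility for compositions of maps with perfect obstruction theories (Manolescu's functoriality, or Behrend–Fantechi). In the reverse direction, I keep the final $Z=Z_d$ and the points $p_i$ fixed and forget the intermediate flags. That is, I regard each step as adding a box relative to the data $(Z,p_\bullet)$.

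\textbf{Dimension count.} Each one-step addition has expected relative dimension $+1$ over the previous stage, coming from moving the added box. This is the virtual dimension of $\mathsf{Hilb}_{k-1,k}\to\mathsf{Hilb}_{k-1}$ with the point free. Whenever the point is constrained to equal an existing $p_i$, which happens for all but the first box of each block, the dimension drops by $2$. There are $d-\ell$ such constrained steps and $\ell$ free steps, each of which moves a new point. Starting from $\dim=0$, the total virtual dimension is
\[
\ell\cdot 2+(d-\ell)(1-2)+(\text{correction})=d+\ell .
\]
Here I use the equivalent count $2\ell$ for the free points plus $(d-\ell)\cdot 1$ for the fibers of the flags at fixed points. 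Compared with $\dim(\mathsf{Hilb}_d\times S^\ell)=2d+2\ell$, this gives relative dimension $(d+\ell)-(2d+2\ell)=-d-\ell$. That differs from the stated $-d$, so I would recheck the count. The likely fix is that $\rho$ should be viewed over $\mathsf{Hilb}_d(S)$ with $p_\bullet$ already determined as a set, making the points' contribution $0$. That gives relative dimension $-d$ and total degree $(2d)-d+\ell=d+\ell$ once the $\ell$ point-positions are counted.

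\textbf{Main obstacle.} The hardest step is showing that the obstruction theory is perfect, meaning amplitude $[-1,0]$, relative to $\rho$ rather than relative to the absolute flag Hilbert scheme. This requires an explicit description of the relative cotangent complex of the one-step nested Hilbert scheme as a cone of $\mathrm{RHom}$'s between the universal ideals. I would take it directly from Neguț's construction [Negut, Proposition 2.19] and track the resulting terms in the iterated composition.
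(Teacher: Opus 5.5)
Your representability argument is fine. The gap is in the obstruction theory. Iterating the one-step presentation gives the wrong virtual dimension, and your count reaches $d+\ell$ only through the unexplained ``correction''.

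\textbf{Where the count goes wrong.} The map $\mathsf{Hilb}_{k-1,k}(S)\to\mathsf{Hilb}_{k-1}(S)$ has relative dimension $2$, not $1$. When the new box is forced onto an existing $p_i$, the ambient space becomes $\mathbb P(V|_{p_i})$ over the previous stage. It is cut out by $(W|_{p_i})^\vee(1)$, where $W\to V$ resolves the universal ideal and $\mathrm{rk}\,V-\mathrm{rk}\,W=1$. So every constrained box has relative virtual dimension $0$, and the composite theory has virtual dimension $2\ell$.

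\textbf{Why this is a real failure.} Take $\mu=(2)$. Then $\mathsf{Hilb}_{(2)}(S)$ is the $\mathbb P^1$-bundle $\mathbb P(T_S)$, of dimension $3$. The naive theory is built from $Li_\Delta^\star\mathcal I_\Delta$, whose cohomology sheaves are $\Omega_S$ and $\mathcal{T}or_2^{S\times S}(\mathcal O_\Delta,\mathcal O_\Delta)\cong K_S$. It has virtual dimension $2$ and a nonzero obstruction line bundle. Its virtual class is therefore a $2$-cycle and cannot push forward to the $3$-dimensional class $[\widetilde\Gamma_{(2)}]$.

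\textbf{The missing idea.} The paper does not prove this; it imports it from Neguț. The two-step flag space with both boxes at the same point is smooth, of dimension one more than the naive count. Hence each further box at a fixed point contributes $+1$. Longer flags at a point are then derived fiber products of these two-step spaces over the smooth one-step nested Hilbert schemes. The blocks are glued by fiber products over the smooth $\mathsf{Hilb}_k(S)$. This gives $2$ for the first box of each block and $1$ for every other box, i.e.\ $d+\ell$.

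By contrast, perfectness relative to $\rho$ is not the real obstacle. The base is smooth, so a relative theory comes from an absolute one $E$ by taking the cone of $\rho^\star\Omega_{\mathsf{Hilb}_d(S)\times S^\ell}\to E$.

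\textbf{Relative dimension.} Your first computation was right; the proposed fix is not. A perfect obstruction theory of relative dimension $e$ over the smooth base $\mathsf{Hilb}_d(S)\times S^\ell$, which has dimension $2d+2\ell$, yields a class of dimension $2d+2\ell+e$. Degree $d+\ell$ forces $e=-(d+\ell)$. This degree is $\dim\widetilde\Gamma_\mu$, and it is the degree used in the next proposition, $\mathsf{CH}_{d+\ell}$. Viewing $\rho$ over $\mathsf{Hilb}_d(S)$ alone would give $\ell-d$. The expression $(2d)-d+\ell$ just adds the point contribution back by hand. The ``$-d$'' in the statement does not match the stated degree, so aim for $-(d+\ell)$ rather than reinterpreting $\rho$.
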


Recall that there is a correspondence 
\[
\widetilde\Gamma_{\mu}\subset \mathsf{Hilb}_d(S)\times S^\ell,
\]
whose action on cohomology identifies the partition $\mu$ subspace of $\mathsf{Hilb}_d(S)$ in the Grojnowski--Nakajima basis for the cohomology of the Hilbert scheme, see~\cite{Nak97}. Neguț's result is compatible with this correspondence~\cite[Theorem~6.19]{Maulik-Negut}:

\begin{proposition}
    The pushforward
$$\rho_\star[\mathsf{Hilb}_\mu(S)]^{\mathrm{vir}} \in \mathsf{CH}_{d+\ell}(\mathsf{Hilb}_d(S) \times S^{\ell})$$
is equal to the class of the Nakajima correspondence $[\widetilde\Gamma_{\mu}]$.
\end{proposition}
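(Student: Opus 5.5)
The plan is to reduce to the generic locus and then argue by dimension. Write $U\subset \mathsf{Hilb}_d(S)\times S^\ell$ for the open set of pairs $(Z,p_1,\dots,p_\ell)$ with the $p_i$ pairwise distinct and $Z=\bigsqcup_i Z^{(i)}$, where $Z^{(i)}$ is a curvilinear subscheme of length $\mu^{(i)}$ supported at $p_i$. Over $U$, a $\mu$-flag structure is forced: for a curvilinear point there is exactly one chain of subschemes of each length, and the ordering of the support is fixed by the $p_i$. Hence $\rho^{-1}(U)\to U$ is an isomorphism onto the locally closed subvariety $\Gamma^\circ_\mu=\rho(\rho^{-1}U)$, which is the open dense part of the Nakajima correspondence $\widetilde\Gamma_\mu$. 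This locus is smooth of dimension $2\ell+\sum_i(\mu^{(i)}-1)=d+\ell$, because a curvilinear length-$m$ punctual subscheme of a surface moves in an $(m-1)$-dimensional family.

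First I check that the virtual class restricts to the fundamental class on $\rho^{-1}(U)$. I would use Neguț's description of the obstruction theory for $\rho$: it is built from successive one-step flag spaces $Z_{k-1}\subset Z_k$, and each step is cut out by a rank-$3$ virtual bundle over a $4$-dimensional ambient. Over $U$ the space is smooth of the expected dimension $d+\ell$, and the obstruction sheaf should be locally free there and vanish. So the virtual class equals $[\rho^{-1}U]$, and $\rho_\star[\mathsf{Hilb}_\mu(S)]^{\sf vir}$ restricts over $U$ to $[\Gamma^\circ_\mu]$.

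Next, the complement. The class $\rho_\star[\mathsf{Hilb}_\mu(S)]^{\sf vir}-[\widetilde\Gamma_\mu]$ is supported on $\rho(\rho^{-1}(\text{complement of }U))$. I would stratify this set by collision type, where several $p_i$ coincide, and by non-curvilinear punctual types. On each stratum I would bound the dimension of its image in $\mathsf{Hilb}_d(S)\times S^\ell$ strictly below $d+\ell$. The input is Briançon's bound that punctual length-$m$ subschemes form an $(m-1)$-dimensional variety, with the curvilinear ones open dense. A collision of $j$ points reduces the dimension of the $S^\ell$ factor by $2(j-1)$, while the punctual Hilbert scheme of the merged cluster gains at most $j-1$. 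So the image has dimension at most $d+\ell-(j-1)$. A class in $\mathsf{CH}_{d+\ell}$ supported on a closed set of smaller dimension vanishes, which gives the equality.

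The main obstacle is the dimension bound on the images of the bad strata. Elements of $\mathsf{Hilb}_\mu$ over non-curvilinear points can have positive-dimensional fibres, so I must bound images rather than sources. The bound on images suffices, since $\rho_\star$ lands in $\mathsf{CH}_{d+\ell}$ of the target. If the image bound fails for some stratum, the fallback is to pass instead to the localized virtual class, or to quote Neguț's computation of the $\rho$-pushforward via the Nakajima creation operators $\prod_i\mathfrak q_{\mu^{(i)}}$. That computation would identify the class with $[\widetilde\Gamma_\mu]$ directly.
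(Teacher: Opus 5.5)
The paper gives no argument for this statement; it quotes it from \cite[Theorem~6.19]{Maulik-Negut}. That result is part of Maulik--Neguț's comparison, in Chow, of Neguț's flag correspondences with Nakajima's operators.

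Your route is a different, self-contained one, and its skeleton is sound. Since $\rho$ is proper, $\rho_\star[\mathsf{Hilb}_\mu(S)]^{\mathrm{vir}}$ lies in $\mathsf{CH}_{d+\ell}(V)$ for $V=\rho(\mathsf{Hilb}_\mu(S))$. Moreover, $V$ is contained in the locus of $(Z,p)$ where $Z$ has length $\sum_{p_i=q}\mu^{(i)}$ at each $q$. By Briançon, this locus has dimension $d+c$ on the stratum where the $p_i$ take $c$ distinct values, and its non-curvilinear part on the $c=\ell$ stratum is smaller. So the image bound you worry about does hold and no fallback is needed. It follows that $\widetilde\Gamma_\mu$ is the unique $(d+\ell)$-dimensional component of $V$, that $\mathsf{CH}_{d+\ell}(V)=\mathbb Z\cdot[\widetilde\Gamma_\mu]$, and that only the generic coefficient remains to be found.

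What your approach buys is that it needs nothing about Neguț's obstruction theory beyond the virtual dimension. It does rely on the flags starting at $Z_0=\emptyset$, so that only punctual Hilbert schemes enter. The cited comparison is not tied to that situation.

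Two points need repair.

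\emph{The set $U$.} The set you call $U$ is locally closed, not open. Use instead the open set where the $p_i$ are distinct and $Z$ is curvilinear. Its intersection with $V$ is $\Gamma^\circ_\mu$, and you can run excision on $V$.

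\emph{Multiplicity one.} This is the more substantive point. Bijectivity of $\rho^{-1}(U)\to\Gamma^\circ_\mu$ does not give coefficient $1$: if $\mathsf{Hilb}_\mu(S)$ were non-reduced along curvilinear flags, the coefficient would be the generic length. Your sketch of Neguț's obstruction theory ("rank-$3$ over a $4$-dimensional ambient") does not settle this, and you do not need it. Any perfect obstruction theory of virtual dimension $d+\ell$ on a scheme that is smooth of dimension $d+\ell$ has vanishing obstruction sheaf, so its virtual class is the fundamental class. What you must check is scheme-theoretic smoothness, which is a short local computation.
\begin{enumerate}
\item Work near each $p_i$ separately, with coordinates in which $Z$ projects isomorphically to the $x$-line. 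Nearby length-$k$ subschemes are $\{f_k(x)=0,\ y=g_k(x)\}$ with $f_k$ monic of degree $k$ and $\deg g_k<k$.
\item The condition $I_{Z_k}\subseteq I_{Z_{k-1}}$ is the vanishing of $f_k \bmod f_{k-1}$ and of $(g_k-g_{k-1})\bmod f_{k-1}$. By division with remainder, the flag scheme is therefore smooth, with coordinates $r_1,\dots,r_d$ (where $f_k=\prod_{j\le k}(x-r_j)$) and the coefficients of $g_d$.
\item The support of $\ker(\mathcal O_{Z_k}\to\mathcal O_{Z_{k-1}})$ is the point $(r_k,g_d(r_k))$. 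Hence the $\mu$-flag conditions are $r_k=r_{k'}$ for $k,k'$ in the same block; the $y$-coordinate equation $g_d(r_k)=g_d(r_{k'})$ lies in the ideal these generate.
\end{enumerate}
The resulting locus is smooth of dimension $d+\ell$ and maps isomorphically onto $\Gamma^\circ_\mu$, since the $a_i=r_k$ are recovered from the $p_i$. This gives coefficient $1$ and completes your argument.
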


\subsection{Flagged moduli space} Let $(Y|\partial Y)$ be a threefold pair. Fix a curve class $\beta$ and, for each divisor $D_i$, an ordered partition $\mu_i$ of $\beta \cdot D_i$. We view these partitions as attached to the rays of $\Sigma$ and denote the collection by $\bm \mu = \{\mu_i\}$. We also have $k$ internal markings.  

Given discrete data $[\beta, k, \bm \mu]$, let $\mathcal T_{\beta,k}^{\bm \mu}(Y|\partial Y)$ denote the tropical space of Chow $1$-complexes with fixed tangency $\bm \mu$, and let $T_{\beta,k}^{\bm \mu}(Y|\partial Y)$ denote the subcomplex of stable Chow $1$-complexes. Recall the stabilization/inclusion maps
\[
T_{\beta,k}^{\bm \mu}(Y|\partial Y) \subset \mathcal T_{\beta,k}^{\bm \mu}(Y|\partial Y) \rightarrow T_{\beta,k}^{\bm \mu}(Y|\partial Y),
\]
and assume the stabilization map is combinatorially flat.

\begin{definition}
    The {\it marked PT moduli space} 
$$\pi:  \mathsf{PT}^{\bm \mu}_{\beta, k}(Y)_{\chi} \rightarrow a\mathcal T_{\beta,k}^{\bm \mu}(Y|\partial Y)$$ 
parametrizes a choice of expansion $\mathcal Y$ of $Y$ 
equipped with 
\begin{enumerate}[(i)]
\item a stable pair $\mathcal{O}_{\mathcal Y} \rightarrow \mathcal E$ on an expansion of $(Y|\partial Y)$ with discrete invariants $\beta$ and $\chi$.
\item $k$ ordered points on the rank $0$ locus of $\mathcal Y$
\item for each ray $r_i$ of $\mathsf v$, a $\mu_i$-flag structure along $Z_i:= \mathrm{supp}(\mathcal E)\cap \mathcal{D}_i$.
\end{enumerate}
The morphism $\pi$ is strict and virtually smooth.
\end{definition}

A marking of $[\beta,k, \bm \mu]$ consists of either an internal marking from $[k]$ or one of the parts of a partition $\mu_k$ in $\bm \mu$.   
We have an evaluation map
\[
\mathsf{ev}: \mathsf{PT}^{\bm \mu}_{\beta, k}(Y)_{\chi}  \rightarrow Y^k \times \mathsf{Ev}^{\bm \mu}.
\]
The virtual class of $[\mathsf{PT}^{\bm \mu}_{\beta, k}(Y)_{\chi}]^{\sf vir}$ lifts to logarithmic homology. Evaluation classes can be taken from $H^\star_{\sf log}(Y^k\times\mathsf{Ev}^{\bm \mu})$. As before, we typically work with insertions from the subspace:
\[
H^\star_{\sf log}(Y^k) \otimes H^\star_{\sf log}(\mathsf{Ev}^{\bm \mu})\subset H^\star_{\sf log}(Y^k\times\mathsf{Ev}^{\bm \mu}).
\]

We have a moduli space of ordered $0$-complexes on $\Sigma_{Y|\partial Y}$ for internal insertions and on $\Sigma_{D|\partial D}$ for each divisor $D$ for boundary insertions. The evaluation is unaffected by stabilization, so we have:
\[
\mathcal T_{\beta,k}^{\bm \mu}(Y|\partial Y) \rightarrow T_{\beta,k}^{\bm \mu}(Y|\partial Y) \rightarrow P_{\beta,k}^{\bm \mu}(Y|\partial Y).
\]

\noindent
{\bf Evaluation Diagram.} Combining the evaluation structure and the combinatorial picture, passing to Artin fans, we get a commutative diagram:
\[
\begin{tikzcd}
    \coprod_\chi \mathsf{PT}^{\bm \mu}_{\beta, k}(Y)_\chi  = \mathsf{PT}^{\bm \mu}_{\beta, k}(Y) \arrow{d} \arrow{r}& \mathsf{Ev}_{\beta,k}^{\bm \mu}(Y|\partial Y)\arrow{d}\\
     \mathsf a T_{\beta,k}^{\bm \mu}(Y|\partial Y)\arrow{r} & \mathsf a P_{\beta,k}^{\bm \mu}(Y|\partial Y).
\end{tikzcd}
\]
In the left vertical arrow, we are mapping to the Artin fan of {\it stable} Chow $1$-complexes. The map is combinatorially flat but not strict.

\subsection{Marked PT spaces} We suppress discrete data from the notation and use $\mathsf{PT}$, $T$, $\mathsf{Ev}$, and $\mathsf P$ with partial discrete data when the rest can be inferred from context.

Let $\mathcal Y \to \mathsf{PT}$ denote the universal expansion over the marked PT space. It carries a universal stable pair $[\mathcal{O} \to \mathcal{E}]$ and is equipped with sections
\[
s_i: \mathsf{PT} \to \mathcal Y, \quad i=1,\ldots,k,
\] 
associated to the internal markings. Fix discrete data $[\beta,k, \bm \mu]$ and a cohomology class $\upalpha \in H^\star_{\sf log}(\mathsf{Ev})$. For fixed holomorphic Euler characteristic $\chi$, define the PT invariant
\[
\langle \upalpha | \bm{\mu}\rangle^{\mathsf{PT}}_{\beta,\chi}
= 
\frac{1}{m_{\bm \mu}}\int_{[\mathsf{PT}_{\chi}]^{\mathsf{vir}}} \prod_{i=1}^{k} s_{i}^\star(\mathsf{ch}_{2}(\mathcal{E})) \cdot \mathsf{ev}^\star(\upalpha).
\]


For insertions in $H^\star_{\sf log}(Y^k) \otimes H^\star_{\sf log}(\mathsf{Ev}^{\bm \mu})$, written $\zeta\otimes\delta$, denote the invariant $\langle\zeta\otimes \delta|\bm\mu\rangle^{\sf PT}_{\beta,\chi}$ to keep the boundary structure distinct from the internal structure. Summing over $\chi$, we have:
\[
\mathsf{Z}_{\mathsf{PT}}\left(Y|\partial Y;q | \zeta \otimes \delta| \bm{\mu} \right)_{\beta}
= \sum_{\chi} \langle \zeta\otimes \delta| \bm{\mu}\rangle^{\mathsf{PT}}_{\beta,\chi} \cdot {q^{\chi}}.
\]
The evaluation insertion $\zeta\otimes \delta$ in logarithmic cohomology is {\it non-exotic} if it is pulled back from the cohomology of a product of blowups. It is {\it strongly non-exotic} if it is pulled back from $Y^k\times\mathsf{Ev}^{\bm\mu}$. The following lemma is shown for smooth pairs $(Y|D)$ in~\cite{Ob21}.

\begin{lemma}
For strongly non-exotic insertion, the above definition of $\mathsf{Z}_{\mathsf{PT}}(q)$ agrees with the original one.
\end{lemma}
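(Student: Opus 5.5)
The comparison splits into the internal insertions and the boundary insertions, which I treat separately and then combine over the fibered product of the forgetful maps. Let
\[
\phi\colon \mathsf{PT}^{\bm\mu}_{\beta,k}(Y)_\chi \to \mathsf{PT}_\beta(Y|\partial Y)_\chi
\]
be the map forgetting the internal points and the flag structures. It factors in two steps.

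\emph{First step.} Let $\mathsf{PT}_{\beta,k}$ be the space with $k$ internal points and no flags. The map $\mathsf{PT}_{\beta,k}\to \mathsf{PT}_\beta$ is the $k$-fold fibered power of the universal expansion $\mathcal Y\to\mathsf{PT}_\beta$. This is flat and log smooth of relative dimension $3$, with virtual class pulled back. Oberdieck's argument for the smooth pair case~\cite{Ob21} then applies verbatim. By the projection formula, pushing forward
\[
s_i^\star(\mathsf{ch}_2(\mathcal E))\cdot \mathsf{ev}_i^\star(\zeta_i)
\]
along the $i$-th factor of $\mathcal Y$ recovers the slant product $\pi_\star(\mathsf{ch}_2(\mathcal E)\cdot \zeta_i)$. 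Here $\zeta_i$ is pulled back from $Y$ along $\mathcal Y\to Y$, which is exactly where strong non-exoticity is used. The class $\mathsf{ch}_2$ of the universal sheaf is supported on the universal support, which is disjoint from the markings' degeneracy loci, so no corrections arise at the singular locus of the expansion. The cases with several points follow by induction, one point at a time.

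\emph{Second step.} Add the flag structures. The boundary part of the flagged space is the fibered product of $\mathsf{PT}_{\beta,k}$ with $\prod_i\mathsf{Hilb}_{\mu_i}(D_i)$ over $\prod_i \mathsf{Hilb}^{n_i}(D_i)$. Its virtual class is the refined Gysin pullback along the relative perfect obstruction theory of $\rho$. This needs the compatibility that the obstruction theory on the flagged PT space is the sum of the PT obstruction theory and the pullback of Neguț's relative one. I take this as the definition of the virtual class, as implicit in calling $\pi$ virtually smooth.

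Since $\delta$ is pulled back from $\prod_i D_i^{\ell(\mu_i)}$, the invariant then equals the pairing of $[\mathsf{PT}]^{\sf vir}$ with $\rho_\star[\mathsf{Hilb}_\mu]^{\sf vir}$ acting as a correspondence on $\delta$. By functoriality of Gysin maps and the proper pushforward compatibility, this is the pairing with $[\widetilde\Gamma_\mu]_\star\delta$, using~\cite[Theorem~6.19]{Maulik-Negut}. That is the Nakajima-type insertion used in the original definition. The factor $1/m_{\bm\mu}$ and the ordering normalizations (the passage from $\mathsf{Aut}(\bm\mu)$-invariant classes) match the conventions of the Nakajima basis in~\cite[Section~4]{MR23}. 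Checking this is bookkeeping.

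\emph{Main obstacle.} The hardest step is the boundary comparison when $\partial Y$ is singular and the expansion varies. The Hilbert schemes appearing are logarithmic Hilbert schemes $\mathsf{Hilb}^{n_i}(D_i|\partial D_i)$, not those of a fixed smooth surface, so Neguț's result must be applied in families over $\mathsf{Exp}$. My plan is to restrict to the strongly non-exotic case. There the insertion is pulled back from $D_i^{\ell}$ through the map $\mathsf{Hilb}^{n_i}(D_i|\partial D_i)\to \mathsf{Hilb}^{n_i}(D_i)$, which collapses the expansion of $D_i$. The flag construction is then carried out over the ordinary Hilbert scheme of the smooth projective surface $D_i$, and pulled back. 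Because the support of the universal sheaf meets $\mathcal D_i$ away from the singular locus of the expansion (condition (i) and predeformability), the pulled-back flag space agrees with the one in the definition. The comparison then reduces to the smooth surface statement via base change and the projection formula.
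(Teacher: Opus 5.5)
Your first step is the argument the paper relies on: its entire proof says that Oberdieck's smooth-pair argument \cite[Section~3.3]{Ob21} uses only formal compatibilities of pullbacks with Chern characters, and so transfers verbatim. Using the Maulik--Negu\c{t} identity for the flags also matches the input the paper records just before the definition. The gap is in the step you single out as the hardest. There is no morphism $\mathsf{Hilb}^{n_i}(D_i|\partial D_i)\to\mathsf{Hilb}^{n_i}(D_i)$ that collapses the expansion. Consider a bubble over a boundary curve $E\subset D_i$. Its rank-$0$ locus is a $\mathbb G_m$-torsor over $E^\circ$, so distinct points of $Z_i$ can collapse to the same point, and the limit in $\mathsf{Hilb}^{n_i}(D_i)$ is not determined by the logarithmic limit. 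Concretely, take local coordinates with $E=\{y=0\}$. The families $\{(0,t),(0,2t)\}$ and $\{(t,t),(2t,2t)\}$ have the same limit in the logarithmic Hilbert scheme: two points at heights $1$ and $2$ in the bubble fiber over $x=0$. Their limits in $\mathsf{Hilb}^2(D_i)$, however, have different tangent directions. Collapsing gives at best a map to the symmetric product, and Nakajima classes are not pulled back from there. Independently, the $\mu_i$-flags in the definition are flags of $Z_i$ inside the expanded divisor $\mathcal D_i$, and they do not descend to flags of a collapsed subscheme. So your reduction ``to the smooth surface statement via base change and the projection formula'' is not available. The fibered product description in your second step has the same problem, since the restriction map lands in the logarithmic Hilbert scheme.

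To repair it, you have to run Negu\c{t}'s construction, its relative obstruction theory, and the identity $\rho_\star[\mathsf{Hilb}_\mu]^{\sf vir}=[\widetilde\Gamma_\mu]$ relatively over the rank-$0$ locus of the universal expanded divisor $\mathcal D_i$. That locus is a smooth family of open surfaces, and it is where $Z_i$ and its flags actually live. You then compare the result there with the logarithmic Nakajima correspondence of \cite[Section~4]{MR23}; this relative form is what the paper's ``formal compatibilities'' has to cover. Your first step also needs a correction. The marked space is not the fibered power of $\mathcal Y\to\mathsf{PT}_\beta$: markings are confined to the rank-$0$ locus, so they force further bubbling, which is reflected in the map to $\mathsf a\mathcal T^{\bm\mu}_{\beta,k}$. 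Nor is the universal support disjoint from the boundary and double loci, since it meets them by definition. The correct reason no corrections arise is twofold. On the components created by the markings, the pair is pulled back from the contracted target, so $s_i^\star\mathsf{ch}_2(\mathcal E)$ is pulled back from the fibered power. And the virtual class is invariant under the resulting log modification of that fibered power.
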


\begin{proof}
    Oberdieck's argument in the smooth pair case~\cite[Section~3.3]{Ob21} uses only formal compatibilities of pullbacks and Chern characters, and applies without change here. 
\end{proof}

With this lemma in hand, going forward, we will only work with marked PT moduli spaces associated to discrete data $[\beta, k, \bm \mu]$.

\section{Elementary geometries}

The degeneration compatibility of Theorem~\ref{thm: deg-compatibility} reduces the toric correspondence to what we call {\it elementary geometries}. We will show, any toric pair can be built from these by gluing and smoothing.

\subsection{List of elementary geometries}\label{sec: elementary-list}

Elementary geometries are certain toric pairs $(Y|\partial Y)$, defined up to logarithmic blowups and blowdowns. The tropicalization is well-defined up to subdivision. 

For any toric pair $(Y| \partial Y)$, the tropicalization $\Sigma_{Y}$ can be described as follows.  Take the fan of $Y$ inside $\mathbb R^3$ and remove every ray associated to a non-boundary toric divisor, along with the interior of every cone containing such a ray as a face.  The underlying subset $\underline{\Sigma}_Y$ has an embedding  $\underline{\Sigma}_Y \subset \mathbb R^3.$ The logarithmic map from $Y$ relative to its full boundary to $(Y|\partial Y)$ induces a piecewise linear surjection on cone complexes:
\[
\pi_Y: \mathbb R^3 \rightarrow \underline{\Sigma}_Y
\]
which is the identity on $\underline{\Sigma}_Y$ and zero on the rays in $\mathbb R^3$ corresponding to the non-boundary divisors.

We now list the elementary geometries and their tropicalizations: 

\begin{enumerate}[(i)]
\item {\bf Full boundary.} The space $Y$ is a projective toric threefold and $\partial Y$ is the full toric boundary divisor. These form a single logarithmic birational equivalence class. The tropicalization is $\mathbb R^3$.
\item {\bf $1$-Non-boundary.} The space $Y$ is a logarithmic birational model of a $\mathbb P^1$-bundle
\[
\mathbb P(L\oplus \mathcal O)\to S,
\]
where $S$ is a toric surface, $L$ is any line bundle. The divisor $\partial Y$ on the total space is given by the pullback of the full toric boundary on $S$ together with the infinity section. 
To describe the tropicalization, we can associate to $(S,L)$ a piecewise linear function $\psi_L$ on $\underline{\Sigma}_S = \mathbb R^2$.  The tropicalization is given by
\[
\underline{\Sigma}_Y = \left\{(x,y,z) | z \geq \psi_L(x,y)\} \subset \mathbb R^3\right\}.
\]

We introduce the further distinction based on the line bundle $L$:
\begin{enumerate}[(a)]
\item{\bf Straight.} $L = \mathcal O_S$.
\item{\bf Ruled.} $L = g^\star\mathcal O(a)$ for a fibration $g: S \rightarrow \mathbb P^1$
\item{\bf General.} $L$ is not of the form described above.
\end{enumerate}

\item {\bf $2$-Non-boundary.} The space $Y$ is a logarithmic birational model of a $\mathbb P^2$-bundle
\[
\mathbb P(L_1\oplus L_2\oplus \mathcal O)\to \mathbb P^1,
\]
where $L_1$ and $L_2$ are any line bundles on $\mathbb P^1$. The divisor $\partial Y$ is given by the pullback of the toric boundary on $\mathbb P^1$ together with the infinity divisors in the $L_1$ and $L_2$ coordinates. 
In this case, we associate to $(L_1, L_2)$ two piecewise linear functions $\psi_1, \psi_2$ on $\mathbb R$, and the tropicalization is
\[
\underline\Sigma_Y = \left\{(x,y,z) | y \geq \psi_1(x), z \geq \psi_2(x)\} \subset \mathbb R^3\right\}.
\]
We introduce the further distinction:
\begin{enumerate}[(a)]
\item{\bf Straight.} $L_1 = L_2 = \mathcal O$
\item{\bf General.} $L_1, L_2$ not both trivial.
\end{enumerate}

\item {\bf $3$-Non-boundary.} The space $Y$ is $\PP^1\times\PP^1\times\PP^1$ and $\partial Y$ is the union of the infinity divisors.  
The tropicalization is $\mathbb R_{\geq 0}^3$.
\end{enumerate}

We will refer to a target $(Y|\partial Y)$ that is of the form above as an {\it elementary} threefold pair. 

\subsection{Reduction to elementary geometries}

\begin{proposition}
    Every smooth and projective toric threefold pair admits a toric snc degeneration to a union of elementary threefold pairs. 
\end{proposition}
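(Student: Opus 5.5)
Since all the pairs in sight are toric, I will prove the proposition entirely in the language of fans. A toric snc degeneration of $(Y|\partial Y)$ is produced by choosing a rational polyhedral subdivision $\mathcal P$ of the support $\underline\Sigma_Y\subset\mathbb R^3$ (after integral rescaling), taking the cone over $\mathcal P$ at height $1$ in $\underline\Sigma_Y\times\RR_{\geq 0}$, and passing to the associated toric degeneration over $\A^1$, exactly as in the construction of $\mathcal Y_\Gamma$ in Section~\ref{sec: degenerations} but now with a $3$-dimensional complex. The irreducible components of the special fiber correspond to the vertices $v$ of $\mathcal P$. The component at $v$ is the toric pair whose tropicalization is the star of $v$ in $\mathcal P$, i.e. the tangent cone $\mathrm{Star}(v)\subset\mathbb R^3$, with boundary the toric divisors meeting either the singular locus or the horizontal part $\partial Y\times\A^1$. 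After a further subdivision and base change the total space is smooth and the degeneration is snc; neither step changes the log birational class of the components. So the task reduces to a polyhedral statement: \emph{find a subdivision $\mathcal P$ of $\underline\Sigma_Y$ such that the tangent cone at every vertex is, up to subdivision, one of the supports listed in Section~\ref{sec: elementary-list}.}

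I then read off what each tangent cone looks like, according to where $v$ sits relative to the non-boundary part. Recall $\underline\Sigma_Y$ is $\mathbb R^3$ with the open cones adjacent to non-boundary rays removed.
\begin{itemize}
\item If $v$ lies in the interior of $\underline\Sigma_Y$, its tangent cone is $\mathbb R^3$, which is the full boundary case (i).
\item If $v$ lies in the relative interior of a $2$-dimensional face of $\partial\underline\Sigma_Y$, its tangent cone is a half space. This is the straight $1$-non-boundary case $z\geq 0$.
\item If $v$ lies on an edge of $\partial\underline\Sigma_Y$, its tangent cone is a wedge. It is of the form $\{y\geq\psi_1(x), z\geq\psi_2(x)\}$ or $\{z\geq\psi_L(x,y)\}$, depending on whether the wedge is convex or concave.
\item If $v$ is a corner, its tangent cone is the local piece of $\underline\Sigma_Y$ at the origin.
\end{itemize}

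The one vertex I cannot choose freely is the origin. To control it, I will first translate or degenerate so that the origin is avoided. Concretely, I take $\mathcal P$ to contain a bounded polytope $Q$ near the origin together with unbounded cells outside it. Every vertex of $\mathcal P$ then lies away from the origin, so its tangent cone is locally the cone over a face of the boundary, cut by at most three boundary facets. If $\mathcal P$ is chosen so that each vertex meets at most three facets of $\partial\underline\Sigma_Y$, and does so transversally in a product-like manner, the tangent cone becomes one of:
\begin{itemize}
\item $\mathbb R^3$;
\item a region above the graph of a piecewise linear function on $\mathbb R^2$, which is case (ii);
\item an intersection of two such regions pulled back from $\mathbb R$, which is case (iii);
\item an orthant, which is case (iv).
\end{itemize}
The graphs are piecewise linear because each facet of $\partial\underline\Sigma_Y$ is a union of $2$-cones of $\Sigma_Y$, and the piecewise linear functions $\psi$ encode exactly the fan-bends of these cones, matching the descriptions in Section~\ref{sec: elementary-list}.

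The main obstacle is this last step, at the corners. At a vertex where three boundary facets meet, the tangent cone is a polyhedral cone whose boundary may be a non-simplicial, non-convex piecewise linear surface. I must show that it is log birational to one of the listed models. My plan is to subdivide finely enough that every vertex lying on an edge of $\partial\underline\Sigma_Y$ where the boundary is non-convex sees only two facets. This lets me write the cone as $\{z\geq\psi(x,y)\}$ after a linear change of coordinates. Corners with three facets should then only occur where the boundary is locally a convex simplicial cone, which is unimodular after subdivision and hence an orthant. If this projection argument fails at some corner, the fallback is to first perform a toric blowup of $Y$ along non-boundary strata, making the non-boundary rays pairwise non-adjacent. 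This simplifies the local structure of $\partial\underline\Sigma_Y$, and I would then repeat the argument.
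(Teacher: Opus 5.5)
There is a genuine gap, and it is in the setup rather than at the corners. You build the degeneration by subdividing $\underline\Sigma_Y$, the tropicalization of the pair, and this cannot produce the required decomposition. Your dictionary ``the component at $v$ has tropicalization the tangent cone of $\underline\Sigma_Y$ at $v$'' presupposes that the cone over $\mathcal P$ subdivides $\Sigma_{Y|\partial Y}\times\RR_{\geq 0}$, i.e.\ a log modification of $Y\times\A^1$. In that setting the ray $\{0\}\times\RR_{\geq 0}$ cannot be subdivided. So the origin is always a vertex, and its component is the strict transform of $Y\times 0$, which is a log blowup of $(Y|\partial Y)$ itself. The origin therefore cannot be ``translated away'', and the reduction is circular. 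For $\partial Y=\emptyset$ the set $\underline\Sigma_Y$ is a point and nothing happens at all, although $(Y|\emptyset)$ is not elementary.

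Suppose instead you allow a Mumford-type decomposition of the subset $\underline\Sigma_Y\subset\RR^3$ that does not refine the cones. Then its recession fan contains only cones avoiding the non-boundary rays, so the general fiber is $Y$ minus its non-boundary divisors, and you do not get a projective degeneration of $Y$. Your fallback of blowing up non-boundary strata changes the pair outside its log birational class, so it is not available either. The corner step is, in any case, left unproved.

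The missing idea is that the non-boundary toric divisors must be expanded too. Decompose all of $\RR^3$ with recession fan the complete fan of $Y$, and keep only the closure of $\partial Y\times\mathbb G_m$ as horizontal divisors. The paper makes an explicit choice that removes your corner casework:
\begin{enumerate}[(1)]
\item Replace each smooth cone $\RR_{\geq 0}^k$ by the product decomposition with pieces $[0,1]$ and $[1,\infty)$ in each coordinate, and glue these. The vertices are $0$, $u_i$, $u_i+u_j$ and $u_i+u_j+u_k$, where the $u$'s are ray generators of cones.
\item All edges at $0$ are bounded, so that component is $Y$ with its full toric boundary.
\item The other components are a $\PP^1$-bundle over $D_i$, a $\PP^1\times\PP^1$-bundle over the curve $D_i\cap D_j$, and $(\PP^1)^3$.
\item A divisor of a component is non-boundary exactly when it comes from an unbounded edge parallel to a non-boundary ray of $Y$. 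This places every component in cases (i)--(iv).
\item The cells of the form (square or cube)$\times\RR_{\geq 0}^m$ give quadratic singularities. These are resolved by triangulating without adding vertices, which changes components only by log blowups.
\end{enumerate}
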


Elementary geometries are stable under subdivisions by definition, so once such a degeneration exists, any other that is obtained by blowups and base change retains the elementary property. 

\begin{proof}
    We construct a polyhedral decomposition of the fan of $Y$ whose recession fan is $\Sigma_Y$, following~\cite{KKMSD}. For $\mathbb R_{\geq 0}$, consider the decomposition $\mathcal P^1$ into $[0,1]$ and $[1,\infty)$. The product of $k$ copies gives a decomposition of $\mathbb R_{\geq 0}^k$; denote it by $\mathcal P^k$. Its recession fan is $\mathbb R_{\geq 0}^k$. The cone complex $\Sigma$ is a colimit of a diagram of cones $\mathbb R_{\geq 0}^k$. Replacing each copy of $\mathbb R_{\geq 0}^k$ in this diagram with $\mathcal P^k$ gives a new colimit, which is a polyhedral decomposition of $\Sigma_Y$; call it $\mathcal P$.

Some cells of $\mathcal P$ are of the form $\square \times \mathbb R^m$, where $\square$ is a square or cube. These correspond to quadratic singularities in the associated degeneration. Subdividing these squares or cubes resolves the singularities and gives a new decomposition $\mathcal P'$. The associated degeneration produces a degeneration of $Y$ over $\mathbb A^1$ of the required form. See Figure~\ref{fig: elementary-degeneration} for an analogous picture in 2-dimensions. 
\end{proof}

\begin{figure}[h!]
\begin{tikzpicture}

\begin{scope}[scale=3.5] 
  \fill[lavender] (-0.1,-0.1) rectangle (1.1,1.1);

  \coordinate (A) at (0,0);
  \coordinate (B) at (1,0);
  \coordinate (C) at (0,1);
  \draw[violet, thick] (A) -- (B) -- (C) -- cycle;

  \shade[ball color=violet] (A) circle (0.5pt);
  \shade[ball color=violet] (B) circle (0.5pt);
  \shade[ball color=violet] (C) circle (0.5pt);

  \coordinate (A1) at (0.2,0.2);
  \coordinate (B1) at (0.5,0.2);
  \coordinate (C1) at (0.2,0.5);
  \draw[violet, thick] (A1) -- (B1) -- (C1) -- cycle;

  \draw[violet, thick] (A1)--(0.2,0);
  \draw[violet, thick] (A1)--(0,0.2);
  \draw[violet, thick] (B1)--(0.5,0);
  \draw[violet, thick] (B1)--(0.65,0.35);
  \draw[violet, thick] (C1)--(0,0.5);
  \draw[violet, thick] (C1)--(0.35,0.65);

\end{scope}

\begin{scope}[xshift=-4cm, yshift=1.75cm,scale=0.95] 

  \fill[lavender] (-2.2,-2.2) rectangle (2.2,2.2);

  \draw[violet, thick,->] (0,0) -- (2,0);
  \draw[violet, thick,->] (0,0) -- (0,2);
  \draw[violet, thick,->] (0,0) -- (-2,-2);

  \draw[violet, thick,->] (1,1) -- (2,1);
  \draw[violet, thick,->] (1,1) -- (1,2);

  \draw[violet, thick,->] (-1,0) -- (-1,1);
  \draw[violet, thick,->] (-1,0) -- (-2,-1);

  \draw[violet, thick,->] (0,-1) -- (1,-1);
  \draw[violet, thick,->] (0,-1) -- (-1,-2);

  \draw[violet, thick] (1,1) -- (1,0);
  \draw[violet, thick] (1,1) -- (0,1);

  \draw[violet, thick] (-1,0) -- (0,1);
  \draw[violet, thick] (-1,0) -- (-1,-1);

  \draw[violet, thick] (0,-1) -- (1,0);
  \draw[violet, thick] (0,-1) -- (-1,-1);

  \shade[ball color=violet] (0,0) circle (0.6pt);
  \shade[ball color=violet] (1,1) circle (0.6pt);
  \shade[ball color=violet] (-1,0) circle (0.6pt);
  \shade[ball color=violet] (0,-1) circle (0.6pt);
  \shade[ball color=violet] (1,0) circle (0.6pt);
  \shade[ball color=violet] (0,1) circle (0.6pt);
  \shade[ball color=violet] (-1,-1) circle (0.6pt);

\end{scope}

\end{tikzpicture}
\caption{A degeneration of $\mathbb P^2$ into elementary geometries, depicted by the polyhedral decomposition on the left, and the moment polytope complex on the right. On the right picture, the edges of each cell are toric boundary divisors. The edges that are contained within those of the outer triangle are non-boundary. Note that the quadratic singularities here have not been resolved.}\label{fig: elementary-degeneration}
\end{figure}
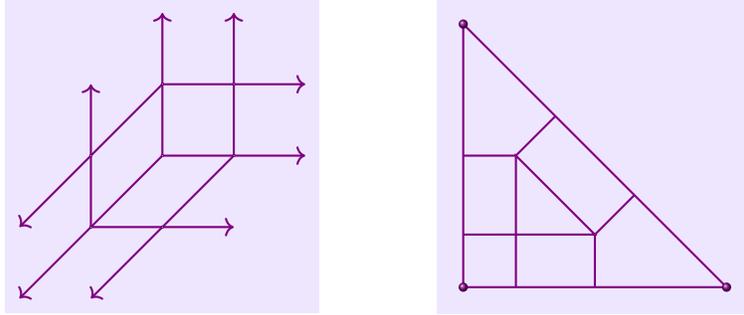

\subsection{Cohomology of elementary geometries} 
The use of elementary geometries in degeneration arguments comes from the fact that all cohomology classes are represented by boundary strata.

\begin{proposition}\label{prop: elementary-cohomology}
    Let $(Y|\partial Y)$ be an elementary toric threefold pair with dense torus $A$. The $A$-equivariant cohomology of $Y$ is spanned, as an $H^\star_A(\mathsf{pt})$-module, by classes of strata. 
\end{proposition}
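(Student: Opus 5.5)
The plan is to use the standard description of equivariant cohomology of a smooth projective toric variety, and then show that every generator can be replaced by a combination of boundary strata. Since $Y$ is smooth, projective and toric, the equivariant Chow ring $H^\star_A(Y)$ is generated as an $H^\star_A(\mathsf{pt})$-algebra by the equivariant classes $[D_\rho]$ of the toric divisors. It is in fact the Stanley--Reisner ring of the fan, with the linear relations absorbed into the $H^\star_A(\mathsf{pt})$-module structure. As a module, it is spanned by the monomials $\prod_{\rho\in\sigma}[D_\rho]=[V(\sigma)]$ over cones $\sigma$ of the fan: squarefree products are transverse intersections of torus-invariant strata, and non-squarefree products reduce to squarefree ones.

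Two observations then reduce the claim to the non-boundary divisors.
\begin{enumerate}[(i)]
\item Equivariant cohomology is invariant up to log birational modification in the relevant sense. Blowing up boundary strata only adds boundary classes, and pushforward along a toric blowup preserves the span of strata. So we may pick a convenient smooth model in each case of the list in Section~\ref{sec: elementary-list}.
\item The equivariant linear relations read $\sum_\rho \langle m, v_\rho\rangle [D_\rho] = m$ for $m$ in the character lattice, with $m\in H^2_A(\mathsf{pt})$. These express any single toric divisor as an $H^\star_A(\mathsf{pt})$-combination of the others.
\end{enumerate}

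So it suffices to show that every torus-invariant stratum $V(\sigma)$, whose cone contains a non-boundary ray, can be rewritten using strata of $\partial Y$ (including $Y$ itself).

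We go through the list. In case (i) every divisor is boundary, so there is nothing to prove. In the remaining cases the non-boundary rays span a coordinate subspace, and we can choose characters adapted to the bundle structure:
\begin{itemize}
\item in case (ii), the zero section $S_0$ is the only non-boundary divisor;
\item in case (iii), two divisors are non-boundary;
\item in case (iv), three divisors are non-boundary.
\end{itemize}
In each case the non-boundary rays $v_{\rho_1},\dots,v_{\rho_j}$ are linearly independent. On $\mathbb P(L\oplus\mathcal O)$ the zero section has ray $-e_3$ and the infinity section has ray $e_3$, and similarly in the other cases. Hence there exist characters $m_i$ with $\langle m_i, v_{\rho_{i'}}\rangle=\delta_{ii'}$. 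Applying the relation in (ii) to $m_i$ gives
\[
[D_{\rho_i}] = m_i - \sum_{\rho \text{ boundary}} \langle m_i,v_\rho\rangle [D_\rho].
\]
This writes each non-boundary divisor as a constant plus boundary divisors.

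Now take any stratum $V(\sigma)=\prod_{\rho\in\sigma}[D_\rho]$ and substitute this identity for each non-boundary factor. After expanding, every term is a product of boundary divisor classes times an element of $H^\star_A(\mathsf{pt})$. Such a product is either a boundary stratum (when the rays span a cone), or zero, or a non-squarefree product. A non-squarefree product is reduced by applying the linear relation again to a repeated boundary divisor, using a character chosen to kill the other rays in that cone. Induction on degree finishes the reduction.

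The main obstacle is this last step: the reduction of self-intersections $[D_\rho]^2$ must not reintroduce non-boundary divisors in a way that never terminates. I would handle it by inducting on cohomological degree. Each application of a linear relation trades a factor of $[D_\rho]$ for a degree-$2$ equivariant parameter plus neighbouring divisors, and a neighbouring non-boundary divisor is immediately eliminated by the displayed formula. Degree of the geometric part strictly drops or squarefreeness strictly improves, so the process terminates. The explicit bundle descriptions in cases (ii)–(iv) guarantee that the characters $m_i$ exist, which is exactly what "elementary" buys us. This fails for general pairs, where non-boundary rays can be linearly dependent, and then the displayed formula is unavailable.
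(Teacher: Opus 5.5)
There is a genuine gap exactly at the step you single out as the main obstacle, and your proposed resolution does not work. When you substitute the displayed formula for a non-boundary divisor $D_n$ into a term $[D_n]\cdot[V(\tau)]$, the rays of $\tau$ itself reappear with coefficients $\langle m_n, v_\rho\rangle$. This produces non-squarefree terms $[D_\rho]\cdot[V(\tau)]$, $\rho\in\tau$, of the \emph{same} degree. So neither the degree drops nor squarefreeness improves, and the procedure can cycle.

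In fact, linear independence of the non-boundary rays, which is the only input you extract from ``elementary'', is not sufficient. Take $Y=\mathbb{P}^2\times\mathbb{P}^1$ with $\partial Y=\ell\times\mathbb{P}^1\cup\mathbb{P}^2\times\{0\}\cup\mathbb{P}^2\times\{\infty\}$ for an invariant line $\ell$. The non-boundary rays $e_1,e_2$ are independent, and $v_\ell=-e_1-e_2$. Every $m$ with $\langle m,v_\ell\rangle=1$ pairs non-trivially with $e_1$ or $e_2$. With $m=-e_1^*$ you get $[D_\ell]^2=-e_1^*[D_\ell]+[D_{e_1}][D_\ell]$, and your formula (with $m_1=e_1^*$) gives $[D_{e_1}]=e_1^*+[D_\ell]$. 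Substituting returns $[D_\ell]^2=[D_\ell]^2$. The conclusion genuinely fails here. $H^4(Y)$ has rank $2$, while the only codimension-$2$ boundary strata $\ell\times\{0\}$ and $\ell\times\{\infty\}$ are homologous, and there are no boundary points at all. Since $H^\star(Y)$ is the quotient of $H^\star_A(Y)$ by the augmentation ideal of $H^\star_A(\mathsf{pt})$, an equivariant spanning set of strata would descend to a spanning set of $H^\star(Y)$. So your closing diagnosis, that only dependent non-boundary rays cause failure, is also wrong.

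The missing input is the following condition. For every cone $\tau$ of the logarithmic fan, the rays of $\tau$ together with all non-boundary rays $n$ for which $\tau+n$ is a cone must be linearly independent. Your hypothesis is only the case $\tau=0$. Given this condition, for $\rho\in\tau$ choose $m\in M_\QQ$ with $\langle m,v_\rho\rangle=1$, vanishing on the other rays of $\tau$ \emph{and} on those adjacent non-boundary rays. Then $[D_\rho]\cdot[V(\tau)]=m[V(\tau)]-\sum\langle m,v_{\rho''}\rangle[V(\tau+\rho'')]$ involves only boundary $\rho''$. Hence the $H^\star_A(\mathsf{pt})$-span of boundary strata contains $1$ and is closed under multiplication by all divisors, so it is everything, with no iteration needed.

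The condition is automatic when at most one non-boundary ray is adjacent to $\tau$, by simpliciality. In cases (iii) and (iv) it must be checked from the bundle structure: a log cone adjacent to two non-boundary rays is $0$ or a ray in the intersection of the corresponding faces of the log fan. It must also be checked in every log birational model where the proposition is used. Your step (i) only justifies the pushforward direction, not passage to blowups.

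The paper sidesteps the fan combinatorics entirely. It argues by excision over the logarithmic stratification, using that each locally closed log stratum of an elementary pair is a torus times an affine space. Such a stratum has equivariant Chow group generated over $H^\star_A(\mathsf{pt})$ by its fundamental class. In the example above this is precisely what fails, since $\ell\times\mathbb{G}_m$ is a stratum.
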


For a smooth projective toric threefold, equivariant cohomology surjects onto ordinary cohomology, so the proposition also holds non-equivariantly. 

\begin{proof}
For a smooth projective toric threefold, the cycle map is an isomorphism~\cite[Chapter~5]{Ful93}, so the statement can be proved in Chow, where it follows from excision. Let $\mathsf{CH}^k_A(Y)$ denote the equivariant Chow group of codimension $k$, and let $U_k \subset Y$ be the open subset corresponding to cones of dimension $k-1$. The equivariant Chow cohomology of $U_k$ vanishes in degree $k$. Indeed, for elementary geometries $Y \setminus \partial Y$ is a product of an affine space and a torus, which is Chow trivial in all degrees, and $U_k$ has no strata of codimension $k$, so the claim follows from excision. Applying excision for $U_k \subset Y$ shows that the Chow groups of $Y$ are generated by codimension-$k$ strata contained in $\partial Y$. 
\end{proof}

\begin{remark}[Artin fan]
    We use a related fact. The pair $(Y|\partial Y)$ has an Artin fan $\mathsf a\Sigma$ and a map
    \[
    Y\to \mathsf a\Sigma.
    \]
    Since the action of the torus $A$ preserves $\partial Y$, we 
    have an snc log structure for the quotient stack $(Y/A, \partial Y/A)$.  From \cite{AW}, since the quotient map $Y \rightarrow Y/A$ is strict and smooth, it induces an isomorphism on Artin fans; in particular, we have a factorization
    \[ 
    Y \to Y/A \to \mathsf a\Sigma.
    \]
    So, if we consider $a\Sigma$ with the {\it trivial action} of $A$, the map $Y \rightarrow \mathsf{a}\Sigma$ is $A$-equivariant. The equivariant cohomology of $a\Sigma$ is
    \[
    H^\star_A(a\Sigma;\QQ) = H^\star(a\Sigma;\QQ)\otimes H^\star_A(\mathsf{pt};\QQ).
    \]
    For elementary geometries, Proposition \ref{prop: elementary-cohomology} can be restated:   the $A$-equivariant cohomology of $Y$ is additively spanned by equivariant classes pulled back from $a\Sigma$. The same is true for the divisorial strata. 
    
    The evaluation spaces are built from blowups and products of $Y$ and its codimension one strata. By K\"unneth and blowup formulas, this implies that the $A$-equivariant cohomology of the evaluation space is spanned by the $A$-equivariant cohomology of the space $\mathsf aP^{\bm \mu}_{\beta,k}(Y|\partial Y)$.
\end{remark}

\subsection{Stars} 

The notion of a star specifies partial discrete data for a GW/DT/PT enumerative problem. It is {\it partial} because it does not fix the genus/Euler characteric, nor the tangency at the boundary. 

Let $\underline{\Sigma}$ be a cone complex of an elementary toric threefold pair, viewed with its embedding $\underline{\Sigma} \subset \mathbb R^3$ and surjection $\pi: \mathbb R^3 \rightarrow \underline{\Sigma}$.
Given a point $p \in \underline{\Sigma}$ there is a well-defined set of {\it tangent vectors} at $p$ in $\mathbb R^3$; such a tangent vector based at $p$ is {\it allowable} if it points into $\underline{\Sigma}$. 
The set $\underline{\Sigma}_p$ of admissible tangent vectors is itself a cone complex, defined up to subdivision; by considering the action of $\pi$ on tangent spaces, we have a surjection $\pi_p: \mathbb R^3 \rightarrow \underline{\Sigma}_p$.  As we soon explain, $\underline{\Sigma}_p$ is the tropicalization of an elementary geometry $(Y_p|\partial Y_p)$ associated to an expansion of $Y$.



\begin{definition}[Stars]\label{def: star}
A {\it star} in $\Sigma$ is given by data $\{p,(v_1, n_1),\ldots,(v_m, n_m),k\}$, denoted $\mathsf v$.  The point $p$ is an integral point in $\Sigma$, which we call the {\it base} of the star; the vectors $v_i$ are primitive, integral, admissible tangent vectors based at $p$, each equipped with a positive integer weight $n_i$.   Finally, the non-negative integer $k$ denotes the number of internal markings.   The vectors are required to satisfy a {\it balancing condition} that we will describe next.
\end{definition}

\begin{definition}
Given a star $\mathsf v$ based at $p \in \Sigma$ and a linear projection $\phi:  \mathbb R^3 \rightarrow \mathbb R$, we say $\mathsf v$ is {\it balanced with respect to} $\phi$, if
\[
\sum_{i=1}^{m} n_i \phi(v_i) = 0.
\]
A star $\mathsf v$ is {\it balanced} if it is balanced with respect to every linear projection $\phi$ that factors through $\pi_p$, i.e. projections of the form:
\[
\phi: \mathbb R^3 \rightarrow \underline{\Sigma}_p \rightarrow \mathbb R.
\]
\end{definition}
Notice this balancing condition is local, in the sense that it only depends on $\mathsf v$ and $\underline{\Sigma}_p$. In the full boundary situation, with $\underline{\Sigma} = \mathbb R^3$, this balancing corresponds to the usual balancing condition in tropical geometry, but the condition on projections becomes more restrictive as we add non-boundary divisors.  In the $3$-non-boundary geometry, with $\underline{\Sigma} = \mathbb R_{\geq 0}^3$ and $p = \mathbf{0}$, the balancing condition is vacuous.

A star encodes geometric data, as follows. 

\begin{construction}
Let $\mathsf v$ be a star, with data $\{p,(v_1, n_1),\ldots,(v_m, n_m),k\}$ as above. As explained in Section~\ref{sec: degenerations}, the vertex of a star determines a toric threefold pair up to subdivision. Call it $Y_{\mathsf v}$; its tropicalization is the cone complex $\underline{\Sigma}_p$, up to subdivision.  Since $Y$ is elementary $Y_{\mathsf v}$ will be as well.

The primitive vectors $v_i$ determine rays in the fan of $Y_{\mathsf v}$ which correspond to divisors in the boundary $\partial Y_{\mathsf v}$.  The multiplicities $n_i$ record intersection numbers of a curve class $\beta$ with these divisors.

A star does not explicitly list the curve class. But in elementary geometries the intersection multiplicities $n_i$ uniquely specify a curve class.  Indeed, the boundary divisors generate the Picard group. The balancing condition ensures such a $\beta_{\sf v}$ exists.
\end{construction}

Given an elementary geometry $(Y|\partial Y)$ with tropicalization $\Sigma$, the above remark lets us use a star $\mathsf v$, based at $0 \in \Sigma$, to encode the data of curve class $\beta$ and contact orders.   Similarly, when defining Chow $1$-complexes in $\Sigma$, we can omit $\beta_{\sf v}$-labels.  As defined in Section \ref{combinatorial}, we have tropical moduli spaces of Chow (resp. stable Chow) $1$-complexes
\[\mathcal T_\mathsf{v}(Y|\partial Y), \ \mathcal T^{\bm \mu}_\mathsf{v}(Y|\partial Y), \qquad (\textnormal{resp.} \  T_\mathsf{v}(Y|\partial Y), \  T^{\bm \mu}_\mathsf{v}(Y|\partial Y)).\]



\subsection{Visibility}\label{sec: visibility} Stars arise when we express strata of GW/PT spaces in terms of components in degenerations. We will need to understand curves in non-rigid targets. As explained in~\cite{CN21}, this is much more subtle than in the smooth pair setting. The following combinatorial condition will help.

Fix a cone complex structure $\Sigma$.  Let $\mathsf v$ be a star based at $p$ which lies in the relative interior $\sigma^\circ$ of a unique cone $\sigma$ of $\Sigma$,  Let $L_p \subset \mathbb Z^3$ be the lattice associated to $\sigma$; we can
think of $L_p$ as the integral tangent vectors $w$ at $p$ which point along $\sigma$.

A similar lattice can be associated to a primitive admissible tangent vector $v$ based at $p$.  Choose $q \in \Sigma$ such that $q - p$ is in the direction as $v$, sufficiently close to $p$ so that it is contained in a cone adjacent to $\sigma$, and take $L_q$ as in the last paragraph.  If we again identify $L_q \subset \mathbb Z^3$, there are natural inclusions $L_p \subset L_q$ and $\mathbb{Z}v \subset L_q$.  We denote the quotient of the second inclusion by $L_v$.  Composition with the first inclusion defines a map of lattices
$$L_p \subset L_q \rightarrow L_v$$
which is independent of the choice of $q$.

\begin{definition}[Visibility]
    We say that a star ${\sf v}$ is {\it visible} if the induced map of tangent directions:
    \[
    L_p\to \prod_i L_{v_i}
    \]
    is injective. 
\end{definition}

\begin{remark}
    The visibility condition has a geometric meaning. A point $p$ in $\Sigma$ determines an elementary toric pair. Such a pair is a compactified torus bundle over a stratum of $(Y|\partial Y)$. It has a ``fiber torus'', i.e. the torus of this bundle, and $L_p$ is the associated cocharacter space. Each tangent direction determines a boundary divisor, itself a bundle over a stratum, and has its own fiber torus with cocharacter space $L_{v_i}$. The fiber torus of $L_p$ acts on the one of $L_{v_i}$ -- essentially always non-faithfully. The visibility condition is that, when all directions are taken together, the action is faithful. Visibility will play a role in the rigidification process, when we express strata invariants, which are associated with non-rigid targets, in terms of rigid invariants.
\end{remark}

We extend visibility to Chow $1$-complexes in $\Sigma$ in a way that lets us constrain automorphisms in the stack of expansions.
Given a Chow $1$-complex $\Gamma$, for each vertex $V$, we have a lattice $L_V$ as before; for each edge $E$ adjacent to $V$ we can define $L_E$ and we have a map $L_V \rightarrow L_E$.
We associate a lattice to $\Gamma$ by considering vectors in each $L_V$ which agree upon mapping to $L_E$ for a common edge:
\[
L_\Gamma: = \mathsf{ker}( \prod_{V} L_V \rightrightarrows \prod_{E} L_E).
\]
For each edge $E$, there is a well-defined map $L_\Gamma \rightarrow L_E$ defined using either vertex $V$ adjacent to $E$; similarly, for each unbounded ray $H$ of $\Gamma$,
there is a map $L_\Gamma \rightarrow L_H$.  Combining these, we
have:
\[
g_\Gamma\colon L_\Gamma \to\prod_E L_E \times \prod_{H} L_H.
\]
\begin{definition}
 A Chow $1$-complex $\Gamma$ is called {\it visible} if $g_\Gamma$ is injective. 
\end{definition}

Recall that in this section we work exclusively with elementary target geometries. 

\begin{lemma}\label{stable-visible}
Every stable Chow $1$-complex in an elementary geometry $\Sigma$ is visible.
\end{lemma}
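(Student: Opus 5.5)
We argue by contradiction. Take a nonzero $x=(x_V)_V\in L_\Gamma$ with $g_\Gamma(x)=0$. For each vertex $V$, $x_V$ lies in the kernel of $L_V\to L_E$ for every edge or ray $E$ at $V$. That kernel is $L_V\cap \mathbb Z v_E$ inside $L_q$, where $v_E$ is the primitive direction of $E$ at $V$. So each $x_V$ is an integer multiple of every edge direction at $V$ that lies in $L_V$, and $x_V=0$ as soon as two edge directions at $V$ are linearly independent. The proof is therefore local. We show that every vertex of a stable Chow $1$-complex in an elementary geometry either has no nonzero vector killed by all its edge maps, or, when such a vector exists, the constraint propagates along edges to a vertex where it must vanish.

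First we classify the vertices with $\ker(L_V\to\prod_{E\ni V}L_E)\neq0$. This requires every edge direction at $V$ to be parallel to a single vector $w\in L_V$. So all edges at $V$ point along $\pm w$, and $w$ is tangent to the cone $\sigma$ containing $V$ in its relative interior. If $V$ has edges in both directions $w$ and $-w$, the vertex is $2$-valent with tangent directions inside $\sigma^\circ$. If in addition $\beta_V=0$, it is unstable, which stability excludes. So we must handle vertices that are (a) $2$-valent along $\pm w$ with $\beta_V\neq0$, or (b) have all edges pointing in the single direction $w$ (possibly several parallel edges), or (c) have no edges at all.

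The key step is to rule out (a)--(c), or show they force $x=0$, using that the geometry is elementary and that $\Gamma$ is balanced. For (b) and (c) we use the balancing condition for $\pi_V$. For (c), a vertex with no edges must carry a class $\beta_V$ supported on the stratum $S_V$. In an elementary geometry the boundary divisors generate $\mathrm{Pic}$, so some boundary divisor meets $\beta_V$ nontrivially. Tropically this produces an edge leaving $V$, a contradiction unless $L_V=0$, i.e. $V$ is at the cone point, where the claim is trivial. For (b), balancing with respect to linear functions factoring through $\pi_V$ forces $\sum n_E\,\phi(w)=0$ for all such $\phi$. Hence $\phi(w)=0$, i.e. $w$ is collapsed by $\pi_V$, which is impossible for a nonzero tangent vector in $L_V$ lying along $\sigma$ (such directions are honest directions in $\underline\Sigma$, on which $\pi$ is the identity). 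Here the explicit descriptions of $\underline\Sigma_Y$ for the four elementary types will be used to check that $\pi_V$ is injective on $L_V$.

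Case (a) is the main obstacle. With $\beta_V\neq0$ and balancing along $\pm w$, nothing local forbids $x_V=\lambda w$. We then propagate: the edge $E$ from $V$ in direction $w$ ends at a vertex $V'$, and the compatibility $x_V\equiv x_{V'}$ in $L_E=L_q/\mathbb Z w$ forces $x_{V'}\in \mathbb Z w$. Since $x_{V'}$ must also be killed by all edge maps at $V'$, the vertex $V'$ is again of type (a) or (b) along $w$. Following the maximal chain of such vertices along the line through $V$ in direction $w$, it must terminate: either at a vertex with independent directions, contradicting $x_{V'}\neq0$; at a type (b) vertex, excluded above; or at an unbounded ray in direction $w$. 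In the last case the ray direction $w$ must be parallel to a ray of $\Sigma$ while lying in $\sigma^\circ$'s tangent space, and then the ray map $L_\Gamma\to L_H$ must be examined. We expect to show, using that $\underline\Sigma$ is a product of a polyhedral region and half-lines in the elementary cases, that a line parallel to $w$ inside $\sigma^\circ$ cannot escape to infinity in both directions without the chain meeting a wall where the lattice $L$ jumps. At that wall, $w\notin L_{V'}$ forces $x_{V'}=0$. Making this wall-crossing argument precise for the general $1$-non-boundary case, where $\psi_L$ is nonlinear, is where we expect most of the work.
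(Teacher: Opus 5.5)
The reduction you start with is correct, and it is the paper's first step. Since $g_\Gamma(x)=0$ forces each $x_V$ to die in every adjacent $L_E$ or $L_H$, and any such tuple automatically lies in $L_\Gamma$, you get $\ker g_\Gamma=\prod_V\ker\bigl(L_V\to\prod_{E\ni V}L_E\bigr)$. So $\Gamma$ is visible if and only if every star of $\Gamma$ is. Your classification of stars with nonzero local kernel (all edges parallel to a single $w\in L_V$) is also right.

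The gap is case (a), and the product decomposition above shows why your treatment of it cannot work. Suppose a stable vertex $V$ of type (a) with $w\in L_V$ existed. Then the tuple with $x_V=w$ and all other entries $0$ is a nonzero element of $\ker g_\Gamma$, whatever the rest of $\Gamma$ looks like. The step that fails is the propagation. Compatibility in $L_E=L_q/\mathbb Z w$ only gives $x_{V'}\equiv x_V\equiv 0$ modulo $w$, and $x_{V'}=0$ satisfies this. Nothing forces $x_{V'}\neq 0$. So ending the chain at a vertex with independent directions, at a wall where $w\notin L_{V'}$, or at an unbounded ray yields no contradiction. The wall-crossing analysis you expect to be the hard part is therefore beside the point. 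Your remark that ``nothing local forbids $x_V=\lambda w$'' is also false.

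What you need is a local proof that type (a) cannot occur at a stable vertex. Your case (c) argument is the right tool, but you need its sharper form: $\beta_V\neq 0$ must produce an edge leaving $\sigma$, not merely some edge. Let $V\in\sigma^\circ$ with all edges along $\pm w$, where $w$ is tangent to $\sigma$.

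\begin{enumerate}
\item Each boundary divisor $D$ of the stratum $S_V$ is an $S_\tau$ with $\tau\supsetneq\sigma$.
\item In an elementary geometry, the class at $V$ is determined by the star. Its intersection with the pullback of $D$ to the component $Y_V$ is computed by evaluating the piecewise linear function of $D$ on the images of the edge directions in $\mathbb R^3/\operatorname{span}(\sigma)$, weighted by the $n_E$.
\item All of these images vanish, so $D\cdot\beta_V=0$ for every boundary divisor $D$ of $S_V$.
\item In each of the four elementary types, the boundary divisors of every stratum span its $N^1$. Hence $\beta_V=0$, and $V$ is unstable.
\end{enumerate}

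Consequently, at a stable vertex either two edge directions are independent, or some edge direction lies outside $\operatorname{span}(\sigma)$. In both cases the local kernel, which is $L_V\cap\mathbb Z v_E$ for a suitable edge $E$, vanishes.

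This is what the paper's casework encodes. Generic stable vertices are at least trivalent with non-parallel directions. In the exceptional case of a vertex on a ray with edges not contained in that ray, evaluation along a single edge already gives injectivity.
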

\begin{proof}
First, observe that we have a Cartesian diagram
\[
\begin{tikzcd}
L_\Gamma \arrow{d}\arrow{r} &\prod_E L_E \times \prod_{H} L_H \arrow{d}\\
\prod_{V} L_V \arrow{r} &\prod_{E} (L_E\times L_E) \times\prod_{H} L_H.
\end{tikzcd}
\]
The bottom arrow factors as a product over stars of $\Gamma,$ so injectivity at each vertex would imply injectivity of the top arrow.
So it suffices to use stability to show that every star in $\Gamma$ is visible.

This can be checked with some casework; we do two of the cases, as the rest are similar. First, suppose $\underline{\Sigma}$ is $\mathbb R^3$. Stability implies that every vertex is at least trivalent, and there are  at least three distinct pairwise non-parallel tangent directions. Visibility follows by elementary linear algebra. Next, suppose $\underline{\Sigma}$ is $\mathbb R^2\times \mathbb R_{\geq 0}$. If the star is not based along $0$ in the third coordinate, we are in the previous case. The projection to $\mathbb R^2$ has to be balanced in the usual sense.  With two exceptions, stability again implies that every vertex is at least trivalent with non-parallel edge directions, in which case the argument for $\mathbb R^3$ applies. The two exceptions are when the vertex lies on a ray in the fan structure or $\mathbb R^2$, but with edges not contained in the ray, or when the vertex lies at the origin. In the latter case, visibility condition is vacuous, and in the former case, evaluation along either of the tangent direction verifies visibility. The remaining cases are similar. 
\end{proof}

We explain the relevance to isotropy groups. Fix a cone complex structure on $T_{\sf v}(\Sigma)$ and choose a cone $\tau \subset  T_{\sf v}(\Sigma)$, with {\it relative interior} denoted $\tau^\circ$.  Let $L_\tau$ denote the integer linear span of this cone.
This is the cocharacter lattice of the isotropy group of the point in the stack of expansions determined by the point associated to $\tau$, see~\cite{MR20}.  This lattice is sensitive to the choice of cone structure on
$T_{\sf v}(\Sigma)$.

The combinatorial type (i.e. the topological type of the $1$-complex, the cell structure, the cones in which the vertices and edges lie, and edge directions) is constant in the relative interior of a cell. We pick, arbitrarily, a Chow $1$-complex $\Gamma$
corresponding to a point in $\tau^\circ$. For each vertex $V$ in $\Gamma$, lying in the relative interior $\sigma^\circ$ of a cone of $\Sigma$, there is a natural map
\[
\tau^\circ\to \sigma^\circ
\]
recording the position of a vertex in its cone. Passing to tangent vectors, we have a map of lattices $L_\tau\to L_V$.
If we take the product over vertices, the map
\[
L_\tau\to \prod L_V,
\]
is injective since a $1$-complex is determined by the position of its vertices.
Compatibility with edges forces this map to factor through an inclusion
\[
L_\tau \hookrightarrow L_\Gamma.
\]
By combining this with Lemma \ref{stable-visible}, we have the following corollary.

\begin{corollary}
Given an elementary geometry $\Sigma$, for any cone $\tau$ in $T_{\sf v}(\Sigma)$
the map
\[
L_\tau \rightarrow \prod_{E} L_E \times \prod_H L_H
\]
is injective.
\end{corollary}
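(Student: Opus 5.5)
The corollary follows by composing the injection $L_\tau \hookrightarrow L_\Gamma$ established just before the statement with the map $g_\Gamma\colon L_\Gamma \to \prod_E L_E \times \prod_H L_H$. The map in the statement is, by construction, this composite: for each edge $E$ (resp. unbounded ray $H$), the component $L_\tau \to L_E$ is obtained by sending a tangent vector of $\tau$ to the induced displacement of an adjacent vertex $V$ in $L_V$ and then projecting to $L_E$. Compatibility with edges guarantees this is independent of the choice of $V$. So I would first record this factorization
\[
L_\tau \hookrightarrow L_\Gamma \xrightarrow{\,g_\Gamma\,} \prod_E L_E \times \prod_H L_H ,
\]
checking that the edge and ray components agree with those defining $g_\Gamma$. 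This is only a matter of unwinding definitions.

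Next I would verify the hypothesis of Lemma~\ref{stable-visible}. The cone $\tau$ lies in $T_{\sf v}(\Sigma)$, the space of \emph{stable} Chow $1$-complexes, so the representative $\Gamma$ chosen in $\tau^\circ$ is stable. The lemma then says $\Gamma$ is visible, that is, $g_\Gamma$ is injective. A composite of two injective maps of lattices is injective, which gives the claim.

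The one point needing care is that the lattices $L_V$, $L_E$, $L_H$, and the map $L_\tau \to L_\Gamma$, do not depend on the arbitrary choice of $\Gamma \in \tau^\circ$. The combinatorial type of $\Gamma$ is constant on $\tau^\circ$: the cones containing the vertices and edges and the edge directions are all fixed. Hence all of these lattices and maps are constant as well, so the statement is well posed. I expect no real obstacle here; the substantive input is entirely Lemma~\ref{stable-visible}, together with the injectivity of $L_\tau \to \prod_V L_V$, which holds because a $1$-complex is determined by the positions of its vertices.
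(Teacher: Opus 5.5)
Your proposal is correct and matches the paper's argument. The paper factors the map as $L_\tau \hookrightarrow L_\Gamma \xrightarrow{g_\Gamma} \prod_E L_E\times\prod_H L_H$. The first map is injective because $L_\tau\to\prod_V L_V$ is injective and compatibility along edges makes it factor through $L_\Gamma$. The second map is injective by Lemma~\ref{stable-visible} applied to a stable representative $\Gamma\in\tau^\circ$. Your remark that the lattices and maps do not depend on the choice of $\Gamma$, because the combinatorial type is constant on $\tau^\circ$, is a harmless clarification that the paper leaves implicit.
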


\subsection{Partial ordering on stars}


In this section, we define a partial ordering on pairs $(\underline{\Sigma}, \mathsf v)$
where $\underline{\Sigma}$ is the tropicalization of an elementary geometry and $\mathsf v$ is
a star based at $0$.

We first define a partial ordering on the targets themselves.  We will identify two tropicalizations $\underline{\Sigma}$ and $\underline{\Sigma}'$ if there exists a linear isomorphism of $\mathbb R^3$ 
which descends, via the piecewise linear surjections $\pi_{\Sigma}, \pi_{\Sigma'}$ to an isomorphism of cone complexes up to subdivision. Recall also that 
$\underline{\Sigma}_p$ is the cone complex (up to subdivision) of admissible tangent vectors at $p$.

\begin{definition}
Given $\underline{\Sigma}$ and $\underline{\Sigma'}$, we say that $\underline{\Sigma}'$ is {\it smaller} than $\underline{\Sigma}$, written
as $\underline{\Sigma'} \preccurlyeq \underline{\Sigma}$, if there
exists a point $p \in \underline{\Sigma}$ such that 
$$\underline{\Sigma}_p = \underline{\Sigma}'$$
after applying a linear change of coordinates to $\mathbb R^3$.   
\end{definition}

The condition says a log birational model of $(Y'| \partial Y')$ occurs as a component in an expansion of $(Y|\partial Y)$.

If 
$\underline{\Sigma'} \preccurlyeq \underline{\Sigma}$ then either $\underline{\Sigma'}$ occurs earlier in the list of elementary geometries
or $\underline{\Sigma}' = \underline{\Sigma}$ up to change of coordinates.

Before defining the partial ordering on stars, we first introduce a few preliminary ideas.  Given the tropicalization $\underline{\Sigma}$ of an elementary geometry,
there exists a subgroup $\mathbb{G}_\Sigma$ of translations which preserve $\underline{\Sigma}$.
Concretely, this group is the lattice of integral vectors $w$ for which $w, -w \in \underline{\Sigma}$.  We can describe them explicitly:
\begin{enumerate}[(i)]
\item {\bf Full boundary.} $\mathbb G_\Sigma = \mathbb Z^3$.
\item {\bf $1$-Non-boundary.} $\mathbb G_\Sigma = \mathbb Z^2$ in the straight geometry, $\mathbb G_\Sigma = \mathbb Z$ in the ruled geometry, and $\mathbb G_\Sigma = 0$ in the 
general case.  
\item {\bf $2$-Non-boundary.} $\mathbb G_\Sigma = \mathbb Z$ in the straight geometry, and $\mathbb G_\Sigma = 0$ in general geometry; 
\item {\bf $3$-Non-boundary.} $\mathbb G_\Sigma = 0$.
\end{enumerate}

Given a Chow $1$-complex $\Gamma$ in $\underline{\Sigma}$, we have two ways to assign stars to $\Gamma$. First, the asymptotic star $\mathsf v$ of $\Gamma$ is the star based at the $0$ obtained from the unbounded rays of $\Sigma$, adding mulitplicities of rays with the same direction. More precisely,
we construct the cone 
\[
\mathsf{Cone}(\Gamma) \subset \underline{\Sigma} \times \mathbb{R}_{\geq 0}
\]
and take the fiber over $0$, with appropriate weights on the rays.
We say that $\Gamma$ is a \textit{degeneration} of $\mathsf v$.  

Second, if we take a vertex $p$ of $\Gamma$, then the star $\mathsf{Star}_\Gamma(p)$ is obtained by forgetting all other vertices and extending the half-edges adjacent to $p$ into rays.

\begin{definition}
A star $(\underline{\Sigma}', \mathsf w)$ is {\it smaller} than $(\underline{\Sigma}, \mathsf v)$, written as
$\mathsf w \preccurlyeq \mathsf v$, if either
\begin{enumerate}[(i)]
\item $\underline{\Sigma}' \prec \underline{\Sigma}$, or
\item after a linear transformation, $\underline{\Sigma}' = \underline{\Sigma}$ and there exists a degeneration $\Gamma$ of $\mathsf v$, and a vertex $p$ of $\Gamma$ such
that $\mathsf w = \mathrm{Star}_\Gamma(p)$ after applying a translation in $\mathbb G_\Sigma$.
\end{enumerate}
\end{definition}

The definition is captured by Figure~\ref{fig: ordering-of-stars} below. 

\begin{figure}[h!]
\begin{tikzpicture}[x=0.75pt,y=0.75pt,yscale=-1,xscale=1]

\definecolor{lavender}{RGB}{238,230,255}
\fill[lavender] (0,60) rectangle (600,400);

\coordinate (A1) at (120,300);
\draw[violet, ->, line width=0.75] (A1) -- ++(40,-40);   
\draw[violet, ->, line width=0.75] (A1) -- ++(-40,-40);  
\draw[violet, ->, line width=0.75] (A1) -- ++(40,40);    
\draw[violet, ->, line width=0.75] (A1) -- ++(-40,40);   
\shade[ball color=violet] (A1) circle (1.5pt);
\node[violet, anchor=south] at (A1) {$\mathsf v$};

\def\scaleSquare{0.8}
\def\xshift{5}
\coordinate (B1orig) at (300,150);
\coordinate (B2orig) at (400,150);
\coordinate (B3orig) at (400,250);
\coordinate (B4orig) at (300,250);

\coordinate (B1) at ($(\xshift,0) + \scaleSquare*(B1orig)$);
\coordinate (B2) at ($(\xshift,0) + \scaleSquare*(B2orig)$);
\coordinate (B3) at ($(\xshift,0) + \scaleSquare*(B3orig)$);
\coordinate (B4) at ($(\xshift,0) + \scaleSquare*(B4orig)$);

\draw[violet, line width=0.75] (B1) -- (B2) -- (B3) -- (B4) -- (B1);
\draw[violet, ->, line width=0.75] (B2) -- ++(16,-16);   
\draw[violet, ->, line width=0.75] (B1) -- ++(-16,-16);  
\draw[violet, ->, line width=0.75] (B3) -- ++(16,16);    
\draw[violet, ->, line width=0.75] (B4) -- ++(-16,16);   

\foreach \v in {B1,B2,B3,B4} {
    \shade[ball color=violet] (\v) circle (1.5pt);
}

\draw[violet, line width=0.75] (B3) circle (6pt);
\node[violet, anchor=west] at ($(B3)+(8,0)$) {$p$};
\node[violet, anchor=south] at ($(B1)+(40,-10)$) {$\Gamma$};

\coordinate (C1) at (480,300);
\draw[violet, ->, line width=0.75] (C1) -- ++(-40,0);    
\draw[violet, ->, line width=0.75] (C1) -- ++(0,-40);    
\draw[violet, ->, line width=0.75] (C1) -- ++(40,40);    
\shade[ball color=violet] (C1) circle (1.5pt);
\draw[violet, line width=0.75] (C1) circle (6pt);
\node[violet, anchor=west] at ($(C1)+(6pt,0)$) {$\mathsf w$};

\node[violet] at ($(A1)+(0,60)$) {Initial star $\mathsf v$};
\node[violet] at ($($(B1)!0.5!(B3)$)+(0,75)$) {A degeneration of $\mathsf v$};
\node[violet] at ($(C1)+(0,60)$) {A smaller star $\mathsf w$};

\end{tikzpicture}
\caption{A depiction of the ordering on stars induced by taking degenerations and then stars at vertices. In this case $\mathsf w$ on the right is smaller than $\mathsf v$.}\label{fig: ordering-of-stars}
\end{figure}
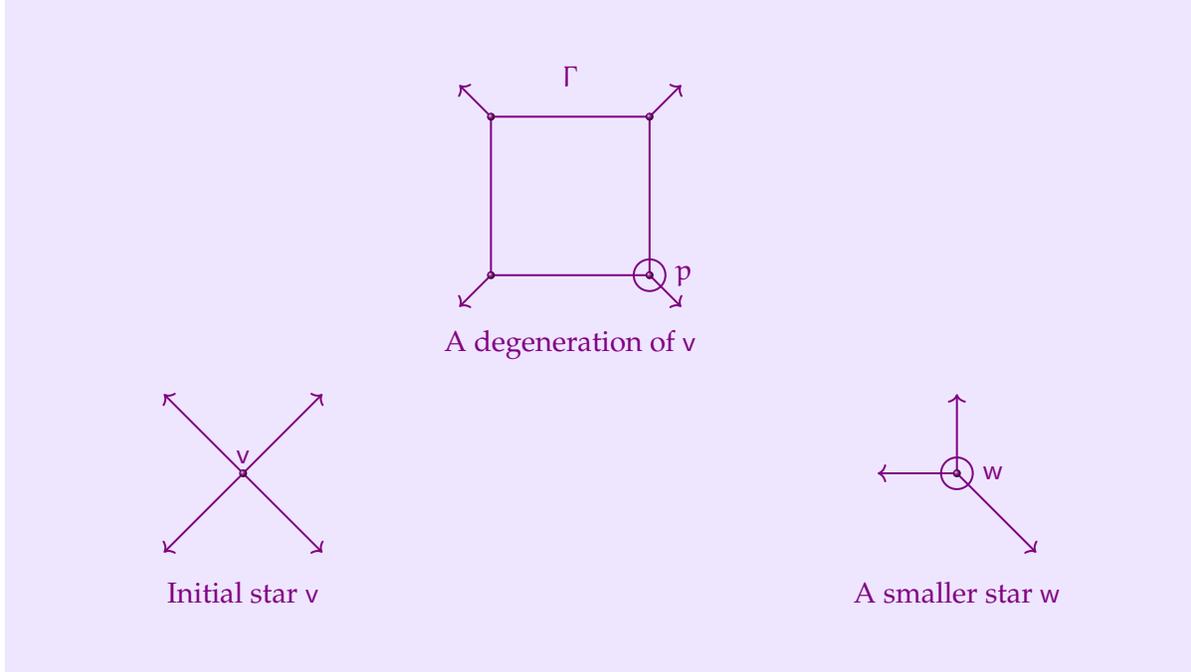

The following proposition shows this partial ordering is well-suited for inductive arguments.  

\begin{proposition}

\begin{enumerate}[(i)]
\item If $(\underline{\Sigma}', \mathsf w)$ is smaller than $(\underline{\Sigma}, \mathsf v)$ and vice versa, then they are equal to each other.

\item For a fixed pair $(\underline{\Sigma}, \mathsf v)$, the set of pairs $(\underline{\Sigma}, \mathsf w)$ with $\mathsf w \preccurlyeq \mathsf v$
is finite.
\end{enumerate}
\end{proposition}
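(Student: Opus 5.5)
Both parts come down to one monotone quantity that strictly drops along every nontrivial step of the ordering, together with a finiteness count.

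For targets, the relation $\underline{\Sigma}'\preccurlyeq\underline{\Sigma}$ either moves strictly down the finite list of elementary types (full boundary, $1$-, $2$-, $3$-non-boundary, with the refinements straight/ruled/general), or returns $\underline\Sigma$ itself up to linear change of coordinates. I would first check that the relation on targets is antisymmetric. If $\underline\Sigma_p=\underline\Sigma'$ and $\underline\Sigma'_{p'}=\underline\Sigma$, then passing to the tangent complex at $p$ can only keep or enlarge the lineality lattice $\mathbb G$; it enlarges it unless $p$ lies in the lineality space. The same holds for the number of non-boundary directions. Comparing these invariants in both directions forces equality. In case (i) of the ordering on stars, the target strictly decreases in this sense, so a mixed cycle is impossible. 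This reduces both claims to the case $\underline\Sigma'=\underline\Sigma$.

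For a fixed $\underline\Sigma$, the monotone quantity I would use is the \emph{total weight} of a star,
\[
|\mathsf v|=\sum_i n_i,
\]
refined by its curve class $\beta_{\mathsf v}$. Let $\Gamma$ be a degeneration of $\mathsf v$ and $p$ a vertex of $\Gamma$. The curve classes at the vertices of $\Gamma$ are effective and sum to $\beta_{\mathsf v}$. Each edge at $p$ is either an unbounded ray of $\Gamma$, hence part of $\mathsf v$, or a bounded edge. So $\beta_{\mathsf w}$ is a summand of $\beta_{\mathsf v}$ in the effective cone. Since the boundary divisors span the Picard group of an elementary geometry, I would pair with an ample class that is a positive combination of boundary divisors. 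Then $\mathsf w\preccurlyeq\mathsf v$ gives $\deg\beta_{\mathsf w}\le\deg\beta_{\mathsf v}$, with equality only if every other vertex carries class $0$. In the equality case, a balancing argument shows that the remaining vertices contribute nothing. The balancing condition is taken with respect to projections factoring through $\pi_p$. A nonzero polyhedral piece with zero class is impossible, because the cone over $\Gamma$ has recession equal to $\mathsf v$. Hence $\Gamma$ is, up to $\mathbb G_\Sigma$-translation, a single vertex with the rays of $\mathsf v$, and $\mathsf w=\mathsf v$. This gives (i): if $\mathsf w\preccurlyeq\mathsf v\preccurlyeq\mathsf w$, then the degrees agree and the equality case applies.

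For (ii), the stars $\mathsf w$ arising from degenerations of $\mathsf v$ have $\deg\beta_{\mathsf w}\le\deg\beta_{\mathsf v}$. Effective classes of bounded degree are finite in number, so the multiplicities $n_i$ are bounded, and so is the number of directions. The directions $v_i$ themselves are primitive admissible tangent vectors, and these must also be bounded. A vertex of $\Gamma$ has weighted direction vectors whose intersection numbers with the boundary divisors of $Y_{\mathsf w}$ are bounded by those of $\beta_{\mathsf v}$. In coordinates, the pairing of a direction with boundary divisors is essentially the linear functional given by the rays of $\underline\Sigma_p$. Bounded pairings against a spanning set of functionals bound the vector, so only finitely many directions occur. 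The base point is normalized to $0$ modulo $\mathbb G_\Sigma$. Modulo $\mathbb G_\Sigma$, the possible base points $p$ are constrained to finitely many cone types, and $\underline\Sigma_p$ depends only on the cone containing $p$. The integer $k$ is bounded by that of $\mathsf v$, since markings are distributed among the vertices.

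I expect the main obstacle to be the equality case in (i) for the degenerate geometries, where $\mathbb G_\Sigma$ is small. There a vertex with zero class but non-parallel edges could sit on a boundary face. I would handle it using the stability/visibility casework of Lemma~\ref{stable-visible}.
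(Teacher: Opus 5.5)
Your route differs from the paper's. The paper translates $p$ to $0$ by $\mathbb G_\Sigma$, promotes $\Gamma$ and $\mathsf v$ to Chow $1$-complexes (possible because weights determine classes in elementary geometries), identifies the ordering with the one in \cite{MR25}, and quotes Propositions~3.2.2 and~3.3.2 there. Your self-contained degree argument has a genuine gap in (i): the equality case is false.

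Vertex decorations vanish for many vertices, for instance any vertex in the interior of a maximal cone, or any vertex whose star lies in the fiber directions of $Y_V\to S_V$. Moreover, the class of $\mathsf w$ on a fixed model does not determine $\mathsf w$. Concretely, take $\underline\Sigma=\mathbb R^3$ and let $\mathsf v$ have rays $(1,0,0),(0,1,0),(-1,-1,0)$, each of weight $2$. Let $\Gamma$ be the tropical conic in the plane $z=0$ whose inner vertex $0$ is joined by weight-one edges to $(-1,0,0)$, $(0,-1,0)$, $(1,1,0)$, each of which carries the two remaining ends. Then $\mathsf w=\mathrm{Star}_\Gamma(0)$ has rays $(-1,0,0),(0,-1,0),(1,1,0)$ of weight $1$, so $\mathsf w\preccurlyeq\mathsf v$ and $\mathsf w\neq\mathsf v$. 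On $Y=\mathbb P^2\times\mathbb P^1$ the three outer vertices lie in interiors of $2$-dimensional cones with stars in the $\mathbb P^2$-directions, so their decorations are $0$. Hence $\beta_0=2\ell=\beta_{\mathsf v}$; geometrically, $\mathsf w$ is a conic through the three fixed points of $\mathbb P^2$. Therefore $H\cdot\beta_{\mathsf w}=H\cdot\beta_{\mathsf v}$ for every $H$ on this model; for example, both equal $6$ for $H=-K_Y$.

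The same happens for any fixed model and ample class. Let each edge at $0$ run into the interior of a maximal cone, and attach there a vertex whose ends stay in that cone. So no single degree is strictly monotone. This already fails for full boundary, so the obstacle is not the degenerate geometries you flag, and the visibility casework does not help. The first step of (ii), ``effective classes of bounded degree are finite, so the $n_i$ are bounded'', fails for the same reason: both stars above have class $2\ell$ on $Y$.

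Both parts can be repaired with your own ingredients.

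\emph{Translation.} First translate $\Gamma$ by $-p\in\mathbb G_\Sigma$. You need this even for the inequality, because for $p\neq 0$ the decoration $\beta_p$ lives on a proper stratum and is not the class of $\mathsf w$.

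\emph{Part (i).} Pass to a projective model $\widetilde Y$ whose fan contains the rays of both $\mathsf v$ and $\mathsf w$. After subdividing $\Gamma$, its decoration at $0$ is $\beta_{\mathsf w}$, so $\beta_{\mathsf v}-\beta_{\mathsf w}=\sum_{V\neq 0}\beta_V$ is effective on $\widetilde Y$. If also $\mathsf v\preccurlyeq\mathsf w$, then $\pm(\beta_{\mathsf v}-\beta_{\mathsf w})$ are both effective, so the difference vanishes. Pairing with the boundary divisors of $\widetilde Y$ then gives equal weights, hence $\mathsf w=\mathsf v$. In the example, on the blowup of $\mathbb P^2$ at its fixed points one gets $\beta_{\mathsf v}-\beta_{\mathsf w}=E_1+E_2+E_3\neq 0$.

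\emph{Part (ii).} Fix a nef $H$ on $Y$ that is a positive combination of boundary divisors. Then
\[
\sum_e n_e\phi_H(u_e)=H\cdot\beta_{\mathsf w}\le H\cdot\beta_{\mathsf v},
\]
and $\phi_H$ is strictly positive on nonzero vectors of $\underline\Sigma$. This bounds the weights and directions directly.
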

In the second part, notice that we are considering stars with the same target as $\mathsf v$.

\begin{proof}

These are basically proven in~\cite[Propositions~3.2.2,~3.3.2]{MR25}. In the full boundary case, the partial ordering here is the same as that of \cite[Section 3.2]{MR25}.
In the remaining cases, suppose $\Gamma$ is a degeneration of $\mathsf v$ and $\mathsf w$ is the star of $\Gamma$ at the vertex $p$, 
Note that $p$ can be translated to $0$ by an element of $\mathbb G_\Sigma$. Replacing $\Gamma$ with its translation, we assume $p = 0$.  
For elementary geometries, the curve class is determined by the weights on edges/rays so we can promote $\Gamma$ and $\mathsf v$ to 
Chow $1$-complexes as used in~\cite{MR23,MR25}; after making the changes, the ordering $\mathsf w \preccurlyeq \mathsf v$ agrees with the ordering in \cite[Definition~3.3.1]{MR25}. The proposition follows.
\end{proof}

\section{Exotic insertions and rubber calculus}

To prove the correspondence, we induct on the discrete data via the star. 
This requires a formulation of the correspondence for strata. 
Although such an extension holds, care is needed: strata invariants need not be rational functions, so we must avoid working directly with them; see Section~\ref{sec: strata-conj}.

Even starting from non-exotic invariants, the induction necessarily passes through strata invariants, whose insertions are exotic. So, to apply induction, two conversion rules are required. 

For the exotic-to-non-exotic conversion, we adapt ideas from our earlier work on GW reconstruction algorithms~\cite{MR25}, formulating a generalized GW/PT correspondence suited to this reconstruction. 
This is carried out at the beginning of this section.

The strata-to-non-exotic conversion is more involved---we begin in Section~\ref{sec: rubber-calc-plan} and it occupies the remainder of the section. 
The conversion rule is based on a ``rubber calculus'' that rigidifies the strata and applies the degeneration formula to express strata invariants in terms of non-exotic invariants of rigid targets, up to lower-order corrections. 
A key subtlety—and the reason the GW reconstruction techniques of~\cite{MR25} do not apply—is that this procedure must respect the correspondence.

\subsection{An identity without virtual structures.}

The GW and PT moduli spaces share two structures: a map to the evaluation space $\mathsf{Ev}$ and a map to the stack $\mathsf a T$ of stable Chow $1$-complexes. Combining them yields a common space with trivial virtual structure. Consider the fiber product
\[
\begin{tikzcd}
\mathscr M \arrow{d} \arrow{r} & \mathsf{Ev} \arrow{d}\\
\mathsf a T \arrow{r} & \mathsf a \mathsf P.
\end{tikzcd}
\]
The space $\mathscr M$ parametrizes expansions of $Y$ with a labeled collection of points compatible with the discrete data of $\mathsf P$. We obtain maps
\[
\begin{tikzcd}
    \coprod_{g,\bm\mu} \mathsf{GW}^{\bm\mu}_{\beta,k}(Y|\partial Y)_g 
    \arrow[swap]{dr}{\rho_{\sf GW}} 
    & & 
    \coprod_{\chi,\bm\mu} \mathsf{PT}^{\bm\mu}_{\beta,k}(Y|\partial Y)_\chi
    \arrow{dl}{\rho_{\sf PT}}\\
    & \mathscr M.
\end{tikzcd}
\]
Both morphisms are combinatorially flat and admit virtual pullbacks. We write $\rho$ when working simultaneously on both sides.


Exotic insertions are defined by taking a subdivision $\mathsf{Ev}^{\diamond} \rightarrow \mathsf{Ev}$ and a compatible map $\mathscr{M}^{\diamond} \rightarrow \mathsf a T^{\diamond}$. 


Given a cone $\sigma \subset T^{\diamond}$, the closure of the corresponding stratum defines a closed substack 
$S_\sigma \subset \mathsf a T^{\diamond}$, and the image of $\sigma$ in $\mathsf P^{\diamond}$ determines 
a stratum $\mathsf{Ev}_{\sigma} \subset \mathsf{Ev}^{\diamond}$.  
Given in addition a class $\vartheta \in H^\star_{A}(\mathsf T)$ and $\upalpha \in H^\star_{A}(\mathsf{Ev}_{\sigma})$, 
we pull $\vartheta$ and $\upalpha$ back to $S_\sigma$ and push forward to $\mathscr{M}$ to obtain
\[
[\![\sigma, \vartheta, \upalpha]\!] \in H^\star_{A}(\mathscr{M}).
\]
We call $[\![\sigma, \vartheta, \upalpha]\!]$ a non-exotic stratum class if $\upalpha$ is pulled back from a stratum in a product of blowups of $\mathsf{Ev}_i$, and strongly non-exotic if it is pulled back from a stratum of $\mathsf{Ev}$ itself.

A key proposition of \cite[Section~4]{MR25} is the following.

\begin{proposition}\label{prop: precise-version}
Any, possibly exotic, class of the form $[\![ \sigma,\vartheta,\upalpha ]\!]$ can be expressed in the form
\[
[\![ \sigma,\vartheta,\upalpha ]\!] = \sum_j [\![ \sigma_j,\vartheta_j,\upalpha_j ]\!]
\]
where $\sigma_j$ is a cone of $T$ containing $\sigma$ in its closure and $\upalpha_j$ is non-exotic. 
\end{proposition}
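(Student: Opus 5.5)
The plan is to adapt the argument of \cite[Section~4]{MR25} verbatim, since it concerns only the space $\mathscr M$ and its maps to $\mathsf a T^\diamond$ and $\mathsf{Ev}^\diamond$, with no reference to virtual structures. This is why it applies simultaneously to the GW and PT sides. I would induct on the codimension of $\sigma$ in $T^\diamond$, in decreasing order. For maximal-dimensional cones there is nothing to do once we check that $\mathsf{Ev}_\sigma$ is a point-like (torus-orbit closure) stratum whose equivariant cohomology is generated by non-exotic classes. More precisely, a closed stratum of a toric-type blowup of $\mathsf{Ev}$ is itself a bundle over a stratum of $\mathsf{Ev}$ with toric fibers. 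By the Remark on Artin fans following Proposition~\ref{prop: elementary-cohomology}, its $A$-equivariant cohomology is spanned by classes pulled back from the base stratum together with classes of further boundary strata.

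The main step is as follows. Write $\mathsf{Ev}^\diamond\to\mathsf{Ev}$ as an iterated blowup along strata, which is possible after further subdivision, and the statement is insensitive to this. The restriction of $\mathsf{Ev}^\diamond$ to $\mathsf{Ev}_\sigma$ is then a toric bundle over a stratum $\mathsf{Ev}'$ of a product of blowups of the $\mathsf{Ev}_i$. Its cohomology is generated over $H^\star_A(\mathsf{Ev}')$ by the piecewise polynomial functions on the fiber fan, i.e.\ by divisor classes of boundary strata. So $\upalpha$ can be written as $\sum \upalpha'_j\cdot \pi_j$, with $\upalpha'_j$ non-exotic and $\pi_j$ a monomial in boundary divisors of $\mathsf{Ev}_\sigma$. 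Each boundary divisor of $\mathsf{Ev}_\sigma$ is cut out by a cone $\tau\supset\sigma$ of $\mathsf P^\diamond$. Using combinatorial flatness of $T^\diamond\to\mathsf P^\diamond$, its pullback to $S_\sigma$ is a sum of strata classes $S_{\sigma'}$ with $\sigma'\supset\sigma$, up to a class pulled back from $\mathsf a T$ that can be absorbed into $\vartheta$. Multiplying out converts $[\![\sigma,\vartheta,\upalpha]\!]$ into $[\![\sigma,\vartheta',\upalpha']\!]$ with $\upalpha'$ non-exotic, plus terms $[\![\sigma',\vartheta'',\upalpha'']\!]$ with $\sigma'$ strictly larger. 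The induction then handles the latter terms.

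The main obstacle is this last pullback step. The cones of $T^\diamond$ over $\tau$ need not map isomorphically, so the pullback of a divisor of $\mathsf P^\diamond$ to $\mathsf a T^\diamond$ carries multiplicities and may involve cones not containing $\sigma$. I would first use combinatorial flatness to show that only cones $\sigma'$ containing $\sigma$ in their closure meet $S_\sigma$, and compute multiplicities via the piecewise linear function as in \cite{MR25}. A final step checks that the $\sigma_j$ may be taken in $T$ rather than $T^\diamond$, by pushing forward along $\mathsf a T^\diamond\to\mathsf a T$ and absorbing the fiber classes into $\vartheta_j$.
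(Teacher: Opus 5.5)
Your proposal is correct and is essentially the argument the paper has in mind when it defers to \cite[Section~4]{MR25}. Your splitting $\upalpha=\sum_j\upalpha'_j\pi_j$ is the ``approximate by a non-exotic class, then correct via the blowup formula'' step: the correction terms, supported on boundary strata of $\mathsf{Ev}_\sigma$, become deeper strata $S_{\sigma'}$ by combinatorial flatness, are handled by induction, and are then pushed forward from $\mathsf aT^\diamond$ to $\mathsf aT$.

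Two minor corrections. For maximal $\sigma$ the stratum $\mathsf{Ev}_\sigma$ need not be point-like, because the image of a maximal cone of $T^\diamond$ in $\mathsf P^\diamond$ need not be maximal (e.g.\ by the Menelaus constraint for trivalent stars). Your main step already covers this case, since the boundary divisors of $\mathsf{Ev}_\sigma$ then pull back to zero on $S_\sigma$. Also, the normal-bundle classes you absorb into $\vartheta$ are in general only pulled back from $\mathsf aT^\diamond$, not $\mathsf aT$, which is exactly why your final pushforward step is needed.
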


We do not recall the proof here, but note that it follows by approximating an exotic class by a non-exotic one, and then systematically using the blowup formula to account for the difference. 


\subsection{Strata conjectures}\label{sec: strata-conj}

We define stratum GW/PT invariants by pulling back classes in $H^\star_{A}(\mathscr M)$ along $\rho$ and pairing with the appropriate virtual class:
\[
\langle [\![\sigma, \vartheta, \upalpha]\!]\mid \bm{\mu}\rangle^{\mathsf{GW}}_{\beta,g}, \qquad
\langle [\![\sigma, \vartheta, \upalpha]\!]\mid \bm{\mu}\rangle^{\mathsf{PT}}_{\beta,\chi}.
\]
The generating functions $\mathsf{Z}_{\mathsf{GW}}(u)$ and $\mathsf{Z}_{\mathsf{PT}}(q)$ are defined in the usual way, and we extend the primary GW/PT correspondence to stratum invariants. We maintain the notation of Conjecture~\ref{conj: gw/pt}.

\begin{conjecture}
The PT series
\[
\mathsf{Z}_{\mathsf{PT}}\!\left(Y\mid\partial Y; q \mid [\![\sigma, \vartheta, \upalpha]\!]\mid \bm{\mu}\right)_{\beta}
\]
is the Laurent expansion of a rational function in $q$. Furthermore,
\[
(-iu)^{d_\beta+\sum \ell(\mu_j)-|\mu_j|}
\, \mathsf{Z}_{\mathsf{GW}}\!\left(Y\mid\partial Y; u \mid [\![\sigma, \vartheta, \upalpha]\!]\mid \bm{\mu}\right)_{\beta}
=
(-q)^{-d_\beta/2}
\, \mathsf{Z}_{\mathsf{PT}}\!\left(Y\mid\partial Y; q \mid [\![\sigma, \vartheta, \upalpha]\!]\mid \bm{\mu}\right)_{\beta}
\]
under the identification $-q = e^{iu}$.
\end{conjecture}

\begin{remark}[Importance of stability]
It is important in this version of the conjecture that we work with strata from the space $T(Y| \partial Y )$ of {\it stable} Chow $1$-complexes. If we work with strata on the unstable moduli space $\mathcal{T}(Y|\partial Y)$ the PT series fails to be rational. We give an example below. 
\end{remark}

\begin{example}
Consider $(Y,\partial Y) = \mathbb{C}^2\times (\mathbb{P}^1|\infty)$, which has tropicalization $\Sigma_Y = \mathbb{R}_{\geq 0}$, and consider $\beta = [\mathbb{P}^1]$.  In $\mathcal{T}(Y|\partial Y)$, there is a ray spanned by the (unstable) $1$-complex with vertices at $0$ and $1$.  The corresponding strata PT invariant is computed in \cite{OP10} to 
be proportional to $\mathrm{log}(1+q)$.  

\end{example}

\begin{corollary}
For fixed discrete data, the GW/PT correspondence for an exotic strata invariant $[\![ \sigma,\vartheta,\upalpha ]\!]$ follows from the GW/PT correspondence for nonexotic strata invariants
$[\![ \sigma_j,\vartheta_j,\upalpha_j ]\!]$ with $S_{\sigma_{j}}$ contained in the closure of the image of $S_\sigma$.
\end{corollary}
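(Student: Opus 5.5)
The corollary follows from Proposition~\ref{prop: precise-version} once we check that the decomposition there is an identity of classes on the common space $\mathscr M$, compatible with both virtual pullbacks. Fix the discrete data $[\beta,k,\bm\mu]$ and an exotic stratum class $[\![\sigma,\vartheta,\upalpha]\!]\in H^\star_A(\mathscr M)$. First, apply Proposition~\ref{prop: precise-version} to write
\[
[\![\sigma,\vartheta,\upalpha]\!]=\sum_j [\![\sigma_j,\vartheta_j,\upalpha_j]\!]
\]
in $H^\star_A(\mathscr M)$, possibly after a further subdivision $\mathscr M^\diamond$. Here each $\upalpha_j$ is non-exotic and each $\sigma_j$ contains $\sigma$ in its closure. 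Dually, $S_{\sigma_j}\subset \overline{S_\sigma}$, which is the containment asserted in the statement. Since the relation lives on $\mathscr M$, which has no virtual structure, it involves neither the genus nor the Euler characteristic. It is also independent of the choice between GW and PT.

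Second, pull the identity back along $\rho_{\sf GW}$ and $\rho_{\sf PT}$ and pair with the virtual classes $[\mathsf{GW}^{\bm\mu}_{\beta,k}(Y|\partial Y)_g]^{\sf vir}$ and $[\mathsf{PT}^{\bm\mu}_{\beta,k}(Y|\partial Y)_\chi]^{\sf vir}$. For this to be legitimate, the relation must hold on a subdivision to which both moduli spaces lift. Both $\rho$'s are combinatorially flat, so we pass to the induced log modifications. The virtual classes lift to logarithmic Borel--Moore homology, and pullback commutes with proper pushforward in the inverse system, so the identity survives. Summing over $g$ and $\chi$ gives
\[
\mathsf Z_{\bullet}\big(\cdots|[\![\sigma,\vartheta,\upalpha]\!]|\bm\mu\big)_\beta=\sum_j \mathsf Z_{\bullet}\big(\cdots|[\![\sigma_j,\vartheta_j,\upalpha_j]\!]|\bm\mu\big)_\beta
\]
for $\bullet\in\{\sf GW,\sf PT\}$.

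Finally, the normalizing factors $(-iu)^{d_\beta+\sum\ell(\mu_j)-|\mu_j|}$ and $(-q)^{-d_\beta/2}$ depend only on $\beta$ and $\bm\mu$, which are the same for every term. So the rationality of each PT summand implies the rationality of the sum, and the termwise equalities under $-q=e^{iu}$ add up to the equality for $[\![\sigma,\vartheta,\upalpha]\!]$.

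The main point to verify is that the finitely many classes produced by Proposition~\ref{prop: precise-version} really are stratum classes for the stable space $T$. The Remark on the importance of stability warns that unstable strata can break rationality. We expect this to hold because the proposition is stated with $\sigma_j$ cones of $T$, and exotic refinements only subdivide cones of $T$ itself.
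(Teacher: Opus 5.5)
Your proposal is correct and follows the paper's proof, which simply applies virtual pullback along $\rho$ to the identity of Proposition~\ref{prop: precise-version} on $\mathscr M$. Your extra remarks make explicit what that one-line argument leaves implicit: the identity does not depend on $g$ or $\chi$, the normalizing factors are common to all terms, and the sum is finite. One small nit: since $\sigma$ is a cone of the subdivision $T^\diamond$, the containment should be stated for the image of $S_\sigma$ in $\mathsf a T$, as the statement does.
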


\begin{proof}
    Apply virtual pullback along $\rho$ to the statement Proposition~\ref{prop: precise-version}.
\end{proof}

\subsection{Rubber calculus: plan}\label{sec: rubber-calc-plan}

Our goal is to convert non-exotic strata invariants into standard ones. Given a non-exotic stratum insertion $[\![\sigma, \vartheta, \upalpha]\!]$, we use $\sigma$ to construct an snc degeneration of $(Y|\partial Y)$. The resulting GW/PT moduli space has a distinguished component, generically a bundle over $S_\sigma$, which can be analyzed via the degeneration formula. 
The class $\vartheta$ is then converted to a deeper stratum, yielding an inductive reduction to lower-order invariants.

For smooth pairs, this collection of techniques is known as ``rubber calculus''~\cite{MP06}; we extend it to the snc setting. A related procedure in GW theory appears in~\cite[Section~5]{MR25}, but it involves steps incompatible with GW/PT, including induction on the genus. 
We present a different approach based on the structure of elementary geometries.

\subsection{Strata to degenerations}\label{sec: rigidification}

Fix a star $\mathsf v$ with $k$ internal markings and partitions $\bm\mu$ for the directions in $\mathsf v$ to record the tangency. Also fix a stable Chow $1$-complex $\Gamma$ in $\Sigma_Y$ with fixed tangency, which is a specialization of $\mathsf v$, i.e., the asymptotic star of $\Gamma$ is $\mathsf v$. This includes the possibility that $\Gamma = \mathsf v$.

Taking the cone over $\Gamma$ gives a map $\Sigma_\Gamma \rightarrow \RR_{\geq 0}$, inducing a degeneration
\[
\cY \rightarrow \A^1.
\]  
There is a space of stable Chow $1$-complexes in $\Sigma_\Gamma$ that are ``vertical'' with respect to the projection to $\RR_{\geq 0}$, denoted $T_\beta(\Sigma_\Gamma/ \RR_{\geq 0})$. This is the underlying tropical moduli space for the GW and PT spaces associated with the degeneration $\cY$. The fiber over $0$ of this space is $T_{\mathsf v}$; see~\cite[Section~7]{MR23}.

The space $T_\beta(\Sigma_\Gamma/ \RR_{\geq 0})$ contains a distinguished ray $\rho_\Gamma$, whose primitive generator is the $1$-complex $\Gamma$ itself. We get a cone complex:
\[ T_\Gamma := \mathsf{Star}(\rho_\Gamma)\]
whose Artin fan $\mathsf{a}T_\Gamma$ is an irreducible component in the degeneration of $\mathsf{a}T_{\sf v}$ associated to $T_\beta(\Sigma_\Gamma/ \RR_{\geq 0})$. The space $T_\Gamma$ determines a virtual irreducible component in each of the GW/PT spaces for the degenerate fiber $\cY_0$. Picking these out, we have a Cartesian square:
\[
\begin{tikzcd}
    \mathsf{GW}_\Gamma, \ \mathsf{PT}_\Gamma\arrow{d}\arrow{r} & \mathsf{GW}_\beta(\mathcal Y_0), \ \mathsf{PT}_\beta(\mathcal Y_0)\arrow{d}\\
    \mathsf a T_\Gamma\arrow{r} & \mathsf a T_\beta(\Sigma_\Gamma/ \RR_{\geq 0}).
\end{tikzcd}
\]

\subsection{Evaluation structure in the strata}

For each marking $s$ of $\Gamma$—either an internal marking or a part of a partition in $\bm \mu$—we have tropical evaluation spaces $P_s$ and evaluation spaces $\mathsf{Ev}_s$, which are the strata of $(Y|\partial Y)$ determined by $s$, as appropriate.

We also have evaluation spaces associated with the \emph{bounded edges} of $\Gamma$. For an edge $E$ of $\Gamma$ labelled with a multiplicity $n_E$, there is a space $P_E$ of $n_E$ unordered points in $\mathsf{Star}_E(\Gamma)$. The partition is not fixed at this stage, and it is important for GW/PT compatibility that it does not need to be.

There is a universal surface $\cD_E \rightarrow \mathsf{a}P_E$, and we define $\mathsf{Ev}_E$ to be the relative symmetric power
\[
\mathsf{Ev}_E := \mathsf{Sym}^{n_E}(\cD_E) \rightarrow \mathsf{a}P_E.
\]
We use a symmetric power because, unlike the unbounded rays, the tangency profiles are not fixed.  

We denote 
\[
P_\Gamma := \prod P_s \times \prod P_E, \qquad 
\mathsf{Ev}_\Gamma := \prod \mathsf{Ev}_s \times \prod \mathsf{Ev}_E.
\]
We have evaluation maps
\[
\mathsf{GW}_\Gamma, \mathsf{PT}_\Gamma \rightarrow \mathsf{Ev}_\Gamma.
\]
As before, to make uniform statements on the GW/PT sides, we define a new space via pullback:
\[
\begin{tikzcd}
\mathscr M^{\mathsf{node}}_\Gamma \arrow{d} \arrow{r} & \mathsf{Ev}_\Gamma \arrow{d} \\
\mathsf{a} T_\Gamma \arrow{r} & \mathsf{a}P_\Gamma.
\end{tikzcd}
\]
This space receives virtually smooth maps from $\mathsf{GW}_\Gamma$ and $\mathsf{PT}_\Gamma$ spaces.

\subsection{Stratum to non-exotic} Fix a cone $\sigma \subset T_\Gamma$, with associated stratum $S_\sigma \subset \mathsf{a}T_\Gamma$.
 The image of $\sigma$ in $\mathsf P_\Gamma$ defines
a stratum $\mathsf{Ev}_{\sigma} \subset \mathsf{Ev}_\Gamma$.
If we have cohomology classes $\vartheta \in H^\star_{A}(\mathsf{a}T_\Gamma)$ and  $\upalpha \in H^\star_{A}(\mathsf{Ev}_\sigma)$,
we again have a cohomology class
\[
[\![\sigma, \vartheta,\upalpha]\!]_{\Gamma} \in H^\star_{A}(\mathscr M^{\mathsf{node}}_\Gamma).
\]
\begin{definition}[Broken stratum invariants]
    The degree of the virtual pullback of classes of the form $[\![\sigma, \vartheta,\upalpha]\!]_{\Gamma}$ to the GW/PT moduli spaces is referred to as a {\it broken non-exotic stratum invariant}.
\end{definition}

We formulate the GW/PT correspondence as in the last section, but only work with {\it non-exotic} insertions, so $\upalpha$ is a class on the product of evaluation spaces.
If $\Gamma = \mathsf v$, the invariants are non-exotic stratum invariants discussed earlier. We show these can be determined inductively. 

Write $\sigma = \cdot$ for the zero-dimensional cone, corresponding to $S_\sigma = \mathsf{a}T_\Gamma$:

\begin{proposition}
\begin{enumerate}[(i)]

\item  When $\sigma = \cdot$ and $\vartheta = 1$, 
the GW/PT series for $[\![\sigma, \vartheta,\upalpha]\!]_{\Gamma}$
is determined by exotic invariants associated to the vertices of $\Gamma$.

\item When $\mathrm{dim}(\sigma) > 0$ and $\vartheta = 1$, the GW/PT series for $[\![\sigma, \vartheta,\upalpha]\!]_{\Gamma}$
equals the GW/PT series for 
$$[\![\cdot, 1,\upalpha']\!]_{\Upsilon}$$
where $\Upsilon$ is obtained from $\Gamma$ by taking a degeneration.

\item When $\mathsf{deg}(\vartheta) > 0$, then 
the GW/PT series for 
$[\![\sigma, \vartheta,\upalpha]\!]_{\Gamma}$
is determined by a $H^{\ast}_{A}(\mathrm{pt})$-linear combination of 
the GW/PT series for
$$[\![\sigma_i, \vartheta_i,\upalpha_i]\!]_{\Gamma_i}$$
where 
$$\mathrm{deg}(\vartheta_i) < \mathrm{deg}(\vartheta)$$
and
each $\Gamma_i$ is obtained from $\Gamma$ by taking a degeneration.

\end{enumerate}

In all cases, these reductions are compatible with the GW/PT correspondence.

\end{proposition}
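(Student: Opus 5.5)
The plan is to treat the three parts in order. Each part rewrites a broken stratum invariant in terms of objects that are either vertex-level or lower in the induction. The key point throughout is that every rewriting happens on the common space $\mathscr M^{\mathsf{node}}_\Gamma$, or on the Artin fan $\mathsf aT_\Gamma$, before virtual pullback along $\rho$. Compatibility with the correspondence is then automatic, up to the bookkeeping of normalization factors. Everything is done $A$-equivariantly, with $A$ acting trivially on the Artin fans.

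For (i), take $\sigma=\cdot$ and $\vartheta=1$, so the invariant is just $\upalpha$ integrated against $[\mathsf{GW}_\Gamma]^{\sf vir}$ or $[\mathsf{PT}_\Gamma]^{\sf vir}$. I would apply the cycle-level gluing statement underlying Theorem~\ref{thm: deg-formula}, i.e. \cite[Section~8]{MR23}, to the component $\mathsf aT_\Gamma$.
\begin{enumerate}[(a)]
\item The virtual class of $\mathsf{GW}_\Gamma$ (resp. $\mathsf{PT}_\Gamma$) is the product over vertices $V$ of the virtual classes for the targets $(Y_V|\partial Y_V)$, intersected with the strict diagonal over the bounded edges.
\item The edge evaluation spaces $\mathsf{Ev}_E=\mathsf{Sym}^{n_E}(\mathcal D_E)$ carry no fixed partition. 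So I first stratify by partitions $\mu_E$ of $n_E$. On the GW side this is the decomposition by contact orders. On the PT side I use the fixed-partition spaces of Section~\ref{sec: Negut-spaces}, together with Neguț's identification of $\rho_\star[\mathsf{Hilb}_\mu]^{\sf vir}$ with the Nakajima correspondence.
\item Writing the strict diagonal in its Künneth decomposition, and distributing $\upalpha$ (non-exotic, hence a product of factors over markings and edges), expresses the series as a sum of products of vertex series with possibly exotic insertions $\delta_V^{(j)}$.
\item The prefactors $(-1)^{\bm\mu}m_{\bm\mu}q^{-|\bm\mu|}$ and $m_{\bm\mu}u^{2\ell(\bm\mu)}$ match under $-q=e^{iu}$ after inserting the normalizations of Conjecture~\ref{conj: gw/pt}. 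This is exactly the computation behind Theorem~\ref{thm: deg-compatibility}, and I would just cite it.
\end{enumerate}

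For (ii), take $\dim\sigma>0$ and $\vartheta=1$. After refining the cone structure (Remark~\ref{rem: non-uniqueness}), I would pick an integral point of $\sigma^\circ$ and read off the corresponding $1$-complex $\Upsilon$. This $\Upsilon$ is a degeneration of $\Gamma$: each vertex of $\Gamma$ is replaced by a subcomplex in the star of that vertex. The claim is that the closed stratum $S_\sigma\subset\mathsf aT_\Gamma$ is identified, after subdivision, with the component $\mathsf aT_\Upsilon$ of a further degeneration. Moreover, the restriction of the (relative, strict) obstruction theory over $\mathsf aT_\Gamma$ to $S_\sigma$ is the one for $\mathsf{GW}_\Upsilon$, $\mathsf{PT}_\Upsilon$, by the Cartesian square in Section~\ref{sec: rigidification}. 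The class $\upalpha$ on $\mathsf{Ev}_\sigma$ then pulls back to a class $\upalpha'$ on $\mathsf{Ev}_\Upsilon$. The new bounded edges of $\Upsilon$ contribute $\mathsf{Sym}$-type factors with insertion $1$.

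The subtle point in (ii) is multiplicities: the stratum is a gerbe-like quotient by the isotropy torus with cocharacters $L_\sigma$. I would use the Corollary after Lemma~\ref{stable-visible}: $L_\tau$ injects into $\prod_E L_E\times\prod_H L_H$. This shows the isotropy acts faithfully through the edge data, so the identification holds with a combinatorial factor that depends only on $\Gamma,\sigma$ and is the same on both sides. This is where stability of the Chow $1$-complexes is essential.

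For (iii), take $\deg\vartheta>0$. I use that $H^\star_A(\mathsf aT_\Gamma)=H^\star(\mathsf aT_\Gamma)\otimes H^\star_A(\mathsf{pt})$ is the ring of piecewise polynomials on $T_\Gamma$. Its positive-degree part is generated, as an ideal, by the Courant functions of the rays. Hence $\vartheta=\sum_i [S_{\rho_i}]\cdot\vartheta_i$ with $\deg\vartheta_i<\deg\vartheta$, after absorbing equivariant parameters. Multiplying by $[S_{\rho_i}]$ replaces $\sigma$ by the cone $\sigma+\rho_i$, or by $\rho_i$ when $\sigma=\cdot$. Applying the identification from (ii) then converts each term into a broken stratum invariant for a degeneration $\Gamma_i$ of $\Gamma$, with class $\vartheta_i$ restricted to $\mathsf aT_{\Gamma_i}$ and lower degree.

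I expect the main obstacle to be (ii): matching the stratum $S_\sigma$ with the degenerate component $\mathsf aT_\Upsilon$, including obstruction theories and isotropy multiplicities, uniformly for GW and PT. I would first try to establish it at the level of Artin fans and $\mathscr M^{\mathsf{node}}$, using visibility, so that no separate GW or PT analysis is needed.
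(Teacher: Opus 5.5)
Your part (i) matches the paper's argument. You split by the degeneration formula and assign each edge factor $\upalpha_E\in H^\star_A(\mathsf{Ev}_E)$ to one adjacent vertex. This is harmless because, for every splitting partition, the Nakajima correspondence lives over $\mathsf{Sym}^{n_E}(\mathcal D_E)$.

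The genuine gap is in (ii), and it carries over to (iii). The closed stratum $S_\sigma\subset\mathsf aT_\Gamma$ is \emph{not} identified with the component $\mathsf aT_\Upsilon$, even after subdivision. The construction of Remark~\ref{rem: degeneration-trick} only gives a map $g\colon\mathsf aT_\Upsilon\to S_\sigma$. That map is a toric fibration whose general fiber is a compactification of the torus $A_\sigma$, whose cocharacter lattice is the groupification of $\sigma$. Meanwhile $S_\sigma$ is a $B\mathbb G_m^{\dim\sigma}$-torsor over an Artin fan, i.e. a rubber stack. Consequently $\mathsf{GW}_\Upsilon\to\mathsf{GW}_\sigma$ and $\mathsf{PT}_\Upsilon\to\mathsf{PT}_\sigma$ are flat fibrations of relative dimension $\dim\sigma$, and the rigid virtual classes are flat pullbacks of larger dimension. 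So taking $\upalpha'=f^\star\upalpha$ gives zero by the projection formula. This is a dimension discrepancy, not an isotropy multiplicity, and no combinatorial factor repairs it; compare rubber versus rigid relative invariants of a smooth pair.

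The missing idea is the rigidifying insertion. You need a class $\overline\epsilon\in H^\star_A(\mathsf{Ev}_\Upsilon)$ with $f_\star(\overline\epsilon\cap[\mathscr M^{\mathsf{node}}_\Upsilon])=[\mathscr S^{\mathsf{node}}_\sigma]$; the correct statement then uses $\upalpha'=\overline\epsilon\cdot f^\star\upalpha$. This is where visibility actually enters:
\begin{enumerate}
\item Injectivity of $L_\sigma\to\prod_E L_E\times\prod_H L_H$ makes $A_\sigma\to\prod A_E\times\prod A_s$ finite onto its image.
\item Hence $\mathscr M^{\mathsf{node}}_\Upsilon$ is finite over the fiber product $F_\Upsilon$ of $\mathsf{Ev}_\Upsilon$ and $\mathscr S^{\mathsf{node}}_\sigma$ over $\mathsf{Ev}_\sigma$.
\item So a power of a fiberwise hyperplane class, pulled back from the evaluation side, cuts out a cycle generically finite over $\mathscr S^{\mathsf{node}}_\sigma$.
\end{enumerate}
It matters that $\overline\epsilon$ comes from $\mathsf{Ev}_\Upsilon$. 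That keeps the result a broken non-exotic invariant, and keeps the identity on the common space before virtual pullback, which is what makes GW/PT compatibility automatic.

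Your (iii) fails for a closely related reason. Writing $\vartheta=\sum_i x_{\rho_i}\vartheta_i$ with Courant functions on $T_\Gamma$ only yields deeper strata when $\rho_i\not\subset\sigma$. For $\rho_i\subseteq\sigma$ you get self-intersection terms $x_{\rho_i}|_{S_\sigma}$. These are the target $\psi$-type classes of the rubber, and they are not supported on deeper strata of $S_\sigma$. The paper removes $\vartheta$ only after rigidifying. The pullback $g^\star\vartheta$ lives on the honest Artin fan $\mathsf aT_\Upsilon$, where positive-degree classes are boundary-supported, so $g^\star\vartheta=\sum_i\vartheta_i[S_{\sigma_i}]$ with $\deg\vartheta_i<\deg\vartheta$. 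One then pairs with $\epsilon=\overline\epsilon\cdot f^\star\upalpha$ and pulls back virtually along the flat toric fibrations.
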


In combination with the conversion from exotic to non-exotic stratum invariants, this proposition gives a recursive procedure for converting exotic invariants into nonexotic invariants, compatibly with the GW/PT correspondence.  Statement (iii) lets us replace non-exotic stratum terms with nontrivial $\vartheta$ with broken nonexotic strata invariants with $\vartheta = 1$. Statement (ii) lets us replace these terms with broken invariants with $\sigma = \cdot, \vartheta = 1$.  Finally, statement (i) expresses these in terms of exotic invariants of vertices which are lower-order than the star we started with. 

We record a general construction that will be helpful in the proof. 

\begin{remark}\label{rem: degeneration-trick}
    Let $\Sigma\to\RR_{\geq 0}$ a surjective map of cone complexes. Let $\rho$ be a ray that maps surjectively  to the base $\mathbb R_{\geq 0}$ and let $\tau$ be a cone that contains $\rho$ as a face. Let $t$ be a point in the interior of $\tau$. Let $\widetilde\Sigma\to \RR_{\geq 0}$ be any subdivision that introduces the ray through $t$ as one of the rays and call this ray $\delta$. 

    The cone complexes $\Sigma\to\RR_{\geq 0}$ and $\widetilde \Sigma\to\RR_{\geq 0}$ induce \textit{relative} Artin fans\footnote{Abramovich insiste sur le fait que les fans d'Artin devraient en réalité être appelés fans d'Olsson. En revanche, Wise propose ``éventails d'Olsson'' pour désigner les éventails d’Artin relatifs.} over $\mathbb A^1$, by taking the induced maps of Artin fans and pulling back via $\A^1\to[\A^1/\mathbb G_m]$. Call these $\mathfrak a\Sigma\to\A^1$ and $\mathfrak a{\widetilde\Sigma}\to\A^1$; we have used $\mathfrak a$ rather than $\mathsf a$ to distinguish these from ordinary Artin fans. The maps are flat with $0$-dimensional fibers, and we make base changes so the fibers are reduced. 

    The components dual to $\delta$ and $\rho$ are Artin fans. The stratum $\mathfrak B_\tau$ corresponding to $\tau$ is an irreducible stack of dimension $-(\dim \tau-1)$; specifically, it is a $B\mathbb G_m^{(\dim \tau-1)}$-torsor over an Artin fan. By the usual toric/fan dictionary, we have maps
    \[
    \mathsf a {\widetilde \Sigma}(\delta)\to \mathfrak B_\tau\subset \mathsf a \Sigma(\rho).
    \]
    We have used the notation ${\widetilde \Sigma}(\delta)$ and $\Sigma(\rho)$ for the star fan -- the quotient of the cones containing $\delta$ (resp. $\rho$) by the action of $\delta$ (resp. $\rho$). 

    Again by the structure theory of toric morphisms, the first arrow is a ``toric fibration'', meaning an equivariant compactification of a torus torsor.
\end{remark}

\begin{proof}

For part (i), since there are no strata or $\vartheta$ terms and since the evaluation insertion $\upalpha$ is nonexotic, 
we can apply the degeneration formula to split the GW/PT series across the vertices of $\gamma$.  The nodal contribution to $\upalpha$ is a product of contributions $\upalpha_E \in H^\star_{A}(\mathsf{Ev}_E)$ of each edge.
When we split an edge $E$, we can arbitrarily choose which of the two vertices incident to $E$ is assigned $\upalpha_E$.  For each partition $\mu$ occurring in the degeneration formula, the relevant Nakajima correspondence lives over the symmetric power $\mathsf{Sym}^{n_E}(\cD_E)$, so it is automatically compatible with
multiplication by $\upalpha_E$.

The argument for (ii) and (iii) is more involved and is carried out below. The stack $S_\sigma$ is negative-dimensional, so the idea is to rigidify its generic stabilizer by replacing it with a bundle, given by $\mathsf aT_\Upsilon$. Doing so allows us to trade the $\vartheta$-insertion 
for a boundary class.  We use the term "rubber calculus" as shorthand for this series of moves. Additional details on a similar procedure can be found in~\cite[Section~5]{MR25}. Now the details.

\noindent
{\sc Step I. Further degeneration.} We use the construction outlined in Remark~\ref{rem: degeneration-trick}, and set it up as follows. We fix an initial choice of $\Gamma$. The cone over it determines a family $\Sigma_\Gamma\to \RR_{\geq 0}$. By taking the associated family of vertical $1$-complexes, we get a family of moduli spaces $T_\beta(\Sigma_\Gamma/\RR_{\geq 0})\to \RR_{\geq 0}$. Note that by construction, the space $T_\beta(\Sigma_\Gamma/\RR_{\geq 0})$ is a subdivision of the constant family $T_\beta(\Sigma)\times\RR_{\geq 0}$.

Tautologically, the cone complex $T_\beta(\Sigma_\Gamma/\RR_{\geq 0})\to \RR_{\geq 0}$ has a distinguished ray $\rho_\gamma$ corresponding to $\Gamma$. Now, the choice of $\Upsilon$ determines a point in a cone $T_\beta(\Sigma_\Gamma/\RR_{\geq 0})$ adjacent to $\rho_\gamma$. As in the remark, we can subdivide along the point $[\Upsilon]$ and take the induced maps on components. We get maps  
\[
\mathsf aT_\Upsilon\to S_\sigma\subset \mathsf a T_\Gamma. 
\]
We have used shorthand $\mathsf aT_\Upsilon$ for the component determined by the ray through $[\Upsilon]$ and $\mathsf aT_\Gamma$ for the one determined by $\Gamma$.

We view $\mathsf aT_\Upsilon$, and also the pullbacks of the GW/PT spaces along the map to $S_\sigma$, as a rigidified version of the stratum.

\noindent
{\sc Step II. Rigidification.} The rigidified spaces have larger dimension, so we use a ``rigidifying insertion'' to express the series of the stratum in terms of that of the rigidified space. As in the definition of $\mathscr M^{\mathsf{node}}_{\gamma}$, we can take a symmetric power of the universal surfaces over the stratum $S_\sigma$. This records evaluation at the markings and the nodes. The result
$\mathscr S^{\mathsf{node}}_{\sigma}$ fits in a Cartesian diagram
\[
\begin{tikzcd}
\mathscr M^{\mathsf{node}}_{\Upsilon} \arrow{d}\arrow{r} & \mathscr S^{\mathsf{node}}_{\sigma} \arrow{d}\\
\mathsf a T_{\Upsilon} \arrow{r} &S_\sigma.
\end{tikzcd}
\]
Both horizonal maps are toric fibrations, whose general fibers are compactifications of a torus $A_\sigma$ -- the torus with cocharacter space equal to the groupification of $\sigma$.  We have an evaluation space
$\mathsf{Ev}_\sigma:= \prod \widetilde{\mathsf{Ev}}_E \times \prod \widetilde{\mathsf{Ev}}_s$, for $\mathscr S^{\mathsf{node}}_{\sigma}$, equipped with an evaluation
map
\[
\mathscr S^{\mathsf{node}}_{\sigma} \rightarrow \mathsf{Ev}_\sigma
\]
that fits in a commutative diagram
\[
\begin{tikzcd}
\mathscr M^{\mathsf{node}}_{\Upsilon} \arrow{d}{f}\arrow{r} & 
\mathsf{Ev}_\Upsilon \arrow{d}\\
\mathscr S^{\mathsf{node}}_{\sigma} \arrow{r} &\mathsf{Ev}_\sigma.
\end{tikzcd}
\]
The right arrow is a toric fibration; the general fiber is a compactification of $\prod A_E \times \prod A_s$. The diagram is {\it not} Cartesian -- the generic fiber of the left vertical map is a compactification of $A_\sigma$.

The following result lets us trade invariants on $S_\sigma$ for invariants on $T_{\Upsilon}$
\begin{lemma}
There exists a cohomology class $\overline{\epsilon} \in H^\star_{A}(\mathsf{Ev}_{\Upsilon})$ such that 
\[
f_\star(\overline{\epsilon}\cap[\mathscr M^{\mathsf{node}}_{\Upsilon}]) = [\mathscr S^{\mathsf{node}}_{\sigma}].
\]
\end{lemma}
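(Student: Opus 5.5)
The plan is to reduce to a statement about toric fibrations of Artin-fan type and then use visibility. The map $f\colon \mathscr M^{\mathsf{node}}_{\Upsilon}\to \mathscr S^{\mathsf{node}}_{\sigma}$ is a toric fibration whose generic fiber is a compactification $\overline{A}_\sigma$ of the torus $A_\sigma$, of dimension $r=\dim\sigma$. It therefore suffices to find a class of degree $r$ on $\mathscr M^{\mathsf{node}}_{\Upsilon}$ that restricts to the point class on the generic fiber. Since $f$ is (after base change to make fibers reduced) flat and proper with generically irreducible fibers, the cycle $f_\star(\overline\epsilon\cap[\mathscr M^{\mathsf{node}}_{\Upsilon}])$ is then a multiple of the fundamental class $[\mathscr S^{\mathsf{node}}_{\sigma}]$. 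The multiple is determined by the generic fiber degree, and we normalize it to $1$.

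The class will come from the evaluation side: we pull back a class from $\mathsf{Ev}_\Upsilon$ rather than from $\mathsf aT_\Upsilon$. The commutative square relates $\mathsf{Ev}_\Upsilon\to\mathsf{Ev}_\sigma$, a toric fibration with fiber $\overline{\prod A_E\times\prod A_s}$. On the generic fiber, the induced map
\[
\overline{A}_\sigma\longrightarrow \overline{\textstyle\prod_E A_E\times\prod_s A_s}
\]
is equivariant for the homomorphism of tori $A_\sigma\to\prod A_E\times\prod A_s$. On cocharacters this homomorphism is the map $L_\sigma\to\prod_E L_E\times\prod_H L_H$. By the Corollary following Lemma~\ref{stable-visible} (visibility of stable Chow $1$-complexes in elementary geometries), this map is injective. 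Hence $A_\sigma$ maps with finite kernel onto a subtorus, and the generic fiber maps finitely onto the closure of an orbit-type subvariety of the target torus compactification.

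Next choose $\overline\epsilon$ as an equivariant class on $\mathsf{Ev}_\Upsilon$ cutting out a point transversally on the image subtorus. Concretely, pick a complementary torus $A'$ so that $A_\sigma\times A'\to\prod A_E\times\prod A_s$ is an isogeny. Take $\overline\epsilon$ to be the pullback, along the projection to the quotient by $A'$, of the relative point class of the fiberwise compactified torus $\overline{A_\sigma}$. Equivalently, take a product of $r$ boundary divisor classes whose linear functionals restrict to a basis of $L_\sigma^\vee$, possibly after a subdivision of $\mathsf{Ev}_\Upsilon$, which is harmless in logarithmic cohomology. The relative point class can be built from toric boundary divisors of the fiber compactification, and these are strata classes pulled back from $\mathsf aP_\Upsilon$, so it is $A$-equivariant. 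Its restriction to a generic fiber of $f$ has degree equal to the index of $L_\sigma$ in its saturation, which is a nonzero integer. We divide by this integer, working with $\QQ$-coefficients.

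The main obstacle is justifying the degree computation globally. The fibration $f$ need not be a trivial torus bundle: its fibers over deeper strata of $S_\sigma$ degenerate, and $\mathscr M^{\mathsf{node}}_{\Upsilon}$ is only birational to a product over $\mathscr S^{\mathsf{node}}_{\sigma}$. I would handle this by working on the Artin-fan level as in Remark~\ref{rem: degeneration-trick}. Both spaces are pulled back from $\mathsf aT_\Upsilon\to S_\sigma$, and $S_\sigma$ is irreducible, being a $B\mathbb G_m^{\dim\tau-1}$-torsor over an Artin fan. Then $f_\star$ of any class is a multiple of the fundamental class purely for dimension and irreducibility reasons. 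The multiple can be computed over the dense open stratum, where the map is a genuine torus-torsor compactification and the visibility argument above applies.
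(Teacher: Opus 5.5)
Your skeleton matches the paper's. Visibility makes $A_\sigma\to\prod A_E\times\prod A_s$ finite. You then pull back from $\mathsf{Ev}_\Upsilon$ a class of degree $2r$ with nonzero degree on the generic fiber of $f$. Finally you normalize, using that the top-degree Borel--Moore homology of the irreducible $\mathscr S^{\mathsf{node}}_\sigma$ is spanned by its fundamental class.

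You diverge in the choice of $\overline\epsilon$. The paper makes $\mathsf{Ev}_\Upsilon\to\mathsf{Ev}_\sigma$ projective, factors it through $\mathbb P^N\times\mathsf{Ev}_\sigma$, and takes the $r$-th power of a fiberwise hyperplane class $H$. Since $\mathscr M^{\mathsf{node}}_\Upsilon\to F_\Upsilon$ is finite, $H$ restricts to an ample class on the generic fiber of $f$, so $H^r$ has positive degree there. This needs no quotient torus and no index computation. The class lives on $\mathsf{Ev}_\Upsilon$ itself (made projective factor by factor), and it is defined over all of $\mathsf{Ev}_\sigma$, not only where the fibration is a torsor compactification.

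Your choice does not go through as written. You pull back a point class along $T\to T/A'$, with $T=\prod A_E\times\prod A_s$. That quotient must first extend to a morphism. For a general complement $A'$, this forces a subdivision of the product fan that is not a product of subdivisions of the factors. Already for $T=\mathbb G_m^2$, with $L_\sigma$ the diagonal and $A'$ the antidiagonal, you must blow up two torus-fixed points of $\mathbb P^1\times\mathbb P^1$.

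The resulting $\overline\epsilon$ is then exotic, not an element of $H^\star_A(\mathsf{Ev}_\Upsilon)$. So your claim that the subdivision is ``harmless in logarithmic cohomology'' is exactly what fails in context. In Step III the class $\epsilon=\overline\epsilon\cdot f^\star\upalpha$ is fed into part (i) of the proposition. That proof splits the invariant with the degeneration formula, so it needs the insertion to be non-exotic for the product structure over edges and markings.

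The repair is easy: choose $A'$ compatibly with the factors. Extend a basis of $L_\sigma\otimes\QQ$ using vectors from a basis of $\prod L_E\times\prod L_H$ that is a union of bases of the individual factors. Then $A'$ is a product of subtori of the factor tori, and only blowups of individual factors are needed. In the example above, taking $A'$ to be one coordinate factor gives the projection $\mathbb P^1\times\mathbb P^1\to\mathbb P^1$ and the class $[\mathrm{pt}]\otimes 1$, with no blowup at all. Alternatively, just use the paper's hyperplane class.

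The same caveat applies to your ``equivalently'' reformulation. Boundary divisors do not carry linear functionals. What you mean is pulling back $[0]\in\mathbb P^1$ along characters $\chi^{m_1},\dots,\chi^{m_r}$ that restrict to a basis of $L_\sigma^\vee\otimes\QQ$. Making these maps regular again requires the same care about which subdivisions are allowed.
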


This allows us to conclude the analogue of (ii) for the non-virtual spaces $\mathscr M_\Upsilon^{\sf node}$. 

\begin{proof}

Let $F_\Upsilon$ denote the fiber product of $\mathsf{Ev}_\Upsilon$ and $\mathscr S^{\sf node}_\sigma$ over $\mathsf{Ev}_\sigma$. We of course have a map
\[
\mathscr M^{\mathsf{node}}_{\Upsilon}\to F_\Upsilon
\]
Over the generic point of $\mathscr S_\sigma^{\sf node}$, the fiber product $F_\Upsilon$ is a compactified torsor under the product torus $\prod A_E\times \prod A_s$. The group $A_\sigma$ has a natural map 
\[
A_\sigma \rightarrow \prod A_E \times \prod A_s.
\]
We claim this map is finite onto its image. Indeed, this can be checked at the level of cocharacter lattices, and amounts to the map on cocharacter lattices being injective. Since we are working in elementary geometries, the results of Section~\ref{sec: visibility} exactly guarantee this.

The map $\mathsf{Ev}_\Upsilon\to\mathsf{Ev}_\sigma$ is generically a toric variety fiber bundle. We can assume this map is also projective, so factors through a constant projective space bundle. Choose a fiberwise hyperplane. Consider pullbacks of powers of this divisor to $F_\Upsilon$, and then to $\mathscr M_\Upsilon^{\sf node}$. The map $\mathscr M^{\mathsf{node}}_{\Upsilon}\to F_\Upsilon$ is finite -- this follows from visibility. By taking a power, we obtain a cycle in $\mathscr M_\Upsilon^{\sf node}$ that is generically finitely onto $\mathscr S^{\sf node}_\sigma$. By scaling, we can ensure that it cycle pushes forward to the fundamental class. Any equivariant lift of the class on $\mathsf{Ev}_\Upsilon$ that pulls back to this cycle can be taken as $\overline \epsilon$.
\end{proof}

\noindent
{\sc Step III. Removal of the $\vartheta$ class.} The action of $A$ on our Artin fans is trivial, so we can write $H^\star_{A}(\mathsf T_\Gamma) = H^\star(\mathsf T_\Gamma)\otimes H^\star_{A}(\mathrm{pt})$.  
By linearity, it suffices to consider $\vartheta$ of the form $\vartheta' \otimes 1$ with $\deg \vartheta' > 0$.  Since $\mathsf{a} T_{\Upsilon}$ is an Artin fan, the pullback $g^\star\vartheta$ is supported on boundary strata. This follows, for example, from the description of the (non-$A$-equivariant) cohomology of the Artin fan as piecewise polynomials~\cite[Theorem~B]{MR21}. So there are cones $\sigma_i \subset T_\Upsilon$ and cohomology classes $\vartheta_i$ such that
\[
g^\star\vartheta = \sum_i \vartheta_i [S_{\sigma_{i}}]
\]
with $\deg \vartheta_i < \deg \vartheta$.
If we set $\epsilon = \overline{\epsilon}\cdot f^\star\upalpha$ where $\overline\epsilon$ is the rigidifying insertion, we see that
\begin{equation}\label{theta-removal-identity}
\upalpha \cap q^\star(\vartheta\cdot[S_\sigma]) = \sum f_\star\left( \epsilon \cap p^\star(\vartheta_i [S_{\sigma_{i}}]\right) \ \ \textnormal{in } \ H^\star_{A}(\mathscr S^{\mathsf{node}}_{\sigma}).
\end{equation}
This allows us to conclude the analogue of (iii) on the non-virtual spaces $\mathscr M_\Upsilon^{\sf node}$.

\noindent
{\sc Step IV. Virtual structure.} We can now conclude (ii) and (iii) in the proposition. It suffices to apply virtual pullback to the identity in Equation~\eqref{theta-removal-identity}. To explain the details, denote the preimage of the strata $S_{\sigma}\subset \mathsf{a}T_\gamma$ in our geometric moduli spaces by $\mathsf{GW}_\sigma \subset \mathsf{GW}_\Gamma$ and $\mathsf{PT}_\sigma \subset \mathsf{PT}_\Gamma$.
The morphisms to $S_\sigma$ factor through $\mathscr S^{\mathsf{node}}_{\sigma}$ and we have a Cartesian diagram:
\[
\begin{tikzcd}
\mathsf{GW}_\Upsilon, \mathsf{PT}_\Upsilon \arrow{d}\arrow{r} &\mathsf{GW}_\sigma, \mathsf{PT}_\sigma \arrow{d}\\
\mathscr M^{\mathsf{node}}_{\Upsilon} \arrow{r} &S^{\mathsf{node}}_{\sigma}.
\end{tikzcd}
\]
Since the horizontal morphisms are flat toric fibrations, we can combine virtual push/pull compatibility with \eqref{theta-removal-identity} to complete the proof.  As we are pulling back the same identity to both GW and PT moduli spaces, 
we have compatibility with the GW/PT correspondence is automatic.
\end{proof}

The key consequence for our overall goal is the following.

\begin{corollary}\label{cor: non-exotic-strata}
Given an exotic stratum class $[\![ \sigma,\vartheta,\upalpha ]\!]$, with $1$-complex $\Upsilon$ in the interior of $\sigma$, the GW/PT correspondence holds for the GW/PT series associated to  $[\![ \sigma,\vartheta,\upalpha ]\!]$
if it holds for the nonexotic invariants of the stars $v'$ of $\Upsilon$ and any stars less than these stars in the partial ordering.
\end{corollary}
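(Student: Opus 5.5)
The plan is to assemble the corollary from the two conversion rules established earlier in this section, running a finite recursion. First, apply the Corollary following Proposition~\ref{prop: precise-version}. This reduces the GW/PT correspondence for the exotic class $[\![ \sigma,\vartheta,\upalpha ]\!]$ to the correspondence for non-exotic stratum classes $[\![ \sigma_j,\vartheta_j,\upalpha_j ]\!]$, where each $\sigma_j$ contains $\sigma$ in its closure. Next, the Proposition on broken stratum invariants is applied to each of these non-exotic classes, with $\Gamma$ initially equal to the star $\mathsf v$.

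The recursion proceeds in three moves. Part (iii) trades $\vartheta_j$ of positive degree for broken classes $[\![\sigma_i,\vartheta_i,\upalpha_i]\!]_{\Gamma_i}$ with $\deg\vartheta_i<\deg\vartheta_j$ and $\Gamma_i$ a degeneration. Iterating, we reach $\vartheta=1$; the iteration terminates because the degree is a non-negative integer that strictly decreases. Part (ii) then replaces $[\![\sigma,1,\upalpha]\!]_\Gamma$ with $\dim\sigma>0$ by $[\![\cdot,1,\upalpha']\!]_\Upsilon$. Here $\Upsilon$ is the degeneration determined by a point in the interior of the relevant cone, which for the original stratum is the given $1$-complex $\Upsilon$ in the interior of $\sigma$. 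Finally, part (i) splits $[\![\cdot,1,\upalpha']\!]_\Upsilon$ via the degeneration formula into products of (possibly exotic) invariants attached to the vertices of $\Upsilon$. These are exactly the stars $\mathsf v'$ of $\Upsilon$, or of further degenerations of it. All three moves are stated to be compatible with the correspondence. This holds because each is the virtual pullback of one identity on the non-virtual spaces $\mathscr M^{\sf node}$ to both the GW and PT sides, and the GW and PT degeneration formulas have matching prefactors once $-q=e^{iu}$ and the normalizations of Conjecture~\ref{conj: gw/pt} are taken into account.

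The vertex invariants produced by (i) may again be exotic, so the procedure is repeated at each vertex star. The main obstacle is showing that this repeated process terminates and lands only on non-exotic invariants of the stars $\mathsf v'$ of $\Upsilon$ and of stars below them. For this I would use the partial ordering on stars. Every vertex star arising from a degeneration of $\Upsilon$ is by definition $\preccurlyeq$ some star $\mathsf v'$ of $\Upsilon$. It is either strictly smaller or equal to a star of $\Upsilon$ up to translation in $\mathbb G_\Sigma$. By part (ii) of the ordering proposition, only finitely many stars lie below a given one, and antisymmetry rules out cycles. Hence a well-founded induction on stars applies: at a fixed star one first reduces exotic to non-exotic invariants, and every broken contribution passes either to strictly smaller stars or to non-exotic invariants of the same star. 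The delicate point is checking that the recursion at a star equal to $\mathsf v'$ does not reproduce the exotic invariant we started with. I would handle this by inducting lexicographically, first on the star and then on $(\dim\sigma$ reversed$,\ \deg\vartheta)$. The relevant facts are that the $\sigma_j$ in Proposition~\ref{prop: precise-version} are contained in closures of larger cones, so that $\dim\sigma_j \ge \dim\sigma$, that $T_{\mathsf v}$ is finite-dimensional, and that $\deg\vartheta$ drops at each step.
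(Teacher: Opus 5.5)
Your outline is the paper's own. The paper gives no separate proof of this corollary: it composes the exotic-to-non-exotic conversion (the corollary to Proposition~\ref{prop: precise-version}) with moves (iii), (ii), (i) of the proposition on broken stratum invariants and recurses, relying on the vertex stars produced by (i) being of lower order. The gap is in what you add beyond this, namely the termination argument for the case you correctly flag as delicate. Your lexicographic order (star, then $\dim\sigma$ descending, then $\deg\vartheta$) is not decreased by the moves. Move (ii) followed by (i) sends a broken class with $\dim\sigma>0$ to \emph{ordinary} invariants, with the zero cone, of the vertex stars of $\Upsilon$. Suppose $\sigma$ is a cone of translates of the star $\mathsf u$ under consideration. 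Such cones exist: in the full boundary case $T_{\mathsf u}$ contains a $3$-dimensional subcomplex of translates, as in Proposition~\ref{prop: trivalent-reduction}. Nothing prevents these cones from appearing among the $\sigma_j$ of Proposition~\ref{prop: precise-version} or among the cones produced by (iii), and the corollary itself allows $\sigma$ to be one. In that case $\Upsilon$ has a single vertex whose star is $\mathsf u$. You therefore pass from $(\mathsf u,\dim\sigma>0)$ to $(\mathsf u,\dim\sigma=0)$, which comes \emph{later} in your order, so your induction hypothesis does not cover it. The facts you cite ($\dim\sigma_j\ge\dim\sigma$, finite dimensionality of $T_{\mathsf u}$, the drop of $\deg\vartheta$ in (iii)) say nothing about this reset of $\dim\sigma$ to $0$. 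Your sentence that every broken contribution goes either to strictly smaller stars or to non-exotic invariants of the same star states the right dichotomy. But you assert it rather than prove it, and the lexicographic induction does not supply it.

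The missing observation concerns where exoticity comes from. In (i), exotic vertex insertions arise only from the K\"unneth components of the strict diagonals along bounded edges. A vertex star of $\Upsilon$ can coincide with $\mathsf u$ only when $\Upsilon$ is a translate of $\mathsf u$; otherwise all vertex stars are strictly smaller, as used in the proof of Proposition~\ref{prop: trivalent-reduction}. In the translate case $\Upsilon$ has one vertex and no bounded edges. So (i) merely identifies the broken class with an ordinary invariant of the same star, with insertion $\overline\epsilon\cdot f^\star\upalpha$ pulled back from $\mathsf{Ev}_\Upsilon=\prod_s\mathsf{Ev}_s$. This is a product of marking evaluation spaces, so the insertion is non-exotic. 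Such invariants are covered by the hypothesis of the corollary, which explicitly includes non-exotic invariants of the stars of $\Upsilon$ themselves. Exotic invariants therefore reappear only at strictly smaller stars, where a well-founded induction on stars applies, using finiteness and antisymmetry of $\preccurlyeq$. With this, the only inductions needed are on the star and, inside move (iii), on $\deg\vartheta$; no induction on $\dim\sigma$ is required.
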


\section{Full toric boundary}

The goal of this section is to prove the following:

\begin{theorem}\label{thm: full-boundary}
    The logarithmic GW/PT correspondence holds equivariantly, with primary insertions, for toric threefold pairs $(Y|\partial Y)$ where $\partial Y$ is the full toric boundary. 
\end{theorem}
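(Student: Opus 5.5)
We argue by induction on stars in the partial order of Section~3, restricted to the full boundary geometry $\underline\Sigma=\mathbb R^3$, where $\mathbb G_\Sigma=\mathbb Z^3$. Since all full-boundary pairs lie in one logarithmic birational class, and GW/PT invariants are unchanged by subdivision, every invariant is indexed by a star $\mathsf v$ at $0$ (balanced in the usual tropical sense), partitions $\bm\mu$, $k$ internal markings, and an insertion. By the Proposition on the partial ordering, only finitely many stars lie below a given $\mathsf v$. So it suffices to prove the correspondence for $\mathsf v$ assuming it for all strictly smaller stars. By Corollary~\ref{cor: non-exotic-strata}, and the exotic-to-non-exotic reduction that precedes it, the stratum and broken invariants arising along the way are then controlled by the inductive hypothesis, provided the $1$-complexes $\Upsilon$ involved are genuine degenerations of $\mathsf v$ with at least two vertices.

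\textbf{Step 1: trade insertions for strata.} By Proposition~\ref{prop: elementary-cohomology} and the remark following it, every equivariant insertion on $Y^k\times\mathsf{Ev}^{\bm\mu}$ is an $H^\star_A(\mathsf{pt})$-combination of classes pulled back from $\mathsf aP^{\bm\mu}_{\beta,k}$. A point insertion, or more generally any positive-degree class of a boundary stratum, is therefore the pullback of a stratum of $\mathsf aP$. Its preimage in $\mathsf aT$ is a union of strata $S_\sigma$ with $\sigma\neq 0$, provided the evaluation $T\to P$ is combinatorially flat. Each such term is a non-exotic stratum invariant attached to a nontrivial degeneration $\Upsilon$ of $\mathsf v$. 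Its vertices have stars strictly smaller than $\mathsf v$, except in the degenerate case where a vertex carries all of $\mathsf v$; that case is removed by translating with $\mathbb G_\Sigma$. The inductive hypothesis and Corollary~\ref{cor: non-exotic-strata} then give the correspondence for every insertion with a nonconstant component. We are left with insertions built from equivariant parameters times $1$, i.e.\ with no incidence conditions.

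\textbf{Step 2: the residual base cases.} Equivariant parameters factor out, so only invariants with trivial insertion remain. The virtual dimension of the full-boundary problem is $\sum_j \ell(\mu_j)+k$ on the GW side after fixing contact orders; it is nonnegative, and positive unless $\mathsf v$ is empty. Hence for nontrivial $\mathsf v$ the integral of $1$ vanishes by degree, on both sides compatibly. When insertion-free integrals do not vanish, we use the $A$-action instead: the evaluation map is equivariant onto a space carrying a free action of the translation torus of $\mathbb R^3$. So the class must be pulled back, which forces a point-type insertion and returns us to Step~1. This is the infinite family indexed by positive integers mentioned in the overview, and it must be cut down to finitely many genuinely unbreakable stars. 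For these I would invoke Parker's tropical gluing argument~\cite{Par17}, as Bousseau does~\cite{Bou17}. The plan is to degenerate a low-complexity star, such as the tripod of $\mathbb P^3$ lines or the $S\times\mathbb P^1$ configurations, along a rigid $1$-complex using Theorem~\ref{thm: deg-formula}. This expresses it in terms of itself plus lower terms, with a nonzero leading coefficient on both sides, which pins down the unknown series. What remains is the single line through two points in $\mathbb P^3$, where both sides are computed directly.

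\textbf{Main obstacle.} I expect Step 2 to be the hardest. Translations guarantee that some vertex star is strictly smaller only when $\Upsilon$ genuinely breaks. Self-similar stars, for example a multiple of a primitive tripod, whose degenerations reproduce a scaled copy at a vertex, must be handled by the Parker-style recursion rather than by the order. There one must check that the coefficient relating the unknown to itself is invertible in $H^\star_A(\mathsf{pt})(\!(q)\!)$ and matches across GW and PT under $-q=e^{iu}$. Rationality in $q$ is preserved throughout, since each reduction is a finite linear combination with coefficients in $H^\star_A(\mathsf{pt})$ times monomials in $q$.
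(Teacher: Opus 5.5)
Your proposal has a genuine gap in Step 1, and it carries through the rest of the argument. When you trade an insertion for strata $S_\sigma$ with $\sigma\neq 0$, nothing forces the $1$-complex $\Upsilon$ in the interior of $\sigma$ to break. In $\mathbb R^3$ the translates of $\mathsf v$ (with its markings at the central vertex) form a $3$-dimensional subcomplex of $T_{\mathsf v}$. If $\sigma$ lies in that subcomplex, then $\Upsilon$ has a single vertex whose star is $\mathsf v$ itself. Translating by $\mathbb G_\Sigma=\mathbb Z^3$ does not remove this case: it is exactly what identifies the vertex star with $\mathsf v$ in the partial order, so Corollary~\ref{cor: non-exotic-strata} just returns the invariant you started with.

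The missing idea is the dimension count in Proposition~\ref{prop: trivalent-reduction}. Once equivariant parameters are factored out, a nonvanishing stratum insertion needs $\dim\sigma\geq\mathsf{vdim}=k+\sum\ell_i$. Breaking is therefore forced only when $\mathsf{vdim}\geq 4$. The irreducible cases are the stars with $\mathsf{vdim}\leq 3$:
\begin{enumerate}[(i)]
\item linear stars, with two opposite rays, maximal contact, and $k\leq 1$;
\item trivalent stars with maximal contact and $k=0$.
\end{enumerate}
These carry nonzero invariants with point-type insertions (e.g.\ the principal series $\frac{2}{u^3}\sin(nu/2)$). Yet your Step 1, combined with the degree vanishing in Step 2, would prove the theorem with no computation at all. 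That shows Step 1 overclaims. The later sentence ``when insertion-free integrals do not vanish\ldots'' does not repair this; it is inconsistent with what precedes it.

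As a result, your Step 2 neither locates the base cases nor supplies the mechanism that solves them. The paper needs the following.
\begin{enumerate}[(a)]
\item For linear stars, a self-gluing identity $a(u)=d\,a(u)^2$ from degenerating along the ray, plus a genus-$0$ check that $a\neq 0$ (Lemma~\ref{lem: linear-calculation}). The PT sign comes from the PT degeneration formula.
\item For trivalent stars, the fact that the pushforward of each virtual class to $\mathsf{Ev}_{\mathsf v}$ is a multiple of the single class $[R_{\mathsf v}]$, via the Menelaus condition and Tevelev. So each star has one principal series.
\item A $4$-valent consistency relation showing the principal series depends only on the multiplicity $mn^2$ (Proposition~\ref{prop: multiplicity-only}). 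This includes a K\"unneth/dimension argument that non-maximal gluing partitions contribute zero.
\item Multiplicity $1$ via degeneration to the normal cone of a line in $\mathbb P^2\times\mathbb P^1$ and the local curve theory.
\item Parker's recursion (Proposition~\ref{prop: trivalent-recursion}). There the multiplicity-$2$ series appears \emph{squared}, so it is determined only up to sign, which is fixed by positivity of the leading PT coefficient.
\end{enumerate}
You describe the recursion as ``the unknown in terms of itself plus lower terms with nonzero leading coefficient''. That misses the quadratic relations in (a) and at multiplicity $2$, and the extra input each needs (nonvanishing in one case, a sign check in the other). It also misplaces the obstacle: the issue is not self-similar degenerations but that stars with $\mathsf{vdim}\leq 3$ admit no forced breaking at all.
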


\subsection{Reduction to trivalent stars} We write discrete data as $[\mathsf v,k,\bm\mu]$ -- standing for a star, $k$ internal insertions, and partitions of the ray multiplicities. As noted earlier, this determines the curve class. Recall the virtual dimension of the GW/PT moduli spaces is
\[
\mathsf{vdim}: = k + \sum \ell_i,
\]
where $\ell_i$ ranges over the lengths of the partitions in $\bm\mu$. 

\begin{proposition}\label{prop: trivalent-reduction}
If 
\[
\mathsf{vdim} = k + \sum \ell_i \geq 4
\]
then any primary GW/PT series for $[\mathsf v, k, \mu]$ is effectively determined by the GW/PT series of strictly lower-order stars in $\RR^3$, compatibly with the GW/PT correspondence.  Furthermore, the PT series
$\mathsf{Z}_{\mathsf{PT}}\left(Y|\partial Y;q | \upalpha| \bm{\mu} \right)_{\sf v}$
 is expressed as a polynomial function of $q$, $q^{-1}$, and the PT series for stars $\mathsf v'\prec\mathsf v$.  

\end{proposition}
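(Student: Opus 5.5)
By Proposition~\ref{prop: elementary-cohomology} and the remark on Artin fans, every equivariant insertion can be written as an $H^\star_A(\mathsf{pt})$-linear combination of classes pulled back from $\mathsf aP^{\bm\mu}_{\beta,k}(Y|\partial Y)$, i.e.\ boundary strata of the evaluation space. So we may assume $\upalpha$ is a product of stratum classes at the markings: each internal marking is constrained to a toric stratum of $Y$, and each boundary marking to a stratum of $D_i$. The plan is to move these constraints into the tropical moduli $T_{\sf v}$ and show that, once enough of them are imposed, the pulled-back class is supported on proper strata of $\mathsf aT_{\sf v}$. Such strata correspond to nontrivial degenerations $\Upsilon$ of $\mathsf v$. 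Corollary~\ref{cor: non-exotic-strata} then reduces the invariant to invariants of the stars of $\Upsilon$, which are strictly smaller than $\mathsf v$.

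\textbf{Step 1: dimension count.} The virtual dimension is $\mathsf{vdim}=k+\sum\ell_i$. A primary insertion has cohomological degree $3$ per internal marking and $2$ per boundary marking. The degree of $\upalpha$ exceeds $\mathsf{vdim}$ by at least $2k+\sum\ell_i$, so non-equivariantly the nonzero invariants force insertions that are heavy in codimension. Equivariantly, we decompose $\upalpha$ as a $H^\star_A(\mathsf{pt})$-linear combination of pullbacks of piecewise polynomials on the tropical evaluation space $P^{\bm\mu}_{\beta,k}$, using~\cite[Theorem~B]{MR21}. The lowest-degree terms are point-like conditions: a point in $\mathbb R^3$ for internal markings and a point in $\mathbb R^2$ for ends.

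\textbf{Step 2: forcing degeneration.} The evaluation $T_{\sf v}\to P$ has target of dimension $3k+2\sum\ell_i$. Modulo the translation group $\mathbb G_\Sigma=\mathbb Z^3$, the locus of $1$-complexes equal to the star $\mathsf v$ itself is a single point, so it maps to a $3$-dimensional locus. We choose a piecewise-polynomial representative of each point class supported away from the image of this ``undegenerated'' cone. Once the number of independent point conditions exceeds the $3$ translation parameters, the pulled-back class on $\mathsf aT_{\sf v}$ is a sum of classes $[\![\sigma,\vartheta,\upalpha']\!]$ with $\dim\sigma>0$; this is what the hypothesis $\mathsf{vdim}\ge 4$ provides. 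Concretely, we pick a marking and translate its point condition generically far away, as in the Parker/Bousseau tropical argument. The degeneration formula (Theorem~\ref{thm: deg-formula}) then applies to the expansion defined by the resulting rigid $1$-complexes $\Gamma$, each of which has at least two vertices. The vertex stars are strictly smaller, by the finiteness and antisymmetry of the ordering. The resulting broken invariants carry exotic diagonal insertions. We convert these using Proposition~\ref{prop: precise-version} and the rubber calculus proposition, and each conversion only introduces stars $\preccurlyeq$ stars of $\Gamma$.

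\textbf{Step 3: compatibility and the shape of the formula.} Every identity used is a virtual pullback of an identity on $\mathscr M$ or $\mathscr M^{\sf node}_\Gamma$, so it holds simultaneously for GW and PT. The degeneration formulas differ only by the factors $(-1)^{\bm\mu}m_{\bm\mu}q^{-|\bm\mu|}$ versus $m_{\bm\mu}u^{2\ell(\bm\mu)}$. These match under the normalization in Conjecture~\ref{conj: gw/pt} via additivity of $d_\beta$ and of $\sum(\ell-|\mu|)$ over vertices. The same bookkeeping gives the polynomiality claim: $Z_{\sf PT}$ for $\mathsf v$ is a finite sum of products of lower PT series, with coefficients that are monomials in $q^{\pm1}$ and elements of $H^\star_A(\mathsf{pt})$. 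The sum is finite because there are finitely many rigid $\Gamma$ and finitely many smaller stars.

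\textbf{Main obstacle.} The hard step is Step~2: showing that with $\mathsf{vdim}\ge4$ the insertion really localizes off the open stratum equivariantly. The equivariant lifts of point classes carry lower-degree correction terms in $H^\star_A(\mathsf{pt})$, and those terms may fail to force degeneration. I would first handle this by induction on the polynomial degree of the piecewise polynomial. Terms of top degree degenerate by the translation argument. Lower terms either still impose at least $4$ independent conditions or fall below virtual dimension, in which case they vanish or reduce to fewer markings with the same star. In the latter case I would use the divisor and point-forgetting relations to remove markings, and then recurse.
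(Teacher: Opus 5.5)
Your architecture matches the paper's: write the insertion via stratum classes from $\mathsf a\mathsf P$, pull back to $\mathsf aT_{\mathsf v}$, and invoke Corollary~\ref{cor: non-exotic-strata}. The decisive step, however, is not established, for two reasons.

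First, $\dim\sigma>0$ is the wrong criterion. The translates of $\mathsf v$ sweep out a $3$-dimensional subcomplex of $T_{\mathsf v}$, and this subcomplex contains positive-dimensional cones. At a translate, the central vertex star is $\mathsf v$ itself up to $\mathbb G_\Sigma$, so it is not strictly smaller. Hence ``proper strata correspond to nontrivial degenerations'' is false; what you need is $\dim\sigma\geq 4$.

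Second, your mechanism for reaching $\dim\sigma\geq 4$ does not work in the equivariant setting. Pushing a point condition tropically far away and degenerating, as in Parker/Bousseau, is non-equivariant: a fully $A$-equivariant insertion leaves no freedom in where it specializes (compare Step~III of Proposition~\ref{prop: linear-negative-geometry}). It also treats only point insertions, as does your Step~1 count, whereas a primary insertion at a marking can be a divisor or curve class. This is why you are left with the ``equivariant correction terms'' obstacle and an unjustified recursion through divisor and point-forgetting relations.

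The missing idea is short and removes that obstacle:
\begin{itemize}
\item The Artin fan carries the trivial $A$-action. So each term is $c\cdot[S_\tau]$ with $c\in H^\star_A(\mathrm{pt})$ and $[S_\tau]$ a non-equivariant stratum class, and there are no correction terms to chase.
\item After making $T_{\mathsf v}\to\mathsf P$ combinatorially flat, the pullback of $[S_\tau]$ is its cycle-theoretic preimage. This is an integer combination of $[S_\sigma]$ over cones $\sigma$ mapping isomorphically onto $\tau$, so $\dim\sigma$ equals the degree of the insertion.
\item The moduli spaces are proper, so the pairing of $\rho^\star[S_\sigma]$ with the virtual class lies in $H_A^{2(\dim\sigma-\mathsf{vdim})}(\mathrm{pt})$. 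It therefore vanishes unless $\dim\sigma\geq\mathsf{vdim}\geq 4$.
\item A cone of dimension at least $4$ cannot lie in the $3$-dimensional translation subcomplex. So any $\Upsilon$ in its interior is a genuine degeneration with strictly smaller vertex stars.
\item Corollary~\ref{cor: non-exotic-strata} then applies directly to $[\![\sigma,1,1]\!]$, giving both the compatibility and the Laurent-polynomial shape.
\end{itemize}
No moving of points, no induction on polynomial degree, and no hand-applied degeneration formula is needed.
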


\begin{proof}
Without loss of generality, we can assume $T_{\mathsf v} \rightarrow \mathsf{P}$ is combinatorially flat.
Consider the diagram
\[
\begin{tikzcd}
\mathscr M\arrow{d}\arrow{r} & \mathsf{Ev} \arrow{d}\\
\mathsf a T_{\sf v}\arrow{r} &\mathsf{a} \mathsf P.
\end{tikzcd}
\]
and fix a class $\upalpha \in H^\star_{A}(\mathsf{Ev})$.

By Proposition~\ref{prop: elementary-cohomology}, we can assume $\upalpha$ descends to a stratum class $[S_\tau] \in H_A^\star(\mathsf a \mathsf P)$ associated to a cone $\tau$ of $\mathsf P$, and we can pull back $[S_\tau]$ to $\mathscr M$ via $\mathsf aT_{\mathsf v}$. 
By flatness, the pullback of $[S_\tau]$ to $\mathsf aT_{\mathsf v}$ is the same as its cycle-theoretic preimage; this is an integer linear combination of strata $S_\sigma$ associated 
to the cones $\sigma$ of $\mathsf T$ which map isomorphically onto $\tau$. By pulling back each $[S_\sigma]$ to $\mathscr M$, it suffices to prove the GW/PT correspondence for strata insertions $[\![\sigma, 1,1]\!]_{\mathsf v}$.

Using the virtual dimension constraint, we see that the dimension of $\sigma$ must be at least $4$ to have a non-vanishing invariant.  
On the other hand, the subcomplex inside $T$ consisting of translations of $\mathsf v$ (with its internal markings) is $3$-dimensional, since such a translation is determined by the location of the central vertex. Therefore,
if we let $\Upsilon$ denote a Chow $1$-complex in the interior of $\sigma$, we see 
that $\Upsilon$ cannot be a translated star and its vertices are all strictly smaller than $\mathsf v$ in our partial ordering. By Corollary~\ref{cor: non-exotic-strata}, we deduce the primary GW/PT correspondence for $[\mathsf v, k, {\bm \mu}]$ from the stars at these vertices and all lower-order stars.
\end{proof}

Given this proposition, the cases that need to be analyzed are (i) \emph{linear stars}, where $\mathsf v$ has two rays in opposite directions, with $\ell_1 =1$ and $k=0$ or $1$, and (ii) \emph{trivalent stars}, where $\mathsf v$ has $k=0$ and three rays with maximal contact order, i.e., $\ell_i=1$ for all $i$. We take them in turn.

\subsection{Linear stars}\label{sec: linear-stars}

The linear cases can be analyzed by hand.
First, consider $k=0$ and $\ell_1 = 1$. We can assume
$Y = \mathbb P^1 \times \mathbb P^1 \times \mathbb P^1$ with $\beta= d[\mathbb P^1 \times 0 \times 0]$ and partitions $(d)$ and $\mu$ along $D_0=\pi_1^{-1}(0)$ and $D_\infty = \pi_1^{-1}(\infty)$ respectively.
Let $D = \mathbb{P}^1 \times \mathbb{P}^1$, which we can identify with both $D_0$ and $D_\infty$.
\begin{lemma}\label{lem: linear-calculation}
If $\ell(\mu) > 1$, then all GW/PT invariants vanish.
If $\mu = (d)$, then 
\begin{eqnarray*}
Z_{\mathsf{GW}}\left(Y|\partial Y;u| \delta_1 \otimes \delta_2|(d);(d)  \right)_\beta = \frac{1}{d} u^{-2} \int_{D}\delta_1\cup\delta_2,
\\
Z_{\mathsf{PT}}\left(Y|\partial Y; q| \delta_1 \otimes \delta_2|(d);(d)  \right)_\beta = \frac{(-1)^{d-1}}{d}q^{d} \int_{D}\delta_1\cup\delta_2.
\end{eqnarray*}
\end{lemma}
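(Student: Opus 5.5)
Both sides are computed directly, using the $\mathbb G_m^2$-symmetry of the geometry and a dimension count. Write $Y=\mathbb P^1\times D$ with $D=\mathbb P^1\times\mathbb P^1$, and $\beta=d[\mathbb P^1\times\mathrm{pt}]$. Every stable map or stable pair in class $\beta$ on an expansion of $Y$ is supported, after contracting tube components, on fibres $\mathbb P^1\times\{x\}$ of the projection $\pi\colon Y\to D$. The projection to $D$ is compatible with all the expansions that occur here. These are chains of $\mathbb P^1$-bundles along the first factor, together with blowups of $\partial Y$ which do not interact with curves of fibre class. Hence both moduli spaces map to configurations of points of $D$, and the evaluation maps at $D_0$ and $D_\infty$ factor through this.

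\emph{Dimension count and vanishing.} The virtual dimension is $k+\sum\ell_i=1+\ell(\mu)$. Since $Y$ is a product, the obstruction theory splits off the trivial factor $T_D$. Concretely, the obstruction bundle contains the trivial summand $H^1(C,\mathcal O_C)\otimes T_D$ on the GW side and the corresponding $\mathrm{Ext}$ term on the PT side. Away from the rigid locus this forces the class to be pulled back from the locus where all components lie over a single point $x\in D$. Equivalently, I would deform the target by the translation action of $\mathbb G_m^2\subset \mathrm{Aut}(D)$ and use the $(d)$ contact condition at $D_0$: every curve meets $D_0$ at a single point $x$, and a connected curve of fibre class over $x$ lies in $\mathbb P^1\times\{x\}$. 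Therefore every part of $\mu$ evaluates at the same point $x$. The evaluation map
\[
\mathsf{GW},\ \mathsf{PT}\longrightarrow D_0\times \mathsf{Ev}^{\mu}
\]
thus factors through the small diagonal $D\hookrightarrow D\times D^{\ell(\mu)}$, which has dimension $2$. This is smaller than the virtual dimension $1+\ell(\mu)$ once $\ell(\mu)>1$. Pulling back along the factorization shows that the pushforward of the virtual class is a cycle of dimension exceeding that of its support, hence zero, and all invariants vanish. This works equivariantly as well, since the pushforward to the fixed diagonal vanishes for degree reasons.

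\emph{The case $\mu=(d)$.} The evaluation factors through the diagonal $D\to D\times D$, and the virtual dimension is exactly $2$. Hence
\[
\langle\delta_1\otimes\delta_2\rangle=c_{\bullet}\int_D\delta_1\cup\delta_2,
\]
where $c_\bullet$ is the degree of the pushed-forward virtual class over $D$. By the product structure, $c_\bullet$ equals the corresponding invariant of the local geometry $\mathbb C^2\times(\mathbb P^1|0,\infty)$ with $(d)$ contact at both ends, normalized by the point class of $\mathbb C^2$. This is the rubber-free tube / degree $d$ fully ramified cover, known from \cite{OP10,MNOP06b} and its GW counterpart. On the GW side the only contribution is genus $0$: the degree $d$ cover $z\mapsto z^d$ with automorphism group $\mathbb Z/d$. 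Higher genus contributions vanish by the $\lambda_g$-type argument coming from the trivial $T_D$ factor. This gives $\frac1d u^{-2}$. On the PT side the moduli space is the single pair $\mathcal O_{d\cdot\text{fibre}}$, with the thickening along the fibre direction fixed by the $(d)$ flag condition. It has $\chi=d$, and the marked-space normalization $1/m_{\bm\mu}$ produces the factor $1/d$. The sign $(-1)^{d-1}$ is the parity of the Behrend-type weight, which I would read off from the Nakajima normalization of the $(d)$ class.

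The main obstacle is making the vanishing of higher genus / higher $\chi$ contributions and the sign rigorous. For this I would use the equivariant tube computation of \cite{OP10} as input, and check that the answer is consistent with Conjecture~\ref{conj: gw/pt}: under $-q=e^{iu}$, the prefactors $(-q)^{-d}$ and $(-iu)^{2d+2-2d}$ match.
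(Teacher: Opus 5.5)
Your first half matches the paper. The support is $\mathbb P^1\times\{x\}$, the evaluations factor through the small diagonal $D\hookrightarrow D\times D^{\ell(\mu)}$, and the dimension count kills $\ell(\mu)>1$. For $\mu=(d)$ the invariant is $a\int_D\delta_1\cup\delta_2$ for a scalar series $a$.

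The gap is in computing $a$, above all on the PT side. The PT moduli space is not a single pair. A multiplicity-$d$ structure on $\mathbb P^1\times\{x\}$ is thickened in the $D$-directions, and the $(d)$-flag does not rigidify it. Already for $d=2$ the flag carries no information, while the thickening direction ranges over $\mathbb P(T_xD)$. So the $\chi=2$ locus is $3$-dimensional against virtual dimension $2$. For $\chi>d$ there are also free points, non-constant families $\mathbb P^1\to\mathsf{Hilb}^d_x(D)$, and pairs on expansions.

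Consequently:
\begin{itemize}
\item the $q^d$ coefficient is an excess-intersection computation you have not done;
\item the vanishing of all higher powers of $q$ is not addressed;
\item the sign $(-1)^{d-1}$ is asserted rather than derived;
\item $m_{\bm\mu}=d^2$ here, so $1/m_{\bm\mu}$ alone does not give $1/d$.
\end{itemize}
Deferring to the tube of \cite{OP10} does not close the gap. That is the equivariant residue theory of $\mathbb C^2\times(\mathbb P^1|0,\infty)$. Here $D$ is compact with its full toric boundary (all torus-fixed points of $D$ are corners), and the boundary evaluations go through flag spaces. Comparing compact logarithmic and residue theories is a real step; compare Proposition~\ref{Chow-residue}, where strata corrections appear. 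On the GW side, your $\lambda_g^2=0$ argument can be made to work, but only with the logarithmic tangent bundle $T^{\log}_D\cong\mathcal O^{\oplus 2}$ (trivial weights), not $T_D$.

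The missing idea, which makes all of this unnecessary, is self-consistency under degeneration. Degenerate $Y$ along the first factor into $Y\cup_D Y$. By the vanishing you have already proved, only the gluing partition $(d)$ contributes. The K\"unneth decomposition of the diagonal of $D$ then turns Theorem~\ref{thm: deg-formula} into $a=d\,u^{2}a^{2}$ on the GW side and $a=(-1)^{d-1}d\,q^{-d}a^{2}$ on the PT side. Once $a\neq 0$ is known, these force $a=\frac1d u^{-2}$ and $a=\frac{(-1)^{d-1}}{d}q^{d}$. Nonvanishing needs only a single leading-term check, at genus $0$ for GW and at minimal $\chi$ for PT. All higher genus and higher Euler characteristic contributions then vanish automatically. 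The sign is simply the PT gluing factor $(-1)^{\ell(\mu)-|\mu|}$ for $\mu=(d)$.
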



\begin{proof}
For both moduli spaces, since the support of the map/sheaf is a fiber class with connected image, the evaluation maps factor through evaluation at $D_0$:
\[\mathsf{GW}_{\mathsf v},\ \mathsf{PT}_{\mathsf v} \rightarrow D \hookrightarrow D \times D^{\ell(\mu)},\]
where the second map is the small diagonal.
The virtual dimension is $\ell(\mu) +1$, so a dimension count gives the vanishing statement when $\ell(\mu) > 1$ and
\[Z_{\mathsf{GW}}\left(Y|\partial Y;u| \delta_1 \otimes \delta_{2}|(d);(d)  \right)_\beta = a(u) \int_{D}\delta_1\cup\delta_2\]
when $\mu = (d)$, where $a(u)$ is a scalar function of $u$.
If we degenerate $Y$ along the first factor into two copies of $Y$, the degeneration formula gives
$a(u) = d\cdot a(u)^2.$
Since we know $a(u)$ is nonzero, by an explicit calculation at $g=0$, the result follows. The same analysis also gives the PT calculation, where the sign comes from the corresponding sign in the PT degeneration formula.
\end{proof}

The case $k =\ell_1 = \ell_2 =1$ can be analyzed in a similar way as follows:

\begin{lemma}
\begin{eqnarray*}
    Z_{\mathsf{GW}}\left(Y|\partial Y;u| \zeta \otimes\delta_1 \otimes \delta_2|(d);(d)  \right)_\beta &=&  u^{-2} \int_{Y}\zeta\cup \delta_1\cup\delta_2,\\
Z_{\mathsf{PT}}\left(Y|\partial Y; q|\zeta \otimes  \delta_1 \otimes \delta_2|(d);(d)  \right)_\beta &=& q^{d} \int_{Y}\zeta \cup \delta_1\cup\delta_2.
\end{eqnarray*}
\end{lemma}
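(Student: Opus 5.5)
The plan mirrors the proof of Lemma~\ref{lem: linear-calculation}: first use geometry to reduce both series to a single scalar function times a classical intersection number, then fix that scalar with the degeneration formula (Theorem~\ref{thm: deg-formula}) together with the previous lemma.

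\emph{Step 1: factorization.} As before, $Y=\mathbb P^1\times\mathbb P^1\times\mathbb P^1$ and $\beta=d[\mathbb P^1\times 0\times 0]$. Every stable map and every stable pair in class $\beta$ has support a union of fibres of $\pi_{23}\colon Y\to D$. Because of the connectedness and the maximal contact along $D_0$ and $D_\infty$, the support is set-theoretically a single fibre over a point $x\in D$. The two boundary evaluations therefore both equal $x$. On the PT side, with the marked moduli of Section~\ref{sec: Negut-spaces}, the internal marking is a point $y$ on that fibre, so $\pi_{23}(y)=x$. The evaluation map thus factors through
\[
Y\xrightarrow{(\mathrm{id},\pi_{23},\pi_{23})} Y\times D\times D .
\]
The virtual dimension is $k+\ell_1+\ell_2=3=\dim Y$.

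\emph{Step 2: reduction to a scalar.} The virtual class pushes forward to a degree-$3$ class on $Y$, so it is a scalar multiple of $[Y]$. Hence
\[
Z_{\sf GW}=b(u)\int_Y\zeta\cup\pi_{23}^\star\delta_1\cup\pi_{23}^\star\delta_2 ,
\]
and likewise $Z_{\sf PT}=c(q)\int_Y(\cdots)$. Equivariantly the same holds, since the $A$-equivariant pushforward to $Y$ in the top degree is determined by the non-equivariant degree. Alternatively, one can run the argument with $\zeta$ and the $\delta_i$ equal to point and fundamental classes and use $H^\star_A(\mathsf{pt})$-linearity.

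\emph{Step 3: fixing the scalar.} Take $\zeta=[\mathrm{pt}]$, $\delta_1=1$, $\delta_2=[\mathrm{pt}]$. Degenerate $Y$ along the first factor into $Y\cup_D Y$ and specialise the internal point into the first component. The degeneration formula gives
\[
b(u)=d\,u^{2}\, b(u)\cdot a(u), \qquad a(u)=\tfrac1d u^{-2}.
\]
Here the only splitting has one edge of multiplicity $d$ with partition $(d)$, and the diagonal of $D$ is distributed by Künneth.

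This is consistent but does not determine $b$. To pin it down, I would instead compare with the insertion-free problem through a divisor-type argument. Replace $\zeta$ by a fibre divisor $H=\pi_1^\star[\mathrm{pt}]$, so that $\int_\beta H=d$. On both sides, imposing $H$ on the internal marking equals $d$ times the invariant without the marking, in the form of the string/divisor equation. On the PT side this is the standard $\mathsf{ch}_2$-divisor identity of Oberdieck's marked spaces. This gives
\[
b(u)=d\cdot a(u)=u^{-2}, \qquad c(q)=d\cdot\tfrac{(-1)^{d-1}}{d}q^d .
\]

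\emph{Expected obstacle: the sign.} The lemma as stated has no $(-1)^{d-1}$, so the sign has to be reconciled through the normalisation $1/m_{\bm\mu}$ and the convention for the sign in the PT degeneration formula. That bookkeeping is the step I expect to require care. Verifying the divisor identity equivariantly on the marked PT space is the other delicate point. If that identity fails, the fallback is an explicit genus-$0$ / minimal-$\chi$ computation, as in Lemma~\ref{lem: linear-calculation}, combined with the multiplicative relation above.
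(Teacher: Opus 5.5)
Your argument is the same as the paper's. You factor the evaluation through $Y\hookrightarrow Y\times D\times D$. You note that the pushforward of the virtual class is a rational multiple of $[Y]$; top-degree equivariant Borel--Moore homology of $Y$ is $\QQ\cdot[Y]$, so the scalar is a number and can be computed with non-equivariant insertions, and no equivariant divisor equation is needed. You then fix the scalar by taking $\zeta=\pi_1^\star[\mathrm{pt}]$ and using the divisor equation to reduce to Lemma~\ref{lem: linear-calculation}. The degeneration detour is circular, as you noticed, and can be dropped.

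What needs changing is your last paragraph: there is nothing to reconcile, and the route you propose cannot produce $q^d$. The factor $1/m_{\bm\mu}$ appears identically on both sides of the divisor equation, because the contact data $(d);(d)$ is unchanged. The PT gluing sign $(-1)^{\ell-|\mu|}$ is already built into the value $\frac{(-1)^{d-1}}{d}q^d$ of Lemma~\ref{lem: linear-calculation}. So your argument, and the paper's, yields
\[
Z_{\mathsf{PT}}\left(Y|\partial Y;q|\zeta\otimes\delta_1\otimes\delta_2|(d);(d)\right)_\beta=(-1)^{d-1}q^d\int_Y\zeta\cup\delta_1\cup\delta_2 .
\]
This is also the sign forced by Conjecture~\ref{conj: gw/pt}. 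Here $d_\beta=2d$ and the exponent of $-iu$ is $2d+2(1-d)=2$, so one needs $(-q)^{-d}Z_{\mathsf{PT}}=-u^2Z_{\mathsf{GW}}=-\int_Y\zeta\cup\delta_1\cup\delta_2$. That holds for $(-1)^{d-1}q^d$ but fails for $q^d$ when $d$ is even. The displayed PT formula therefore drops a factor $(-1)^{d-1}$, which is invisible for odd $d$ and in particular for $d=1$. State your conclusion with the sign rather than trying to remove it.
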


\begin{proof}
Evaluation at the internal marked point gives a factorization
\[\mathsf{GW}_{\mathsf v}, \mathsf{PT}_{\mathsf v} \rightarrow Y \hookrightarrow Y \times D \times D.\]
The virtual class pushes forward to a scalar multiple of $Y$. The scalar can be evaluated at $\zeta = [p \otimes 1 \otimes 1]$, $\delta_1 = [p \otimes p]$ and $\delta_2 = 1$. Here,
the divisor equation can be used to remove the internal insertion and reduce to $k=0$, in which case it follows from Lemma~\ref{lem: linear-calculation}.
\end{proof}

\subsection{Trivalent stars} 
Let $\mathsf v$ be a trivalent star in $\mathbb R^3$, given by vectors $v_1,v_2,v_3$ with multiplicities $n_i$ and $\sum n_i v_i = 0$. By the dimension analysis, the partitions have length one along each ray, i.e. we consider maximal tangency for the star. We consider GW/PT moduli spaces over all genus/Euler characteristics. The invariants only depend on $\mathsf v$ up to an $\mathsf{GL}_3(\ZZ)$-change of coordinates.

\subsubsection{Organizing the evaluation} The span of the $v_i$ is $2$-dimensional in $\mathbb R^3$, so we change basis and assume
\[
v_i = (\star,\star,0).
\]
We will use this identification $\mathbb R^3 = \RR^2\times\RR$ and refer to this last copy of $\mathbb R$ as the \textit{third coordinate}. 

The picture extends to the geometric side. The threefold determined by the star has the form
\[
Y = S\times\mathbb P^1.
\]
The $\mathbb P^1$ is the direction corresponding to the third coordinate in our $\mathbb R^3$, referred to above. The divisor corresponding to $v_i$, call it $D_i$, has the form $\overline D_i\times\mathbb P^1\subset S\times\mathbb P^1$, where $\overline D_i\subset S$ is a boundary curve. Evaluating to $D_i$ and projection gives ${\sf GW}_{\sf v}, \ {\sf PT}_{\sf v}\to (\mathbb P^1)^3$. We make a basic observation.

\begin{lemma}
The maps ${\sf GW}_{\sf v},\ {\sf PT}_{\sf v}\to (\mathbb P^1)^3$ factor through the diagonal $\mathbb P^1\subset(\mathbb P^1)^3$.
\end{lemma}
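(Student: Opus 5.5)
The plan is to show that every curve, respectively every stable-pair support, in these moduli spaces lies in a single fiber $S\times\{t\}$ of the projection $\mathrm{pr}\colon Y=S\times\mathbb P^1\to\mathbb P^1$. Both moduli spaces are proper, so it suffices to check the factorization on geometric points, provided the statement is also handled scheme-theoretically; the second half of the plan deals with that.

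First, the curve class. Since all $v_i$ have vanishing third coordinate, balancing shows that the curve class $\beta_{\sf v}$ satisfies $\beta_{\sf v}\cdot(S\times\{0\})=\beta_{\sf v}\cdot(S\times\{\infty\})=0$. Thus $\mathrm{pr}_\star\beta_{\sf v}=0$. For a stable map $C\to\mathcal Y\to Y$, every connected component of $C$ therefore maps to a point of $\mathbb P^1$ under $\mathrm{pr}$.

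Second, connectedness. The divisors $S\times 0$ and $S\times\infty$ belong to the full toric boundary, so the expansions carry corresponding boundary components. Because the contact orders along these divisors are zero, no marking lies on them, and a curve component cannot lie inside a boundary divisor. The real issue is connectedness across the three markings. Each contact-order marking along $D_i$ evaluates to $\overline D_i\times\mathbb P^1$, and its $\mathbb P^1$-coordinate is $\mathrm{pr}$ of the corresponding curve point. With maximal tangency there is exactly one marking per ray, so three markings in total. The curve may be disconnected, but any connected component that carries no marking is a closed curve whose class meets every boundary divisor trivially. I would rule such components out, or else show they do not affect the evaluation.

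The cleaner route is to work tropically rather than geometrically. The map to $\mathsf aT_{\sf v}$ records the tropical $1$-complex, and the $\mathbb P^1$-coordinates of the three ends agree after passing to the expansion. Concretely, the composite $\mathcal Y\to Y\to\mathbb P^1$ is a family over the base, and the pullback of $\mathcal O_{\mathbb P^1}(1)$ to each connected component of the support is trivial in degree. A Chow $1$-complex with asymptotic star $\sf v$ is balanced. Its projection to the third coordinate is therefore a balanced tropical curve in $\mathbb R$ with no unbounded ends, so that projection is a point. It follows that the support of the tropical curve is connected in the third direction, after we account for the fact that the three ends emanate from a connected graph.

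For the PT side I would argue the same way. The support $Z$ of $\mathcal E$ is pure one-dimensional with class $\beta_{\sf v}$, so each connected component of $Z$ lies in a fiber of $\mathrm{pr}$.

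The main obstacle is showing that the three end-points land in the same fiber, not merely that each component sits in some fiber. I would first try to prove that the support is connected. On the GW side, the evaluation data $(\mathsf v,k=0,\ell_i=1)$ together with stability forces every connected component to contain a marking. If the three markings then lay on different components, each component would have a class with nonzero intersection with a single $D_i$ only, and this contradicts balancing, because a single $v_i$ is not balanced. Two components would split the star into a part $\{v_i,v_j\}$ and $\{v_k\}$, which is impossible for the same reason. On the PT side I would run the same balancing argument on the classes of the connected components of $Z$.

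To finish, I would upgrade the pointwise statement to a scheme-theoretic factorization. The map to $(\mathbb P^1)^3$ is determined by the line bundles $\mathrm{ev}_i^\star\mathcal O(1)$ together with their sections. The composite $C\to\mathbb P^1$ contracts the connected $C$ and so factors through a single map from the base, compatibly with all three evaluations. For families over non-reduced bases, I would instead invoke reducedness of $\mathsf aT_{\sf v}$ and check the factorization only at the level of cycles, which is all that the invariants require.
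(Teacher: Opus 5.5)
Your proposal is correct and takes the paper's approach. The paper's proof is the single observation that the cycle underlying a stable map or pair lies in one slice $S\times\{\mathsf{pt}\}$. Your argument supplies the justification the paper leaves implicit: $\mathrm{pr}_\star\beta_{\mathsf v}=0$ puts each connected component in a fiber, and no proper nonempty subset of $\{n_1v_1,n_2v_2,n_3v_3\}$ is balanced, so all three markings lie on one connected component.

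Two small remarks. The tropical paragraph can be dropped, since tropical data cannot see the actual $\mathbb P^1$-coordinate. For non-reduced bases, reducedness of $\mathsf aT_{\mathsf v}$ plays no role; the cycle-level fallback works because pushforward of classes only sees the reduced moduli space, where set-theoretic factorization is an honest factorization.
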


\begin{proof}
The cycle underlying the stable map/pair is contained in a surface of the form $S\times\{\sf pt\}$.
\end{proof}

Split the evaluation space into
\[
\overline{E} \times (\mathbb P^1)^3.
\]
Reset the notation and consider the evaluation space
\[
{\sf Ev}_{\sf v} = \overline{E} \times \mathbb P^1\subset \overline{E} \times (\mathbb P^1)^3.
\]
Both moduli spaces ${\sf GW}_{\sf v}$ and ${\sf PT}_{\sf v}$ have virtual dimension $3$. There is a conjectural matching for any codimension $3$ logarithmic cohomology class insertion. But by the discussion above, it suffices to prove it for insertions from the subspace
\[
H^\star_{A,{\sf log}}(\overline{E})\otimes H_A^\star(\mathbb P^1)\subset H_{A,{\sf log}}^\star({\sf Ev}_{\sf v}).
\]
If we do not choose a point class in the second factor, there are not sufficiently many constraints and the invariant vanishes (even equivariantly), so the choice for the insertions is determined by an equivariant logarithmic cohomology class on $\overline{E}$.

\subsubsection{Tropical evaluations} We have a space $T_{\sf v}(\mathbb R^3)$ of Chow $1$-complexes. We can fix a point class in the distinguished $\mathbb P^1$-direction -- the second factor in the above tensor product. So we can assume for this discussion that all $1$-complexes are contained in the height $0$ slice of $\mathbb R^2\times \RR$. 

We have an evaluation map to the product of these three boundary curves:
\[
\mathsf {GW}_{\sf v} \ \ \mathsf {PT}_{\sf v} \to \overline {\sf E}.
\]
We also have a map from the space of $1$-complexes to the corresponding space of $0$-complexes:
\[
T_{\sf v}(\mathbb R^3)\to \overline{\mathsf P}.
\]
The space $\overline{\mathsf P}$ is the cone space of $\overline {\mathsf E}$ -- one point each on the fans of the curves $\overline D_1,\overline D_2$ and $\overline D_3$. 

We now explain that all the invariants associated with a trivalent star can be reconstructed from any single nontrivial invariant. The $1$-complexes are contained in a copy of $\mathbb R^2$ and we have three distinguished vectors $v_1$, $v_2$, and $v_3$ given by the star. By basic toric geometry, the cocharacter space of $\overline D_i$ is the quotient of this $\mathbb R^2$ by the primitive vector in the direction of $v_i$, compatibly with the lattice structures. Putting these maps together, we have a map
\[
\mathbb R^2\to \mathbb R^2/\langle v_1\rangle\times \mathbb R^2/\langle v_2\rangle \times \mathbb R^2/\langle v_3\rangle = \overline{\mathsf P}.
\]
Call the image lattice $\mathsf L_{\mathsf v}$. The linear space $\mathsf L_{\mathsf v}$ determines a subtorus $L\otimes \mathbb G_m \times \mathbb G_m \subset {\sf Ev}_{\sf v}$. 

For every blowup of the evaluation space ${\sf Ev}_{\sf v}$, we can take the closure of this subtorus. Since the inclusion is $A$-equivariant, it determines a logarithmic cohomology class:
\[
[R_{\mathsf v}]\in H^\star_{A,{\sf log}}({\sf Ev}_{\sf v}).
\]

\begin{lemma}
    For each fixed value of the genus resp. holomorphic Euler characteristic, the pushforward of the virtual class of ${\sf GW}_{\mathsf{v},g}$ resp. ${\sf PT}_{\mathsf{v},\chi}$ in $H^\star_{\sf log}({\sf Ev}_{\sf v})$ is a multiple of $[R_{\mathsf v}]$. 
\end{lemma}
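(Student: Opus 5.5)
We propose to prove the lemma by an equivariance argument: the virtual class is invariant under a translation action of a three-dimensional torus, and the orbit of a generic point of $\mathsf{Ev}_{\mathsf v}$ under that torus is exactly the subtorus $\mathsf L_{\mathsf v}\otimes\mathbb G_m\times\mathbb G_m$. The target is $Y=S\times\mathbb P^1$, and its dense torus $A=T_S\times\mathbb G_m$ (with $T_S\cong \mathbb G_m^2$) acts on $\mathsf{GW}_{\mathsf v,g}$ and $\mathsf{PT}_{\mathsf v,\chi}$ by translation. The evaluation map to $\mathsf{Ev}_{\mathsf v}=\overline E\times\mathbb P^1$ is $A$-equivariant. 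Here $T_S$ acts on each boundary curve $\overline D_i$ through the quotient $T_S\to T_{\overline D_i}$, whose cocharacter map is $\mathbb Z^2\to\mathbb Z^2/\langle v_i\rangle$, and $\mathbb G_m$ acts on the $\mathbb P^1$ factor.

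Consequently the orbit of the torus $A$ through a point of the open torus of $\mathsf{Ev}_{\mathsf v}$ is a translate of the subtorus with cocharacter lattice equal to the image of $\mathbb Z^2\oplus\mathbb Z$. By definition this is $\mathsf L_{\mathsf v}\times\mathbb Z$. Its closure is $R_{\mathsf v}$, or an $A$-translate of it, and it has dimension $3$ because the three $v_i$ span $\mathbb R^2$. The image is therefore a $3$-dimensional subvariety, matching the virtual dimension $3$.

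\textbf{Step 1 (support).} We show that the image of each virtual class, on every subdivision of $\mathsf{Ev}_{\mathsf v}$, is supported on a finite union of closures of $A$-orbits. A general curve or sheaf in the moduli space lies over the dense torus of $\mathsf{Ev}$. The stable pair or stable map in a fixed surface $S\times\{t\}$ with class $\beta$ meeting each $\overline D_i$ at a single point with full contact is constrained by a Menelaus-type condition: the three contact points in $\overline D_i^\circ\cong\mathbb G_m$ satisfy the relation coming from the divisor of a character, and every monomial function restricted to the curve has principal divisor supported at the three contact points. In fact this is the cleanest route. For a character $m$ of $T_S$ vanishing on none of the $v_i$, Weil reciprocity gives the constraint that $\prod_i \mathrm{ev}_i^\star(\ldots)$ is constant. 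More structurally, the orbits of $A$ on the open part of $\mathsf{Ev}_{\mathsf v}$ are exactly the fibers of the quotient torus map $\overline E^\circ\to(\prod_i \mathbb Z^2/\langle v_i\rangle)/\mathsf L_{\mathsf v}\otimes\mathbb G_m$. We need the evaluation image to lie in a single fiber, namely the identity fiber, which is $R_{\mathsf v}$ itself and not merely a translate.

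We will argue this via the tropical diagram. The map $\mathsf{GW}_{\mathsf v},\mathsf{PT}_{\mathsf v}\to\mathsf aT_{\mathsf v}\to\mathsf a\overline{\mathsf P}$ factors through the image of $T_{\mathsf v}$, which is contained in the image of the linear map $\mathbb R^2\to\overline{\mathsf P}$. Balancing forces every tropical $1$-complex to lie in a translate of the star, and the translate is recorded by the linear map. The analogous statement at the level of the character quotient follows from Weil reciprocity applied to monomials on the (possibly expanded) curve, or, for PT, to the norm of a monomial over the support. This shows the evaluation lands in $R_{\mathsf v}$ on every blowup.

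\textbf{Step 2 (multiple).} The pushforward is a class of dimension $3$ supported on the irreducible $3$-dimensional variety $R_{\mathsf v}$ (or its strict transform). Hence it equals $c\cdot[R_{\mathsf v}]$ in Borel--Moore homology, and therefore also in $H^\star_{A,\mathsf{log}}$. The constants $c$ are compatible under the pushforward maps between subdivisions, since strict transforms push forward to strict transforms. Because $A$ acts on $R_{\mathsf v}$ with a dense orbit, the same conclusion holds equivariantly: an equivariant class of top dimension supported on $R_{\mathsf v}$ is an $H^\star_A(\mathsf{pt})$-multiple of its fundamental class, and degree considerations force that multiple to be a scalar.

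\textbf{Main obstacle.} The main obstacle is Step 1 on the boundary of the moduli spaces, in particular for expansions and for PT sheaves with embedded or fat components. There the Weil reciprocity or norm argument must be run component by component along the expansion, using pre-deformability to match the contributions at the double loci. Our first attempt would instead argue that the closed locus $\mathrm{ev}^{-1}(R_{\mathsf v})$ contains the dense open part of the moduli space and is closed. This requires the virtual class to be supported on the closure of the locus where the evaluation lands in the interior; if that support statement fails, we would fall back on the component-wise reciprocity argument.
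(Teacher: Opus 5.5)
Your interior computation is sound: for maps or pairs on the unexpanded target, Weil reciprocity for two monomials on the components of the underlying curve gives the Menelaus character relation. This is the algebraic counterpart of the tropical Menelaus condition the paper uses. Your Step~2 is also fine once a support statement is in hand.

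The gap is Step~1 away from the interior, which is where the real content lies. To get a multiple of $[R_{\mathsf v}]$ in $H^\star_{\mathsf{log}}(\mathsf{Ev}_{\mathsf v})$ you need the following on every log blowup of $\mathsf{Ev}_{\mathsf v}$: the image of all of $\mathsf{GW}_{\mathsf v,g}$ and $\mathsf{PT}_{\mathsf v,\chi}$ lies in the strict transform of finitely many torus translates, not merely in their total transform. Points over nonzero cones of $\mathsf aT_{\mathsf v}$ (maps and pairs on expansions, tube components, non-reduced supports) are the only possible source of exceptional $3$-dimensional pieces, and you say nothing checkable about them.

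Neither of your fallbacks closes this. The first presupposes that the virtual class is supported on the closure of the locus mapping into the open torus. Nothing provides this: these spaces are reducible and not of expected dimension, so there is no meaningful ``general'' point. The second, reciprocity component by component with cancellation at the double loci, may be workable. As written it is only announced, and it would still have to be matched against the strata of each blowup.

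The idea you are missing is the paper's factorization, which removes the need for any boundary analysis on the GW/PT spaces themselves. Taking the underlying cycle and projecting to $S$ gives $A$-equivariant maps $\mathsf{GW}_{\mathsf v,g},\ \mathsf{PT}_{\mathsf v,\chi}\to\mathbb P_{\mathsf v}\to\overline E$. Here $\mathbb P_{\mathsf v}$ is the logarithmic linear system of maximally tangent curves on $S$ (identified with the PT space of minimal Euler characteristic), a single proper space independent of $g$ and $\chi$. One then shows, once, that the image of $\mathbb P_{\mathsf v}$ lies in a surface whose class is proportional to $[R_{\mathsf v}]$. This is done by checking that the tropicalization of $\mathbb P_{\mathsf v}\to\overline E$ lands in $\mathsf L_{\mathsf v}\otimes\mathbb R$ and applying Tevelev's theorem. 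The lemma is then a dimension count on that surface times $\mathbb P^1$.

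Separately, drop your requirement that the image be the identity fiber $R_{\mathsf v}$. It is false in general, because Menelaus' relation carries a sign. For lines in $\mathbb P^2$, write $y_1$, $x_2$, $z_3$ for the restrictions of $y$, $x$, $x/y$ to the boundary tori of the divisors for $(1,0)$, $(0,1)$, $(-1,-1)$. The contact points satisfy $z_3=-x_2/y_1$, while $R_{\mathsf v}$ is cut out by $z_3=x_2/y_1$. For larger weights the relation only pins down finitely many translates.

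The requirement is also unnecessary. The dense torus of $\mathsf{Ev}_{\mathsf v}$ is connected, commutes with $A$, and acts on every toric blowup. Hence the closure of each such translate has the same equivariant logarithmic class as $R_{\mathsf v}$. By contrast, an $A$-translate of $R_{\mathsf v}$ is just $R_{\mathsf v}$, so that is not the right notion of translate.
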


\begin{proof}
    Let $\mathbb P_{\sf v}$ be the ``logarithmic linear system'' of curves on the surface $S$ in the curve class determined by ${\sf v}$, with maximal tangency with the toric divisors. This coincides with the PT space with minimal possible Euler characteristic~\cite[Section~6]{MR20}. There are $A$-equivariant maps:
\[
\mathsf{GW}_{\mathsf{v},g}, \ \mathsf{PT}_{\mathsf{v},\chi} \to \mathbb P_{\mathsf v}\to \overline E
\]
by projecting the underlying cycle to $S$, and evaluating. We show the image of $\mathbb P_{\sf v}$ in $\overline E$ is contained in a surface with homology class proportional to $[R_{\sf v}]$. The result then follows for dimension reasons.

To prove that the image of $\mathbb P_{\sf v}$ is contained in such a surface, we show that the tropicalization of the map has its image contained in $L\otimes \RR\subset \overline P$. By~\cite{Tev07}, this implies the image is contained in a compact surface in $\overline{L\otimes \mathbb G_m}\times\mathbb G_m$, which implies the claim. 

For the tropical statement, note that the cone complex of $\mathbb P_{\sf v}$ is the set of balanced tropical curves in $\mathbb R^2$ with three ends, given by $\mathsf v$. If we fix the positions of two of the ends, the location of the last one is fixed, independent of the tropical curve — this is a consequence of the {\it Menelaus condition}, see~\cite[Proposition~6.12]{Mik17}. The condition forces the required evaluation condition.
\end{proof}


For each genus/Euler characteristic, the pushforward of the virtual class of the GW/PT space to $\mathsf{Ev}_{\mathsf v}$ is a rational multiple of the class $R_{\mathsf v}$. The proportionality factor is the {\it principal GW/PT} invariant. 

\begin{definition}
 The generating series of principal GW/PT invariants will be called the {\it principal GW/PT series} of the star $\mathsf v$. 
\end{definition}

The principal series determine the GW/PT series for arbitrary, possibly equivariant, insertions with star $\mathsf v$ -- given $\xi$ in $H^\star_{\sf log}(\mathsf {Ev}_{\mathsf v})$, the PT series is given by the principal contribution times the degree of $[R_{\mathsf v}]\cdot \xi$. More usefully, the principal contributions are also determined by the series for any choice of $\tau$, provided $[R_{\mathsf v}]\cdot \xi$ is nonzero. We match the principal contributions by selecting insertions to compute particular invariants {\it non-equivariantly}. 

\subsection{Consistency relations} The principal series associated to $\mathsf v$ appear in the computation of more general invariants by degeneration -- for example $4$-valent invariants. Each $4$-valent invariant can be computed in different ways, so that different trivalent vertices appear in the computation. The resulting consistency relations constrain the trivalent vertices, and this is what we explain next.

The computation in this section closely follows the calculation of B. Parker~\cite{Par17}. The combinatorics is the same; the geometric arguments are different because the structures of our degeneration formulas are different.

\subsubsection{Reduction to multiplicity} A trivalent vertex gives three integer vectors that sum to $0$. By $GL_3(\mathbb Z)$ symmetry, the star can be represented by the rows of:
\[
\begin{pmatrix}
n & 0 & 0\\
0 & mn  & 0\\
-n & -mn & 0
\end{pmatrix},
\]
where $m$ and $n$ are positive integers.  To see this, let $K=\langle v_1,v_2\rangle\subset \ZZ^3$. Since $v_1+v_2+v_3=0$, the vectors span a rank $2$ lattice $K$. By existence of Smith normal form, there is a matrix $A$ in $\mathrm{GL}_3(\mathbb{Z})$ such that
\[
A(K)=\langle (n,0,0),(0,mn,0)\rangle.
\]
In particular,
\[
A(v_1)=(n,0,0),\quad A(v_2)=(0,mn,0),\quad A(v_3)=(-n,-mn,0).
\]
The rows represent the multiplicity times the tangent vector.

\begin{definition}
Given a trivalent star $\mathsf v$ in the form above, its {\it multiplicity} is the positive integer $mn^2$.  
\end{definition}

\begin{proposition}\label{prop: multiplicity-only}
    The principal GW (resp. PT) series of a star $\mathsf v$ depends only its multiplicity
\end{proposition}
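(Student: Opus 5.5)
The plan is to follow Parker's strategy~\cite{Par17}: compare the principal series of $\mathsf v$, in normal form with parameters $(m,n)$, to that of a star with the same multiplicity $mn^2$ in a simpler normal form. The comparison comes from degenerating a $4$-valent star in two different ways. The tool is the degeneration formula, Theorem~\ref{thm: deg-formula}, applied to a degeneration of the $4$-valent star. By Proposition~\ref{prop: trivalent-reduction} and Corollary~\ref{cor: non-exotic-strata}, any $4$-valent invariant of virtual dimension $4$ is computed by rigid $1$-complexes that split into two trivalent vertices joined by an edge. The edge carries a multiplicity and a partition, and linear stars from Lemma~\ref{lem: linear-calculation} may contribute. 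The same series is computed in both the GW and the PT theory, so any relation extracted holds on both sides at once.

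Concretely, I would take a planar $4$-valent star, balanced in $\mathbb R^2\times\{0\}$, with three or four ends. I would impose a point class in the third coordinate and a point condition on two of the boundary curves, so that the insertion is non-equivariant. Then I would choose two tropical positions of these point constraints (two chambers of the evaluation map $T_{\mathsf v}\to \overline{\mathsf P}$). In one chamber the unique rigid $1$-complex has two trivalent vertices, one of which is $\mathsf v$. In the other chamber it has a vertex of the same multiplicity $mn^2$ but with a different normal form, for instance $(m',n')=(mn^2,1)$. The remaining vertices in the two chambers should be of a type already controlled, e.g. multiplicity-$n$ vertices with $m=1$ or lower-multiplicity vertices, by an induction on $mn^2$. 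This is the Menelaus/lattice-index combinatorics of~\cite{Par17}. Since the pushforward of the virtual class to $\mathsf{Ev}$ is independent of the point choice, each chamber gives a product formula: (principal series of vertex 1)$\cdot$(principal series of vertex 2)$\cdot$(edge factor $m_{\bm\mu}u^{2\ell}$ or $(-1)^{\bm\mu}m_{\bm\mu}q^{-|\bm\mu|}$)$\cdot$(degree of $[R]\cdot\xi$). With maximal tangency forced by dimension, the edge partition is a single part. Equating the two chambers expresses the principal series of $\mathsf v$ as the series of the reference star times a ratio of known factors. The lattice indices coming from the degrees of $[R_{\mathsf v}]\cdot\xi$ should combine so that only $mn^2$ survives.

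The main obstacle is this bookkeeping of lattice indices. I must check that the intersection numbers of $[R_{\mathsf v}]$ with the chosen point classes, and the strict-diagonal Künneth factors $\delta^{(j)}_V$, produce exactly the tropical multiplicities. No exotic corrections should enter. In particular, in the chosen chambers the flattening subdivisions must not introduce exotic insertions. If they do, I would instead choose the point constraints generically on the interior torus, where the diagonal is a transverse product. I also need the auxiliary vertices to be nonzero so that I can divide by them. I would get nonvanishing from a genus $0$ or minimal-$\chi$ computation via the logarithmic linear system $\mathbb P_{\mathsf v}$ of Appendix~\ref{app: log-linear-system}.
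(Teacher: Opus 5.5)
Your overall strategy is the paper's: compute one $4$-valent invariant in two tropical chambers with the degeneration formula, equate the results, and cancel a factor shown nonzero by a genus $0$ / minimal-$\chi$ count. However, the step carrying the weight is justified incorrectly. You claim the splitting partition along the bounded edge is a single part ``forced by dimension.'' It is not. In the degeneration formula the expected dimension balances for every splitting partition $\nu$.

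In the paper's configuration, after imposing constraints, the two vertex spaces have dimensions $\ell(\nu)-1$ and $\ell(\nu)+1$. They are glued inside $(\overline D_E\times\mathbb P^1)^{\ell(\nu)}$, which has dimension $2\ell(\nu)$, so counting excludes nothing. Moreover, for $\ell(\nu)\geq 2$ the vertices are no longer trivalent, and the gluing requires a flattening blowup with K\"unneth factors of the strict diagonal.

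The paper kills these terms (its Step II) with a finer argument. All cycles lie in a slice $S\times\{\mathsf{pt}\}$, so the height coordinates of all parts agree. The flattening can therefore be taken as a product blowup $(\overline D^{\,2})^\dagger\times(\mathbb P^1\times\mathbb P^1)^\dagger$ (two parts; more is similar). The two pushforwards then have K\"unneth types $[\mathsf{curve}\otimes\mathsf{pt}]$ and $[\mathsf{surface}\otimes\mathsf{curve}]$, whose product vanishes for degree reasons in the height factor. You need this, or an equivalent, before any product formula holds.

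Relatedly, the two rigid $1$-complexes come from the degeneration formula applied with torus-fixed choices of constraints. They do not come from Proposition~\ref{prop: trivalent-reduction}, which produces strata terms rather than a naive product. Your constraint count is also off by one. A height condition plus point conditions on two boundary curves has codimension $3$, but the virtual dimension is $4$, so no rigid $1$-complex is cut out. You need conditions on three of the four curve factors.

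Second, you never fix the configuration. What you put in its place is exactly what needs proof: an induction on $mn^2$, plus the hope that lattice indices ``combine so that only $mn^2$ survives.'' The paper's choice makes the auxiliary vertices cancel on the nose. Take the star $(n,0,0)$, $(0,mn+1,0)$, $(-n,-mn-1,0)$, with partition $(mn,1)$ in the $v_2$ direction. Impose a point condition on the $v_1$ end and $\mathsf{pt}\times\mathbb P^1$ conditions on the two $v_2$ ends, at torus-fixed points.
\begin{itemize}
\item One chamber gives the $(m,n)$ star $\mathsf w_2$, with edge of weight $n$, together with a multiplicity-$n$ vertex $\mathsf w_1$.
\item The other gives a multiplicity-$n$ vertex $\mathsf u_2$ and a multiplicity-$mn^2$ vertex $\mathsf u_1$, with primitive edge $(n,1,0)$.
\end{itemize}
Since $(x,y)\mapsto(x,y-mx)$ carries $\mathsf w_1$ to $\mathsf u_2$, the relation $\mathsf Z(\mathsf u_1)\mathsf Z(\mathsf u_2)=\mathsf Z(\mathsf w_1)\mathsf Z(\mathsf w_2)$ gives $\mathsf Z(\mathsf w_2)=\mathsf Z(\mathsf u_1)$. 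This uses only nonvanishing of the common factor and needs no induction. Note that $\mathsf u_1$ has weights $(mn,1,1)$, so for $n>1$ it is not the $(mn^2,1)$ star you proposed as the reference target.
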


As a result, it suffices to prove the correspondence for one trivalent star of each multiplicity. 

\begin{proof} The proof proceeds in several steps.

\noindent
{\sc Step I. A $4$-valent calculation.}
    Consider the star given by the rows $v_i$ of
\[
\begin{pmatrix}
n & 0 & 0\\
0 & mn+1  & 0\\
-n& -mn-1&0
\end{pmatrix}.
\]
Fix maximal tangency in the directions $v_1$ and $v_3$. For $v_2$, fix the tangency partition $(mn,1)$.

The GW/PT virtual dimension is $4$, and the evaluation space has dimension $8$. We use $4$ incidence constraints to obtain an invariant. Assign a point condition to the end $v_1$. Note that this fixes the third coordinate for the underlying cycle of any stable map/stable pair meeting the conditions. In the $v_2$ direction, assign a codimension $1$ condition to each of the two evaluations; in both cases, assign a condition of the form $\mathsf{pt}\times\mathbb P^1$, where $\mathbb P^1$ is the third coordinate in $Y$.

For each of the $v_2$ evaluations, choose one of the two fixed points on the corresponding $\mathbb P^1$. Depending on the choice, the contributing tropical curves differ, with two possibilities. See Figure~\ref{fig: 4-valent-figure}.

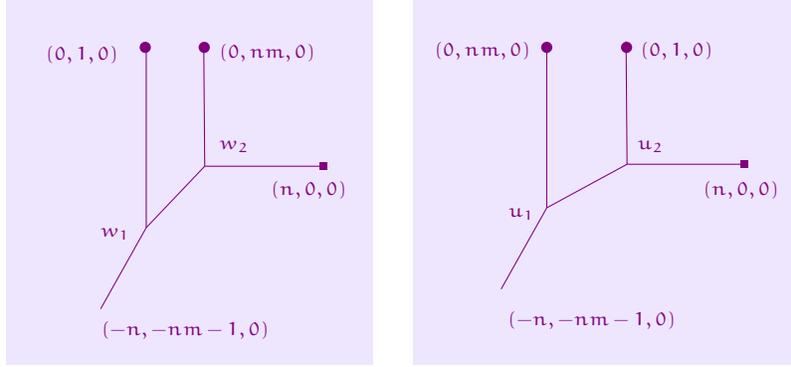
\begin{figure}
\begin{tikzpicture}[x=0.75pt,y=0.75pt,yscale=-1,xscale=1]

\definecolor{lavender}{RGB}{238,230,255}

\fill[lavender]
  (60,655) rectangle (245,840);
\fill[lavender]
  (265,655) rectangle (460,840);

\draw[violet] (159.5,678) -- (160,740);
\draw[violet] (219.5,740) -- (160,740);
\draw[violet] (160,740) -- (130.5,771);
\draw[violet] (130.5,678) -- (130.5,771);
\draw[violet] (130.5,771) -- (107.5,812);

\draw[violet] (372.5,677) -- (373,739);
\draw[violet] (432.5,739) -- (373,739);
\draw[violet] (373,739) -- (332.5,761);
\draw[violet] (332.5,678) -- (332.5,761);
\draw[violet] (332.5,761) -- (309.5,802);

\draw[violet, line width=0.75] (159.75,680)
  node[circle, fill=violet, inner sep=1.5pt] {};
\draw[violet, line width=0.75] (220,740)
  node[rectangle, fill=violet, inner sep=1.5pt] {};  
\draw[violet, line width=0.75] (130,680)
  node[circle, fill=violet, inner sep=1.5pt] {};
\draw[violet, line width=0.75] (432,739)
  node[rectangle, fill=violet, inner sep=1.5pt] {};  
\draw[violet, line width=0.75] (372.75,680)
  node[circle, fill=violet, inner sep=1.5pt] {};
\draw[violet, line width=0.75] (332.5,680)
  node[circle, fill=violet, inner sep=1.5pt] {};

\draw[violet] (410,746) node [anchor=north west][inner sep=0.75pt] {\tiny $( n,0,0)$};
\draw[violet] (192,746) node [anchor=north west][inner sep=0.75pt] {\tiny $( n,0,0)$};
\draw[violet] (166,677) node [anchor=north west][inner sep=0.75pt] {\tiny $( 0,nm,0)$};
\draw[violet] (378.5,676) node [anchor=north west][inner sep=0.75pt] {\tiny $( 0,1,0)$};
\draw[violet] (78.5,678) node [anchor=north west][inner sep=0.75pt] {\tiny $( 0,1,0)$};
\draw[violet] (106.5,817) node [anchor=north west][inner sep=0.75pt] {\tiny $(-n,-nm-1,0)$};
\draw[violet] (311.5,812) node [anchor=north west][inner sep=0.75pt] {\tiny $(-n,-nm-1,0)$};
\draw[violet] (300,676) node [anchor=north ][inner sep=0.75pt] {\tiny $( 0,nm,0)$};
\draw[violet] (320,760) node [anchor=north ][inner sep=0.75pt] {\tiny $u_1$};
\draw[violet] (385,726) node [anchor=north ][inner sep=0.75pt] {\tiny $u_2$};
\draw[violet] (115,770) node [anchor=north ][inner sep=0.75pt] {\tiny $w_1$};
\draw[violet] (175,726) node [anchor=north ][inner sep=0.75pt] {\tiny $w_2$};

\end{tikzpicture}

\caption{The two Chow $1$-complexes, depending on the choice of constraint for the two vertical ends in the picture. The square indicates the end that carries the point condition, while the circles indicate divisorial conditions.}\label{fig: 4-valent-figure}
\end{figure}
For each Chow $1$-complex, apply the degeneration formula. Depending on the cone structure, these $1$-complexes may carry $2$-valent vertices. Adding a $2$-valent vertex introduces a new bounded edge, say, of multiplicity $d$. By the earlier computations in Section~\ref{sec: linear-stars}, the new vertex contributes $\frac{1}{d}$ when the splitting partition along the bounded edge is maximal and $0$ otherwise. This $\frac{1}{d}$ cancels with the gluing factor coming from the bounded edge, so we can ignore the $2$-valent vertices in the coming discussion.\footnote{Alternatively, if one is careful with cone structures, the $2$-valent vertices can actually be avoided.}

\noindent
{\sc Step II. Reduction to maximal gluing tangency.} We now apply the degeneration formula to split along the bounded edge in the figure above. This involves summing over splitting partitions along the bounded edge. To justify the claim, write the gluing divisor $D$, up to subdivision, as $\overline D\times\PP^1$, corresponding to the decomposition of the threefold as $S\times\PP^1$. Suppose there are two parts in the partition along the gluing divisor; the case of more parts is similar. Let $\mathsf M_1$ and $\mathsf M_2$ be the moduli spaces at the two vertices \textit{with the boundary incidence conditions imposed}. For each vertex, and then for each part, we have a map to the gluing divisor $\overline D\times\PP^1$. Together we have maps
\[
\mathsf M_1\to \overline D^2\times (\PP^1)^2\leftarrow \mathsf M_2.
\]
By the degeneration formula, the invariant we want is obtained by subdividing such that both maps become combinatorially flat, pushing forward, and intersecting the cycles. The evaluation space is $4$-dimensional. The pushforward of $[\mathsf M_1]^{\sf vir}$ is $1$-dimensional, and that of $[\mathsf M_2]^{\sf vir}$ is $3$-dimensional. 

The flattening subdivision can be described explicitly. The tropical space associated to the gluing divisor is a copy of $\mathbb R^2$. The support of the fan of $\overline D\times\PP^1$ can be written as $\RR\times\RR$. Let us call the second coordinate the ``height''. For each $\mathsf M_i$, the tropical evaluation maps to two copies of this $\RR\times\RR$ -- one for each part in the gluing partition. However, because the $1$-complex is contained in an $xy$-plane, the height coordinate for the two parts is equal. Paying attention only to the height coordinate, the tropical evaluation associated to $\mathsf M_i$ maps onto the diagonal.

It follows from the above discussion that we can flatten the evaluations by subdividing the fan of $\overline D^2\times (\PP^1)^2$, supported on $\RR^2\times\RR^2$, by separately subdividing the two factors in the product. In fact, in the second copy of $\mathbb R^2$, it is sufficient to introduce the diagonal as a subfan. Translating to geometry, this means we can flatten both evaluation maps by a single blowup of the form
\[
(\overline D^{2})^\dagger\times (\PP^1\times\PP^1)^\dagger\to \overline D^2\times (\PP^1)^2,
\]
where $(\PP^1\times\PP^1)^\dagger$ is obtained by blowing up $(0,0)$ and $(\infty,\infty)$.


Using the K\"unneth decomposition above, we claim that the pushforward of $[\mathsf M_1]^{\sf vir}$ to this space has the form $[\mathsf{curve} \otimes \mathsf{pt}]$. Similarly, the pushforward for $[\mathsf M_2]^{\sf vir}$ has the form $[\mathsf{surface} \otimes \mathsf{curve}]$. Note that the dimensions of the two sides are different because only one of the two sides carries a point insertion. In both cases, the claims follow from elementary dimension counting. Apply the K\"unneth decomposition for the cap product on $(\overline D^{2})^\dagger\times (\PP^1\times\PP^1)^\dagger$, so the invariant vanishes.  

\noindent
{\sc Step III. Maximal gluing tangency.} If the partition has a single part, the degeneration formula takes its naive form, in the sense that no blowups are needed to flatten the evaluation map. 

Let $\mathsf u_1$ and $\mathsf u_2$ be the stars in the first degeneration, and $\mathsf w_1$ and $\mathsf w_2$ in the second, as shown in the figure. The degeneration formula gives an identity of GW/PT series:
\[
\mathsf Z(\mathsf u_1)\cdot \mathsf Z(\mathsf u_2) = \mathsf Z(\mathsf w_1)\cdot \mathsf Z(\mathsf w_2).
\]
The stars $\mathsf u_2$ and $\mathsf w_1$ are identified by the $\mathsf{GL}_3(\ZZ)$-action. Finally, both of these series are nonzero. We can see this by taking the genus $g$ to be $0$ or $\chi$ to be the minimal possible Euler characteristic. In both cases, one obtains genuinely enumerative invariants—counts of rational curves on surfaces—that are known to be nonzero. The proposition follows by canceling these terms.
\end{proof}

\subsubsection{Multiplicity $1$} We start with the multiplicity $1$ case. 

\begin{proposition}\label{prop: multiplicity-one}
    Let $\mathsf v$ be a trivalent vertex of multiplicity $1$. The GW/PT correspondence holds for $\mathsf v$. 
\end{proposition}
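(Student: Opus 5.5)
By Proposition~\ref{prop: multiplicity-only}, the principal series depends only on the multiplicity. It therefore suffices to verify the correspondence for one convenient representative of multiplicity $1$. I would take the standard star with $m=n=1$, given by $v_1=(1,0,0)$, $v_2=(0,1,0)$ and $v_3=(-1,-1,0)$, all with weight $1$. Geometrically this is the pair $(\mathbb P^2\times\mathbb P^1\,|\,\partial\mathbb P^2\times\mathbb P^1)$ up to log birational modification, with curve class a line in a fiber $\mathbb P^2\times\{\mathsf{pt}\}$ and contact order $1$ with each boundary line. The principal GW and PT series are each determined by any single non-vanishing invariant, so the plan is to pick one insertion, computed non-equivariantly, and evaluate both series for it in closed form.

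The insertion is a point condition on the $\mathbb P^1$ factor together with point conditions on two of the three ends, for instance on $\overline D_1$ and $\overline D_2$. These constrain the line to pass through fixed points on two boundary lines of a fixed fiber $\mathbb P^2$, so the class $[R_{\mathsf v}]\cdot\xi$ is nonzero. The resulting invariant is the series of lines through two points, with the third contact point free.

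Next I would relate this to the degree $1$ series of $\mathbb P^3$, which the abstract identifies as the single input. Degenerate $\mathbb P^3$, with point insertions, along a suitable toric degeneration. The degeneration formula (Theorem~\ref{thm: deg-formula}) together with Theorem~\ref{thm: deg-compatibility} expresses the line-through-two-points series of $\mathbb P^3$ as a product of the trivalent multiplicity-one principal series with linear-star factors. The linear-star factors are known explicitly from Lemma~\ref{lem: linear-calculation} and the lemma following it, and they match under the correspondence, including the signs $(-1)^{d-1}$ and the powers $q^d$ versus $u^{-2}$. All other vertices in this degeneration must be linear or lower order. Since these factors are nonzero, dividing through isolates the principal multiplicity-one series on each side. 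Its GW/PT equality then follows from the known absolute correspondence for degree $1$ curves in $\mathbb P^3$ (\cite{MOOP}, or a direct computation).

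The obstacle I expect is arranging a degeneration in which the only nonlinear vertex is a single multiplicity-one trivalent star with maximal tangency. If a cleaner route is needed, I would instead compute both sides directly. Lines in a fixed $\mathbb P^2$ through two points form a single rigid curve. On the GW side, the genus-$g$ contributions reduce to a local-curve Hodge integral for a $\mathbb P^1$ with normal bundle $\mathcal O(1)\oplus\mathcal O$ relative to three points. On the PT side, the stable pairs supported on that line are computed by the standard Euler-characteristic count, giving a monomial in $q$ matching $u^{-2}$ after the prescribed normalisation. Matching these two local computations, and tracking the factor $(-iu)^{d_\beta+\sum\ell-|\mu|}$, is the main technical work.
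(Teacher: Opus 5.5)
Your fallback route is the paper's proof. The paper takes the standard star; since multiplicity $mn^2=1$ forces $m=n=1$, every multiplicity-one star is $\mathsf{GL}_3(\ZZ)$-equivalent to it, so Proposition~\ref{prop: multiplicity-only} is not even needed. It imposes two point conditions so the curve is a rigid line at fixed height in $\PP^2\times\PP^1$, and degenerates to the normal cone of that line. This lands in the local curve theory of $\PP^1$ with $N=\cO(1)\oplus\cO$ relative to three points, where the correspondence is known \cite{BP08,OP10}; only the elementary pieces \cite{PT09,FP00} are actually used. You should cite this rather than recompute it.

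However, your predicted outcome is wrong: the PT side is not a monomial. Pairs $\cO_L\to\cO_L(p)$ with $p$ moving on $L$ contribute at $\chi=2$, and the principal series is $q(1+q)$. The GW side is $\frac{2}{u^3}\sin(u/2)$, with contributions in every genus. With $d_\beta=3$, both normalized sides equal $(-q)^{1/2}-(-q)^{-1/2}$ under $-q=e^{iu}$. No monomial could work: $(-iu)^3$ times a series starting at $u^{-2}$ vanishes at $u=0$, while $\pm e^{icu}$ does not.

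Your primary route through $\PP^3$ has a genuine gap, at exactly the step you flagged.

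\emph{Two trivalent vertices, not one.} A tropical line through two generic points of $\RR^3$ has two trivalent vertices, with directions $\{e_1,e_2,-e_1-e_2\}$ and $\{e_1+e_2,e_3,-e_1-e_2-e_3\}$ up to relabeling. Both have multiplicity one. So the degeneration formula produces the \emph{square} of the principal series, and dividing leaves a sign ambiguity. That needs a separate leading-term argument, as the paper gives in multiplicity $2$.

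\emph{The caps are neither linear nor lower order.} Because $\PP^3$ has empty boundary, the ends of the line must terminate in non-full-boundary components. Generically these are $1$-non-boundary pieces where the vertex is a fiber class with $D\cdot\beta=1$. These are not linear stars, and they are not lower order: in the paper's ordering a full-boundary star is smaller than a $1$-non-boundary one, not conversely. Their evaluation is a separate base case, the degree-one series of $(\PP^1|\infty)\times\mathbb C^2$, which again comes from \cite{BP08,OP10}.

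\emph{The $\PP^3$ input is no cheaper.} Importing the absolute correspondence from \cite{MOOP} brings in a result the paper aims to reprove. A ``direct computation'' of the $\PP^3$ line series is itself a local-curve computation, now with $N=\cO(1)\oplus\cO(1)$. So the detour needs strictly more external input than the direct normal-cone argument and gains nothing.
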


\begin{proof}
    Take $\mathsf v$ to be the rows of:
    \[
\begin{pmatrix}
1 & 0 & 0\\
0 & 1  & 0\\
-1 & -1 & 0
\end{pmatrix}.
\]
The star represents the class of a generic line in $\mathbb P^2$, at fixed height in $\PP^2\times\PP^1$. Two point conditions on the $\mathbb P^2$ gives a rigid curve -- a line. Degenerate the threefold to the normal cone of this line. The GW/PT spaces degenerate to a local curve theory, where the correspondence is known~\cite{BP08,OP10}.\footnote{In fact, we are only using a very small piece of this -- the PT side is classical calculation, see~\cite[Section~4.2]{PT09} and the GW side is a known Hodge integral~\cite{FP00}.}
\end{proof}

\begin{remark}
    The GW principal series is known for all multiplicities~\cite{Bou17,KHSUK}. In multiplicity $n$ it is
\[
\mathsf{GW}^{\sf prin}_n(u) = \frac{2}{u^3}\cdot \sin\left(\frac{nu}{2}\right).
\]
The correspondence predicts $\mathsf{PT}^{\sf prin}_n(u) = q\cdot (1+q)^n.$
\end{remark}

\subsubsection{Higher multiplicity} We relate vertices of higher multiplicity to those of multiplicity $1$. 

\begin{proposition}\label{prop: trivalent-recursion}
    The GW/PT correspondence holds for trivalent vertices of any multiplicity. 
\end{proposition}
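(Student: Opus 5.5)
We argue by induction on the multiplicity $mn^2$, using Proposition~\ref{prop: multiplicity-only} to replace any trivalent star by a convenient representative of the same multiplicity, and Proposition~\ref{prop: multiplicity-one} as the base case. The inductive step runs the same consistency mechanism as the proof of Proposition~\ref{prop: multiplicity-only}. We choose a $4$-valent star $\mathsf v'$ together with a choice of boundary incidence conditions, as in Figure~\ref{fig: 4-valent-figure}. The choice is made so that one specialization of the constraints produces a single rigid Chow $1$-complex with two trivalent vertices. One vertex has the target multiplicity $N$ and the other has multiplicity $1$. A different specialization of the equivariant fixed points on the $\mathbb P^1$-factors produces one or several rigid $1$-complexes, all of whose trivalent vertices have multiplicity strictly less than $N$. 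The natural candidate, following Parker~\cite{Par17}, is the star with rows $(1,0,0)$, $(0,N,0)$ and $(-1,-N,0)$, with tangency partition $(N-1,1)$ in the middle direction. Splitting the middle end differently gives, in one chamber, a vertex of multiplicity $N$ glued to a linear/multiplicity $1$ piece. In the other chamber it gives vertices of multiplicities $N-1$ and $1$.

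The steps are as follows. (1) Write down the degeneration formula (Theorem~\ref{thm: deg-formula}) on both the GW and PT sides for the two specializations. Since the underlying invariant is independent of the choice of constraint, this yields an identity
\[
\mathsf Z(\mathsf v_N)\cdot \mathsf Z(\mathsf v_1)\cdot c = \sum_{\Gamma} m_\Gamma \prod_{V\in \Gamma}\mathsf Z(\mathsf v_V),
\]
valid for both the GW and the PT series with the same combinatorial coefficients and the same sign and $u$/$q$ normalizations. (2) As in Step~II of Proposition~\ref{prop: multiplicity-only}, show that non-maximal gluing partitions contribute zero. Each side is split into a Künneth type of the form $[\mathsf{curve}\otimes\mathsf{pt}]$ against $[\mathsf{surface}\otimes\mathsf{curve}]$ after a product blowup. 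Two-valent vertices are discarded using Lemma~\ref{lem: linear-calculation}, since their $\tfrac1d$ cancels the gluing factor. (3) All vertices on the right-hand side have multiplicity less than $N$, so they satisfy the correspondence by induction. The multiplicity $1$ factor on the left is nonzero and already matched. The GW/PT prefactors $(-q)^{-d_\beta/2}$ and $(-iu)^{\cdots}$ are multiplicative under the degeneration formula; this is the compatibility underlying Theorem~\ref{thm: deg-compatibility}. We can therefore divide and conclude the correspondence for $\mathsf v_N$. Rationality of the PT series follows at the same time, because the identity expresses $\mathsf Z_{\sf PT}(\mathsf v_N)$ as a polynomial in $q^{\pm1}$ and rational series.

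The main obstacle is the combinatorial step. We must find a $4$-valent configuration and constraint choices in which the multiplicity $N$ vertex appears exactly once, paired only with already-known vertices, and every other rigid $1$-complex has strictly smaller vertices. We must also compute the multiplicities $m_\Gamma$ and verify that they agree on the GW and PT sides. We would first try Parker's configuration, with a middle-end partition $(N-1,1)$ against a maximal end, and check directly that the second chamber yields vertices of multiplicity $N-1$ and $1$. If the multiplicity $N$ vertex arises from a non-primitive star (with $n>1$), we would use Proposition~\ref{prop: multiplicity-only} to swap it for the primitive representative of the same multiplicity $mn^2$. A secondary point is nonvanishing of the cancelled factor, which we handle by the genus $0$ or minimal $\chi$ enumerative argument already used above.
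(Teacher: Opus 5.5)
\textbf{There is a genuine gap in step (1): your candidate configuration never produces a multiplicity-$N$ vertex.}

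The star with rows $(1,0,0)$, $(0,N,0)$, $(-1,-N,0)$ and partition $(N-1,1)$ on the middle ray is exactly Step~I of the proof of Proposition~\ref{prop: multiplicity-only}, with $n=1$ and $m=N-1$. The divisorial conditions pin the two vertical ends at distinct positions, so they never leave a common vertex. Hence the trivalent star $\{(1,0,0),(0,N,0),(-1,-N,0)\}$ cannot occur. Concretely:
\begin{itemize}
\item One chamber gives the vertices $\{(1,0,0),(0,N-1,0),(-1,1-N,0)\}$ and $\{(1,N-1,0),(0,1,0),(-1,-N,0)\}$, of multiplicities $N-1$ and $1$.
\item The other chamber gives $\{(1,0,0),(0,1,0),(-1,-1,0)\}$ and $\{(1,1,0),(0,N-1,0),(-1,-N,0)\}$, of multiplicities $1$ and $N-1$.
\end{itemize}
The resulting identity has multiplicity type $\{N-1,1\}$ on both sides. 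It is the representative-independence statement already proved, not a recursion, so your induction has nothing to solve for. The missing idea is to use a non-planar $4$-valent star, whose chambers are governed by the relative heights, in the third coordinate, of two point-constrained ends. This is what Parker's argument actually does.

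\textbf{What the paper does.} It takes rows $(1,0,0)$, $(0,1,0)$, $(1,0,n)$, $(-2,-1,-n)$, with maximal tangency and point conditions on the first two ends.
\begin{itemize}
\item In one chamber there is a unique complex. It has bounded edge $(1,1,n)$ and two vertices of multiplicity $1$.
\item In the other chamber there is a complex with bounded edge $(2,0,n)$, whose vertices have multiplicities $n$ and $\gcd(n,2)$.
\item There are also triangle complexes of type II$(n_1,n_2)$ with $n_1+n_2=n$. These involve only smaller multiplicities, together with a $4$-valent vertex computed by specializing insertions again.
\end{itemize}

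This structure forces two points your sketch does not anticipate.
\begin{enumerate}
\item For $n$ even, the non-maximal partition $(1+1)$ on the edge $(2,0,n)$ is not disposed of by the Step~II K\"unneth argument. The paper instead treats it as contributing through $4$-valent vertices, which are reduced further to trivalent stars of smaller multiplicity.
\item The series multiplying the multiplicity-$n$ series is then the multiplicity-$2$ series, not a known multiplicity-$1$ factor. So for $n=2$ one only gets an identity of squares. A separate sign check, via positivity of the minimal-$\chi$ PT coefficient, is needed before the recursion determines all multiplicities from $1$ and $2$.
\end{enumerate}
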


\begin{proof}
    Here we closely follow Parker~\cite[Section~3]{Par17} for the tropical combinatorics, and apply our degeneration formula to analyze the geometry. Start with a $4$-valent star given by the rows of the matrix with rows $v_i$:
     \[
\begin{pmatrix}
1 & 0 & 0\\
0 & 1  & 0\\
1& 0 &n\\
-2&-1&-n.
\end{pmatrix}.
\]
See Figure~\ref{fig: 4-valent-setup}. We take maximal tangency for all divisors. The moduli spaces have virtual dimension $4$, so we obtain an invariant with two point conditions. Place them on $v_1$ and $v_2$. For each $v_i$, we have a choice, in the third coordinate $\mathbb P^1$, of which fixed point they are constrained to. Tropically, this corresponds to which of $v_1$ and $v_2$ is ``higher''. Equating the computations gives relations between principal series.

\begin{figure}[h!]
\tdplotsetmaincoords{20}{0}

\begin{tikzpicture}[tdplot_main_coords, scale=1.25]

\definecolor{lavender}{RGB}{238,230,255}

\fill[lavender] (-2.5,-1.2,-2.5) rectangle (2.5,1.8,2.5);

\coordinate (O) at (0,0,0);

\coordinate (A) at (2,0,0);
\coordinate (B) at (0,2,0);
\coordinate (C) at (1,0,2);
\coordinate (D) at (-1.6,-0.8,-1.6);

\draw[violet, thick,->] (O) -- (A);
\draw[violet, thick,->] (O) -- (B);
\draw[violet, thick,->] (O) -- (C);
\draw[violet, thick,->] (O) -- (D);

\node[violet] at ($(A)+(0.1,-0.25,0)$) {\tiny $(1,0,0)$};
\node[violet] at ($(B)+(0.1,0.25,0)$) {\tiny $(0,1,0)$};
\node[violet] at ($(C)+(0.25,0.25,0.15)$) {\tiny $(1,0,n)$};
\node[violet] at ($(D)+(-0.2,-0.2,-0.3)$) {\tiny $(-2,-1,-n)$};

\shade[ball color=violet] (O) circle (1.2pt);

\end{tikzpicture}
\caption{The $4$-valent star considered above.}\label{fig: 4-valent-setup}
\end{figure}
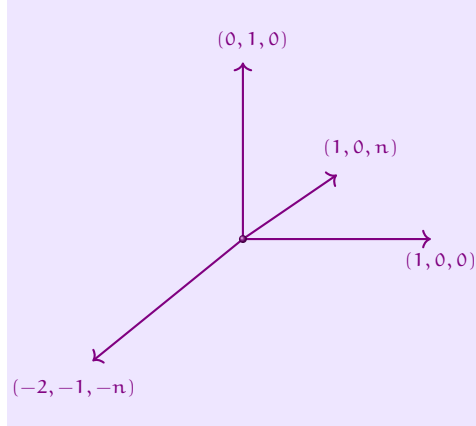

\noindent
{\sc Case I.} The only tropical curve is shown in Figure~\ref{fig: case-I-trivalent}.
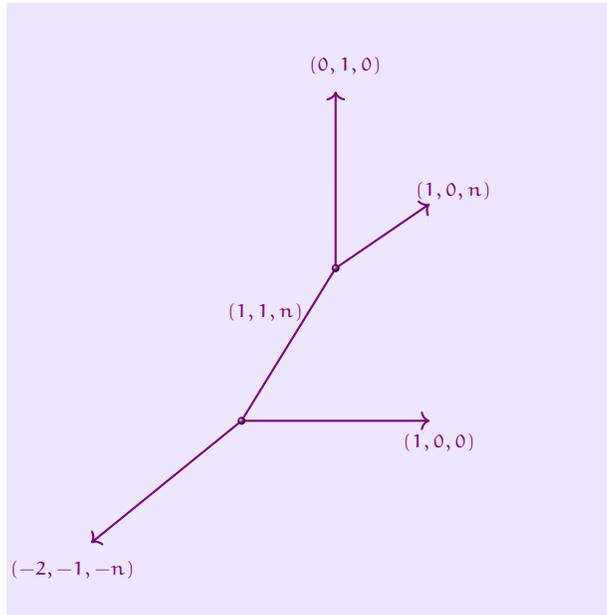
\begin{figure}
\tdplotsetmaincoords{20}{0}

\begin{tikzpicture}[tdplot_main_coords, scale=1.25]

\definecolor{lavender}{RGB}{238,230,255}

\fill[lavender] (-2.5,-1.5,-2) rectangle (4,4,2);

\coordinate (O) at (0,0,0);

\coordinate (A1) at (2,0,0);       
\coordinate (D1) at (-1.6,-0.8,-1.6); 

\draw[violet, thick,->] (O) -- (A1);
\draw[violet, thick,->] (O) -- (D1);

\coordinate (shift) at (1,1,2);
\coordinate (B2) at ($(shift)+(0,2,0)$);  
\coordinate (C2) at ($(shift)+(1,0,2)$); 

\draw[violet, thick,->] (shift) -- (B2);
\draw[violet, thick,->] (shift) -- (C2);

\draw[violet, thick] (O) -- (shift);

\node[violet] at ($(A1)+(0.1,-0.25,0)$) {\tiny $(1,0,0)$};
\node[violet] at ($(D1)+(-0.2,-0.2,-0.3)$) {\tiny $(-2,-1,-n)$};
\node[violet] at ($(B2)+(0.1,0.25,0.1)$) {\tiny $(0,1,0)$};
\node[violet] at ($(C2)+(0.25,0,0.4)$) {\tiny $(1,0,n)$};
\node[violet] at ($(shift)+(-0.75,-0.5,0.0)$) {\tiny $(1,1,n)$}; 

\shade[ball color=violet] (O) circle (1.2pt);
\shade[ball color=violet] (shift) circle (1.2pt);

\end{tikzpicture}
\caption{The unique tropical curve appearing in  Case I for the distribution of points above.}\label{fig: case-I-trivalent}
\end{figure}
The multiplicity along the bounded edge is equal to $1$, so the degeneration formula takes a naive form, and both vertices have multiplicity $1$ so GW/PT series are given, up to a positive multiple, by the square of the principal multiplicity $1$ series. 

\noindent
{\sc Case II.} In the second case, there are several tropical curves: one curve with an internal edge with direction $(2,0,n)$ and another set, when $n\geq 3$: for each $n_1> n_2>0$ with $n_1+n_2 = n$, pictured in Figure~\ref{fig: case-II-trivalent} -- in addition to the ends, there is a triangle with edge directions $(1,0,n_1)$, $(1,0,n_2)$, and $(-2,0,-n_1)$. Let us call these type I and type II$(n_1,n_2)$, respectively. 

The contribution for type I can be computed using the degeneration formula. When $n$ is odd, there is a unique splitting partition, and the contribution is a positive multiple of a product of the principal series for multiplicity $n$ times the principal series for multiplicity $1$.

\begin{figure}[h!]
\tdplotsetmaincoords{20}{0}

\begin{tikzpicture}[tdplot_main_coords, scale=1.3]

\definecolor{lavender}{RGB}{238,230,255}

\begin{scope}[shift={(0,0,0)}]

\fill[lavender] (-2.5,-1.2,-2.5) rectangle (4,2.5,3.5);

\coordinate (O) at (0,0,0);

\coordinate (B1) at (0,2,0);       
\coordinate (D1) at (-1.6,-0.8,-1.6); 
\draw[violet, thick,->] (O) -- (B1);
\draw[violet, thick,->] (O) -- (D1);

\coordinate (P) at (1.5,0,1.5);

\coordinate (A2) at ($(P)+(2,0,0)$);   
\coordinate (C2) at ($(P)+(1,0,2)$);   
\draw[violet, thick,->] (P) -- (A2);
\draw[violet, thick,->] (P) -- (C2);

\draw[violet, thick] (O) -- (P);

\node[violet] at ($(B1)+(0.1,0.25,0)$) {\tiny $(0,1,0)$};
\node[violet] at ($(D1)+(-0.2,-0.2,-0.3)$) {\tiny $(-2,-1,-n)$};
\node[violet] at ($(P)+(-0.5,0.05,0.1)$) {\tiny $(2,0,n)$};
\node[violet] at ($(A2)+(0.1,-0.25,0)$) {\tiny $(1,0,0)$};
\node[violet] at ($(C2)+(0.25,0.15,0.15)$) {\tiny $(1,0,n)$};

\shade[ball color=violet] (O) circle (1.2pt);

\end{scope}

\begin{scope}[shift={(6.75,0,0)}]

\fill[lavender] (-2.5,-1.2,-2.5) rectangle (3.5,2.5,3.5);

\coordinate (O) at (0,0,0);

\coordinate (B1) at (0,2,0);          
\coordinate (D1) at (-1.6,-0.8,-1.6); 
\draw[violet, thick,->] (O) -- (B1);
\draw[violet, thick,->] (O) -- (D1);

\def\a{0.3}
\def\b{1.7}
\coordinate (T1) at (1,0,\a);
\coordinate (T2) at (1,0,\b);

\draw[violet, thick] (O) -- (T1) -- (T2) -- (O);

\coordinate (A2) at ($(T1)+(2,0,0)$); 
\coordinate (C2) at ($(T2)+(1,0,2)$); 
\draw[violet, thick,->] (T1) -- (A2);
\draw[violet, thick,->] (T2) -- (C2);

\node[violet] at ($(B1)+(0.1,0.25,0.1)$) {\tiny $(0,1,0)$};
\node[violet] at ($(D1)+(-0.2,-0.2,-0.3)$) {\tiny $(-2,-1,-n)$};
\node[violet] at ($(T1)+(-0.2,-0.25,0.05)$) {\tiny $(1,0,n_1)$};
\node[violet] at ($(T2)+(-0.5,0,0.05)$) {\tiny $(1,0,n_2)$};
\node[violet] at ($(A2)+(0.1,-0.25,0)$) {\tiny $(1,0,0)$};
\node[violet] at ($(C2)+(0.25,0.15,0.15)$) {\tiny $(1,0,n)$};

\shade[ball color=violet] (O) circle (1.2pt);

\end{scope}

\end{tikzpicture}
\caption{The Chow $1$-complexes appearing in Case II above. The picture on the right exists for each positive pair $n_1>n_2$ above, adding up to $n$.}\label{fig: case-II-trivalent}
\end{figure}
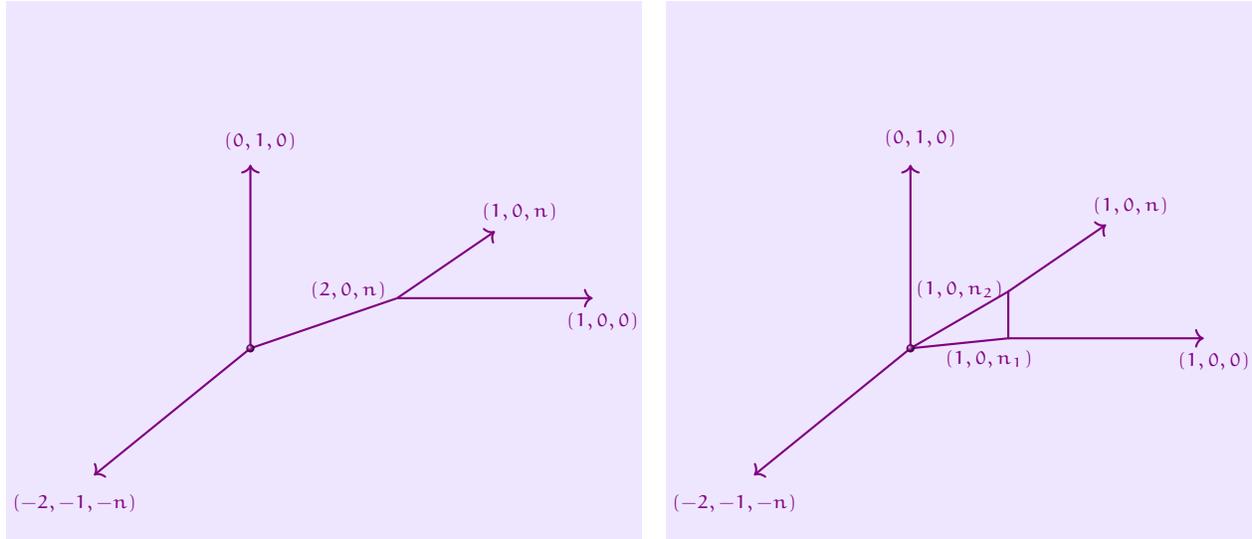

When $n$ is even, it is twice a primitive vector, so there are two possible partitions. When the partition is $(1+1)$, both vertices contribute via a $4$-valent star. By the argument of Section~\ref{sec: linear-stars}, we can reduce further to trivalent stars, and these are of smaller multiplicity than $n$. When the partition is $(2)$, the contribution is a positive multiple of the multiplicity $2$ series with the multiplicity $n$ series as above. Note that $n$ can be $2$, so in this case we get a square of the multiplicity $2$ series.

For type II$(n_1,n_2)$, the degeneration formula expresses the contribution in terms of two principal series and an invariant associated to a $4$-valent vertex. We compute the $4$-valent vertex by specializing the insertions as before. The contribution is a polynomial in the principal series of multiplicities smaller than $n$.

We conclude the result. Start with $n = 2$; the contribution of Case II, type I, gives us the square of the multiplicity $2$ series. The remaining terms involve principal series of multiplicity $1$, so we know the squares of the multiplicity $2$ series on the GW/PT sides match. The multiplicity $2$ series themselves match up to sign. To fix signs, we check the leading term of the PT series, i.e., when the Euler characteristic is minimal. By an elementary calculation, this term is positive. The GW series is known, so we easily see the correspondence in multiplicity $2$ holds. The relations above determine all higher multiplicities in terms of multiplicity $1$ and $2$, compatibly with GW/PT.
\end{proof}

\noindent
{\it Proof of Theorem~\ref{thm: full-boundary}}. Apply Propositions~\ref{prop: trivalent-reduction} and~\ref{prop: multiplicity-only} to reduce to checking a trivalent vertex of each multiplicity, the recursion in Proposition~\ref{prop: trivalent-recursion} to reduce to multiplicity $1$, and the calculation in Proposition~\ref{prop: multiplicity-one} to check multiplicity $1$.
\qed


\section{Nef geometries}\label{sec: nef-geometries}

In this section, we restrict attention to elementary geometries with the following property.

\begin{definition}
An elementary toric pair $(Y|\partial Y)$ is \textit{nef} if every toric divisor not in $\partial Y$ is nef on $Y$.  
\end{definition}

This property is preserved by log blowups. In the one non-boundary case, it means we consider birational models of $\mathbb P(L\oplus \mathcal O)\to S$ where $L$ is nef on $S$. In the two non-boundary case,
we consider models of $\mathbb P(L_1\oplus L_2\oplus \mathcal O)$ where $L_1$ and $L_2$ have non-negative degree on $\mathbb{P}^1$.  In the three non-boundary case, the condition is automatic. The key property of these spaces is the following.

\begin{lemma}
A component of an expansion of a nef elementary toric pair is a nef elementary toric pair.
\end{lemma}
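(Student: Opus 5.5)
The plan is to work tropically, since expansion components are described by stars. A component of an expansion of $(Y|\partial Y)$ corresponds to a vertex $p$ of a $1$-complex in $\underline\Sigma$. Up to log birational equivalence, it is the target $(Y_p|\partial Y_p)$ whose tropicalization is the cone of admissible tangent vectors $\underline\Sigma_p$, together with the surjection $\pi_p\colon\mathbb R^3\to\underline\Sigma_p$. The paper already notes that $Y_p$ is elementary. What remains is nefness of the non-boundary toric divisors of $Y_p$. Nefness is invariant under log blowups, so we may choose a convenient model. In the full boundary case and the $3$-non-boundary case the claim is immediate: for full boundary there are no non-boundary divisors, and for three non-boundary divisors nefness is automatic.

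First I would translate nefness into a convexity statement about the piecewise linear functions $\psi$ defining $\underline\Sigma$. In the $1$-non-boundary case, $\underline\Sigma=\{z\ge\psi_L(x,y)\}$, and the non-boundary divisor is the zero section of $\mathbb P(L\oplus\mathcal O)$. Its normal bundle is $L$ (up to sign convention), and restricted to the zero section $\cong S$ the divisor is nef exactly when $L$ is nef on $S$. By standard toric geometry, nefness of $L$ is equivalent to convexity of $\psi_L$ (or concavity, depending on convention). The fibers of $\mathbb P(L\oplus\mathcal O)\to S$ meet the zero section positively, so nefness on the total space reduces to nefness along the section. Similarly, in the $2$-non-boundary case the condition becomes convexity of $\psi_1$ and $\psi_2$ on $\mathbb R$, i.e.\ non-negative degrees.

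The key step is that taking the cone of admissible tangent vectors preserves this convexity. If $p$ lies in the interior of $\underline\Sigma$, then $\underline\Sigma_p=\mathbb R^3$, which is full boundary. If $p$ lies on the graph $z=\psi_L(x,y)$, then $\underline\Sigma_p=\{(a,b,c)\colon c\ge d\psi_L|_{(x,y)}(a,b)\}$, where $d\psi_L|_{(x,y)}$ is the directional derivative (local fan) of $\psi_L$ at $(x,y)$. A directional derivative of a convex piecewise linear function is again convex and piecewise linear. So $Y_p$ is a $1$-non-boundary geometry over the toric surface of the star fan, with line bundle given by a nef PL function; the degenerate cases are the straight and ruled ones. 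In the $2$-non-boundary case, $p$ may lie on one graph, on both graphs, or at a kink. Each $\psi_i$ is replaced by its derivative at $p$, while the coordinates where $p$ is strictly inside become unconstrained. This yields a $1$-non-boundary geometry over a fibration, a $2$-non-boundary geometry with convex derivatives, or full boundary. The same one-dimensional argument applies.

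The main obstacle is making the identification of $\underline\Sigma_p$ with a concrete elementary model precise when $p$ sits on a lower-dimensional cone of $\underline\Sigma$, for example over a ray of the fan of $S$. There, one must check that the non-boundary divisor of $Y_p$ really corresponds to the local graph and that no additional non-boundary divisors appear. I would handle this by noting that the non-boundary rays of $Y_p$ are exactly the images under $\pi_p$ of the rays of $\mathbb R^3$ collapsed by $\pi$ near $p$. Then I would verify case by case, as in the proof of Lemma~\ref{stable-visible}, that their nefness is equivalent to local convexity of $\psi$ at $p$, which is inherited from global convexity.
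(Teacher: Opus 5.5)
Your proposal is correct. It is the tropical counterpart of the paper's proof rather than a restatement of it.

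\textbf{The paper's route.} The paper argues geometrically. Up to subdivision, the component attached to a point $p$ in the relative interior of a cone $\sigma$ is a toric bundle over the closed stratum $S_\sigma$, and its fibers carry full boundary. So every non-boundary divisor of the component comes from the base. The paper then finishes by ``direct analysis of the cases.'' The underlying mechanism is that such a divisor is a pullback of a restriction $D|_{S_\sigma}$ of a non-boundary divisor $D$ of $Y$, and restriction and pullback both preserve nefness.

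\textbf{Your route.} You encode nefness as convexity of $\psi_L$ (resp.\ $\psi_1,\psi_2$), which is the same as convexity of the support of $\underline\Sigma$ in $\mathbb R^3$. You then observe that $\underline\Sigma_p$ is a tangent cone, cut out by the directional derivative of $\psi$ at $p$, so convexity survives. The two arguments match term by term: $d\psi_L|_{(x,y)}$ is exactly the piecewise linear function of $L$ restricted to the stratum and pulled back along the bundle.

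\textbf{Trade-offs.} Your version gives a uniform combinatorial criterion that plugs directly into the star formalism. It also identifies the type (straight, ruled, general) of $Y_p$ along the way. The cost is that you must set up the nef/convex dictionary for each elementary type and pin down the non-boundary rays of $Y_p$, which is the ``main obstacle'' you flag. The geometric route avoids this, because it never needs to recognise which elementary model the component is.

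\textbf{Small repairs.}
\begin{enumerate}
\item In the $3$-non-boundary case you checked nefness of $Y$, not of its components. The fix is immediate: tangent cones of $\mathbb R^3_{\geq 0}$ are products of lines and half-lines, so the components are straight geometries or $\mathbb R^3_{\geq 0}$ itself, all nef.
\item Your description of the non-boundary rays of $Y_p$ as ``images under $\pi_p$'' is garbled, since those images are $0$. The non-boundary rays of $Y_p$ are the directions collapsed by $\pi_p$. These are the non-boundary rays $\rho$ of the fan of $Y$ for which $\rho$ and $\sigma$ span a cone. This is the tropical form of the paper's remark that the non-boundary comes from the base.
\item The zero section $S_0$ is nef iff $\mathcal O(S_0)|_{S_0}$ is nef simply because any curve not contained in an effective divisor meets it non-negatively. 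The fibers play no special role in this step.
\end{enumerate}
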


\begin{proof}
    Up to subdivision, a component in an expansion is a toric bundle over a stratum in $(Y|\partial Y)$, relative to the full boundary in the fiber direction. The non-boundary comes from the divisors in the base of this bundle. The result follows by direct analysis of the cases. 
\end{proof}

We can therefore restrict the induction to nef elementary geometries; the general case is done later.

\subsection{$1$-non-boundary}

Let $(Y| \partial Y)$ be a nef $1$-non-boundary elementary geometry, with non-boundary divisor $D$. For a GW/PT problem with curve class $\beta$, $k$ markings, and partition lengths $\ell_i$ we have
\[
\mathsf{vdim} = D\cdot \beta + k + \sum \ell_i
\]

\begin{proposition}
Suppose $(Y| \partial Y)$ is a straight $1$-non-boundary geometry and fix discrete data $[\mathsf v,k,\bm\mu]$ with
\[
\mathsf{vdim} \geq 3
\]
or $(Y| \partial Y)$ is a ruled or general $1$-non-boundary geometry.
Any primary GW/PT series for $[\mathsf v, k, \bm{\mu}]$ is effectively determined by the GW/PT series of strictly smaller stars, compatibly with the correspondence.  Furthermore, the PT series
$\mathsf{Z}_{\mathsf{PT}}\left(Y|\partial Y;q | \upalpha| \bm{\mu} \right)_{\sf v}$
 is a polynomial function of $q$, $q^{-1}$, and the PT series for stars $\mathsf v' \prec \mathsf v$.  
\end{proposition}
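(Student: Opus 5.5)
Following Proposition~\ref{prop: trivalent-reduction}, I would replace the insertion by a combination of tropical strata. First arrange that $T_{\mathsf v}\to \mathsf P$ is combinatorially flat. By Proposition~\ref{prop: elementary-cohomology} and the Artin fan remark, the $A$-equivariant cohomology of $\mathsf{Ev}$ is spanned over $H^\star_A(\mathsf{pt})$ by pullbacks of stratum classes $[S_\tau]$ from $\mathsf a\mathsf P$. Pulling such a class back to $\mathsf aT_{\mathsf v}$ gives a cycle-theoretic preimage, by flatness. It is an integer combination of strata $S_\sigma$ with $\sigma\to\tau$ an isomorphism. So it suffices to treat insertions $[\![\sigma,1,1]\!]_{\mathsf v}$ where $\dim\sigma$ equals the virtual dimension. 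Corollary~\ref{cor: non-exotic-strata} then reduces the correspondence to the stars at the vertices of a $1$-complex $\Upsilon$ in the interior of $\sigma$, and to stars below those. The correction terms enter through the degeneration formula. They carry only gluing factors, which are monomials in $q^{\pm1}$, and the linear-star factors of Lemma~\ref{lem: linear-calculation}. This gives the polynomial dependence on lower PT series.

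What remains is to show that no vertex of $\Upsilon$ has star equal to $\mathsf v$ up to $\mathbb G_\Sigma$-translation and change of target. I would bound the dimension of the locus of $T_{\mathsf v}$ consisting of $1$-complexes that are a single translated copy of $\mathsf v$, carrying all internal markings. The only degrees of freedom are the position of the base vertex in $\underline\Sigma$, modulo what is forced by the balancing condition.

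In the straight case, $\underline\Sigma=\mathbb R^2\times\mathbb R_{\ge0}$. A copy of $\mathsf v$ based at the origin can only move by $\mathbb G_\Sigma=\mathbb Z^2$, so the locus is $2$-dimensional. If $\mathsf{vdim}\ge 3$, then $\dim\sigma\ge 3>2$, and $\Upsilon$ has a genuinely smaller vertex. One subtlety: moving the base vertex off the boundary height $z=0$ changes the target to $\mathbb R^3$. That gives a strictly smaller target $\underline\Sigma'\prec\underline\Sigma$, which is also acceptable. Similarly, a vertex of $\Upsilon$ sitting at height $0$ but away from the origin is a $\mathbb Z^2$-translate, and this is exactly the case the dimension bound excludes.

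In the ruled and general cases, $\mathbb G_\Sigma$ is $\mathbb Z$ or $0$. A star based at $0$ can be translated within the same elementary target only along $\mathbb G_\Sigma$. Translating in other directions lands the base point where $\underline\Sigma_p$ is $\mathbb R^3$ or a straight $1$-non-boundary cone. The key claim is that these are strictly smaller in the target ordering, because the twist $\psi_L$ becomes linear near a point off the cone locus of the base fan. So the same-target locus has dimension at most $\operatorname{rk}\mathbb G_\Sigma\le 1$.

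For such an invariant to be nonzero we need $\mathsf{vdim}\ge\dim\sigma$, and I must check that every stratum actually contributing has dimension above this bound. If $\dim\sigma\le\operatorname{rk}\mathbb G_\Sigma$, the stratum could be the translation locus itself. I would use the nef hypothesis here: $D\cdot\beta\ge0$, together with $\ell_i\ge1$ for at least two rays. Balancing in these geometries forces at least two non-fiber rays whenever $\beta$ is not a fiber class. If $\beta$ is a fiber class, the problem is a linear star, handled by Section~\ref{sec: linear-stars}.

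The main obstacle is this low-dimension edge case in the ruled and general geometries. It requires checking that $\mathsf{vdim}\ge 2$ always exceeds $\operatorname{rk}\mathbb G_\Sigma$ where necessary. It also requires checking that the ``smaller target'' comparison $\underline\Sigma_p\prec\underline\Sigma$ is strict in the ordering of Section~3. I would verify the latter directly from the explicit description $z\ge\psi_L(x,y)$ of the tropicalization.
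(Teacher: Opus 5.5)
Your overall strategy is the paper's: reduce to stratum insertions $[\![\sigma,1,1]\!]$, then compare $\dim\sigma$ with the dimension of the locus of same-target translates of $\mathsf v$. Your straight case is fine. The gap is the one-ray case in the ruled and general geometries.

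Your balancing argument correctly gives at least two rays, hence $\mathsf{vdim}\geq 2$, whenever some ray is non-vertical. But you then send the fiber class to Section~\ref{sec: linear-stars}, and that does not apply. Balancing against all $\phi=ax+by$ (exactly the linear maps factoring through $\pi$) forces a one-ray star to be the single vertical ray $(0,0,1)$ of weight $n$. So $\beta=n[F]$, with $F$ a fiber of $\mathbb P(L\oplus\mathcal O)\to S$. There is no opposite ray, because the direction $(0,0,-1)$ is the non-boundary zero section $D$, which is not a boundary direction. This is not the situation of Lemma~\ref{lem: linear-calculation}, which treats two opposite boundary rays in $\mathbb P^1\times\mathbb P^1\times\mathbb P^1$ with full boundary. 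Nothing there computes these invariants.

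The missing observation is that $F$ meets the section $D$ once. Hence $D\cdot\beta=n\geq 1$, and $\mathsf{vdim}=D\cdot\beta+k+\ell\geq 2>1\geq\operatorname{rk}\mathbb G_\Sigma$. So the fiber class falls under the same dimension count and needs no separate base case. This is exactly how the paper concludes: either $\sum\ell_i\geq 2$, or there is a single ray with $\ell=1$, which forces a fiber class with $D\cdot\beta\geq 1$. Only in the straight geometry, where the translation locus is $2$-dimensional, does this case become a genuine base case. Even there it is computed from $(\mathbb P^1|\infty)\times\mathbb C^2$, not from the linear-star lemma, and it is excluded from this proposition by $\mathsf{vdim}\geq 3$.

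Two smaller corrections:
\begin{enumerate}
\item The inequality you need is $\dim\sigma\geq\mathsf{vdim}$, not $\mathsf{vdim}\geq\dim\sigma$ as you write in the ruled/general paragraph. A stratum of codimension below $\mathsf{vdim}$ caps the virtual class to a positive-dimensional class, whose equivariant degree vanishes. Also, $\dim\sigma=\mathsf{vdim}$ holds only for insertions of degree exactly $\mathsf{vdim}$; equivariantly, strata of higher codimension also contribute.
\item In the general geometry, base points on a ray of the fan of $S$ give a ruled $\underline\Sigma_p$, not only $\mathbb R^3$ or straight ones. These are still strictly smaller, so your conclusion survives.
\end{enumerate}
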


\begin{proof}
The proof is parallel to Proposition~\ref{prop: trivalent-reduction}. By Proposition~\ref{prop: elementary-cohomology} any insertion $\upalpha$ can be replaced with a strata class $[\![\sigma, 1, 1]\!]$. By dimension counting, the dimension of $\sigma$ is at least the virtual dimension. In the straight geometry, this is at least $3$ by assumption. The tropical space of translated stars is $2$-dimensional, so any $1$-complex in the interior of $\sigma$ breaks into inductively smaller vertices.

In the ruled/general case, the set of translated stars is at most $1$-dimensional. If the virtual dimension is at least $2$, the $1$-complex breaks non-trivially, but this is automatic -- either there are multiple rays or there is one ray with $\ell = 1$. In this case, $\beta$ is a fiber class of $Y$ and $D\cdot \beta \geq 1$, so $\mathsf{vdim} \geq 2$.
\end{proof}

For the straight geometry, this proposition handles most cases. The remaining base cases have virtual dimension less than $3$. The first is when there are two rays with $\ell_1 = \ell_2 = 1$ and $D\cdot \beta = 0$. In this case, we have a linear star with maximal contact order at each ray, and this calculation
is identical to Lemma~\ref{lem: linear-calculation}. The second is when there is one ray with $\ell = 1$, $k=0$, and $D \cdot \beta = 1$. In this case, the curve class must be a fiber class, with a point insertion at the boundary. This calculation
is equal to the degree $1$ invariant of $(\mathbb{P}^1, \infty) \times \mathbb{C}^2$, which is known by~\cite{BP08,OP10}.

\subsection{$2$- and $3$-non-boundary}\label{sec: 2-nb-3-nb}

Fix discrete data $[\sf v, k, \bm{\mu}]$ in a $2$- or $3$-non-boundary nef toric pair. 

\begin{proposition}\label{2/3NBcase}
Any primary GW/PT series for $[\mathsf v, k, \bm\mu]$ is effectively determined by the GW/PT series of strictly lower-order stars, compatibly with the GW/PT correspondence.  Furthermore, any primary PT series for $[v, k, \mu]$
is a polynomial function of $q$, $q^{-1}$, and the PT series for lower-order stars.
\end{proposition}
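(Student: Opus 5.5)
The plan follows the template of Proposition~\ref{prop: trivalent-reduction} and the $1$-non-boundary case. First, by Proposition~\ref{prop: elementary-cohomology} and the subsequent remark on Artin fans, the equivariant cohomology of $\mathsf{Ev}$ is spanned over $H^\star_A(\mathsf{pt})$ by classes pulled back from $\mathsf a\mathsf P$. After a subdivision making $T_{\mathsf v}\to\mathsf P$ combinatorially flat, any insertion $\upalpha$ can therefore be replaced by a combination of stratum insertions $[\![\sigma,1,1]\!]_{\mathsf v}$, with $\sigma$ a cone of $T_{\mathsf v}$ mapping isomorphically onto a cone of $\mathsf P$. A nonvanishing invariant then forces $\dim\sigma\geq \mathsf{vdim}$.

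Second, I would compare this with the dimension of the locus of translated stars. In the $2$- and $3$-non-boundary geometries the translation group $\mathbb G_\Sigma$ is $\mathbb Z$ (straight $2$-non-boundary) or $0$ (general $2$-non-boundary and $3$-non-boundary). So the locus in $T_{\mathsf v}$ of $1$-complexes that are just a translate of $\mathsf v$ is at most $1$-dimensional. Additional non-rigid positions come from sliding the base along a cone of $\underline\Sigma$ not reached by translation, but such a $1$-complex has its vertex in a nonzero cone. Its star therefore lives in $\underline\Sigma_p$, which is either a strictly earlier elementary geometry in the list or, in the straight case, a translate. The main task is to show $\mathsf{vdim}\geq 2$ (respectively $\geq 1$ in the general $2$- and $3$-non-boundary cases), so that a $1$-complex $\Upsilon$ in the interior of $\sigma$ is never the star at the origin. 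Then every vertex star of $\Upsilon$ is either in a smaller target $\underline\Sigma_p\prec\underline\Sigma$ or is a proper star of a degeneration, hence strictly smaller in the ordering. Here the virtual dimension is $\sum_{D\ \text{non-bdry}}D\cdot\beta+k+\sum\ell_i$, and nefness gives $D\cdot\beta\geq0$ for each non-boundary divisor. A nonzero class in these geometries must meet some boundary divisor or some nef non-boundary divisor positively, which gives the needed lower bound. The degenerate cases with very small $\mathsf{vdim}$ (for example $\beta$ a fiber line of $\mathbb P(L_1\oplus L_2\oplus\mathcal O)$ meeting one boundary divisor once, or an edge line of $(\mathbb P^1)^3$) must be checked by hand. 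In these cases the stars have a single ray, and I would reduce them to the linear-star computation of Lemma~\ref{lem: linear-calculation} or to the degree-one local curve $(\mathbb P^1|\infty)\times\mathbb C^2$ of~\cite{BP08,OP10}.

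Third, Corollary~\ref{cor: non-exotic-strata} converts $[\![\sigma,1,1]\!]$ into non-exotic invariants of the vertex stars of $\Upsilon$ and lower stars, compatibly with the correspondence. For the polynomiality statement, I would track the PT degeneration formula (Theorem~\ref{thm: deg-formula}). Its only new factors are $q^{-|\bm\mu|}$, signs, and multiplicities, and the rubber-calculus steps are $H^\star_A(\mathrm{pt})$-linear. So the PT series is a polynomial in $q^{\pm1}$ and the PT series of lower stars.

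I expect the main obstacle to be the second step. One must verify carefully, case by case, that whenever the base of the star can move in a positive-dimensional family the resulting vertex stars are genuinely smaller; in particular, stars based on the boundary faces of $\mathbb R_{\geq0}^3$ must land in earlier geometries. One must also verify that the remaining low-dimensional cases are exactly the known base cases.
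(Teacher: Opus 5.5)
Your template (trade $\upalpha$ for strata $[\![\sigma,1,1]\!]$, compare $\dim\sigma\ge\mathsf{vdim}$ with the dimension of the locus of translated stars, then apply Corollary~\ref{cor: non-exotic-strata}) is the paper's. The gap is the step you flag as the main obstacle: it is left open, and your plan for it points the wrong way.

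Your lower bound (``a nonzero class meets some boundary or nef non-boundary divisor positively'') only gives $\mathsf{vdim}\ge 1$. That is enough when $\mathbb G_\Sigma=0$, i.e.\ in the general $2$-non-boundary and the $3$-non-boundary cases. In the straight $2$-non-boundary case, however, translates form a $1$-dimensional family, so you need $\mathsf{vdim}\ge 2$, and you never prove it. Instead you postulate low-dimensional base cases to be done ``by hand'' via Lemma~\ref{lem: linear-calculation} or the degree-one local curve. Neither computation concerns a single-ray star in a $2$- or $3$-non-boundary geometry. Moreover, your sample cases are not degenerate: a fiber line, or an edge line of $(\mathbb P^1)^3$, meets the nef zero-section divisor once, so $\mathsf{vdim}=2$.

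The missing observation is that there are no base cases at all. Write the straight geometry as $\underline\Sigma=\{(x,y,z): y\ge 0,\ z\ge 0\}$, with non-boundary divisors $D_1,D_2$ dual to $(0,-1,0)$ and $(0,0,-1)$.
\begin{itemize}
\item Linear equivalence via the characters for the $y$- and $z$-coordinates expresses $D_1$ and $D_2$ through the boundary divisors. This gives $D_1\cdot\beta=\sum_i n_i\,y(v_i)$ and $D_2\cdot\beta=\sum_i n_i\,z(v_i)$, both nonnegative; in the general nef case, nefness is what guarantees nonnegativity.
\item Suppose $\mathsf{vdim}\le 1$. Since $\mathsf v$ has at least one ray, this forces $k=0$, $D_1\cdot\beta=D_2\cdot\beta=0$, and a single ray $v$ with $\ell=1$.
\item The projection to the $x$-coordinate factors through $\pi$, so balancing forces $x(v)=0$. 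Hence $v$ lies in the $yz$-quadrant and $y(v)+z(v)>0$, contradicting $D_1\cdot\beta=D_2\cdot\beta=0$.
\end{itemize}
So $\mathsf{vdim}\ge 2$ always, and every contributing stratum breaks into strictly smaller stars. This is also why the polynomiality clause holds with no external input. That contrasts with the straight $1$-non-boundary case, where genuine base cases remain. If you had kept your postulated base cases, the clause ``polynomial in $q^{\pm1}$ and lower-order PT series'' would not follow as stated.
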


\begin{proof}
We argue similarly to the $1$-non-boundary case, but now with no base cases. In the $2$-non-boundary geometries with non-boundary divisors $D_1$ and $D_2$, there is at most a one-dimensional tropical space of translated stars, so by dimension counting, if
\[
\mathsf{vdim}= D_1\cdot \beta + D_2 \cdot \beta + k + \sum \ell_i \geq 2
\]
then the strata invariant breaks into lower-order stars.
We claim this always happens. Suppose $\mathsf{vdim} \leq 1$. The star must have at least one ray and since both $D_1$ and $D_2$ are nef, we must have 
\[
D_1\cdot \beta = D_2 \cdot \beta = k =0
\]
and $\sf v$ has a single ray with $\ell =1$.  
Such a ray must lie on the $xy$ plane in the cone complex $\mathbb R_{\geq 0}^2\times\RR$.  However, the corresponding curve class $\beta$ must intersect either $D_1$ or $D_2$ positively.

In the $3$-non-boundary case, there is at least one ray, so the virtual dimension is positive. By dimension counting, we can replace the insertion with a stratum where the curve breaks.
\end{proof}

\subsection{Polynomiality}\label{sec: polynomiality} We prove Theorem~\ref{thm: polynomiality} and
~\cite[Conjecture~2]{MOOP}.

\begin{corollary}
Suppose $(Y|\partial Y)$ is a toric threefold pair such that every toric divisor not in $\partial Y$ is nef. The equivariant PT series of $Y$ is a Laurent polynomial in $q$. 
\end{corollary}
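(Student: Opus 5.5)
We argue by induction on discrete data, combining the degeneration formula with the reductions established for elementary geometries in the nef setting. The plan has three steps: degenerate, track what appears in the pieces, and then induct down the partial order on stars.

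\emph{Step 1: degenerate to nef elementary pairs.} Apply the degeneration of Section~3.2 (polyhedral decomposition $\mathcal P'$) to write $(Y|\partial Y)$ as an snc degeneration whose components are elementary pairs. We must check that these pieces are nef. Each component is, up to subdivision, a toric bundle over a stratum of $Y$ with full boundary in the fiber direction. Its non-boundary divisors are pulled back from non-boundary divisors of $Y$, restricted to strata, so the hypothesis that every non-boundary toric divisor of $Y$ is nef should pass to them. This is the same casework as the lemma that expansions of nef pairs are nef.

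\emph{Step 2: apply the degeneration formula.} By Theorem~\ref{thm: deg-formula} and Proposition~\ref{prop: elementary-cohomology}, the PT series of $Y$ with primary insertions is a finite sum of products of three kinds of factors:
\begin{itemize}
\item gluing factors, which are monomials $\pm q^{-|\bm\mu|}$;
\item PT series of the components, possibly with exotic or strata insertions;
\item factors from the components of each edge evaluation.
\end{itemize}
Finiteness holds because, for fixed $\beta$, there are finitely many rigid $1$-complexes. The exotic and strata insertions are converted to non-exotic ones by Proposition~\ref{prop: precise-version} and the rubber calculus proposition. These conversions are $H^\star_A(\mathsf{pt})$-linear identities with coefficients independent of $q$, apart from further gluing monomials. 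So it suffices to show that every non-exotic PT series of a nef elementary pair is a Laurent polynomial.

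\emph{Step 3: induct on stars.} We induct on the partial order on stars, using its finiteness: by Proposition~3.6.1(ii) there are finitely many stars below a given one for a fixed target, and targets decrease in the list. The cases are as follows.
\begin{itemize}
\item \emph{Full boundary.} Proposition~\ref{prop: trivalent-reduction} expresses the PT series as a polynomial in $q^{\pm1}$ and lower series. The base cases are the linear stars (the monomials $\pm q^d/d$ and $q^d$ of Section~\ref{sec: linear-stars}) and the trivalent stars, whose principal series are determined by the recursion of Proposition~\ref{prop: trivalent-recursion}.
\item \emph{$1$-non-boundary.} Use the corresponding proposition in the $1$-non-boundary case. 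Its base cases are the linear star and the degree $1$ series of $(\mathbb P^1|\infty)\times\mathbb C^2$, both explicit Laurent polynomials.
\item \emph{$2$- and $3$-non-boundary.} Proposition~\ref{2/3NBcase} applies and has no base cases.
\end{itemize}

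The main obstacle is the trivalent principal series. Proposition~\ref{prop: trivalent-recursion} determines them through relations that involve \emph{squares} and products, for example $\mathsf Z(\mathsf u_1)\mathsf Z(\mathsf u_2)=\mathsf Z(\mathsf w_1)\mathsf Z(\mathsf w_2)$, together with division by nonzero series. Such relations preserve rationality but do not obviously preserve Laurent polynomiality. To handle this, we first show by induction on multiplicity that the PT principal series equals $q(1+q)^n$. By Proposition~\ref{prop: multiplicity-one} and the correspondence already proved, this is the image of the known GW series $\frac{2}{u^3}\sin(nu/2)$ under $-q=e^{iu}$, after the normalization in Conjecture~\ref{conj: gw/pt}. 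Concretely, Theorem~\ref{thm: full-boundary} identifies the PT principal series with the transformed GW series, and we only need to verify the elementary identity that the transform of $\frac{2}{u^3}\sin(nu/2)$ is, up to the monomial prefactors, $q(1+q)^n$. We should also double-check that the rubber-calculus conversions introduce no denominators in $q$. They are pulled back from identities on $\mathscr M$, so they are $q$-independent, and this should be straightforward.

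With these points in hand, every series produced in the induction is a Laurent polynomial in $q$, and the corollary follows.
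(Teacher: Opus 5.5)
Your argument follows the paper's route. You degenerate into elementary pieces, observe that only nef ones occur, and run the Section~\ref{sec: nef-geometries} reductions. Those reductions write each PT series as a polynomial in $q^{\pm1}$ and lower-order series, down to explicit base cases.

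Your extra caution about the trivalent base case is warranted. The square-root step at multiplicity $2$ is harmless given rationality, since $\mathbb{Q}[q^{\pm1}]$ is integrally closed. For odd multiplicity, however, the recursion in Proposition~\ref{prop: trivalent-recursion} solves for $Z_n$ by dividing by the multiplicity-$1$ series. So a closed form is genuinely needed, and the paper's one-line proof tacitly takes it from the known GW series via the correspondence, just as you propose.

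The specific identity you plan to verify is false for $n\ge 2$. For the trivalent star the exponent $d_\beta+\sum\ell(\mu_j)-|\mu_j|$ equals $3$, and $(-iu)^3\cdot\frac{2}{u^3}\sin(nu/2)=(-q)^{n/2}-(-q)^{-n/2}$. The PT principal series is therefore $(-q)^{(d_\beta-n)/2}\bigl((-q)^n-1\bigr)$. This equals $q(1+q)$ for $n=1$, but for $n=2$ it is $q-q^3$, not $q(1+q)^2$. The formula $q(1+q)^n$ in the paper's Remark has the same problem under the normalization of Conjecture~\ref{conj: gw/pt}.

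The corrected expression is still a Laurent polynomial, because $d_\beta-n$ is even. In the normal form with rows $(k,0,0)$, $(0,mk,0)$, $(-k,-mk,0)$ one has $d_\beta=k(m+2)$ and $n=mk^2$. Then $d_\beta-n=k(m+2-mk)$, which is even: trivially if $k$ is even, and because $m+2-mk\equiv m(1-k)\equiv 0 \pmod 2$ if $k$ is odd. With this replacement your proof goes through.
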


\begin{proof}
When we degenerate $(Y|\partial Y)$ into elementary geometries, only nef elementary geometries appear, so the discussion of the previous sections implies the polynomiality.
\end{proof}

The {\it capped vertex} was introduced in~\cite[Section~2]{MOOP} to study GW/DT for toric threefolds. Let $Y^\circ$ be $(\mathbb P^1)^3$ minus the toric curves through $(\infty,\infty,\infty)$. Let $\partial Y^\circ$ be the union of the infinity divisors. Invariants are defined by localization residues with respect to the $\mathbb G_m^3$-action. The capped vertex is the equivariant PT series for $(Y^\circ|\partial Y^\circ)$, with contact orders $\mu,\nu,\lambda$. See Figure~\ref{fig: capped-vertex}.

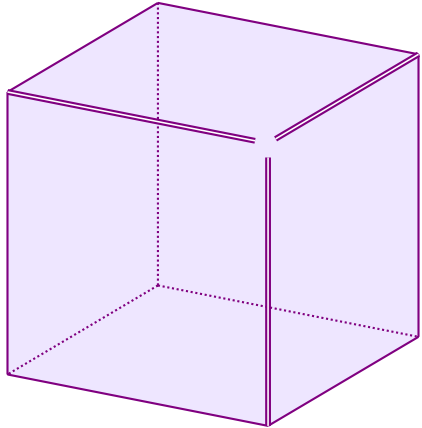
\begin{figure}[h!]
\begin{tikzpicture}[scale=4]
  \definecolor{lavender}{RGB}{238,230,255}

  \tdplotsetmaincoords{70}{120}

  \begin{scope}[tdplot_main_coords]

    \def\eps{0.05}

    \fill[lavender] (0,0,0) -- (1,0,0) -- (1,1,0) -- (0,1,0) -- cycle; 
    \fill[lavender] (0,0,0) -- (1,0,0) -- (1,0,1) -- (0,0,1) -- cycle; 
    \fill[lavender] (0,0,0) -- (0,1,0) -- (0,1,1) -- (0,0,1) -- cycle; 
    \draw[violet, densely dotted, thick]
      (0,0,0)--(1,0,0)
      (0,0,0)--(0,1,0)
      (0,0,0)--(0,0,1);

    \draw[violet, thick]
      (1,0,0)--(1,1,0)
      (0,1,0)--(1,1,0)
      (0,0,1)--(1,0,1)
      (0,1,0)--(0,1,1)
      (1,0,0)--(1,0,1)
      (0,0,1)--(0,1,1);

    \draw[violet, double, thick]
      (1,1,0) -- (1,1,{1-\eps});

    \draw[violet, double, thick]
      (1,0,1) -- (1,{1-\eps},1);

    \draw[violet, double, thick]
      (0,1,1) -- ({1-\eps},1,1);

  \end{scope}
\end{tikzpicture}
\caption{The open subset of $(\mathbb P^1)^3$ defining the capped vertex, depicted here by deleting the relevant faces from the moment polytope of $(\mathbb P^1)^3$.}\label{fig: capped-vertex}
\end{figure}

Because it is a residue theory, the capped vertex does not directly fall under the framework above, since our targets are projective.  Instead, we compare the residue GW/PT theory of $(Y^{\circ}|\partial Y^{\circ})$ with the log GW/PT theory of $(Y, \partial Y)$ and show the discrepancy is given by lower-order strata invariants.

Let $\mathsf{M}(Y|\partial Y)_{\mathsf v}$ denote one of the GW or PT spaces associated to $Y$, and let 
\[j\colon \mathsf{M}(Y^{\circ}|\partial Y^{\circ})_{\mathsf v} \subset \mathsf{M}(Y|\partial Y)_{\mathsf v}\]
denote the open subset corresponding to $Y^{\circ}$.  In what follows, we tensor with the fraction field
$$K = \mathsf {Frac} \ H_{A}^{\star}(\mathsf{pt})$$
\begin{proposition}\label{Chow-residue}
We have the following equality
\[
j_\star[\mathsf{M}(Y^{\circ}|\partial Y^{\circ})_{\mathsf v}]^{\mathsf{vir}} = \left(1 + \sum [\![\sigma, \theta, 1]\!]\right)\cap [\mathsf{M}(Y|\partial Y)_{\mathsf v}]^{\mathsf{vir}} \in H^{\mathrm{BM}}_{A,\star}(\mathsf{M}(Y|\partial Y)_{\mathsf v})\otimes K\]
where
the right-hand side is a summation of strata classes with positive-dimensional cones $\sigma$.
\end{proposition}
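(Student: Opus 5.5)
The plan is to compare the two virtual classes through the boundary $Z := \mathsf{M}(Y|\partial Y)_{\mathsf v}\setminus \mathsf M(Y^\circ|\partial Y^\circ)_{\mathsf v}$, which consists of maps/pairs whose support meets one of the three deleted toric curves through $(\infty,\infty,\infty)$. Because this locus is closed and $A$-invariant, and the $A$-fixed loci of $\mathsf M(Y^\circ|\partial Y^\circ)$ are proper, the residue class $j_\star[\mathsf M(Y^\circ)]^{\sf vir}$ is defined after tensoring with $K$ by virtual localization on the open part. The difference $[\mathsf M(Y)]^{\sf vir} - j_\star[\mathsf M(Y^\circ)]^{\sf vir}$ is then a $K$-class supported on $Z$, so it can be written as a sum of localization contributions from fixed loci that meet $Z$.

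The key geometric input is that a fixed stable map or pair meeting a deleted curve cannot live on an unexpanded target. Take such a curve, say the line $\{\infty\}\times\{\infty\}\times\mathbb P^1$. It is the intersection of two boundary divisors, so it is a codimension $2$ stratum of $(Y|\partial Y)$. It is removed in every expansion: expansions have no strata of codimension $2$, and the trivial expansion is $Y$ minus those strata. A curve whose support approaches it must therefore degenerate into an expansion along the corresponding $2$-dimensional cone of $\Sigma=\mathbb R^3_{\geq 0}$. Tropically, the Chow $1$-complex then has a vertex or edge in the interior of that cone. So $Z$ is contained in the preimage of the union of the closed strata $S_\sigma\subset \mathsf aT_{\mathsf v}$ indexed by positive-dimensional cones $\sigma$ whose generic $1$-complexes meet the interior of one of the three relevant $2$-cones.

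The last step is to upgrade "supported on $Z$" to "equal to a combination of $[\![\sigma,\theta,1]\!]$ capped with the virtual class". I would use the stratification of $\mathsf aT_{\mathsf v}$. On the preimage of each open stratum $S_\sigma^\circ$, both virtual classes are pulled back along $\rho$ from the same non-virtual space $\mathscr M$, because the obstruction theories are relative to $\mathsf aT$. The localization residue of the difference is therefore computed by a class on $\mathsf aT_{\mathsf v}$ with $A$ acting trivially, restricted to $S_\sigma$ and with coefficients in $K$. Concretely, I would show that $[\mathsf M(Y)]^{\sf vir}-j_\star[\mathsf M(Y^\circ)]^{\sf vir}=\rho^\star\Theta\cap[\mathsf M(Y)]^{\sf vir}$ for some $\Theta\in H^\star_A(\mathscr M)\otimes K$ supported on $\bigcup S_\sigma$. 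Then I would expand $\Theta$ as $\sum [\![\sigma,\theta_\sigma,1]\!]$ by filtering by dimension of $\sigma$ and using the piecewise polynomial description of $H^\star(\mathsf aT)$ from~\cite[Theorem~B]{MR21}, as in Step III of the rubber calculus.

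The main obstacle is this last step. The difference is a localization residue, so it is not obviously pulled back from $\mathsf aT$. The evaluation data in the normal directions to the deleted curves also contributes equivariant weights. My first approach would be to write the open virtual class as the virtual pullback along $\rho$ of $[\mathscr M^\circ]$, the open part of the non-virtual space $\mathscr M$, where the identity becomes a non-virtual equivariant excision statement on $\mathscr M$. I would check that statement on $\mathscr M$ directly, using that $\mathscr M\to\mathsf aT$ is a toric fibration and applying the same excision on the evaluation factors.
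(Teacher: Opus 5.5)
\textbf{There is a genuine gap: $\mathscr M$ (or $\mathsf{Ev}$) is the wrong space to localize on.}

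Your overall shape is right: pull back a non-virtual equivariant localization identity, then rewrite the boundary residues as strata classes. Your identification of the open locus is also correct; in fact $\mathsf M(Y^\circ|\partial Y^\circ)_{\mathsf v}=\pi^{-1}(\mathsf aT^\circ_{\mathsf v})$ exactly.

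On $\mathscr M=\mathsf aT_{\mathsf v}\times_{\mathsf aP}\mathsf{Ev}$, the residue $j_\star[\mathscr M^\circ]$ you need is not defined. Here $\mathscr M^\circ\cong\mathsf{Ev}^\circ$ has a single fixed point $p'$, and $A$ acts trivially on $\mathsf aT_{\mathsf v}$. The fibre of $\mathscr M\to\mathsf{Ev}$ over $p'$ is the whole open substack of $\mathsf aT_{\mathsf v}$ lying over the open point of $\mathsf aP$. So $p'$ is not closed in $\mathscr M$, and $(\mathscr M^\circ)^A$ is not a union of components of the fixed locus.

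\textbf{Why this is not a technicality.} Nontrivial cones of $T_{\mathsf v}$ can map to the cone point of $P$. For example, when $|\mu|,|\nu|\ge 2$, take:
\begin{itemize}
\item vertices $O=0$, $A=e_1$, $B=e_2$;
\item edges $OA$, $OB$, $AB$ of weights $a,b,c\ge 1$, with $a+c=|\mu|$ and $b+c=|\nu|$;
\item ends $e_1$ from $A$, $e_2$ from $B$, $e_3$ from $O$.
\end{itemize}
This is a stable $1$-complex satisfying the balancing conditions of the $3$-non-boundary geometry (only $x$-balancing at $A$, only $y$-balancing at $B$, none at $O$). All its ends lie on the coordinate axes. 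So maps or pairs of this type have every contact point in $D_i^\circ$, yet their image meets the deleted curve $D_1\cap D_2$. Hence $\mathsf{ev}^{-1}(\mathsf{Ev}^\circ)\supsetneq\mathsf M(Y^\circ|\partial Y^\circ)_{\mathsf v}$. No localization identity on $\mathsf{Ev}$ or $\mathscr M$ can single out $j_\star[\mathsf M(Y^\circ|\partial Y^\circ)_{\mathsf v}]^{\sf vir}$.

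Your intermediate claim that the residue of the difference is ``computed by a class on $\mathsf aT$'' is also unjustified. It would need a virtual localization formula on $\mathsf M$. The fixed-locus terms there involve virtual normal bundles that are not pulled back from $\mathscr M$.

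\textbf{The missing idea} is an auxiliary smooth space with a genuine torus action that sees the whole support of the curve, not just its contact points. The paper projects the $1$-dimensional support to each $D_i$. This gives $f\colon\mathsf M(Y|\partial Y)_{\mathsf v}\to\mathsf{Chow}=(\prod_i\mathsf{Chow}_i)^\dagger$, the product of logarithmic Chow varieties of the three boundary divisors.
\begin{itemize}
\item The tropical input is that a stable $1$-complex whose three coordinate projections are all trivial is itself trivial, because a stable vertex is non-linear in some projection. In the example above, the projection along $e_3$ is the triangle.
\item This gives $f^{-1}(\mathsf{Chow}^\circ)=\mathsf M(Y^\circ|\partial Y^\circ)_{\mathsf v}$.
\item Since $\mathsf{Chow}^\circ$ has a unique fixed point $p$, one writes $1=c(a)[p]+\mathsf{Res}_\partial$ in $H^\star_A(\mathsf{Chow})\otimes K$.
\item The concentration theorem then identifies $c(a)f^\star[p]\cap[\mathsf M(Y|\partial Y)_{\mathsf v}]^{\sf vir}$ with $j_\star[\mathsf M(Y^\circ|\partial Y^\circ)_{\mathsf v}]^{\sf vir}$.
\end{itemize}

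For the boundary term, your planned expansion via piecewise polynomials on $\mathsf aT$ is replaced by the appendix. Strata of the log Chow variety are affine toric, so each $\gamma_E$ in $\mathsf{Res}_\partial=\sum_E i_{E,\star}\gamma_E$ is generated by boundary strata and descends to $\mathsf aW$. Combinatorial flatness then pulls these back to classes $[\![\sigma,\theta_\sigma,1]\!]$ with $\sigma$ rays of $T_{\mathsf v}$.
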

Here, because the $A$-fixed locus is proper, the left-hand side is well defined after tensoring with $K$.
This proposition immediately implies the corollary since every term in the right-hand side is a Laurent polynomial in $q$ by our earlier sections.

\begin{proof}

To prove the above identity, we will use the log Chow variety for the irreducible boundary components of $\partial Y$, whose basic properties are studied in Appendix~\ref{app: log-linear-system}.  
For each boundary component, $D_i = (\mathbb{P}^1| \infty)^2$, $i= 1, 2, 3$, we have a
smooth logarithmic Chow variety for $D_i$, equipped with a map to its Artin fan $\mathsf{Chow}_i \rightarrow a\mathsf{W}_{i}.$

By projecting a stable Chow $1$-complex along each coordinate direction, 
we have tropical maps $\mathsf T_{\mathsf v} \rightarrow \mathsf{W}_i$.  Above these, we have corresponding maps of geometric moduli spaces, obtained by taking the one-dimensional support of a stable map/pair 
on an expansion of $X$ and projecting to the corresponding expansion of $D_i$.  After taking products and flattening the tropical map, we have a commutative diagram

\[
\begin{tikzcd}
\mathsf{M}(Y| \partial Y)_{\mathsf{v}} \arrow[swap]{d}{\pi} \arrow{r}{f} &\mathsf{Chow}: = (\prod \mathsf{Chow}_{i})^{\dagger}\arrow{d}{\rho}\\
\mathsf{a T}_{\mathsf{v}} \arrow{r} & \mathsf{aW}:= (\prod \mathsf{aW}_{i})^{\dagger}.
\end{tikzcd}
\]
We claim that
\begin{equation}\label{preimage-of-Chow}
f^{-1}(\mathsf{Chow}^{\circ}) = \mathsf{M}(Y^{\circ}|\partial Y^{\circ})_{\mathsf v}.
\end{equation}
To see this, we first show the tropical statement:  if a stable Chow $1$-complex $[\Gamma]\in \mathsf T_{\mathsf v}$ projects to the $0$-dimensional cone in each $ \mathsf{W}_i$ then it must be the $0$-dimensional cone in $\mathsf{T}_{\mathsf{v}}$.
Indeed, by stability, any vertex of $\Gamma$ is non-linear, so one of its coordinate projections will be non-linear as well.
Once the tropical statement is known, then we have $f^{-1}(\mathsf{Chow}^{\circ}) = \pi^{-1}(\mathsf a \mathsf{T}_{\mathsf v}^{\circ})$.  The latter equals $\mathsf{M}(Y^{\circ}|\partial Y^{\circ})_{\mathsf v}$ because both correspond to only allowing expansions where any 
curve in an expanded component is contracted in the projection to $Y$.

Now there is a unique fixed point $p \in (\mathsf{Chow}^{\circ})^{A}$ contained in the interior of $\mathsf{Chow}$, corresponding to the non-boundary toric divisors in each $D_i$.
By $A$-equivariant localization on $\mathsf{Chow}$, we have a decomposition
\[
1 = c(a)[p] + \mathsf{Res}_{\partial} \in H^\star_{A}(\mathsf{Chow}) \otimes K
\]
where $\mathsf{Res}_{\partial}$ is the sum of equivariant residues for the fixed components of $(\partial \mathsf{Chow})^{A}$.
If we pull this back, and pair with the virtual class, we have
\[
c(a)f^{*}([p])\cap [\mathsf{M}(Y| \partial Y)_{\mathsf v}]^{\mathsf{vir}} = \left(1 -f^\star(\mathsf{Res}_{\partial})\right)\cap [\mathsf{M}(Y|\partial Y)_{\mathsf v}]^{\mathsf{vir}} 
\]

We first show the left-hand side equals $j_\star[\mathsf{M}(Y^{\circ}|\partial Y^{\circ})_{\mathsf v}]^{\mathsf{vir}}$.
Indeed, by equation \eqref{preimage-of-Chow}, we have a sequence of Cartesian squares

\[
\begin{tikzcd}
f^{-1}(p)\arrow[d,"f"]\arrow[r,"i"] &\mathsf{M}(Y^{\circ}|\partial Y^{\circ})_{\mathsf v} \arrow{d}{f}\arrow{r} & \mathsf{M}(Y, \partial Y)_{\mathsf{v}} \arrow{d}{f} \\
p\arrow{r}{i} & \mathsf{Chow}^{\circ} \arrow{r} & \mathsf{Chow},
\end{tikzcd}
\]
where the vertical maps are virtually smooth.
By the concentration theorem of \cite[Theorem~4.1]{stacky-concentration-adeel}, both horizontal maps $i$ induce isomorphisms $i_\star$ on localized Borel--Moore homology, which commute with virtual pullback along $f$.
As a result, we have
\[
c(a)f^{*}([p])\cap [\mathsf{M}(Y, \partial Y)_{\mathsf v}]^{\mathsf{vir}} = (i_{*})^{-1}\circ f^{!}\left([\mathsf{Chow}^{\circ}]) = (i_{*})^{-1}([\mathsf{M}(Y^{\circ}|\partial Y^{\circ})_{\mathsf v}]^{\mathsf{vir}}\right)
\]
which, by the residue definition of $j_\star$, finishes the computation.

It remains to show the contribution of $f^{*}(\mathsf{Res}_{\partial})$ is given by strata classes.
Each fixed locus $F$ of $\mathsf{Chow}$ is contained in some boundary divisor $E$, so we can group the boundary residues as a sum over divisors
\[
\mathsf{Res}_{\partial} = \sum_{E} i_{E,*}(\gamma_E)
\]
where $\gamma_E \in H^\star_{A}(E)\otimes K$ and $i_{E}\colon E \hookrightarrow \mathsf{Chow}$ is the inclusion.

By Corollary~\ref{cor: Chow-chow-groups} of the appendix, each $\gamma_E$ can be generated by boundary divisors in $E$ so descends to a class $\delta_E \in \mathsf{aW}$; furthermore, $[E]$ descends to a boundary divisor $[S_r]$ in $\mathsf{aW}$ for a ray $r \in \mathsf{W}$.
By combinatorial flatness, the preimage of $[S_r]$ is a combination of boundary divisors $S_\sigma \subset \mathsf{aT}_{\mathsf{v}}$, where $\sigma$ runs over rays of $T_{\mathsf v}$ which surject onto $r$.
Therefore, we have
\[f^\star(i_{E,\star}(\gamma_E)) = f^\star\circ\rho^\star(\delta_E[S_r]) = \sum_{\sigma} \pi^\star(\theta_\sigma [S_\sigma]).\]
By summing over $E$, we have that $f^\star\mathsf{Res}_{\partial}$ is a combination of strata classes $[\![\sigma, \theta_\sigma, 1]\!]$, as desired.
\end{proof}

\begin{corollary}
    The capped vertex is a Laurent polynomial in $q$. 
\end{corollary}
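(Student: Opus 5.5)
The plan is to apply Proposition~\ref{Chow-residue} and then show that every term on its right-hand side is a Laurent polynomial in $q$. The capped vertex with contact orders $\mu,\nu,\lambda$ is, by definition, the degree of the residue class $j_\star[\mathsf{PT}(Y^\circ|\partial Y^\circ)_{\mathsf v}]^{\mathsf{vir}}$, summed over $\chi$, where $\mathsf v$ is the star in $\mathbb R^3_{\geq 0}$ with the three coordinate rays weighted by $|\mu|,|\nu|,|\lambda|$. The boundary insertions are the Nakajima classes attached to the partitions, so the relevant evaluation class is strongly non-exotic. Pairing the identity of Proposition~\ref{Chow-residue} with this class and summing over $\chi$ writes the capped vertex as the sum of two pieces: the ordinary equivariant PT series of the $3$-non-boundary pair $((\mathbb P^1)^3|\text{infinity divisors})$ with the same contact data, and a $K$-linear combination of stratum series $\mathsf Z_{\mathsf{PT}}(\,\cdot\,|[\![\sigma,\theta,\upalpha]\!]|\bm\mu)$ with $\dim\sigma>0$. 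We may switch freely between Nakajima and flagged evaluations by the Neguț compatibility and the lemma comparing the marked PT space with the original one.

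The first piece is handled by Proposition~\ref{2/3NBcase}. The pair $(\mathbb P^1)^3$ relative to its infinity divisors is a nef $3$-non-boundary elementary geometry, and every component of every expansion is again nef elementary. We induct on the star ordering, which is well-founded by the finiteness proposition for stars. Proposition~\ref{2/3NBcase} and its $1$- and $2$-non-boundary analogues express each series as a polynomial in $q^{\pm1}$ and lower series. The base cases are all Laurent polynomials: the linear stars of Lemma~\ref{lem: linear-calculation}, the fibre-class degree $1$ series of $(\mathbb P^1|\infty)\times\mathbb C^2$, and the trivalent full-boundary principal series $q(1+q)^n$ obtained from the correspondence. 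So the first piece is a Laurent polynomial.

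For the second piece, I would first use Proposition~\ref{prop: precise-version} to replace possibly exotic stratum classes by non-exotic ones on deeper cones. Then I would apply the rubber calculus proposition and Corollary~\ref{cor: non-exotic-strata}. These trade $\theta$ for deeper strata, rigidify using $\overline\epsilon$, and finally split, via the degeneration formula, into products of primary series of the vertex stars of some $\Upsilon$ with $\dim\sigma>0$. These stars are strictly smaller nef elementary stars. The degeneration formula contributes only the monomial factors $q^{-|\bm\mu|}$ and finite sums, so each stratum series is again a Laurent polynomial.

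The main obstacle is the coefficients. The stratum classes and $\overline\epsilon$ have coefficients in $K=\mathsf{Frac}\,H^\star_A(\mathsf{pt})$, so the result is a Laurent polynomial in $q$ with coefficients in $K$. Polynomiality in $q$ survives because every reduction is $K$-linear and independent of $q$. The step I would check most carefully is that the rigidification and $\theta$-removal identities are compatible with virtual pullback after tensoring with $K$ and with the residue definition of $j_\star$. Once that holds, polynomiality in $q$ is inherited from the finitely many terms.
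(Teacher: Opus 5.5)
Your proposal is correct and follows the paper's route. Proposition~\ref{Chow-residue} writes the capped vertex as the ordinary log PT series of the nef $3$-non-boundary pair, which is a Laurent polynomial by Proposition~\ref{2/3NBcase} and the induction over nef elementary stars, plus $K$-linear combinations of positive-dimensional stratum series, which the rubber calculus breaks into Laurent polynomial series of strictly smaller stars. For the trivalent base case, you need only that the principal PT series is a Laurent polynomial; this follows from the proven correspondence together with the known GW series, so you do not need the closed form $q(1+q)^n$, which the paper states only as a prediction.
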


\begin{proof}

    This follows from Proposition \ref{Chow-residue} and Proposition \ref{2/3NBcase}.
\end{proof}

\section{Non-nef geometries}

We study elementary toric threefold pairs $(Y|\partial Y)$ where the non-boundary divisors are not nef. 

\subsection{General $1$-non-boundary} We start with the $1$-non-boundary case -- projective line bundles over a surface $S$ and on section of the bundle is not in the boundary. The next proposition allows us to conclude the correspondence inductively in many cases.

\begin{proposition}\label{prop: negative-induction}
Let $(V|\partial V)$ a full-boundary toric surface pair and suppose there exists a surjection $A \rightarrow A_V$ onto the torus of $V$, and an $A$-equivariant logarithmic map $$\mathsf{Ev} \rightarrow V.$$
Then any GW/PT series for a star $\mathsf v$ is effectively determined by the GW/PT series of smaller stars, compatibly with the correspondence.
\end{proposition}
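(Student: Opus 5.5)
Given an equivariant logarithmic map $\mathsf{Ev} \to V$ to a full-boundary toric surface pair, I would run the same dimension-counting argument as in Propositions~\ref{prop: trivalent-reduction} and~\ref{2/3NBcase}, with the map to $V$ supplying the missing rigidity. The point is to write a point class of $V$ as a boundary stratum, and then check that the resulting stratum in $\mathsf aT_{\mathsf v}$ forces the $1$-complexes to break.

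First, reduce the insertions to strata classes. By Proposition~\ref{prop: elementary-cohomology} and the Artin fan remark, any insertion $\upalpha \in H^\star_A(\mathsf{Ev})$ is an $H^\star_A(\mathsf{pt})$-combination of classes pulled back from $\mathsf a\mathsf P$. Before doing so, I factor out a point class of $V$. Since $V$ is a full-boundary toric surface, the equivariant point class equals the class of a $0$-dimensional torus fixed stratum, i.e. a $2$-dimensional cone $\kappa$ of $\Sigma_V$. It can also be written, modulo $H^{>0}_A(\mathsf{pt})$-multiples of lower-codimension classes, as a product of two boundary divisors.

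I would then argue by induction on the cohomological degree of the insertion. Terms carrying positive-degree equivariant parameters have lower-degree insertion. For such a term, the virtual dimension drops relative to the insertion degree, and it is handled either inductively or by the earlier base analysis. For the main term, pull $[S_\kappa]$ back along $\mathsf{Ev}\to V$ and then to $\mathsf a T_{\mathsf v}$ through $\mathsf a\mathsf P \to \mathsf a\Sigma_V$. After subdividing so that the composite $T_{\mathsf v}\to\mathsf P\to\Sigma_V$ is combinatorially flat, the preimage is a sum of strata $S_\sigma$ whose cones $\sigma$ surject onto $\kappa$. Each such $\sigma$ is therefore at least $2$-dimensional and maps onto a $2$-dimensional cone of $\Sigma_V$.

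The key combinatorial step is to show that a $1$-complex $\Upsilon$ in the interior of such a $\sigma$ is not a translate of $\mathsf v$. In the non-nef $1$-non-boundary geometries, $\mathbb G_\Sigma$ has rank at most $1$ in the ruled case and is $0$ in the general case. So the locus of translated stars is at most $1$-dimensional, and it cannot surject onto a $2$-dimensional cone. The surjection $A\to A_V$ and equivariance make the tropical map $\Sigma_{\mathsf{Ev}}\to\Sigma_V$ linear and compatible with the lattice maps, so a translation moves the image by at most a $1$-dimensional family. Hence $\Upsilon$ has at least two vertices, each strictly smaller than $\mathsf v$ in the partial order. Corollary~\ref{cor: non-exotic-strata} then expresses the invariant $[\![\sigma,1,1]\!]$ through stars at the vertices of $\Upsilon$ and lower stars, compatibly with the correspondence. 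Since the same identity is pulled back to both moduli spaces, GW/PT compatibility is automatic.

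I expect the main obstacle to be the equivariant bookkeeping: writing an arbitrary insertion as a multiple of the pulled-back point class of $V$ plus terms that are genuinely lower order. That some factor of $\upalpha$ comes from $V$ is not automatic. My first attempt would use the virtual dimension to show that invariants without two independent constraints pulled back from $V$ either vanish for degree reasons or reduce to smaller virtual dimension. If that fails, the fallback is equivariant localization on $V$, whose fixed points are all boundary strata, so that $1$ is written as a combination of fixed-point classes over $\mathrm{Frac}\,H^\star_A(\mathsf{pt})$. This would need care to keep the result non-localized, which should follow from polynomiality of the lower-order PT series.
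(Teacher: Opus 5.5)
Your combinatorial endgame matches the paper: descend a point class of $V$ to a $2$-dimensional cone of $\Sigma_V$, pull it back along the flattened maps $\mathsf aT_{\mathsf v}\to\mathsf a\mathsf P\to\mathsf a\Sigma_V$, and use that translates of $\mathsf v$ form an at most $1$-dimensional family in the ruled/general geometries. The gap is in how you produce the point class. Your primary plan inducts on the degree of $\upalpha$ and uses virtual dimension to show that insertions without a factor $f^\star[\mathsf{pt}_V]$ either vanish or drop to lower order. This cannot work here. These are exactly the non-nef geometries, where $D\cdot\beta<0$ can make $\mathsf{vdim}=D\cdot\beta+k+\sum\ell_i$ small or negative, and equivariant invariants with too few insertions, even $\upalpha=1$, are nonzero. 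A degree induction never creates a constraint pulled back from $V$. It bottoms out at classes supported on cones of dimension $0$ or $1$ in $\mathsf a\mathsf P$, whose strata need not break, and these are precisely the invariants the proposition must handle. (The virtual dimension is also fixed by the discrete data; only the insertion degree changes.) A symptom of this is that your primary plan never makes essential use of the hypothesis that $A\to A_V$ is surjective.

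The step you list as a fallback is the actual argument, and it is the paper's proof. Surjectivity of $A\to A_V$ guarantees that $V$, even after the flattening blowup, has isolated $A$-fixed points $p_i$ whose tangent weights are nonzero in $H^\star_A(\mathsf{pt})$. Over $K=\mathrm{Frac}\,H^\star_A(\mathsf{pt})$ localization then gives $1_V=\sum_i c_i(a)[p_i]$, hence $\upalpha=\sum_i c_i(a)\,\upalpha\cdot f^\star[p_i]$. Each summand becomes a stratum insertion $[\![\sigma,1,\upalpha]\!]$ with $\dim\sigma\geq 2$, and such a stratum breaks. No extra care is needed to ``de-localize,'' and polynomiality would not be available anyway, since lower-order PT series in non-nef geometries are in general only rational. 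The reason no care is needed is as follows:
\begin{itemize}
\item $H^\star_A(\mathsf{pt})\to K$ is injective;
\item the $c_i(a)$ involve neither $q$ nor $u$, so they commute with $-q=e^{iu}$;
\item both the GW/PT equality and rationality can therefore be checked after tensoring with $K$.
\end{itemize}
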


\begin{proof}
Replace $V$ with a blowup, and pass to subdivisions of $T$ and $P$ to get a commutative diagram
\[
\begin{tikzcd}
\mathscr M\arrow{d}\arrow{r} & \mathsf{Ev} \arrow{d}\arrow{r}{f} & V\arrow{d} \\
\mathsf a T_{\sf v}\arrow{r} &\mathsf{a} \mathsf P\arrow{r} & \mathsf{a} \Sigma_V,
\end{tikzcd}
\]
where the bottom arrows are both flat.  By the assumption on the action of $A$ on $V$, even after blowup, $V$ has isolated $A$-fixed points $p_1, \dots, p_r$. In order to calculate an equivariant invariant, it suffices to do so after tensoring with 
\[K = \mathsf {Frac} \ H_{A}^{\star}(\mathsf{pt}).\]
By the equivariant localization theorem, we have:
\[1_V = \sum c_i(a)[p_i] \in H_{A}^{*}(V) \otimes K\]
with $c_i(a) \in K.$
Given $\alpha \in H_{A}^{*}(\mathsf{Ev})$, we have
\[\alpha = \sum c_i(a) f^\star([p_i])\]
so by linearity, we can assume $\alpha = \alpha' \cdot f^\star([p_i])$ for some $\alpha'$.

By flatness, we can descend $[p_i]$ to the class of a stratum in $\mathsf{a}\Sigma_V$ and pull back to a stratum associated to a $2$-dimensional cone $\sigma$ of $T_\mathsf{v}$, so it suffices to consider the GW/PT series associated to $[\![\sigma, 1, \alpha']\!] \in H_{A}^{*}(\mathscr{M})$. Since we are in either a ruled or general $1$-non-boundary geometry, the tropical space of translated stars is at most $1$-dimensional, so the strata invariant breaks into smaller stars.
\end{proof}

We apply the proposition in a couple of different ways. First, suppose $\mathsf v$ has an internal marking or a ray that points into the interior of $\Sigma_Y$. Then the corresponding factor of the evaluation space gives a toric surface $V$ satisfying the conditions of the proposition, either $V = S$ or $V = D_i$.

Next, suppose $\mathsf v$ has two rays $r_1, r_2$ along the boundary of $\Sigma_Y$ that are not in opposite directions (inside the $\mathbb R^3$ containing $\Sigma_Y$).  Each factor $D_1, D_2$ of the evaluation space admits a projection to $C_1, C_2 = \mathbb P^1$ with full boundary. Since $r_1$ and $r_2$ are not parallel, $A$ acts on each $C_i$ with linearly independent weights and the proposition applies with $V = C_1\times C_2$.

\subsection{Linear case} The case not handled inductively by the proposition is the {\it linear geometry}:
\[
S = \mathbb{P}^1 \times \mathbb{P}^1, \ L = \mathcal{O}(-m,0), \ m > 0, \ \beta = d[\mathbb{P}^1 \times 0].
\]

\begin{proposition}\label{prop: linear-negative-geometry}
    The GW/PT corespondence holds for the linear geometry above, for all $m\geq0$ and $d>0$.
\end{proposition}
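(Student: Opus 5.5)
We prove Proposition~\ref{prop: linear-negative-geometry} by reducing the linear geometry to a local curve computation. We take $Y = \mathbb P(L\oplus\mathcal O)$ over $S=\mathbb P^1\times\mathbb P^1$ with $L=\mathcal O(-m,0)$, so $Y\cong \mathbb P^1\times F_m$ up to log birational modification, where $F_m=\mathbb P(\mathcal O(-m)\oplus\mathcal O)\to\mathbb P^1$ is a Hirzebruch surface. The class $\beta=d[\mathbb P^1\times 0]$ lies in the first factor. The boundary consists of the pullback of $\partial S$ and the infinity section; the zero section $D_0$ is non-boundary and $D_0\cdot\beta=0$. The stars left over by Proposition~\ref{prop: negative-induction} are linear: rays only in the $\pm$ first-coordinate directions along the boundary of $\Sigma_Y$, with no internal markings. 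The evaluation therefore factors through the boundary divisors $D_{0}^{(1)},D_{\infty}^{(1)}\cong F_m$ over $0,\infty$ of the first $\mathbb P^1$.

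\emph{Step 1 (vanishing and support).} As in Lemma~\ref{lem: linear-calculation}, any stable map or pair has support that is a union of fibers $\mathbb P^1\times\{x\}$ with $x\in F_m$. Both evaluation maps therefore factor through the small diagonal $F_m\hookrightarrow F_m\times F_m^{\ell(\mu)}$. The virtual dimension is $\ell_1+\ell_2$, so by the same dimension count only the case of maximal tangency $(d);(d)$ survives. In that case the virtual class pushes forward to a class supported on $F_m$. Working equivariantly, we localize on $F_m$ with respect to the $A$-action. The $A$-fixed points of $F_m$ are four isolated points, so each invariant becomes a sum over fixed points $x$ of equivariant residues of a local theory: the curve $\mathbb P^1\times\{x\}$ with normal bundle $\mathcal O\oplus\mathcal O$, relative to $0$ and $\infty$. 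Its equivariant weights are those of $T_xF_m$, and these depend on $m$ only through the torus weights at $x$.

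\emph{Step 2 (identification with a known theory).} The local contribution at each fixed point is the equivariant relative GW/PT theory of the cap/tube $(\mathbb P^1|0,\infty)\times\mathbb C^2$ in degree $d$ with full tangency. The fiber torus is antidiagonal, and the tube is the identity for gluing. The correspondence for this theory is known from \cite{BP08,OP10}, so summing the residues with the same equivariant weights on both sides gives the GW/PT identity.

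The main obstacle is making Step~1 rigorous. There is no virtual localization formula in this logarithmic setting, and the residue theory is not a priori the logarithmic theory. The plan is to imitate Proposition~\ref{Chow-residue}. We map $\mathsf{M}(Y|\partial Y)_{\mathsf v}$ to the logarithmic Chow variety of $F_m$, using the properties from Appendix~\ref{app: log-linear-system}, and decompose $1=\sum c_i[p_i]+\mathsf{Res}_\partial$ there. The interior fixed-point terms then give the local curve residues by the concentration theorem. The boundary residues are pulled back to strata classes $[\![\sigma,\theta,1]\!]$ with positive-dimensional $\sigma$; these are handled by Corollary~\ref{cor: non-exotic-strata} and the induction on stars, because breaking the linear star produces smaller stars. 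The delicate point is checking that the fixed points of $F_m$ sitting on $D_0$ (the negative section) are interior to the relevant log Chow space, so that their contributions are genuinely the local curve residues rather than further boundary terms.
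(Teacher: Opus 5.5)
There is a genuine gap, and it starts with the geometry. The bundle $L=\mathcal O(-m,0)$ has degree $-m$ along the \emph{first} factor of $S$, and that is exactly the direction of $\beta=d[\mathbb P^1\times 0]$. So $Y\cong Z\times\mathbb P^1$, where $Z=F_m$ is fibred over that first factor, and $\beta$ is $d$ times the $(-m)$-section $Z_0\subset Z$ at a point of the trivial factor. Consequently:
\begin{enumerate}[(i)]
\item the non-boundary zero section meets $\beta$ in degree $-md$, not $0$;
\item curves in class $\beta$ are unions of $Z_0\times\{y\}$, moving only in the trivial $\mathbb P^1$-direction and rigid in the $\mathcal O(-m)$-direction;
\item their normal bundle is $\mathcal O(-m)\oplus\mathcal O$;
\item the boundary divisors over $0,\infty$ are $\mathbb P^1\times\mathbb P^1$, not $F_m$;
\item $\mathsf{vdim}=\ell_1+\ell_2-md$.
\end{enumerate}
The configuration you analyse, curves $\mathbb P^1\times\{x\}$ with normal bundle $\mathcal O\oplus\mathcal O$, is the straight linear star already settled in Lemma~\ref{lem: linear-calculation}. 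The small-diagonal factorization through $F_m$, the reduction to $(d);(d)$, and the identification with the $\mathcal O\oplus\mathcal O$ tube are artifacts of this misreading. The negativity is precisely why this case escapes the nef arguments, and the local model that has to appear is the $\mathcal O(-m)\oplus\mathcal O$ local curve.

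Even after correcting the geometry, your localization plan has no interior term to extract. The curves move only along the trivial $\mathbb P^1$, and both of its torus-fixed points $0,\infty$ lie in $\partial Y$. In your own picture the same problem occurs: all four fixed points of $F_m$ lie on boundary fibres, so the step you flag as delicate actually fails. Hence in $1=\sum c_i[p_i]+\mathsf{Res}_\partial$ every term is a boundary residue. You only recover strata invariants of the same star, never a local curve invariant.

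The missing idea is to ratchet between $Y=(Z|\partial Z)\times(\mathbb P^1|0,\infty)$ and the auxiliary target $X=(Z|\partial Z)\times(\mathbb P^1|\infty)$, in which $0$ is interior. Take an $A$-equivariant invariant of $X$ whose insertions along $D_0=\pi^{-1}(0)\times\mathbb P^1$ are all $[0]$. This forces the support onto $Z_0\times 0$, and by negativity the curve cannot deform into the boundary, so the invariant is the relative local curve invariant known from \cite{BP08,OP10}.

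The paper then inducts on $d$ through four cases: $A$- versus $A_0$-equivariant, $X$ versus $Y$, with insertions $[0]^{\otimes a}\otimes[\infty]^{\otimes b}$ or $[p]^{\otimes a}\otimes[1]^{\otimes b}$ on the trivial factor.
\begin{enumerate}[(i)]
\item Mixed insertions split the degree under degeneration to the normal cone of $Z\times 0$.
\item Degenerating $X$ into $Y\cup X$, with $A_0$-equivariance letting the point insertion specialize into the $Y$ component, isolates the $Y$-invariant with $a>0$ modulo lower degree.
\item When the insertions force the whole cycle into a boundary divisor, the invariant is a strata invariant on which the $A$-action factors through $A_0$. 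Rubber calculus rigidifies it, adding a point insertion on each component, and returns it to the previous $A_0$-equivariant case.
\end{enumerate}
None of this appears in your proposal, and without it there is no mechanism that produces the local curve theory from the logarithmic one.
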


The idea is that these invariants are similar to local curves of the form $\mathsf{Tot}_{\mathbb{P}^{1}}(\mathcal{O}(-m) \oplus \mathcal{O}(0))$. The GW/PT correspondence in this case is known by~\cite{BP08,OP10}. In fact, we will give an independent argument for this calculation in Proposition \ref{localcurveredux} of Section \ref{localcurve-section}.

The local curve case does not have a boundary divisor at infinity and does not exactly coincide with the invariants above. But the local curve theory determines some logarithmic invariants, for a particular choice of equivariant insertions. An induction argument inspired by~\cite[Section~2]{OP06vira} determines all other invariants from this case.

\begin{proof}
    To reduce to local curves, which do not have boundary at infinity, we analyze two targets simultaneously.
The first, denoted $(Y| \partial Y)$, is the $1$-non-boundary geometry from the last section: take $(Z|\partial Z)$ to be the projective completion of $\mathcal O_{\PP^1}(-m)$ and the zero section $Z_0$ to be the only non-boundary; then take
$$
(Y|\partial Y) = (Z| \partial Z) \times (\mathbb{P}^1|0, \infty).
$$

\noindent
The second target is the same, with a different boundary:
$$
(X| \partial X) = (Z| \partial Z) \times (\mathbb{P}^1|\infty).
$$
Equivalently, $(X| \partial X)$ is the $2$-non-boundary geometry associated to line bundles $\mathcal{O}(-m)$ and $\mathcal{O}(0)$. See Figure~\ref{fig: ratchet} below:
\begin{figure}[h!]
\begin{tikzpicture}[x=0.75pt,y=0.75pt,yscale=-1,xscale=1]

\definecolor{lavender}{RGB}{238,230,255}

\fill[lavender] (60,200) rectangle (245,385);
\fill[lavender] (265,200) rectangle (450,385);

\def\amp{1.5}
\def\seg{6}
\def\pad{12}

\coordinate (A) at (115,235);
\coordinate (B) at (205,235);
\coordinate (C) at (205,325);
\coordinate (D) at (115,325);

\draw[violet,line width=1] (A) -- (D);
\draw[violet,line width=1,decorate,decoration={snake,amplitude=\amp,segment length=\seg}] (A) -- (B);
\draw[violet,line width=1,decorate,decoration={snake,amplitude=\amp,segment length=\seg}] (B) -- (C);
\draw[violet,line width=1,decorate,decoration={snake,amplitude=\amp,segment length=\seg}] (D) -- (C);
\draw[purple,line width=1.4] ($(A)!0.5!(B)$) -- ($(D)!0.5!(C)$);

\foreach \pt in {A,B,C,D} {\draw[violet,fill=violet] (\pt) circle (2pt);}

\draw[violet,line width=0.75,rounded corners=10pt] 
  ($(A)+(-\pad,-\pad)$) rectangle ($(C)+(\pad,\pad)$);

\draw[violet] ($(D)+(0,20)$) node [anchor=north] {\small $X|\partial X$};

\coordinate (A2) at (320,235);
\coordinate (B2) at (410,235);
\coordinate (C2) at (410,325);
\coordinate (D2) at (320,325);

\draw[violet,line width=1,decorate,decoration={snake,amplitude=\amp,segment length=\seg}] (A2) -- (B2);
\draw[violet,line width=1,decorate,decoration={snake,amplitude=\amp,segment length=\seg}] (B2) -- (C2);
\draw[violet,line width=1,decorate,decoration={snake,amplitude=\amp,segment length=\seg}] (D2) -- (C2);
\draw[violet,line width=1,decorate,decoration={snake,amplitude=\amp,segment length=\seg}] (A2) -- (D2);
\draw[purple,line width=1.4] ($(A2)!0.5!(B2)$) -- ($(D2)!0.5!(C2)$);

\foreach \pt in {A2,B2,C2,D2} {\draw[violet,fill=violet] (\pt) circle (2pt);}

\draw[violet,line width=0.75,rounded corners=10pt] 
  ($(A2)+(-\pad,-\pad)$) rectangle ($(C2)+(\pad,\pad)$);

\draw[violet] ($(D2)+(0,20)$) node [anchor=north] {\small $Y|\partial Y$};

\end{tikzpicture}
\caption{The two geometries involved in the proof.  We show $Z_0 \times \mathbb{P}^1$, ignoring the $\mathcal O(-m)$-direction. The logarithmic divisors are depicted with wavey lines. The curve is shown in red in the middle.}\label{fig: ratchet}
\end{figure}
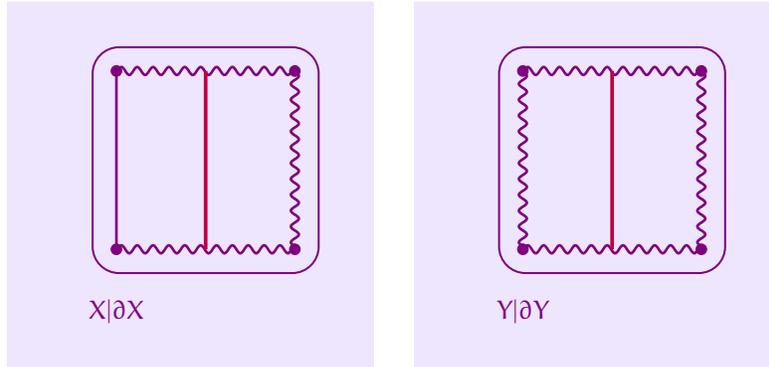

Consider $\beta = d[Z_0 \times 0]$, which intersects the boundary divisors 
$D_{0 \ \textrm{or } \infty} = [\pi^{-1}(0 \ \textrm{or } \infty)] \times \mathbb{P}^1$, 
each with multiplicity $d$. For both targets, we have the $3$-dimensional torus $A$ acting on $X$ and $Y$, 
respectively, as well as the subtorus $A_0$, which fixes the second factor in the product. 
We keep track of the insertions along $D_0$, which are either $A$- or $A_0$-equivariant 
classes on the second $\mathbb{P}^1$ factor.

The four cases mix the two equivariant settings and the two targets:
\begin{enumerate}[(i)]
\item $A$-equivariant $X$ with $D_0$-insertion 
$[0]^{\otimes a} \otimes [\infty]^{\otimes b}$,

\item $A_0$-equivariant $X$ with $D_0$-insertion 
$[p]^{\otimes a} \otimes [1]^{\otimes b}$,

\item $A_0$-equivariant $Y$ with $D_0$-insertion 
$[p]^{\otimes a} \otimes [1]^{\otimes b}$,

\item $A$-equivariant $Y$ with $D_0$-insertion 
$[0]^{\otimes a} \otimes [\infty]^{\otimes b}$.
\end{enumerate}

We induct on the degree $d$ and assume GW/PT is known in all cases for $d' < d$. 
In each of these cases, the degeneration formula takes a naive form; there are no 
corrections from blowups.

\noindent
{\sc Step I.  Case $(i)$ is known when $a > 0$: }

If $b > 0$, we degenerate to the normal cone along $Z \times 0$. 
In the degeneration formula, the degree on each component is strictly 
smaller than $d$, so the invariants are known inductively.  If $b = 0$, all $D_0$-insertions are $[0]$, and the curve is forced to have support $\mathbb{P}^1 \times [0]$. In this case, the invariant equals the corresponding local curve invariant, which is known. 
Due to the negative normal bundle, the curve does not deform into the boundary.

\noindent
{\sc Step II.  Case $(ii)$ is known when $a > 0$}

This follows from Step~I by choosing the equivariant lift of $[p]$ given by $[0]$ and expanding $[1]$ in the fixed-point basis.

\noindent
{\sc Step III.  Case $(iii)$ is known when $a > 0$}

If we start with the invariant in Step II and degenerate $X$ to the normal cone along $Z \times 0$, we get a copy of $X$ glued to a copy of $Y$; we can assume the point insertion $[p]$ degenerates to a point class in $Y$. This is where working on $A_0$-equivariantly is important. It means that we can choose where the insertions specialize in the degeneration. We do not have this flexibility for fully $A$-equivariant insertions. We can compute this known invariant from Step II by using the degeneration formula. There is a leading term where the entire curve class stays in $Y$. Every other term involves a nontrivial splitting of the degree, and so is known inductively. We can therefore compute the invariant in terms of known ones. 

\noindent
{\sc Step IV.  Cases $(i)$ and $(iv)$ are known in general}

We can inductively assume either $a=0$ or $b=0$.  In particular, the evaluation insertion can be replaced with a nontrivial boundary stratum of the moduli space, where the entire cycle lies in the boundary of an expansion of $X$ or $Y$.
Since this is a boundary stratum, there is an action of a rubber torus on the target. Due to this action, the action of $A$ on this stratum factors through the action of $A_0$. Therefore, it suffices to calculate the stratum invariant $A_0$-equivariantly.  When we apply the rubber calculus algorithm, each component 
of the degenerate target acquires a point insertion in the rigidification process.  We can assume this insertion is added to a relative point mapping to $D_0$.  After splitting, we have a collection of invariants of the form
of Case (iii), so is known.

\noindent
{\sc Step V.  Case $(ii)$ and $(iii)$ are known in general}

This follows from Step III, by picking an equivariant lift.  
\end{proof}

\subsection{General $2$-non-boundary}

We again start with a proposition to handle the induction. 

\begin{proposition}
Suppose there exists a nontrivial action of $A$ on $\mathbb P^1$ and an $A$-equivariant logarithmic map $$\mathsf{Ev} \rightarrow (\mathbb P^1| 0,\infty).$$
Then any GW/PT series for star $\mathsf v$ is effectively determined by the GW/PT series of lower-order stars, compatibly with the correspondence.
\end{proposition}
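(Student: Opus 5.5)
The argument follows the proof of Proposition~\ref{prop: negative-induction}, with the surface $V$ replaced by the logarithmic line $(\mathbb P^1|0,\infty)$. Its tropicalization is the fan $\mathbb R$, and its Artin fan is $\mathsf a\mathbb R$. First, pass to subdivisions of $T_{\mathsf v}$ and $\mathsf P$, and if needed to a logarithmic blowup of $\mathsf{Ev}$, so that we have a commutative diagram
\[
\begin{tikzcd}
\mathscr M\arrow{d}\arrow{r} & \mathsf{Ev} \arrow{d}\arrow{r}{f} & \mathbb P^1\arrow{d} \\
\mathsf a T_{\sf v}\arrow{r} &\mathsf{a} \mathsf P\arrow{r} & \mathsf{a}\,\mathbb R,
\end{tikzcd}
\]
in which both bottom arrows are combinatorially flat. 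Because $A$ acts nontrivially on $\mathbb P^1$, its fixed points are exactly $0$ and $\infty$. We compute after tensoring with $K=\mathsf{Frac}\, H^\star_A(\mathsf{pt})$. The localization theorem then gives $1=c_0(a)[0]+c_\infty(a)[\infty]$ in $H^\star_A(\mathbb P^1)\otimes K$. Hence every insertion $\upalpha\in H^\star_A(\mathsf{Ev})$ can be written as $\upalpha=\sum_{p\in\{0,\infty\}} c_p(a)\,\upalpha\cdot f^\star[p]$. By linearity it suffices to treat insertions of the form $\upalpha'\cdot f^\star[p]$.

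Next, convert the point class into a stratum class. The classes $[0]$ and $[\infty]$ are the boundary divisors of $(\mathbb P^1|0,\infty)$. They are therefore pulled back from the divisors of $\mathsf a\,\mathbb R$ dual to the two rays $\mathbb R_{\geq0}$ and $\mathbb R_{\leq 0}$. By flatness, their pullbacks to $\mathsf aT_{\sf v}$ are the cycle-theoretic preimages. These preimages are integer combinations of boundary divisors $S_\sigma$, one for each ray $\sigma$ of $T_{\sf v}$ surjecting onto the given ray. Pulling back to $\mathscr M$, we reduce to the GW/PT series of the classes $[\![\sigma,1,\upalpha']\!]$ with $\sigma$ a ray of $T_{\sf v}$. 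Because the same identity in $H^\star_A(\mathscr M)\otimes K$ is pulled back to both sides, this step is automatically compatible with the correspondence.

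Finally, show that a $1$-complex $\Upsilon$ in the interior of such a ray $\sigma$ is not merely a translate of $\mathsf v$. Then its vertex stars are strictly smaller, and Corollary~\ref{cor: non-exotic-strata} finishes the proof. The key step, and the main obstacle, is ruling out the case where $\sigma$ is the ray of translations of $\mathsf v$. In the general $2$-non-boundary geometry, $\mathbb G_\Sigma=0$, so there are no nonzero translations of the star. A ray of $T_{\sf v}$ therefore parametrizes a genuine degeneration, and its generic member has a vertex away from the base, or at least two vertices. In the straight case, $\mathbb G_\Sigma=\mathbb Z$, and we need more care. The translation direction acts trivially on the fan of the target $\mathbb P^1$, since the map $\mathsf{Ev}\to\mathbb P^1$ is $A$-equivariant and the translation torus is a rubber torus. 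Thus the translation ray maps to $0$ in $\mathbb R$ and is not among the rays surjecting onto $\mathbb R_{\geq 0}$ or $\mathbb R_{\leq 0}$. I expect checking this compatibility of the tropicalized evaluation with translations to be the delicate point. Once it holds, every relevant $\Upsilon$ breaks into stars that are lower in the partial order, and the induction closes as in Proposition~\ref{prop: negative-induction}.
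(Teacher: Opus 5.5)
Your argument for the general $2$-non-boundary geometry is the paper's argument. The paper's proof is the single observation that this geometry has no translations ($\mathbb G_\Sigma=0$). The localization-and-stratum argument of Proposition~\ref{prop: negative-induction} then goes through with $V$ replaced by $(\mathbb P^1|0,\infty)$. The fixed-point classes pull back to strata of rays of $T_{\mathsf v}$, and in the absence of translations these strata break into strictly smaller stars.

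You should, however, delete the paragraph on the straight case. It is not needed: the proposition sits in the general setting, and the straight case $L_1=L_2=\mathcal O$ is nef, so it is already handled by the dimension count in Proposition~\ref{2/3NBcase}.

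More importantly, the claim you make there is false.
\begin{itemize}
\item The maps $\mathsf{Ev}\to(\mathbb P^1|0,\infty)$ in question are projections of the evaluation factors to the base $\mathbb P^1$. These factors are the infinity divisors in the $L_1$, $L_2$ directions, or $Y$ itself for internal markings.
\item Tropically, this projection is the $x$-coordinate, which is exactly the direction of $\mathbb G_\Sigma=\mathbb Z$ in the straight geometry.
\item Translating $\mathsf v$ by $(t,0,0)$ therefore shifts the tropical evaluation by $t$, so the translation rays surject onto $\mathbb R_{\geq 0}$ and $\mathbb R_{\leq 0}$.
\item Geometrically, a curve lying entirely in a bubble over $\pi^{-1}(\infty)$ evaluates to the fixed point $\infty$. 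So these strata do survive the localization step.
\end{itemize}
The vertex star of such a stratum is $\mathsf v$ itself up to translation, so the induction would not close by this route. The $A$-equivariance of the map does not rescue the claim, since $A$ acts trivially on all the Artin fans involved.
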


\begin{proof}
The general $2$-boundary case has no translations, so this follows from the argument for Proposition~\ref{prop: negative-induction}. 
\end{proof}

The only rays in $\Sigma_Y$ for which the evaluation space doesn't have a map to $(\mathbb P^1|0,\infty)$ are those corresponding to the divisors $\pi^{-1}(0)$ and $\pi^{-1}(\infty)$; in the tropical description of $\Sigma_Y$ from Section \ref{sec: elementary-list},
these are the ray
$y= \psi_1(\mathbb{R}_{\geq 0}), z = \psi_2(\mathbb{R}_{\geq 0})$ and the ray $y= \psi_1(\mathbb{R}_{\leq 0}), z = \psi_2(\mathbb{R}_{\leq 0})$.

To analyze these remaining cases, degenerate $\mathcal{O}(a)\oplus\mathcal{O}(b)$ on $\mathbb{P}^1$ to building blocks $\mathcal{O}(1)\oplus\mathcal{O}(0)$, which we know from the nef case,
and $\mathcal{O}(-1)\oplus\mathcal{O}(0)$, which was solved as case $(i)$ in the analysis of the $1$-non-boundary case via local curves.

\subsection{Local curve calculation}\label{localcurvesection}

In this last section, we prove the following, which makes the argument for non-nef geometries independent of \cite{BP08,OP10}.

\begin{proposition}\label{localcurveredux}
The residue GW/PT theories for the local curve $\mathcal{O}(a)\oplus \mathcal{O}(b)$ on $(\mathbb{P}^1| \partial \mathbb{P}^1)$ are determined from the nef elementary geometries of Section~\ref{sec: nef-geometries}.
\end{proposition}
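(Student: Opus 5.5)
The plan is to realize the residue theory of the local curve $\mathsf{Tot}_{\mathbb P^1}(\mathcal O(a)\oplus\mathcal O(b))$, relative to the fibers over $0,\infty$, as a localization residue inside a projective logarithmic target. We then compare it with the logarithmic theory of that target, exactly as in Proposition~\ref{Chow-residue}. Concretely, take the $2$-non-boundary elementary geometry $\mathbb P(\mathcal O(a)\oplus\mathcal O(b)\oplus\mathcal O)\to\mathbb P^1$, and write $X$ for it, with its log boundary $\partial X$. The open subset obtained by deleting the infinity divisors is the local curve relative to $\partial\mathbb P^1$, and the curve class is $d$ times the zero section.

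\textbf{Residue step.} We first run the argument of Proposition~\ref{Chow-residue}: project the support of a stable map or pair to the logarithmic Chow varieties of the two infinity divisors (each a $(\mathbb P^1|\partial)$-ruled surface), and localize at the fixed point corresponding to the zero section. This gives
\[
j_\star[\mathsf M^\circ]^{\sf vir}=\Bigl(c+\sum[\![\sigma,\theta,1]\!]\Bigr)\cap[\mathsf M(X|\partial X)]^{\sf vir}\otimes K .
\]
Here $c$ is an explicit equivariant factor and the sum runs over strata classes with positive-dimensional cones. This expresses the residue series as a $K$-linear combination of logarithmic invariants of $X$ and stratum invariants. By Corollary~\ref{cor: non-exotic-strata}, the stratum invariants reduce to nonexotic invariants of stars of smaller order.

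\textbf{Degeneration step.} Next we degenerate the base $\mathbb P^1$ into a chain of $|a|+|b|$ components (up to order). We choose the splitting of the line bundles so that each component carries $\mathcal O(\pm1)\oplus\mathcal O(0)$ or $\mathcal O(0)\oplus\mathcal O(0)$, the same decomposition as in the preceding subsection. The degeneration formula of Theorem~\ref{thm: deg-formula} takes its naive form, since all gluing divisors are smooth fibers with a single partition. This reduces the local curve with boundary to the building blocks: caps, tubes, and $\mathcal O(\pm1)\oplus\mathcal O(0)$ with relative conditions. The blocks $\mathcal O(0)\oplus\mathcal O(0)$ and $\mathcal O(1)\oplus\mathcal O(0)$ are nef, so they are covered by Section~\ref{sec: nef-geometries}.

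\textbf{The negative block.} The remaining block is $\mathcal O(-1)\oplus\mathcal O(0)$. Here I would use the standard trick of gluing it to an $\mathcal O(1)\oplus\mathcal O(0)$ block along a relative fiber, so that the result degenerates from $\mathcal O(0)\oplus\mathcal O(0)$, which is nef. In that degeneration formula the unknown negative block appears linearly, paired with the invertible (in fact diagonal) matrix of the $\mathcal O(1)\oplus\mathcal O(0)$ block. Equivalently, the tube's gluing matrix is the identity, and the cap/$\mathcal O(1)$ matrix is invertible over $K$ by its leading term. Inverting this matrix determines the negative block compatibly on the GW and PT sides, because the relations come from the same degeneration formula on both sides.

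\textbf{Main obstacle.} I expect the main obstacle to be this invertibility. The matrix of $\mathcal O(1)\oplus\mathcal O(0)$ relative series, indexed by partitions with cohomology weights, must be invertible after localization. I would check this by restricting to the minimal Euler characteristic or genus-zero leading terms, where the matrix is triangular with nonzero diagonal in the Nakajima basis.

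A second, smaller point is to confirm that the equivariant factor $c$ is nonzero and identical on both sides. It is the Euler class of the tangent space at the fixed point of the Chow variety, which is independent of the theory.
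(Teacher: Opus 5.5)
You have the right ingredients (the comparison of Proposition~\ref{Chow-residue}, degeneration of the base into $(\pm1,0)$ and $(0,0)$ blocks, and inversion against the tube), but you apply the inversion in the wrong theory, so as arranged the argument does not close.

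Your residue step compares the residue theory of $\mathcal O(a)\oplus\mathcal O(b)$ with the compactified logarithmic theory of $\mathbb P(\mathcal O(a)\oplus\mathcal O(b)\oplus\mathcal O)$. When $a<0$ or $b<0$ this is a non-nef geometry, which Section~\ref{sec: nef-geometries} does not cover. In the paper its invariants are obtained \emph{from} the local curve theory, so they cannot serve as input here.

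You then degenerate this compactified theory, as in the preceding subsection, and try to solve for the compactified $\mathcal O(-1)\oplus\mathcal O(0)$ block by gluing it to the compactified $\mathcal O(1)\oplus\mathcal O(0)$ block. This is exactly the move that fails after compactification. The gluing divisor is now a compact log surface rather than $\mathbb C^2$, and curves can escape into expansions along the infinity divisors. The logarithmic degeneration formula therefore acquires extra terms, and there is no clean operator identity $T_{(1,0)}\circ T_{(-1,0)}=T_{(0,0)}$ from which the negative block can be solved. The paper makes this point explicitly. Your leading-term check of the $\mathcal O(1)\oplus\mathcal O(0)$ matrix does not address it.

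The repair is to reverse the order.
\begin{enumerate}
\item Degenerate the \emph{residue} theory over $(\mathbb P^1|0,\infty)$ first. There the gluing is along $\mathbb C^2$-fibers and the formula is naive. The $(0,0)$ tube is the identity; the paper takes the $(0,0)$ case as known from Section~\ref{sec: linear-stars}. Hence $T_{(1,0)}\circ T_{(-1,0)}=\mathrm{Id}$ as operators. This gives invertibility for free and reduces everything to the residue theory of $\mathcal O(1)\oplus\mathcal O(0)$.
\item Only then apply the residue comparison, and only to the nef compactification of that single block. Use the log Chow variety of the surface $\mathbb P(\mathcal O(1)\oplus\mathcal O(0))$ with the zero section as its only non-boundary divisor; the appendix applies to it.
\end{enumerate}
The log and strata invariants that appear are then nef $2$-non-boundary invariants, known from Section~\ref{sec: 2-nb-3-nb}.
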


\begin{proof}
By a standard degeneration argument, these invariants are determined by the case of $(a,b) = (1,0), (0,0), (-1,0)$ and the case of $(0,0)$ is known already by Section~\ref{sec: linear-stars}.
Furthermore, if viewed as operators, the degeneration formula shows that the matrix of $(1,0)$-invariants is invertible, with inverse given by the $(-1,0)$ invariants, so it suffices to calculate the residue local curve theory for $(a,b) = (1,0)$.  This invertibility property fails if we compactify the local curve geometry and work with logarithmic theory on the compactification, since the degeneration formula has more terms, so we can't use this move directly in the earlier section.

As in the last section, let $(X| \partial X)$ denote the projective compactification of $\mathsf{Tot}_{\mathbb{P}^{1}}(\mathcal{O}(1) \oplus \mathcal{O}(0))$.  
We want to apply Proposition \ref{Chow-residue} to this geometry as well.  To do this, we replace the argument with the log Chow variety for the elementary geometry 
$\mathbb{P}(\mathcal{O}(1)\oplus \mathcal{O}(0))$ with a single non-boundary divisor given by the zero section of $\mathcal{O}(1)$.  The results for the appendix apply to this geometry as well, so the same proof goes through.

The statement of Proposition \ref{Chow-residue} then determines the residue theory from strata invariants of $(X| \partial X)$ all of which are known from Section~\ref{sec: 2-nb-3-nb}.
\end{proof}

\begin{remark}
    The appearance of the residue local curve theory in calculating the non-nef geometries may seem surprising. The reason it appears is that the local curve theory satisfies an ``invertibility property'' -- the local curve theories for $\mathcal O(1)\oplus\mathcal O$ and  $\mathcal O\oplus\mathcal O$ over $\mathbb P^1$ determine the theory for $\mathcal O(-1)\oplus\mathcal O$; thought of as operators, the theory for $\mathcal O(1)\oplus\mathcal O$ is invertible. However, this property fails when we pass to the compactified logarithmic theory, so to use this invertibility we need to relate the compactified and residue theories. This also happens in Section~\ref{sec: polynomiality}.
\end{remark}

\appendix
\section{The logarithmic Chow variety of a surface}
\label{app: log-linear-system}

Let $S$ be a toric surface with torus $A$ and $\partial S$ an $A$-invariant curve. The pair $(S|\partial S)$ is {\it elementary} if, up to blowup along strata of $\partial S$, the surface $S$ is a $\mathbb P^1$-bundle over $\mathbb P^1$ and $\partial S$ is either (i) the full toric boundary, (ii) three out of the four boundary curves, or (ii) two intersecting boundary curves. Such elementary toric surface pairs are the boundary divisors in elementary toric threefolds. 

Fix a star $\mathsf v$ in $\Sigma_{S|\partial S}$. It determines a curve class, which we also denote $\mathsf v$; there should be no room for confusion. Let $\mathsf{Chow}_{\mathsf v}(S|\partial S)$ be the logarithmic Chow space of curves on expansions of $S$. We are working on a surface, so this coincides with a Hilbert scheme of pure $1$-dimensional subschemes of degree $\mathsf v$. The space $\mathsf{Chow}_{\mathsf v}(S|\partial S)$ is equipped with a logarithmic structure $\mathsf{Chow}_{\mathsf v}(S|\partial S)\to \mathsf aT_{\mathsf v}(S|\partial S)$. The morphism $\mathsf{Chow}_{\mathsf v}(S|\partial S)\to \mathsf aT_{\mathsf v}(S|\partial S)$ is smooth (though the domain stack can have nontrivial finite isotropy groups) and stratifies $\mathsf{Chow}_{\mathsf v}(S|\partial S)$. By subdivision, we assume that $\mathsf a T_{\mathsf v}(S|\partial S)$ is smooth. 

\begin{proposition}\label{prop: chow-generation}
    Each locally closed logarithmic stratum of $\mathsf{Chow}_{\mathsf v}(S|\partial S)$ is a simplicial affine toric variety, stable under the action of $A$. In particular, for every closed logarithmic stratum $W$ of $\mathsf{Chow}_{\mathsf v}(S|\partial S)$, the $A$-equivariant Chow/Borel-Moore homology of $W$ is generated by logarithmic boundary strata. 
\end{proposition}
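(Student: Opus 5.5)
The plan is to describe each locally closed logarithmic stratum explicitly, as a product over the vertices of a tropical curve of open subsets of logarithmic linear systems, and then to deduce the generation statement by a standard toric argument.

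Fix a cone $\tau$ of $T_{\mathsf v}(S|\partial S)$ and a stable Chow $1$-complex $\Gamma$ in its interior. A point of the locally closed stratum $\mathsf{Chow}^\circ_\tau$ is a curve $C$ on the expansion $S_\Gamma$. For each vertex $V$ of $\Gamma$, the restriction $C_V$ lies on the component $S_V$ and has the prescribed contact orders with every boundary curve of $S_V$. Because we work on a surface, the gluing conditions along the double curves are scheme-theoretic equalities $C_V\cap D_E=C_{V'}\cap D_E$.

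I would write each $C_V$ by its equation. Here $S_V$ is a toric surface, possibly a $\mathbb P^1$-bundle over a boundary curve, and $C_V$ is a section of a line bundle whose Newton polygon is dual to $\mathsf{Star}_\Gamma(V)$. Transversality to the boundary strata and the contact-order conditions force $C_V$ to meet the boundary only in the open parts of the boundary curves. Consequently, in the interior of the stratum, the coefficients at the vertices of the Newton polygon are nonzero, and the remaining coefficients range over a logarithmic linear system.

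The key input is the tropical condition that every edge is a single segment with maximal data. Stability then forces each vertex star to be minimal. I expect that, in this situation, the equation of $C_V$ is determined up to the action of the torus of $S_V$ by its boundary points, and that the boundary restriction of $C_V$ is a product of binomials. The space of such curves is then a torus torsor cut out by monomial equations. These are the gluing equalities, which are equalities of binomial roots. Such equations define an affine toric variety, and after taking the quotient by the isotropy group $\mathsf a T_{\mathsf v}$, which is the finite stabilizer in $\mathsf{Chow}$, it becomes a simplicial affine toric variety. The stratum is stable under $A$ because $A$ acts on $S$, on each $S_V$, and preserves all the conditions.

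The main obstacle is this identification. At a vertex whose Newton polygon has interior lattice points, or a nontrivial decomposition, the space of curves with given boundary data is not a torus but an open subset of an affine space. My plan there is to observe that those extra coefficients form a linear fibre that is an $A$-representation, so the stratum is a vector bundle over a toric variety. That vector bundle is $A$-equivariantly an affine toric variety with no loss, and it is simplicial once the base is.

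Generation then follows. For a closed stratum $W$, filter it by the locally closed strata it contains and apply the excision sequence in $A$-equivariant Borel--Moore homology. Each open piece is a simplicial affine toric variety with a torus factor, so its equivariant Borel--Moore homology is generated by its fundamental class. Hence classes on $W$ are generated by the closures of the strata, which are the logarithmic boundary strata.
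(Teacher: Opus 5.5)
Your outline (monomial coordinates at each vertex, binomial gluing, descend, then excise) has the same shape as the paper's argument. However, the central step is not right as written, and the step that actually needs proof is skipped.

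First, stability does not make vertex stars ``minimal'' or edges ``maximal.'' The open stratum has $\Gamma=\mathsf v$ itself, and generic complexes in any cone can have vertices whose Newton polygons have interior points and long edges. Moreover, $\mathsf{Chow}_{\mathsf v}(S|\partial S)$ fixes only the degree $n_E$ along each double curve $D_E$, not a contact profile. So the edge datum is an arbitrary length-$n_E$ subscheme of the open double curve, typically a point of $\{[c_0:\cdots:c_{n_E}] : c_0c_{n_E}\neq 0\}\cong\mathbb G_m\times\mathbb A^{n_E-1}$, which is not a torus.

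Consequently the boundary data are not a torus torsor, and ``equalities of binomial roots'' are not algebraic conditions in the coefficients. Your vector-bundle patch handles interior lattice points only, so it does not cover the non-vertex boundary coefficients, which do enter the gluing. What is true, and what the paper uses, is the following. Each open vertex space is a complement of coordinate hyperplanes in a linear system, and restriction to $D_E$ is a coordinate projection. Predeformability is proportionality of the two edge coefficient vectors, i.e.\ vanishing of $2\times 2$ minors. So the rigidified stratum is an affine \emph{binomial} scheme.

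The genuine gap is your next sentence, ``such equations define an affine toric variety.'' Binomial equations give a toric variety only if the scheme is irreducible. In general they cut out unions of torus cosets: the fiber product of two copies of $\mathbb G_m$ over $t\mapsto t^2$ is $\{s^2=t^2\}$, which has two components. Irreducibility is exactly what the paper proves. It identifies the rigidified stratum $\mathsf{Chow}^\sim_\Gamma(S|\partial S)$ with an open subset of a projective space of sections of the line bundle on the degeneration of $S$ determined by $\Gamma$, and only then invokes Eisenbud--Sturmfels.

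In the full-boundary case you could instead argue directly. Write $a^V_{e,i}$ for the coefficients of $C_V$ along the edge of its Newton polygon dual to $E=VV'$, ordered compatibly on both sides. The characters $(a^V_{e,n_E}/a^V_{e,0})\,(a^{V'}_{e,n_E}/a^{V'}_{e,0})^{-1}$, one per bounded edge, should span a saturated sublattice. This comes down to the cokernel of the incidence map $\ZZ^{\mathrm{Vert}(\Gamma)}\to\ZZ^{\mathrm{Edges}(\Gamma)\sqcup\mathrm{Ends}(\Gamma)}$ being torsion-free, which holds because every connected component of $\Gamma$ has an unbounded end.

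Second, the quotient is misidentified. A point of the stratum is a glued curve on $S_\Gamma$ only up to the rubber torus $T_\tau$. This is the isotropy of the stratum of $\mathsf aT_{\mathsf v}(S|\partial S)$, of rank $\dim\tau$, not a finite group. So what you construct is a $T_\tau$-torsor over $\mathsf{Chow}^\circ_\Gamma(S|\partial S)$, and affine toricity must be descended along that torsor. A finite quotient would not produce simpliciality anyway. Simpliciality should come from smoothness of $\mathsf{Chow}_{\mathsf v}(S|\partial S)\to\mathsf aT_{\mathsf v}(S|\partial S)$ over a smooth Artin fan, which makes the strata orbifolds. Once these points are repaired, your excision argument for the generation statement is the same as the paper's and is fine.
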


When $\partial S$ is the full toric boundary, these results can be read off Kennedy-Hunt's description of the Chow space as the secondary toric variety of the primary polytope determined by $\mathsf v$, see~\cite{KH21}. 

\begin{proof}[Proof of Proposition~\ref{prop: chow-generation}]

Consider the open stratum $\mathsf{Chow}^\circ_{\mathsf v}(S|\partial S)$. There is a map to the linear system of all degree $\mathsf v$ curves on $S$:
\[
\mathsf{Chow}^\circ_{\mathsf v}(S|\partial S)\to \mathbb P_{\mathsf v}.
\]
Since $S$ is toric, the linear system has canonical coordinates given by monomials. The open stratum of the Chow variety is precisely the locus of curves that are dimensionally transverse to $\partial S$. The transversality condition is stable under scaling the monomials of the defining polynomial. This implies that $\mathsf{Chow}^\circ_{\mathsf v}(S|\partial S)$ is a torus-invariant affine subset of $\mathbb P_{\mathsf v}$. 

We now show that the complement of $\mathsf{Chow}^\circ_{\mathsf v}(S|\partial S)$ in $\mathbb P_{\mathsf v}$ has pure codimension $1$, so $\mathsf{Chow}^\circ_{\mathsf v}(S|\partial S)$ is affine and toric. We want to remove from $\mathbb P_{\mathsf v}$ two types of ``bad'' loci: (i) any locus where a curve meets a toric point in $\partial S$, and (ii) any locus where the curve contains a component of $\partial S$. By the elementary condition, if a curve contains a component of $\partial S$, it meets a boundary point. Therefore it suffices to show that the union of loci of type (i) is pure codimension $1$. Now if we fix a point $q$ on $S$, the locus in $\mathbb P_{\mathsf v}$ where the curves pass through $q$ is either pure of codimension $1$ or all of $\mathbb P_{\mathsf v}$. The bad locus is easily seen to be non-empty e.g. because any torus invariant curve in an elementary target lies in the bad locus. We conclude that $\mathsf{Chow}^\circ_{\mathsf v}(S|\partial S)$ is toric and affine.

Fix a locally closed boundary stratum $\mathsf{Chow}^\circ_\Gamma(S|\partial S)\hookrightarrow \mathsf{Chow}_{\mathsf v}(S|\partial S)$. Apply the rigidification of Section~\ref{sec: rigidification} and the results in~\cite{MR23}. Passing to a torus bundle $\mathsf{Chow}^\sim_\Gamma(S|\partial S)\to \mathsf{Chow}^\circ_\Gamma(S|\partial S)$, we can write $\mathsf{Chow}^\sim_\Gamma(S|\partial S)$ as a fiber product\footnote{Note that this statement is only being used on the locally closed stratum, so the flattening and logarithmic fiber product procedures that play a key role in~\cite{MR23} are irrelevant.} of the interiors of logarithmic Chow spaces associated to the vertices of $\Gamma$. For each vertex, we have the interior of an associated logarithmic Chow space, and for each edge $\mathsf e$ we have a logarithmic symmetric product of points on $\mathbb P^1_{\mathsf e}$, where $\mathbb P^1_{\mathsf e}$ is relative to either $1$ or $2$ of its torus fixed points. Evaluation gives a map from the product of vertex spaces to the product of the {\it squares} of the edge spaces. We are interested in the pullback of the diagonal locus:
\[
\begin{tikzcd}
\mathsf{Chow}^\sim_\Gamma(S|\partial S)\arrow{r}\arrow{d} & \prod_{\mathsf w\in V(\Gamma)} \mathsf{Chow}_{\mathsf w}^\circ(S_{\mathsf w}|\partial S_{\mathsf w})\arrow{d}  \\
\prod_{\mathsf e\in E(\Gamma)} \mathsf{Sym}^{n_{\mathsf e},\circ}(\mathbb P^1_{\mathsf e}|\partial \mathbb P^1_{\mathsf e}) \arrow{r} & \left(\prod_{\mathsf e\in E(\Gamma)} \mathsf{Sym}^{n_{\mathsf e},\circ}(\mathbb P^1_{\mathsf e}|\partial \mathbb P^1_{\mathsf e})\right)^2.
\end{tikzcd}
\]
The top right and the bottom two spaces are toric, and the maps are torus-equivariant. If the fiber product is irreducible, then it is automatically an affine toric variety. Indeed, the fiber product is automatically an affine binomial scheme, and irreducible affine binomial varieties are toric by~\cite{ES96}. The Chow $1$-complex $\Gamma$ determines a degeneration of $S$ and a line bundle on each component. The space $\mathsf{Chow}^\sim_\Gamma(S|\partial S)$ is an open subspace in the projective space of sections of this line bundle, because the transversality condition is an open condition\footnote{Transversality is equivalent to flatness over the Artin fan of the degeneration, and the flat locus is open.}. It is therefore irreducible, and so the stratum is affine and toric.

To conclude, observe that $\mathsf{Chow}^{\sim}_\Gamma(S|\partial S)\to \mathsf{Chow}^{\circ}_\Gamma(S|\partial S)$ is a torus torsor. The total space is affine toric, so the base is affine and toric. The stratum is stable under the $A$-action. The rest of the proposition follows by the excision sequence for Chow groups and the fact that an affine toric orbifold has trivial rational Chow groups.
\end{proof}

In particular, we record the following:

\begin{corollary}\label{cor: Chow-chow-groups}
    Given a boundary divisor $D$ on $\mathsf{Chow}_{\mathsf v}(S|\partial S)$, any cohomology class on $D$ is generated logarithmic boundary strata of $D$. 
\end{corollary}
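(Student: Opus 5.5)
The corollary should follow from Proposition~\ref{prop: chow-generation} together with the excision sequence. Recall what that proposition provides: every locally closed logarithmic stratum of $\mathsf{Chow}_{\mathsf v}(S|\partial S)$ is a simplicial affine toric variety, stable under $A$. A boundary divisor $D$ is the closure of a codimension one stratum. So $D$ is itself stratified by the locally closed logarithmic strata of $\mathsf{Chow}_{\mathsf v}(S|\partial S)$ that lie in it, namely those indexed by cones containing the ray of $D$. These are exactly the logarithmic boundary strata of $D$, together with its open stratum.

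The plan is to filter $D$ by closed unions of strata $D=F_0\supset F_1\supset\cdots$, ordered by the dimension of the indexing cone. Each difference $F_j\setminus F_{j+1}$ is then a disjoint union of locally closed strata $W^\circ$. For each step we apply the excision sequence for $A$-equivariant Chow groups,
\[
\mathsf{CH}^A_\star(F_{j+1})\to \mathsf{CH}^A_\star(F_j)\to \mathsf{CH}^A_\star(F_j\setminus F_{j+1})\to 0.
\]
Since each $W^\circ$ is an affine simplicial toric variety, its rational equivariant Chow group is generated as an $H^\star_A(\mathsf{pt})$-module by its fundamental class. This is the fact used at the end of the proof of Proposition~\ref{prop: chow-generation}: the rational Chow groups of an affine toric orbifold are trivial. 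Lifting, the class of the closure $W=\overline{W^\circ}$ maps to a generator. Descending induction on $j$ then shows that $\mathsf{CH}^A_\star(D)$ is spanned over $H^\star_A(\mathsf{pt})$ by the classes $[W]$ of closed logarithmic strata contained in $D$.

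It remains to pass from Chow/Borel--Moore homology to cohomology classes on $D$, which is the form in which the corollary is used in Proposition~\ref{Chow-residue}, where the classes $\gamma_E$ are pulled back to $H^\star_A(E)\otimes K$. I expect this to be the only real point of care. The argument is that $D$ is a closed stratum in a space that is smooth over a smooth Artin fan, so $D$ is smooth up to finite quotient singularities. This holds after the subdivision that makes $\mathsf aT_{\mathsf v}(S|\partial S)$ smooth, and $\mathsf{Chow}_{\mathsf v}\to\mathsf aT_{\mathsf v}$ is smooth. Rationally, Poincaré duality therefore identifies equivariant operational cohomology with equivariant Borel--Moore homology.

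Alternatively, one can simply note that in Proposition~\ref{Chow-residue} the classes $\gamma_E$ are used only through their pushforwards $i_{E,\star}(\gamma_E)$. There the homology statement suffices: each $\gamma_E\cap[E]$ is a combination of $[W]$'s, and each $[W]$ is pulled back from a stratum of $\mathsf aT_{\mathsf v}$ by smoothness of $\mathsf{Chow}_{\mathsf v}\to\mathsf aT_{\mathsf v}$. After tensoring with $K$, localization gives no additional obstruction.
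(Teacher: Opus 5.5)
Your proof is correct and follows the paper's route. The paper records the corollary as an immediate consequence of Proposition~\ref{prop: chow-generation}, whose last assertion it derives from the excision sequence along the logarithmic stratification together with the Chow-triviality of the affine toric orbifold strata; your filtration argument spells this out, and your use of smoothness of $D$ as a DM stack (the preimage of a closed stratum of the smooth Artin fan under a smooth map) to pass from equivariant Chow groups to cohomology classes makes explicit a point the paper leaves implicit.
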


\bibliographystyle{siam} 
\bibliography{PrimaryLMNOP}

\begin{thebibliography}{10}

\bibitem{AC11}
{\sc D.~Abramovich and Q.~Chen}, {\em Stable logarithmic maps to
  {D}eligne-{F}altings pairs {II}}, Asian J. Math., 18 (2014), pp.~465--488.

\bibitem{AW}
{\sc D.~Abramovich and J.~Wise}, {\em {Birational invariance in logarithmic
  Gromov--Witten theory}}, Comp. Math., 154 (2018), pp.~595--620.

\bibitem{AKMV}
{\sc M.~Aganagic, A.~Klemm, M.~Mari\~no, and C.~Vafa}, {\em The topological
  vertex}, Comm. Math. Phys., 254 (2005), pp.~425--478.

\bibitem{stacky-concentration-adeel}
{\sc D.~Aranha, A.~A. Khan, A.~Latyntsev, H.~Park, and C.~Ravi}, {\em The
  stacky concentration theorem}, arXiv preprint arXiv:2407.08747,  (2024).

\bibitem{AB25}
{\sc H.~Arg{\"u}z and P.~Bousseau}, {\em {BPS polynomials and Welschinger
  invariants}}, arXiv:2506.02770,  (2025).

\bibitem{BG16}
{\sc F.~Block and L.~G\"ottsche}, {\em Refined curve counting with tropical
  geometry}, Comp. Math., 152 (2016), pp.~115--151.

\bibitem{Bou17}
{\sc P.~Bousseau}, {\em Tropical refined curve counting from higher genera and
  lambda classes}, Invent. Math., 215 (2019), pp.~1--79.

\bibitem{BP08}
{\sc J.~Bryan and R.~Pandharipande}, {\em {The local Gromov-Witten theory of
  curves}}, J. Amer. Math. Soc., 21 (2008), pp.~101--136.

\bibitem{BR21}
{\sc A.~Buryak and P.~Rossi}, {\em Quadratic double ramification integrals and
  the noncommutative {K}d{V} hierarchy}, Bull. Lond. Math. Soc., 53 (2021),
  pp.~843--854.

\bibitem{CN21}
{\sc F.~Carocci and N.~Nabijou}, {\em Rubber tori in the boundary of expanded
  stable maps}, J. Lond. Math. Soc. (2), 109 (2024), pp.~Paper No. e12874, 36.

\bibitem{CCUW}
{\sc R.~Cavalieri, M.~Chan, M.~Ulirsch, and J.~Wise}, {\em A moduli stack of
  tropical curves}, {Forum Math. Sigma}, 8 (2020), pp.~1--93.

\bibitem{Che10}
{\sc Q.~Chen}, {\em Stable logarithmic maps to {D}eligne-{F}altings pairs {I}},
  Ann. of Math., 180 (2014), pp.~341--392.

\bibitem{ES96}
{\sc D.~Eisenbud and B.~Sturmfels}, {\em Binomial ideals}, Duke Math. J., 84
  (1996), pp.~1--45.

\bibitem{FP00}
{\sc C.~Faber and R.~Pandharipande}, {\em Hodge integrals and {G}romov-{W}itten
  theory}, Invent. Math., 139 (2000), pp.~173--199.

\bibitem{Ful93}
{\sc W.~Fulton}, {\em Introduction to Toric Varieties}, Princeton University
  Press, 1993.

\bibitem{GS14}
{\sc L.~G\"ottsche and V.~Shende}, {\em Refined curve counting on complex
  surfaces}, Geom. Topol., 18 (2014), pp.~2245--2307.

\bibitem{GP99}
{\sc T.~Graber and R.~Pandharipande}, {\em Localization of virtual classes},
  Invent. Math., 135 (1999), pp.~487--518.

\bibitem{Groj96}
{\sc I.~Grojnowski}, {\em Instantons and affine algebras. {I}. {T}he {H}ilbert
  scheme and vertex operators}, Math. Res. Lett., 3 (1996), pp.~275--291.

\bibitem{GS13}
{\sc M.~Gross and B.~Siebert}, {\em {Logarithmic Gromov-Witten invariants}}, J.
  Amer. Math. Soc., 26 (2013), pp.~451--510.

\bibitem{Guz25}
{\sc J.~Guzman}, {\em {Logarithmic cobordism and Donaldson-Thomas invariants}},
  arXiv:2510.15085,  (2025).

\bibitem{HMPPS}
{\sc D.~Holmes, S.~Molcho, R.~Pandharipande, A.~Pixton, and J.~Schmitt}, {\em
  Logarithmic double ramification cycles}, Invent. Math., 240 (2025),
  pp.~35--121.

\bibitem{HS21}
{\sc D.~Holmes and R.~Schwarz}, {\em Logarithmic intersections of double
  ramification cycles}, Algebr. Geom., 9 (2022), pp.~574--605.

\bibitem{KKMSD}
{\sc G.~Kempf, F.~Knudsen, D.~Mumford, and B.~Saint-Donat}, {\em Toroidal
  embeddings {I}}, Lecture Notes in Mathematics, 339 (1973).

\bibitem{KH21}
{\sc P.~Kennedy-Hunt}, {\em {Logarithmic Pandharipande-Thomas spaces and the
  secondary polytope}}, arXiv:2112.00809,  (2021).

\bibitem{KHSUK}
{\sc P.~Kennedy-Hunt, Q.~Shafi, and A.~U. Kumaran}, {\em Tropical refined curve
  counting with descendants}, Comm. Math. Phys., 405 (2024), pp.~Paper No. 240,
  41.

\bibitem{Li01}
{\sc J.~Li}, {\em Stable morphisms to singular schemes and relative stable
  morphisms}, J. Diff. Geom., 57 (2001), pp.~509--578.

\bibitem{Li02}
\leavevmode\vrule height 2pt depth -1.6pt width 23pt, {\em {A degeneration
  formula of GW-invariants}}, J. Diff. Geom., 60 (2002), pp.~199--293.

\bibitem{LiWu15}
{\sc J.~Li and B.~Wu}, {\em {Good degeneration of Quot-schemes and coherent
  systems}}, Comm. Anal. Geom., 23 (2015), pp.~841--921.

\bibitem{Maulik-Negut}
{\sc D.~Maulik and A.~Neguț}, {\em Lehn's formula in {C}how and conjectures of
  {B}eauville and {V}oisin}, J. Inst. Math. Jussieu, 21 (2022), pp.~933--971.

\bibitem{MNOP06a}
{\sc D.~Maulik, N.~Nekrasov, A.~Okounkov, and R.~Pandharipande}, {\em
  {Gromov--Witten theory and Donaldson--Thomas theory, I}}, Compos. Math., 142
  (2006), pp.~1263--1285.

\bibitem{MNOP06b}
\leavevmode\vrule height 2pt depth -1.6pt width 23pt, {\em {Gromov--Witten
  theory and Donaldson--Thomas theory, II}}, Compos. Math., 142 (2006),
  pp.~1286--1304.

\bibitem{MOOP}
{\sc D.~Maulik, A.~Oblomkov, A.~Okounkov, and R.~Pandharipande}, {\em
  {Gromov-Witten/Donaldson-Thomas correspondence for toric 3-folds}}, Invent.
  Math., 186 (2011), pp.~435--479.

\bibitem{MP06}
{\sc D.~Maulik and R.~Pandharipande}, {\em A topological view of
  {G}romov--{W}itten theory}, Topology, 45 (2006), pp.~887--918.

\bibitem{MPT10}
{\sc D.~Maulik, R.~Pandharipande, and R.~P. Thomas}, {\em Curves on {K3}
  surfaces and modular forms}, J. Topol., 3 (2010), pp.~937--996.

\bibitem{MR20}
{\sc D.~Maulik and D.~Ranganathan}, {\em Logarithmic {D}onaldson--{T}homas
  theory}, Forum Math. Pi, 12 (2024), p.~Paper No. e9.

\bibitem{MR25}
\leavevmode\vrule height 2pt depth -1.6pt width 23pt, {\em {Gromov-Witten
  theory, degenerations, and the tautological ring}}, arXiv:2510.04779,
  (2025).

\bibitem{MR23}
\leavevmode\vrule height 2pt depth -1.6pt width 23pt, {\em Logarithmic
  enumerative geometry for curves and sheaves}, Camb. J. Math., 13 (2025),
  pp.~51--172.

\bibitem{Mik17}
{\sc G.~Mikhalkin}, {\em Quantum indices and refined enumeration of real plane
  curves}, Acta Math., 219 (2017), pp.~135--180.

\bibitem{MR21}
{\sc S.~Molcho and D.~Ranganathan}, {\em A case study of intersections on
  blowups of the moduli of curves}, Algebra Number Theory, 18 (2024),
  pp.~1767--1816.

\bibitem{Nak97}
{\sc H.~Nakajima}, {\em Heisenberg algebra and {H}ilbert schemes of points on
  projective surfaces}, Ann. of Math. (2), 145 (1997), pp.~379--388.

\bibitem{Negut}
{\sc A.~Neguț}, {\em Hecke correspondences for smooth moduli spaces of
  sheaves}, Publ. Math. Inst. Hautes \'Etudes Sci., 135 (2022), pp.~337--418.

\bibitem{Negut2}
\leavevmode\vrule height 2pt depth -1.6pt width 23pt, {\em {$W$}-algebras
  associated to surfaces}, Proc. Lond. Math. Soc. (3), 124 (2022),
  pp.~601--679.

\bibitem{NO16}
{\sc N.~Nekrasov and A.~Okounkov}, {\em Membranes and sheaves}, Algebr. Geom.,
  3 (2016), pp.~320--369.

\bibitem{NPS18}
{\sc J.~Nicaise, S.~Payne, and F.~Schroeter}, {\em Tropical refined curve
  counting via motivic integration}, Geom. Topol., 22 (2018), pp.~3175--3234.

\bibitem{Ob21}
{\sc G.~Oberdieck}, {\em Marked relative invariants and {GW}/{PT}
  correspondences}, Adv. Math., 439 (2024), pp.~Paper No. 109472, 98.

\bibitem{OP06vira}
{\sc A.~Okounkov and R.~Pandharipande}, {\em Virasoro constraints for target
  curves}, Invent. Math., 163 (2006), pp.~47--108.

\bibitem{OP10}
\leavevmode\vrule height 2pt depth -1.6pt width 23pt, {\em The local
  {Donaldson}-{Thomas} theory of curves}, Geom. Topol., 14 (2010),
  pp.~1503--1567.

\bibitem{PP13}
{\sc R.~Pandharipande and A.~Pixton}, {\em Descendents on local curves:
  rationality}, Comp. Math., 149 (2013), pp.~81--124.

\bibitem{PP14}
\leavevmode\vrule height 2pt depth -1.6pt width 23pt, {\em
  {G}romov--{W}itten/pairs descendent correspondence for toric 3-folds}, Geom.
  Top., 18 (2014), pp.~2747--2821.

\bibitem{PP12}
\leavevmode\vrule height 2pt depth -1.6pt width 23pt, {\em
  Gromov-{W}itten/pairs correspondence for the quintic 3-fold}, J. Amer. Math.
  Soc., 30 (2017), pp.~389--449.

\bibitem{PT09}
{\sc R.~Pandharipande and R.~P. Thomas}, {\em Curve counting via stable pairs
  in the derived category}, Invent. Math., 178 (2009), pp.~407--447.

\bibitem{Par23}
{\sc J.~Pardon}, {\em {Universally counting curves in Calabi-Yau threefolds}},
  arXiv:2308.02948,  (2023).

\bibitem{Par17}
{\sc B.~Parker}, {\em Three dimensional tropical correspondence formula},
  Commun. Math. Phys., 353 (2017), pp.~791--819.

\bibitem{R19}
{\sc D.~Ranganathan}, {\em Logarithmic {G}romov-{W}itten theory with
  expansions}, Algebr. Geom., 9 (2022), pp.~714--761.

\bibitem{R26-SRI}
\leavevmode\vrule height 2pt depth -1.6pt width 23pt, {\em An invitation to the
  enumerative geometry of degenerations}, arXiv:2602.20840,  (2026).

\bibitem{RUK22}
{\sc D.~Ranganathan and A.~Urundolil~Kumaran}, {\em Logarithmic
  {G}romov-{W}itten theory and double ramification cycles}, J. Reine Angew.
  Math., 809 (2024), pp.~1--40.

\bibitem{Tev07}
{\sc J.~Tevelev}, {\em Compactifications of subvarieties of tori}, Amer. J.
  Math., 129 (2007), pp.~1087--1104.

\end{thebibliography}

\end{document}